\documentclass[11pt]{amsart}
\usepackage{mathrsfs}
\usepackage{amsfonts,amssymb,amsmath,amsthm}
\usepackage{cite}
\usepackage[colorlinks=true, citecolor=red]{hyperref}
\usepackage{titletoc}
\usepackage{geometry}
\usepackage{bm}
\usepackage{indentfirst}
\usepackage{graphicx}
\usepackage{float} 
\usepackage{booktabs}
\usepackage{longtable}
\usepackage{subfigure}
\usepackage{stmaryrd}
\usepackage{enumerate}
\usepackage{tikz}
\usepackage{ulem}
\usepackage{color,soul}
\usepackage{setspace}
\usepackage{stmaryrd}
\allowdisplaybreaks[4]
\raggedbottom
\usepackage{appendix}
\usepackage{cases}
\usepackage{todonotes}
\usepackage{enumitem}

\linespread{1.2}  

\geometry{a4paper}
\headsep 0cm \evensidemargin 0cm \oddsidemargin 0cm \textheight
24cm \textwidth 16cm \topmargin 0pt \headsep 16pt \footskip 27pt

\usepackage{amsmath,amsthm}
\usepackage {latexsym}
\usepackage{amssymb}

\usepackage{xcolor}

\newcommand{\blackhyperref}[2]{%
  {%
    \hypersetup{linkcolor=black}%
    \hyperref[#1]{#2}%
  }%
}

\newcommand{\beal}{\begin{align}}
\newcommand{\enal}{\end{align}}
\newcommand{\bealn}{\begin{align*}}
\newcommand{\enaln}{\end{align*}}
\newcommand{\bear}{\begin{eqnarray}}
\newcommand{\eear}{\end{eqnarray}}
\newcommand{\beeq}{\begin{equation}}
\newcommand{\eneq}{\end{equation}}

\newcommand{\supp}{\mbox{\rm supp}}

\newcommand{\eps}{{\varepsilon}}
\newcommand{\R}{{\mathbb R}}

\def\bm{\left[ \begin{array}{cc}}
\def\endm{\end{array}\right]}

\def\eps{\varepsilon}

\def\bm{\left[\begin{matrix} }
\def\endm{\end{matrix}\right]}

\def\R{{\mathbb R}}

\numberwithin{equation}{section}
\newtheorem{theorem}{Theorem}[section]
\newtheorem{lemma}[theorem]{Lemma}
\newtheorem{cor}[theorem]{Corollary}
\newtheorem{prop}[theorem]{Proposition}

\newtheorem*{op}{Open Problem}
\newtheorem*{pp}{Problem}

\newtheorem{defi}[theorem]{Definition}
\newtheorem{remark}[theorem]{Remark}

\theoremstyle{definition}

\renewcommand{\epsilon}{\eps}
\renewcommand{\tilde}{\widetilde}

\begin{document}

\title[Global control of wave maps]{Wave maps from  circle to Riemannian manifold: \\global controllability is equivalent to homotopy}

\author{Jean-Michel Coron}
\address{Sorbonne Universit\'{e}, Universit\'{e} Paris-Diderot SPC, CNRS, INRIA, Laboratoire Jacques-Louis Lions, LJLL,  \'{e}quipe CAGE, F-75005 Paris, France}
\email{\texttt{jean-michel.coron@sorbonne-universite.fr}}
\thanks{}


\author{Joachim Krieger}
\address{B\^{a}timent des Math\'{e}matiques, EPFL\\Station 8, CH-1015 Lausanne, Switzerland}
\email{\texttt{joachim.krieger@epfl.ch}}

\author{Shengquan Xiang}
\address{School of Mathematical Sciences, Peking University, 100871, Beijing, P. R. China}
\email{\texttt{shengquan.xiang@math.pku.edu.cn}}
\thanks{}

\begin{abstract}
We study wave maps from the circle to a general compact Riemannian manifold. We prove that the global controllability of this geometric equation is characterized precisely by the homotopy class of the data, thereby resolving the conjecture posed in \cite{KX, CKX}. As a remarkable intermediate result, we establish uniform-time global controllability between steady states, providing a partial answer to an open problem raised in \cite{Dehman-Lebeau-Zuazua}.  Finally, we obtain quantitative exponential stability around closed geodesics with negative sectional curvature. This work highlights the rich interplay between partial differential equations, differential geometry, and control theory. 
\end{abstract}
\subjclass[2010]{35L05, 
35B40, 
58J45, 
93C20 
}
\thanks{\textit{Keywords.} wave maps, controllability,  stability, dynamics, homotopy, geodesics, curvature.}

\maketitle

    \setcounter{tocdepth}{2}
	\tableofcontents

\section{Introduction}

Global controllability of the wave maps equation from a circle to a sphere has been established in \cite{KX, CKX}. This raises the question of controllability for this model in the case where the target is a general Riemannian manifold.
Clearly, in order to go from a given initial state to a given target 
state, it is necessary that these two states are homotopic; see Figure \ref{fig:wavemap}. This leads 
to the natural conjecture that this necessary condition for controllability is 
also sufficient.

\begin{pp}
The wave maps equation from a circle to a general compact Riemannian manifold is  controllable in the sense that 
\begin{equation*}
    \textrm{global controllability is equivalent to homotopy.}
\end{equation*}
\end{pp}

This work resolves the conjecture concerning controllability, and investigates the long-time dynamics of the locally damped wave maps equation as well as stability around geodesics.

\begin{figure}[H]\label{fig:wavemap}
\centering
\includegraphics[width=0.8\textwidth]{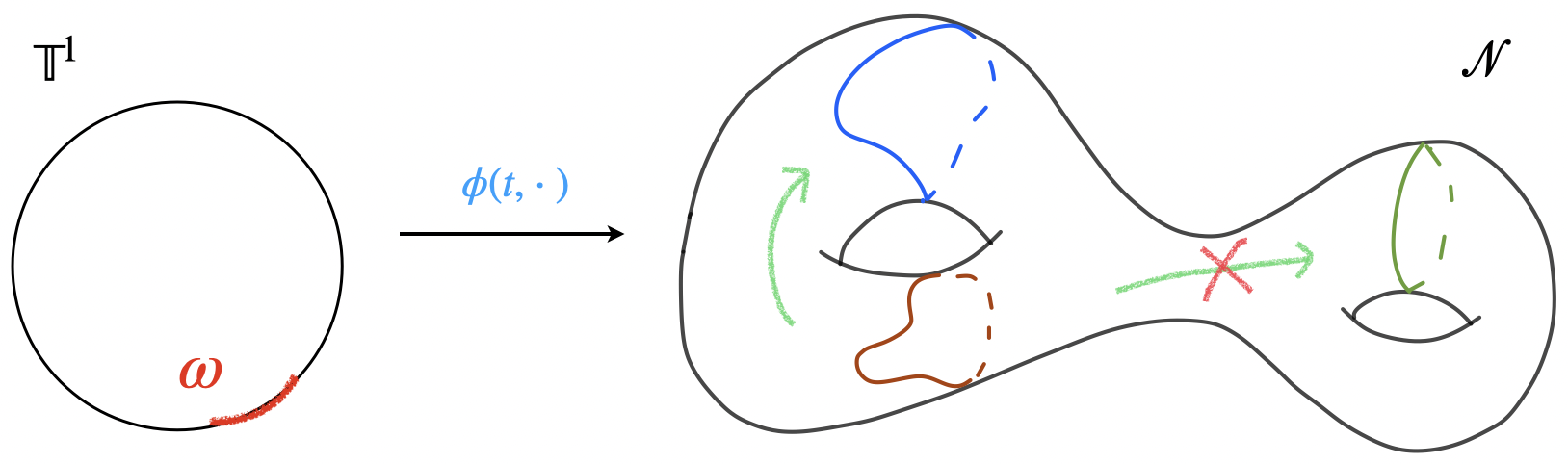}
\caption{Given a state $(\phi, \phi_t): \mathbb{T}^1\rightarrow T\mathcal{N}$. The first component $\phi: \mathbb{T}^1\rightarrow \mathcal{N}$ can be regarded as a closed curve in $\mathcal{N}$. We say that two states are  homotopic if their spatial components (the curves $\phi$) are homotopic as maps from $\mathbb{T}^1$ to $\mathcal{N}$, that is, if one can be continuously deformed into the other.  }
\end{figure}

Wave maps arise as the hyperbolic counterpart of harmonic maps and play an important role in mathematical physics. They coincide with the nonlinear sigma models of quantum field theory, where fields are represented as maps from spacetime into a target manifold. Wave map systems also appear in general relativity through symmetry reductions of the Einstein equations, and are closely related to models in ferromagnetism and liquid crystals.   In these contexts, control problems naturally emerge, where external influence is used to steer field configurations or stabilize dynamics, thereby linking geometric control to concrete physical phenomena.

\vspace{2mm}
Throughout this paper, we fix the following basic setting. 

\vspace{2mm}

\begin{enumerate}[leftmargin=2em]
\item[$\mathbf{(S)}$]\label{setting(S)} 
Let $(\mathcal{N}, g)$ be a smooth, compact, orientable Riemannian manifold without boundary.\\
Let  $\omega\subset \mathbb{T}^1$ be a non-empty open set. Let $a: \mathbb{T}^1\rightarrow \mathbb{R}_{\geq 0}$ be a non-trivial smooth function supported in $\omega$.
\end{enumerate}
    
By Nash's embedding theorem, $(\mathcal{N}, g)$ can be isometrically embedded into some Euclidean space $\mathbb{R}^N$. Hence, throughout this paper, we treat $\mathcal{N}\hookrightarrow \mathbb{R}^N$ as a submanifold.  For simplicity, we focus on smooth compact manifolds, although the results and techniques can be extended to $C^3$ manifolds that are possibly non-compact, and maybe even have a boundary.

\vspace{2mm}

The wave maps equation  from $\mathbb{T}^1$ to $\mathcal{N}$ is
\begin{equation*}\label{eq:wm:ori}
   \Box \phi -  \Pi(\phi)\left(\partial_{\nu}\phi, \partial^{\nu}\phi\right)= 0, 
\end{equation*}
where $\Box= -\partial_t^2+ \Delta$ is the d'Alembert operator,  $\Pi$ is the second fundamental form, and we used the Einstein summation convention for $\nu\in \{0, 1\}$ with $(\partial_0, \partial_1, \partial^0, \partial^1)= (\partial_t, \partial_x, -\partial_t, \partial_x)$. The state $(\phi, \phi_t)(t, \cdot)$ at time $t$ is simply denoted by $\phi[t]$.
We are interested in the natural physical  $H^1$-topology for the system.  Define the usual energy space for wave maps
\begin{equation*}
 \mathcal{H}(\mathbb{T}^1; \mathcal{N}):= \left\{(\phi, \phi_t):  \phi\in H^1(\mathbb{T}^1; \mathcal{N}), \; \phi_t\in L^2(\mathbb{T}^1; \phi^* T\mathcal{N})  \right\}.
\end{equation*}
Since $\mathcal{N}\hookrightarrow \mathbb{R}^N$ is a submanifold, we also define the extrinsic energy space
\begin{gather*}
    \mathcal{H}:= \left\{(f, g)\in  H^1(\mathbb{T}^1; \mathbb{R}^N)\times L^2(\mathbb{T}^1; \mathbb{R}^N)\right\} \; \textrm{ with } \\
    \|(f, g)\|_{\mathcal{H}}^2:= \|f\|_{H^1(\mathbb{T}^1)}^2+ \|g\|_{L^2(\mathbb{T}^1)}^2,
\end{gather*}
and the energy functional for every $(f, g)\in \mathcal{H}$,
\begin{equation*} 
    E(f, g):= \int_{\mathbb{T}^1} \left(|\partial_x f|^2+ |g|^2\right)(x) \, dx.
\end{equation*}

We say that two maps $\phi_1, \phi_2\in H^1(\mathbb{T}^1; \mathcal{N})$ are {\it homotopic}, if  there exists a continuous map $Q:[0,1]\times \mathbb{T}^1\rightarrow \mathcal{N}$ such that, for all $x\in \mathbb{T}^1$, $Q(0,x)=\phi_1(x)$ and $Q(1,x)=\phi_2(x)$. Similarly, we say that two states of wave maps $(\phi_1, \phi_{1t}), (\phi_2, \phi_{2t})\in \mathcal{H}(\mathbb{T}^1; \mathcal{N})$ are {\it homotopic}, if their spatial component $\phi_1$ and $\phi_2$ are homotopic.
\vspace{2mm}

{\it The locally damped wave maps equation} is
\begin{equation}\label{eq:dwm:ori}
   \Box \phi -    \Pi(\phi)\left(\partial_{\nu}\phi, \partial^{\nu}\phi\right)= a(x) \partial_t \phi.
\end{equation}
Notice that {\it harmonic maps} are steady states of both the free wave maps equation and the damped wave maps equation \eqref{eq:dwm:ori},
\begin{equation*}
   \Delta \phi  -   \Pi(\phi)\left(\partial_{x}\phi, \partial_{x}\phi\right)= 0.
\end{equation*}
In the setting $\mathbb{T}^1\rightarrow \mathcal{N}$, harmonic maps are {\it closed geodesics}\footnote{Throughout this paper,  a closed geodesic always indicates a harmonic map $\gamma: \mathbb{T}^1\rightarrow \mathcal{N}$.}. Given a closed geodesic $\gamma: \mathbb{T}^1\rightarrow \mathcal{N}$, there is a family of harmonic maps via rotation along this geodesic:
\begin{equation*}
    \gamma_{p}(\cdot):= \gamma(\cdot+ p), \; \forall p\in [0, 2\pi).
\end{equation*}
Natural questions are understanding the long-time dynamics of the damped wave maps equations and their stability around closed geodesics.
\vspace{2mm}

{\it The controlled wave maps equation} can be expressed as
\begin{equation}\label{eq:wminhomo}
   \Box \phi -  
   \Pi(\phi)\left(\partial_{\nu}\phi, \partial^{\nu}\phi\right)= \chi_{\omega} \Pi_T(\phi) f 
\end{equation}
where the extra force $f:\mathbb{T}^1\rightarrow \mathbb{R}^N$ can be regarded as a control that is localized in $\omega$ via a cutoff $\chi_{\omega}$,  and $\Pi_T(\phi) f$ is the projection of $f$ on the tangent space $T_{\phi}\mathcal{N}$ at $\phi$ in order to obey the geometric constraints and guarantee that the flow stays in the manifold $\mathcal{N}$. 
\vspace{2mm}

\subsection{The main results}
The main purpose of this paper consists of the following  results.
\begin{theorem}[Global controllability is equivalent to homotopy]\label{thm1}
 Let $\mathcal{N}$ and $\omega$ satisfy  \blackhyperref{setting(S)}{$({\bf S})$}.   The controlled wave maps equation \eqref{eq:wminhomo} is globally exactly controllable.
    
    More precisely, for every $M>0$ there exists some  $T>0$, such that for every homotopic states $(\phi_0, \phi_{0t}), (\phi_1, \phi_{1t})\in \mathcal{H}(\mathbb{T}^1; \mathcal{N})$ with their energy smaller than $M$, there exists a control $f\in C([0, T]; L^2(\mathbb{T}^1))$ such that the unique solution of \eqref{eq:wminhomo} with initial state $(\phi_0, \phi_{0t})$ satisfies $\phi[T]= (\phi_1, \phi_{1t})$. 
\end{theorem}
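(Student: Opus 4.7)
I would reduce the two-endpoint controllability problem to a one-endpoint ``driving to a closed geodesic'' problem by exploiting the time-reversibility of \eqref{eq:wminhomo}: if $(\phi,f)$ is a solution/control pair on $[0,T]$, then $(\phi(T-\cdot),-f(T-\cdot))$ is another one. Thus it suffices to establish, for each finite-energy state $(\phi_i,\phi_{it})$ with $i=0,1$, a control driving it into a small $\mathcal{H}$-neighbourhood of some closed geodesic $\gamma_i$ in its homotopy class in uniform time $T_1 = T_1(M)$, together with a uniform-time control bridging any two homotopic closed geodesics (and small perturbations thereof) in time $T_2 = T_2(M)$. Concatenating the forward control for $(\phi_0,\phi_{0t})$, the bridging control, and the time-reversal of the forward control for $(\phi_1,\phi_{1t})$ then produces the desired trajectory, in total time $2T_1+T_2$.

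\textbf{Stabilization via damping.} For the reduction of an arbitrary finite-energy datum to a neighbourhood of some closed geodesic, I would choose $f$ so that the closed-loop equation is precisely the locally damped system \eqref{eq:dwm:ori}, whose energy dissipates at rate $-\int a|\partial_t\phi|^2\,dx$. Combining a LaSalle-type invariance argument, asymptotic compactness of the damped flow in $\mathcal{H}(\mathbb{T}^1;\mathcal{N})$, unique continuation for wave operators under the geometric control condition on $\omega$, and continuity of the flow in $\mathcal{H}$ (which preserves the homotopy class along every orbit), the $\omega$-limit set of every finite-energy orbit consists of closed geodesics lying in its initial homotopy class. A quantitative, uniform-in-the-energy-ball version of this convergence then yields the required time $T_1(M)$ and a distinguished target geodesic $\gamma_i$ for each datum.

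\textbf{Uniform bridging between geodesics.} Near any closed geodesic $\gamma$, linearization of \eqref{eq:wminhomo} produces a wave equation for sections of $\gamma^*T\mathcal{N}$ with a curvature-dependent zeroth-order term, and HUM under the geometric control condition yields small-data local exact controllability in a fixed time. To connect $\gamma_0$ to $\gamma_1$ globally I would (i) use the homotopy hypothesis to construct a continuous path of closed curves in $\mathcal{N}$ joining them, (ii) discretize this path into finitely many intermediate states, each lying in the local controllability ball of the next, and (iii) chain together the resulting local controls. A compactness argument on the space of closed geodesics of energy at most $M$ within a fixed homotopy class, together with uniform lower bounds on local controllability radii, would then deliver the uniform bound $T_2 = T_2(M)$.

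\textbf{Expected main obstacle.} The critical difficulty is the uniformity of the bridging phase. Closed geodesics occur in continuous families (the reparametrizations $\gamma_p$ being the prototype), the steady-state set is generically non-compact, and the linearized operator has a non-trivial kernel along such families, so that the naive HUM controllability radius can collapse. Overcoming this requires either a gauge-fixing reduction that decouples the transverse controllability (handled by HUM) from the motion along the geodesic family (handled by a low-dimensional direct argument), or a nonlinear construction that exploits the damped dynamics to absorb the kernel directions. The companion exponential-stability result for geodesics of negative sectional curvature strongly suggests that the negative-curvature regime admits a clean purely linear treatment, while the general case will have to lean on the nonlinear flow near such families. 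A subsidiary subtlety is ensuring that the stabilization phase does not accidentally leak out of its homotopy class; this is a soft continuity statement but must be quantified to produce a time bound depending only on the energy level $M$.
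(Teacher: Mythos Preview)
Your overall architecture---damp each endpoint toward a closed geodesic, exploit time-reversibility, and bridge between the two resulting geodesics---is exactly the skeleton the paper uses. The damping phase and the use of time reversal are correctly identified.

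The genuine gap is in your bridging step. You establish local exact controllability only \emph{near closed geodesics} (via HUM applied to the linearization at $(\gamma,0)$), but your chaining argument runs along a homotopy path $\{c_s\}$ of \emph{general} closed curves joining $\gamma_0$ to $\gamma_1$. For intermediate $s$ the state $(c_s,0)$ is not a geodesic, hence not even a free trajectory of the wave maps equation, and your HUM argument does not apply there; linearizing around a non-solution makes no sense without first introducing a forcing, and controlling the resulting time-dependent (or forced) linearization is substantially harder than the steady-state case you sketch. The paper closes this gap in two distinct ways. The first (Proposition~\ref{pro:controlaroundcurve}) proves local controllability around an \emph{arbitrary} state by the return method: one builds a controlled loop that starts at the given state, flows via damping to near a geodesic, uses the local result there, and comes back by time reversal---this is precisely the non-trivial ingredient your sketch omits. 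The second, and more direct, route (Proposition~\ref{pro:controlbetween}, Section~\ref{Sec:gc}) avoids chaining entirely: the key observation is that if one restricts the solution to take values in a single geodesic curve $\Gamma\subset\mathcal{N}$, then the substitution $\phi=\Gamma(\varphi)$ turns the wave maps equation into a \emph{linear} one-dimensional wave equation for the scalar $\varphi$, which is globally exactly controllable in uniform time regardless of energy. Combined with an inner/outer gluing argument, this transports the solution from $(\gamma_0,0)$ to $(\gamma_1,0)$ in a fixed time $6\pi$, independent of $M$.

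Your ``expected main obstacle'' is a red herring for controllability. The rotation mode $\gamma_x$ in $\ker\mathcal{L}_\gamma$ does obstruct \emph{stability} (and is indeed the crux of Theorem~\ref{thm:stability}), but it does not obstruct the observability inequality for the linearized wave system, and hence does not cause the HUM radius to collapse; the paper's Lemma~\ref{lem:ob:key-lin} establishes observability in full generality with no curvature hypothesis. The real difficulty is the one above: getting from one geodesic to another when they may be far apart and not joined by a path of geodesics.
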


\begin{theorem}[Dynamics of the locally damped equation]\label{thm:dynamics}
Let $\mathcal{N}$ and $a(\cdot)$ satisfy  \blackhyperref{setting(S)}{$({\bf S})$}. 
 For any $M>0$ and $\delta>0$ there exists  $T_c= T_c(M, \delta)$ such that, for any initial state with energy smaller than $M$ the solution of \eqref{eq:dwm:ori} satisfies
    \begin{align}\label{eq:thm12}
\|\phi[t_{\delta}] - (\gamma, 0)\|_{\mathcal{H}}<\delta,
\end{align}
for some time $t_{\delta}\in [0, T_c]$ and some closed geodesic  $\gamma$.  Moreover,
    for any fixed initial state $\phi[0]$, there exists a  closed geodesic   $\gamma$ with the property that for each $\delta>0$, there exists $t_{\delta}>0$  such that \eqref{eq:thm12} holds.
\end{theorem}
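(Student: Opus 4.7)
The argument follows the classical LaSalle dissipativity philosophy adapted to geometric PDE: derive a uniform a priori bound from the energy identity, extract asymptotic compactness of trajectories, and use a unique-continuation result to identify the $\omega$-limit set as the set of closed geodesic steady states. The uniform time $T_c(M,\delta)$ then comes from a compactness-and-contradiction argument.

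Testing \eqref{eq:dwm:ori} against $\partial_t \phi$ gives the dissipation identity
\begin{equation*}
\frac{d}{dt} E(\phi[t]) = -2 \int_{\mathbb{T}^1} a(x) |\partial_t \phi|^2 \, dx \le 0,
\end{equation*}
so $E(\phi[t]) \le E(\phi[0]) \le M$ and $\int_0^\infty\!\!\int_{\mathbb{T}^1} a(x) |\partial_t \phi|^2\,dx\,dt \le M/2$. In one spatial dimension, a Duhamel splitting of the semigroup combined with the compact embedding $H^{3/2}(\mathbb{T}^1)\hookrightarrow H^1(\mathbb{T}^1)$ and a quadratic estimate on the nonlinearity $\Pi(\phi)(\partial_\nu \phi, \partial^\nu \phi)$ yields asymptotic smoothing: any energy-bounded forward orbit is precompact in $\mathcal{H}$. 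Consequently, for any sequence $t_n \to \infty$, a subsequence of the time-shifted trajectories $\phi[t_n + \cdot]$ converges in $C_{\mathrm{loc}}(\mathbb{R};\mathcal{H})$ to a limit $\psi$ which solves \eqref{eq:dwm:ori} on all of $\mathbb{R}$.

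Now apply a LaSalle argument to identify $\psi$. By lower semi-continuity under the strong $\mathcal{H}_{\mathrm{loc}}$-limit,
\begin{equation*}
\int_0^T\!\!\int_{\mathbb{T}^1} a(x)|\partial_t\psi|^2\,dx\,dt = 0 \qquad \forall T>0,
\end{equation*}
so $\partial_t\psi \equiv 0$ on $\omega\times\mathbb{R}$. Using this vanishing, the spatial equation on $\omega$ reduces to $\partial_x^2\psi = \Pi(\psi)(\partial_x\psi,\partial_x\psi)$, an ODE that upgrades the regularity of $\psi|_\omega$ to smooth. Differentiating \eqref{eq:dwm:ori} in time shows that $u:=\partial_t\psi$ satisfies a linear wave equation with coefficients depending on $\psi,\partial\psi$ and vanishing on $\omega\times\mathbb{R}$; a unique continuation theorem in the spirit of Tataru/Robbiano-Zuily then propagates the vanishing of $u$ to all of $\mathbb{T}^1\times\mathbb{R}$. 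Hence $\psi$ is a stationary solution, i.e.\ a harmonic map $\mathbb{T}^1\to\mathcal{N}$, which is a closed geodesic $\gamma$. Setting $\gamma:=\psi(0,\cdot)$ yields the second assertion: $\phi$ visits arbitrarily small $\mathcal{H}$-neighborhoods of $(\gamma,0)$ along some subsequence of times.

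For the uniform time $T_c(M,\delta)$, suppose for contradiction that there exist initial states $\phi_n[0]$ with $E(\phi_n[0])\le M$ and $T_n\to\infty$ satisfying
\begin{equation*}
\inf_{t\in[0,T_n]}\, \inf_{\gamma}\,\|\phi_n[t]-(\gamma,0)\|_{\mathcal{H}} \ge \delta,
\end{equation*}
where $\gamma$ ranges over closed geodesics. Extracting a compact limit of the initial data (via the energy bound) and using continuous dependence on bounded time intervals produces a forward-global solution whose orbit remains at distance at least $\delta$ from every closed geodesic, contradicting the $\omega$-limit analysis above. The main technical obstacle is the unique continuation step: the coefficients of the wave equation for $u$ are a priori of merely $H^1\!\times\!L^2$ regularity, so one must first exploit the vanishing of $u$ on $\omega\times\mathbb{R}$ to bootstrap $\psi$ to smoothness on $\omega$, then run a Carleman estimate on an enlarged neighborhood. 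A secondary but delicate point is asymptotic compactness in the full energy topology of $\mathcal{H}$, since the quadratic derivative nonlinearity obstructs a naive perturbation of the linear damped semigroup; the one-dimensional Sobolev and Strichartz toolbox is what makes it work here.
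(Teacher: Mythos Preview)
Your strategy is the classical LaSalle/Dehman--Lebeau--Zuazua route, which is quite different from the paper's. The paper never establishes asymptotic compactness of orbits in $\mathcal{H}$ and never invokes a qualitative unique-continuation theorem. Instead it proves a \emph{quantitative} propagation-of-smallness estimate (Proposition~\ref{lem:2}): if $\int_I\!\int a|\phi_t|^2 \le \tilde\delta$ then $\|\phi_t\|_{L^\infty_x L^2_t}$ is small on a slightly shrunken time slab, with explicit dependence of $\tilde\delta$ on $\delta$. From this it builds, by time-averaging, an \emph{approximate geodesic} $\tilde\phi(x)=\int_I \phi(t,x)\psi(t)\,dt$ and proves directly that $\|\tilde\phi\|_{H^2}\le C(M)$ and $\|\tilde\phi_{xx}+S_{jk}(\tilde\phi)\tilde\phi_x^j\tilde\phi_x^k\|_{L^2}\le C(M)\sqrt\delta$. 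Compactness then comes from the $H^2$ bound on the $\tilde\phi^{(n)}$, not from the flow.

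Your argument has a genuine gap in the uniform-time step. You write ``extracting a compact limit of the initial data (via the energy bound),'' but the energy bound only gives $\phi_n[0]$ bounded in $\mathcal{H}$, and bounded sets in $\mathcal{H}$ are not compact. A weak limit does not suffice: continuous dependence in the strong topology fails, so you cannot conclude that the limiting trajectory stays $\delta$-far from every geodesic. The paper's Lemma~\ref{lem:unif:decay:vaphi} sidesteps this entirely: for each $\phi_n$ in the contradicting sequence it produces an approximate geodesic $\tilde\phi^{(n)}$ with a \emph{uniform} $H^2(\mathbb{T}^1)$ bound, and Rellich compactness is applied to this sequence of near-harmonic maps, not to the data or orbits.

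The two other steps you flag as delicate are also unresolved. Asymptotic compactness of the orbit in $\mathcal{H}$ with a quadratic derivative nonlinearity is not standard; the null structure (via the $\mathcal{W}_T$ space the paper uses) is what would make the Duhamel piece compact, and this needs a real argument. Likewise, the unique-continuation step for $u=\partial_t\psi$ has coefficients of only $H^1\times L^2$ regularity outside $\omega$; bootstrapping smoothness on $\omega$ does not help you run a Carleman estimate across the boundary of $\omega$. The paper's quantitative propagation-of-smallness (proved by elementary $L^2_u L^\infty_v$/$L^2_v L^\infty_u$ estimates exploiting the null form) replaces both of these black boxes with a direct, effective estimate, and this is precisely what makes the uniform $T_c(M,\delta)$ accessible.
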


To the best of our knowledge, Theorem \ref{thm1} is the first {\it global controllability} result for geometric equations with general targets. The  global controllability  problem\footnote{The term ``global" refers to large-state control, as opposed to small perturbations around equilibrium.} is a central topic in PDE control theory. 
It fundamentally depends on the nonlinear structure of the PDE and requires sophisticated nonlinear control structures.
\vspace{1mm}

Theorem \ref{thm:dynamics} can be viewed as the analogue of the Eells--Sampson theorem \footnote{The homotopy problem asks whether a given map between two manifolds can be homotopically deformed into a harmonic map. Various approaches have been developed to address this problem; we refer to \cite{Eells-Lemaire, Eells-Lemaire-2} for a comprehensive survey. Among these, a particularly natural method is the heat flow approach due to Eells--Sampson, which suggests that, under suitable conditions, the heat flow may converge to a harmonic map.} for the harmonic map heat flow, in the framework of locally damped wave maps.
This result highlights how localized control mechanisms can drive global deformations, drawing parallels between stabilization techniques in control theory and the asymptotic behavior of geometric flows.

\vspace{2mm}

\noindent {\bf Remark.}
{\it   Our strategy stems from the idea of ``global stabilization-local controllability'' which has been used for KdV, NLW, and  NLS \cite{Laurent-Rosier-Zhang-kdv, Dehman-Lebeau-Zuazua, Laurent-2011, Joly-Laurent-2, DGL-2006, MR2644360}. Usually the problem  involves only one steady state, namely zero. However, for  geometric models, there are infinitely many steady states, which significantly complicates  dynamical behaviors and  analysis.
    
    To tackle this, we introduced a new approach composed of four stages, which emphasizes different tools in each stage: global stabilization toward steady sets, local controllability around steady states, global controllability between steady states, and the return method; see Section \ref{sec:strategy1} for details. We believe this approach along with the tools can be applied to global controllability  problems for dispersive models with solitons and other geometric models, such as the harmonic map heat flow,  H-systems, Schrödinger maps, Landau–Lifshitz equation, and Yang-Mills, etc.}
\vspace{1mm}

\begin{theorem}[Stability around geodesics of the locally damped equation]\label{thm:stability}
    
Let $\mathcal{N}$ and $a(\cdot)$ satisfy  \blackhyperref{setting(S)}{$({\bf S})$}.   Let $\gamma: \mathbb{T}^1\rightarrow \mathcal{N}$ be a closed geodesic. Assume the sectional curvature is strictly negative on $\gamma$. Then  \eqref{eq:dwm:ori} is exponentially stable around $\gamma$. 

    More precisely, there exist $\varepsilon>0$, $b>0$ and $C>0$ such that for any initial state $\phi[0]$ satisfying 
    \begin{equation}
        \|\phi[0]- (\gamma, 0)\|_{\mathcal{H}}\leq \varepsilon, \notag
    \end{equation}
    there exists a constant $p$ such that 
    \begin{gather}
     |p|\leq C   \|\phi[0]- (\gamma, 0)\|_{\mathcal{H}},  \label{ine:thm2:first} \notag\\
       \|\phi[t]- (\gamma_p, 0)\|_{\mathcal{H}}\leq C e^{- b t}  \|\phi[0]- (\gamma, 0)\|_{\mathcal{H}}\;\; \forall t\in (0, +\infty). \label{ine:thm2:second} \notag
    \end{gather}
\end{theorem}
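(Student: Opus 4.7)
Since the one-parameter family $\{\gamma_p\}_{p\in [0,2\pi)}$ is an orbit of equilibria, the linearization of \eqref{eq:dwm:ori} at $\gamma$ has a one-dimensional kernel spanned by the infinitesimal translation $\partial_x\gamma$, and no genuine spectral gap exists before modding out this symmetry. The plan is therefore to introduce a modulation parameter $p(t)$: for $\phi[0]$ sufficiently close to $(\gamma,0)$ in $\mathcal{H}$, the implicit function theorem yields a $C^1$ function $p(t)$ such that the decomposition
\[
\phi(t,x)=\gamma_{p(t)}(x)+\psi(t,x), \qquad \int_{\mathbb{T}^1}\langle\psi(t,x),\partial_x\gamma_{p(t)}(x)\rangle\,dx=0,
\]
holds in $\mathbb{R}^N$ as long as $\psi$ stays small in $\mathcal{H}$. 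Differentiating the orthogonality constraint and using that $\|\partial_x\gamma_p\|_{L^2}^2$ is a positive constant, one obtains $|\dot p(t)|\lesssim \|\psi(t)\|_{\mathcal{H}}$, and in particular $|p(0)|\lesssim\|\phi[0]-(\gamma,0)\|_{\mathcal{H}}$.

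\textbf{Coercivity of the Jacobi form and linear exponential decay.} Inserting the ansatz into \eqref{eq:dwm:ori} and projecting onto $T_{\gamma_p}\mathcal{N}$, the perturbation $\psi$ satisfies a damped Jacobi-type wave equation
\[
\partial_t^2 \psi - D_x^2\psi - R(\psi,\partial_x\gamma_p)\partial_x\gamma_p + a(x)\,\partial_t\psi = F(\psi,\partial_t\psi,\dot p,\ddot p),
\]
where $D_x$ is the covariant derivative along $\gamma_p$, $R$ is the Riemann curvature of $\mathcal{N}$, and $F$ collects the remainder (at least quadratic in $(\psi,\partial_t\psi,\dot p)$; the extra term $a(x)\dot p\,\partial_x\gamma_p$ produced by substitution is also absorbed here via the modulation bound). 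The associated Jacobi quadratic form
\[
Q(\psi):=\int_{\mathbb{T}^1}\bigl(|D_x\psi|^2-\langle R(\psi,\partial_x\gamma_p)\partial_x\gamma_p,\psi\rangle\bigr)\,dx
\]
is strictly positive on the constrained subspace: splitting $\psi=\beta\,\partial_x\gamma_p+\psi^\perp$ into pointwise tangential and normal components along $\gamma_p$, the orthogonality condition (together with $|\partial_x\gamma|$ constant) forces $\int\beta\,dx=0$, so the tangential contribution is coerced by Poincar\'e, while the normal contribution is coerced by the strictly negative sectional curvature; hence $Q(\psi)\gtrsim \|\psi\|_{H^1}^2$. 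For the linearized equation one then has
\[
\tfrac{d}{dt}\bigl(\tfrac12\|\partial_t\psi\|_{L^2}^2+\tfrac12 Q(\psi)\bigr)=-\int_{\mathbb{T}^1} a(x)\,|\partial_t\psi|^2\,dx,
\]
and I would promote this dissipation to exponential decay by the standard observability machinery: the geometric control condition is trivially satisfied on $\mathbb{T}^1$, Holmgren/Carleman arguments give unique continuation for the Jacobi operator with smooth coefficients, and a compactness-uniqueness argument rules out spectral obstructions because the only kernel direction $\partial_x\gamma$ has been excised by the modulation constraint.

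\textbf{Nonlinear bootstrap and main obstacle.} For $\|\psi(0)\|_{\mathcal{H}}\le\varepsilon$ sufficiently small, a continuity/bootstrap argument combining the linear exponential decay with the bound $|\dot p|\lesssim\|\psi\|_{\mathcal{H}}$ absorbs the quadratic remainder $F$ and yields
\[
\|\psi(t)\|_{\mathcal{H}}+|\dot p(t)|\le C e^{-b t}\|\psi(0)\|_{\mathcal{H}},\qquad t\ge 0.
\]
Integrating the bound on $\dot p$ shows $p(t)\to p_\infty$ with $|p_\infty-p(0)|\lesssim\|\phi[0]-(\gamma,0)\|_{\mathcal{H}}$, so that $|p_\infty|\lesssim \|\phi[0]-(\gamma,0)\|_{\mathcal{H}}$, and the desired estimate on $\|\phi[t]-(\gamma_{p_\infty},0)\|_{\mathcal{H}}$ follows from the triangle inequality together with $\|\gamma_{p(t)}-\gamma_{p_\infty}\|_{H^1}\lesssim|p(t)-p_\infty|$. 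I expect the principal difficulty to lie in establishing uniform observability for the \emph{bundle-valued} Jacobi equation with the curvature potential $R(\cdot,\partial_x\gamma)\partial_x\gamma$: one must verify that no $L^2$ eigenfunction of the Jacobi operator other than $\partial_x\gamma$ can vanish identically on $\omega$, and then ensure that the resulting exponential rate is robust as the base point $\gamma_{p(t)}$ drifts along the orbit during the nonlinear evolution, so that the spectral gap used at the linear level persists uniformly in $p$.
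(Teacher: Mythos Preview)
Your modulation bound $|\dot p(t)|\lesssim\|\psi(t)\|_{\mathcal{H}}$ is false, and this is what breaks the scheme. Take data $(\phi,\phi_t)(0)=(\gamma,\,c\,\partial_x\gamma)$ with $c$ small: your decomposition gives $p(0)=0$, $\psi[0]=(0,0)$, but $\dot p(0)=c$. In general, differentiating the constraint only yields $\int\langle\psi_t,\partial_x\gamma_p\rangle=-\dot p\int\langle\psi,\partial_x^2\gamma_p\rangle=O(|\dot p|\,\|\psi\|)$, i.e.\ $\dot p\,\partial_x\gamma_p$ and $\psi_t$ are nearly $L^2$-orthogonal components of $\phi_t$; neither dominates the other. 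Consequently the term $a(x)\dot p\,\partial_x\gamma_p$ in your $\psi$-equation is a \emph{linear} forcing of the same order as the damping, and the ``absorbed via the modulation bound'' step cannot close: your energy $\tfrac12\|\psi_t\|^2+\tfrac12 Q(\psi)$ omits the independent degree of freedom $\dot p^2$.

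The paper's proof is organized around precisely this difficulty. After a decomposition in which the tangential part $\varphi$ lives in the \emph{fixed} bundle $T_{\gamma(x)}\mathcal{N}$ (not the drifting $T_{\gamma_{p(t)}(x)}\mathcal{N}$), the linearized system for $(\varphi,\alpha)$ still carries the linear couplings $(a(x)-l/L)\dot\alpha\,\gamma_x$ and $L^{-1}\langle a\varphi_t,\gamma_x\rangle_{L^2}\,\gamma_x$; see \eqref{eq:varphialphafull}. The key algebraic step is to pass to $\Psi:=\varphi+\alpha\,\gamma_x$: since $\mathcal{L}_\gamma\gamma_x=0$, these couplings cancel, $\Psi$ satisfies the clean equation $-\Psi_{tt}+\mathcal{L}_\gamma\Psi-a\Psi_t=(\text{quadratic})$, and the correct energy $\mathcal{E}_\gamma(\Psi)=\mathcal{E}_\gamma(\varphi)+\tfrac{L}{2}\dot\alpha^2$ now contains $\dot\alpha^2$ as it must (Lemma~\ref{lem:key:ob:Psi}). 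Dissipation for $\Psi$ is then obtained not by compactness--uniqueness but by the authors' quantitative propagation of smallness (Corollary~\ref{cor:propaga:linear}), which gives an inequality $\mathcal{E}_\gamma(\Psi[32\pi])\le(1-c)\mathcal{E}_\gamma(\Psi[0])$ robust to a source $g$ with $\|g\|_{L^2_{t,x}}^2\le\delta\,\mathcal{E}_\gamma(\Psi[0])$; this robustness to inhomogeneities is exactly what the nonlinear bootstrap requires, and is not delivered by a bare compactness--uniqueness observability argument.
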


Although damped wave equations have been extensively studied in the literature, their stability around non-trivial steady states remains less understood. To our knowledge, this is the first quantitative result for geometric wave equations. The curvature assumption is essentially necessary, since without it the closed geodesic is not even necessarily isolated (see Remark \ref{rmk:iso:negcur}).

\vspace{2mm}

\noindent {\bf Methodology:} 
To address the main results, we have developed several novel tools and combined ideas from various research topics. Here we summarize several of them, while further discussions can be found in Section \ref{sec:strategy}.
\begin{itemize}[leftmargin=2em]
     \item[\tiny$\bullet$]  {\it  The return method.} {\rm This method was introduced by the first author for nonlinear global controllability  problems \cite{Coron-1996-Euler}. As we are going to see, it turns out to be powerful to get controllability results around non steady states.}
   \vspace{1mm}
   
   \item[\tiny$\bullet$] {\it Propagation of smallness.}   This property is related to unique continuation and linear observability inequalities. The current quantitative version for geometric waves was introduced by the last two authors in \cite{KX}. 
   \vspace{1mm}
   
   \item[\tiny$\bullet$]  {\it  Dynamics of geometric equations.}  {\rm The long time dynamics of the locally damped wave maps were first investigated by the authors in \cite{CKX, KX} with sphere target case. Here we furnish a wave analogue of the Eells--Sampson argument for the harmonic map heat flow, but with localized damping.}
   \vspace{1mm}

   \item[\tiny$\bullet$]  {\it  Control around geodesics.} {\rm In this paper, we introduce a reduction approach based on the intrinsic moving frame method and iteration schemes that transforms the geometric control problem into a linear control problem. }
   \vspace{1mm}

\item[\tiny$\bullet$]  {\it  Global controllability  between homotopic geodesics.}   This remarkable property emphasizes control and dynamics on geodesics. It is proved by a method recently introduced by the first and last author in \cite{CX-2024} for the harmonic map heat flow. 
   \vspace{1mm}

    \item[\tiny$\bullet$]  {\it Stability around closed geodesics with negative curvature.}   In this paper, we propose a five-step strategy to address this stability problem. This approach emphasizes the role of the propagation of smallness and negative curvature. 
\end{itemize}

\vspace{2mm}    

\noindent {\bf Further directions:} 
We believe that this work opens up several avenues for further exploration: 
\begin{itemize}[leftmargin=2em]
    \item[(i)] How can these results be extended to higher-dimensional wave maps equations? In particular, is it possible to adapt these techniques to the important phenomenon of singularity formation? 
        \item[(ii)] Other geometric models that are strongly related to physics, such as the harmonic map heat flow and the Yang-Mills equations?
         \item[(iii)] Control properties are strongly related to stochastic equations.  Can these results lead to statistical properties of random geometric equations?
        \item[(iv)] In Theorem \ref{thm1}, the control time depends on the scale of the states. Can this be improved to achieve uniform or even optimal-time global controllability? 
        \item[(v)] Other common types of control conditions, such as boundary controls or rough controls, and other classical control problems, such as reachable sets and optimal control?
\end{itemize}

\subsection{Review on the literature}\label{sec:literature}

\subsubsection{Wave maps equations and related control problems} 
Geometric wave equations have been extensively studied over the past few decades, particularly in relation to well-posedness and singularity formation. A vast body of literature is dedicated to well-posedness results; see, for instance, the works of Christodoulou and Tahvildar-Zadeh \cite{Christodoulou-Tahvildar-Zadeh-1993-Duke, Christodoulou-Tahvildar-Zadeh-1993}, Klainerman and Machedon \cite{Klainerman-Machedon-1997}, Tao \cite{Tao-wwm-1, Tao-wwm-2, tao2009global3}, Tataru \cite{Tataru-2001}, Sterbenz and Tataru \cite{Ster-Tat1, Ster-Tat2}, and Krieger and Schlag \cite{Krieger-Schlag-2012}. The study of singularity formation has also been a central theme in the analysis of dispersive equations. In the context of wave maps, we refer to the works \cite{Krieger-Schlag-Tataru, R-R-2012, Rod-Ster}, which provide significant insights into blow-up dynamics and critical behavior. For a broader perspective, we also recommend the survey by Tataru \cite{Tataru-2004} and the references therein.

The study of wave maps is closely connected to the theory of harmonic maps, an important topic in differential geometry with deep links to mathematical physics. Harmonic maps have broad applications, ranging from physics and fluid dynamics to materials science and even computer vision. For an introduction to harmonic maps, we refer to the lecture notes by Schoen and Yau \cite{Schoen-Yau-harmonicmap}, and for the theory of the harmonic map heat flow, to the monograph by Lin and Wang \cite{Lin-Wang-book}.

More recently, the control theory of geometric  wave equations has attracted growing attention. 
In particular, the authors in \cite{CKX, KX} have investigated the controllability and stabilization of wave maps from a circle to a sphere. The control of the harmonic map heat flow is studied in \cite{CX-2024, Liu-2018}.  Unlike control problems for wave equations with Euclidean targets, where the primary focus is often on dispersive properties and energy estimates, these works emphasize the geometric and topological aspects of the target manifold, which play a crucial role in the analysis. Understanding how curvature, geodesics, and topology influence control properties is essential for advancing control techniques for these PDEs.

\subsubsection{Global controllability problems}
Unlike local or linear control problems, where the main challenges stem from observability and the spectral properties of the underlying linear operator, global controllability  problems largely rely on the role and use of nonlinear terms.  In general, three main approaches have been developed in the literature to tackle global controllability  problems.

The first approach is the return method, originally introduced by the first author for the global controllability  problem of the incompressible Euler equations \cite{Coron-1996-Euler}. A comprehensive reference on this method can be found in \cite{Glass-bourbaki}. Since then, this method has been further developed and successfully applied to various systems, including the Euler equations \cite{Glass-2000}, the Navier--Stokes equations \cite{1996-Coron-Fursikov-RJMP, Coron-Marbach-Sueur, Coron-Marbach-Sueur-Zhang, NR-2025}, as well as the viscous Burgers equation \cite{Marbach-burgers, Coron-Xiang-2018}.

A second important approach is damping stabilization, which is particularly useful in the control of defocusing dispersive equations. The key idea is that the presence of a damping term ensures that the energy of the nonlinear system decays over time, eventually leading to stabilization towards a zero equilibrium state. This method has been successfully employed in various settings, including the Benjamin-Ono equation \cite{Laurent-Linares-Rosier-2015}, the KdV equation \cite{Laurent-Rosier-Zhang-kdv}, NLW \cite{Dehman-Lebeau-Zuazua, Laurent-2011}, and NLS   \cite{DGL-2006, MR2644360}. More recently, this global dissipation property has been used by the third author and his coauthors for the investigation of ergodicity properties of randomly forced dispersive equations \cite{WaveMixing, NLSMixing}.

A third major approach is the geometric control method, also known as the Agrachev-Sarychev method in the PDE setting.  This method typically leads to global approximate controllability with the help of finitely many Fourier modes as control. It emphasizes the role of Lie bracket for nonlinear terms and relies on Hörmander-type conditions.  It has been applied to various models under different settings, see for instance,  \cite{AS-NS, beauchard2025smalltimeapproximatecontrollabilitybilinear,  NR-2025, coron-xiang-zhang, Duca-Nersesyan}, and the references therein.

As discussed in the remark after Theorem \ref{thm1}, in this paper, we introduce a new approach to establishing global controllability  results by combining the return method and damping stabilization, leveraging the strengths of both techniques to handle nonlinear PDEs with infinitely many equilibrium states.

\vspace{1mm}

\subsubsection{Control of wave equations}\label{sec:contro:wave}

The controllability of (semi)linear wave equations has been an important and active topic in PDEs control theory over the past several decades. One of the foundational approaches is the multiplier method, first introduced by Lions, which yields quantitative controllability results for star-shaped control regions \cite{Lions}. This technique was subsequently refined and extended in the works \cite{MR1745475, Zuazua-1990, Zuazua-1993}. Under similar geometric conditions,  another widely used approach to obtain observablity is the global Carleman estimates  \cite{BDE-Carleman, DZZ-2008, Zhang-2000-2, Shao-2019}. In a one-dimensional setting, direct control via characteristics also proves to be highly effective (see \cite{Li-Yu-2006, Li-2010}).

Another major approach is based on microlocal analysis,  tied to the so-called Geometric Control Condition, introduced by Bardos, Lebeau, and Rauch \cite{Bardos-Lebeau-Rauch}. Numerous further developments have followed along this line of research, see, for example, \cite{Burq-Gerard-wave, Laurent-2011, Dehman-Lebeau-Zuazua, DGL-2006, Dehman-Lebeau-2009, LL-2019, LLTT-2017}. 
There are also many important works on the stabilization of wave equations. These results also heavily rely on the aforementioned methods, which include for example the works \cite{Anantharaman-Leautaud-2014, Burq-Gerard-wave, Burq-Gerard-2021, Lebeau-Robbiano-2, KX2023, LT-1993, Koch-Tataru-1995, Sun-2023} and the references therein.

\subsection{Strategy of the proofs}\label{sec:strategy}

In this section,  we outline our approach to the proofs of the main theorems. The whole section is composed of two parts. 
\subsubsection{{\bf Ideas for Theorem \ref{thm1} and Theorem \ref{thm:dynamics} }}\label{sec:strategy1}
\; \vspace{2mm}

Achieving this global controllability is challenging due to four issues: 1) global controllability  problems are more difficult than local ones;  2) typically, local exact controllability is studied along trajectories, with limited results around non-steady states; 3) infinitely many implicit steady states of this geometric equation make the dynamics more complicated; and 4) when the target is a general manifold, the control is subject to implicit geometric constraints, making the flow hard to analyse. 
\vspace{1mm}

To solve this problem, we follow the strategy of the return method. The idea is to construct some non-trivial trajectory with given initial and final states, such that the system is controllable around this trajectory. The major difficulty consists in finding such a trajectory. This task is handled by introducing three intermediate properties, each of which being interesting in its own right.    See Sections  \ref{sec:propsmall}--\ref{Sec:gc}.

We first establish the local controllability around any given {\it non-steady}  states by constructing a well-designed return trajectory.  Qualitatively, this property easily leads to the global exact controllability by continuous deformation within a homotopy class. Next, we also directly construct a special trajectory that connects any two given homotopic states. This latter construction provides uniform quantitative bounds on the control time, see  Section \ref{Sec:5}.
\vspace{2mm}

\noindent {\bf Three intermediate properties.}
\begin{itemize}[leftmargin=2em]
    \item[(1)] {\it Dynamics of the locally damped equation};  Theorem \ref{thm:dynamics}. \\
This result demonstrates the {\it dynamics} of the damped equation towards closed geodesics.  The presence of infinitely many steady states complicates the dynamical  behavior.   The proof is inspired by our earlier work \cite{KX, CKX}, while the lack of an explicit formula for closed geodesics makes the analysis more complicated.  
Refer to Sections \ref{sec:propsmall}--\ref{sec:3}.
\vspace{1mm}

 \item[(2)]  {\it Local exact controllability around closed geodesics}; Proposition \ref{pro:controlaroundgeodesic}. \\
 The primary difficulty arises from the geometric constraints imposed during the control process. To address this, we introduce a novel three-step method. We believe the first two steps, reducing the semilinear control problem with geometric constraint to a linear control problem in Euclidean space, are of independent interest. For this see Section \ref{Sec:4}. 
 \vspace{1mm}

  \item[(3)]  {\it Uniform-time global exact controllability between closed geodesics}; Proposition \ref{pro:controlbetween}.\\
Given two (steady) states that are far apart, is there an effective and uniform-time control that connects them? This question has remained a central and challenging open problem for Navier-Stokes, nonlinear heat, and semilinear wave equations. In a recent work, the first and third authors introduced a novel method that resolves this problem in the context of the harmonic map heat flow \cite{CX-2024}.
This approach highlights the role of {\it geometry} and {\it geodesics}.  Using this new approach, we solve the problem for wave maps (a semilinear wave equation). See Section \ref{Sec:gc}.
\end{itemize}
\vspace{2mm}

A more detailed explanation of these three intermediate properties is provided below.

\vspace{2mm}
\noindent \textbf{First property.} This is a stabilization result that contributes to Theorem \ref{thm:dynamics}.
The dissipation is due to the damping effect. Indeed, simple integration by parts yields
\begin{equation*}
   E(\phi[T])+ 2\int_0^T\int_{\mathbb{T}^1} a(x)|\phi_t|^2(t, x) \, dx dt= E(\phi[0]).
\end{equation*}
For (defocusing) wave equations, the exponential stabilization is equivalent to the observability inequality
\begin{equation*}
   \int_0^T\int_{\mathbb{T}^1} a(x)|\phi_t|^2(t, x) \, dx dt\geq c E(\phi[0]), c>0.
\end{equation*}
This type of estimate has been extensively studied in the literature during the past decades. We refer to the references given in Section \ref{sec:contro:wave}. However, for geometric equations, one can not simply reduce the dissipation problem to an observability estimate, since there are infinitely many non-trivial steady states (namely, harmonic maps in our case).

The authors have introduced a method in \cite{CKX, KX} to show that the solution will converge to a closed geodesic in case that $\mathcal{N}= \mathbb{S}^k$, at least along a sequence of times.  Our proof of Theorem \ref{thm:dynamics} is based on this method, which consists of two ingredients: 
\begin{itemize}
    \item[\tiny$\bullet$] {\it quantitative propagation of smallness};
    \item[\tiny$\bullet$] {\it characterization of approximate closed geodesics}. 
\end{itemize}

 We first show in Section \ref{sec:propsmall} the propagation of smallness property; Lemma \ref{lem:2}, which is a technical generalization of \cite[Proposition 2.2]{KX} to the Riemannian manifold case. It shows that if a solution of the damped equation is small in the sense that 
 \begin{equation*}
     \int_{I} \int_{\omega} |\phi_t(t, x)|^2 \, dx dt\ll E(\phi[0]),
 \end{equation*}
 then the smallness for $x\in \omega$ can be propagated to the whole spatial space $x\in \mathbb{T}^1$: 
  \begin{equation*}
   \sup_{x\in \mathbb{T}^1}  \int_{\tilde I} |\phi_t(t, x)|^2 \, dx \ll E(\phi[0]) \; \textrm{ 
 for some  $\tilde I\subset I$.}
 \end{equation*}

Next, in Section \ref{sec:3} we focus on the characterization of approximate closed geodesics. While the preceding estimates suggest that the solution behaves similarly to a steady state in $(t, x)\in \tilde I\times \mathbb{T}^1$, we demonstrate that it is, in fact, close to a closed geodesic. The main obstacle lies in the fact that the closed geodesics themselves are not explicitly known in the general manifold case.

\vspace{2mm}

\noindent \textbf{Second property. }  This is devoted to the following local controllability result.

 \begin{prop}\label{pro:controlaroundgeodesic}  Let $\mathcal{N}$ and $\omega$ satisfy  \blackhyperref{setting(S)}{$({\bf S})$}.   Let $T_1= 64\pi$. Let $M>0$. There exists $\delta_1= \delta_1(M)$ such that for any 
   closed geodesic $\gamma: \mathbb{T}^1\rightarrow \mathcal{N}$ with energy smaller than $M$, the system \eqref{eq:wminhomo} on the time invertal $(0, T_1)$ is locally exactly controllable around $(\gamma, 0)$. More precisely, for any given  pair of data $(\phi_0, \phi_{0t}), (\phi_1, \phi_{1t})\in \mathcal{H}(\mathbb{T}^1; \mathcal{N})$
 satisfying 
 \[
 \big\|(\phi_0, \phi_{0t}) - (\gamma, 0)\big\|_{\mathcal{H}} +  \big\|(\phi_1, \phi_{1t}) - (\gamma, 0)\big\|_{\mathcal{H}}<\delta_1,
 \]
 there is a control $f\in C([0, T_1]; L^2(\mathbb{T}^1))$ such that the flow associated to \eqref{eq:wminhomo} carries the data $(\phi_0, \phi_{0t})$ at time $t = 0$ into $(\phi_1, \phi_{1t})$ at time $t = T_1$. 
\end{prop}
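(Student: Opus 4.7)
My plan is to follow the three-step reduction suggested by the authors: rewrite the geometric problem in an intrinsic moving frame along $\gamma$, derive a semilinear wave system for the frame coefficients in Euclidean space, and close by a fixed-point iteration around a controllable linearization. First I would choose a smooth orthonormal frame $\{E_1(x),\dots,E_n(x)\}$ of $T\mathcal{N}$ along $\gamma$ (with $n=\dim\mathcal{N}$), taking $E_1 = \gamma'/|\gamma'|$ and $E_2,\dots,E_n$ parallel-transported along $\gamma$; any holonomy of the normal bundle is absorbed into smooth coefficients after a suitable local unwrapping. Using the exponential map of $\mathcal{N}$, I would parametrize maps close to $\gamma$ as $\phi(t,x)=\exp_{\gamma(x)}\!\bigl(u^i(t,x)E_i(x)\bigr)$ for $u=(u^1,\dots,u^n)$ small. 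Since $\gamma$ is a harmonic map, expanding \eqref{eq:wminhomo} turns it into a system
\begin{equation*}
\Box u + \mathcal{L}_\gamma u \;=\; N(x;u,\partial u) + \chi_\omega g,
\end{equation*}
where $\mathcal{L}_\gamma$ is a smooth zero/first-order linear operator encoding the Jacobi operator $R(\cdot,\partial_x\gamma)\partial_x\gamma$ along $\gamma$ and the connection one-forms of the frame, $N$ vanishes quadratically at $(u,\partial u)=0$, and $g\in L^2(\mathbb{T}^1;\mathbb{R}^n)$ is the reduced control, recovered from $\chi_\omega\Pi_T(\phi)f$ by expressing it in the $E_i$ frame (an invertible operation for $u$ small). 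Compactness of $\mathcal{N}$ together with the energy bound on $\gamma$ give uniform $C^k$ bounds on the coefficients of $\mathcal{L}_\gamma$ and on $N$.

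\textbf{Linear controllability.} Second, I would prove exact controllability of the linearized equation $\Box u + \mathcal{L}_\gamma u = \chi_\omega g$ on $[0,T_1]$ in $H^1\times L^2$, uniformly in $\gamma$. By the Hilbert Uniqueness Method this reduces to the dual observability inequality
\begin{equation*}
\int_0^{T_1}\!\!\int_\omega |v_t|^2\,dx\,dt \;\geq\; C(M)\,\|v[0]\|_{H^1\times L^2}^2.
\end{equation*}
For the principal part $\Box$ on $\mathbb{T}^1$ this is classical once $T_1>2\pi$; the lower-order perturbation $\mathcal{L}_\gamma$ contributes only a compact term and is handled by a standard compactness--uniqueness argument, the required unique continuation being supplied by the quantitative propagation of smallness of Lemma \ref{lem:2}. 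The generous choice $T_1=64\pi$ comfortably accommodates both the compactness step (made uniform over the $M$-bounded family of geodesics) and the nonlinear fixed point below.

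\textbf{Nonlinear closing.} Third, the linear theory produces a bounded operator $\mathcal{K}$ taking (source, initial data, final data) in $L^1((0,T_1);L^2)\times\mathcal{H}\times\mathcal{H}$ to a control driving the linearized equation from one to the other in time $T_1$. I would then run a Banach fixed point on a small ball of $X:=C([0,T_1];H^1)\cap C^1([0,T_1];L^2)$: given a candidate $\widetilde u\in X$, define $g_{\widetilde u}:=\mathcal{K}(N(\widetilde u,\partial\widetilde u);u_0,u_1)$ and let $\Phi(\widetilde u)$ be the solution of the linearized equation with source $N(\widetilde u,\partial\widetilde u)$, control $g_{\widetilde u}$, and initial datum $u_0$, so that by construction $\Phi(\widetilde u)[T_1]=u_1$. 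Since $N$ vanishes quadratically at the origin and the energy estimate on $[0,64\pi]$ bounds the $X$-norm by the data and the source in $L^1_tL^2_x$, $\Phi$ contracts on $\{\|u\|_X\leq\eta\}$ once $\eta$ and $\|u_0\|_{\mathcal{H}},\|u_1\|_{\mathcal{H}}$ are sufficiently small in terms of $M$. The fixed point gives the desired $u$, and the original control $f$ is reconstructed from $g$ by inverting the frame and the $\Pi_T(\phi)$ relation, a smooth small perturbation of the $\Pi_T(\gamma)$-projection.

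\textbf{Main obstacle.} The most delicate point, to my mind, is the \emph{uniformity in $\gamma$} of the observability constant $C(M)$: the statement asks that $\delta_1$ depend only on the energy bound $M$, whereas $\mathcal{L}_\gamma$ depends on the individual geodesic. I would settle this by a compactness argument over the set of closed geodesics with $E(\gamma,0)\leq M$: a failure of uniformity would produce sequences $\gamma_k$ and dual solutions $v_k$ with $\|v_k[0]\|_{H^1\times L^2}=1$ but $\int_0^{T_1}\!\int_\omega|v_{k,t}|^2\,dx\,dt\to 0$; extracting limits $\gamma_k\to\gamma_\infty$ (again a closed geodesic by smoothness of the geodesic ODE) and $v_k\to v_\infty$ would yield a nontrivial solution of the limiting dual equation invisible from $\omega$, contradicting Lemma \ref{lem:2}. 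This uniform observability is the one place where the propagation-of-smallness machinery of the paper enters decisively; once it is in hand, the remaining analysis is routine semilinear wave theory.
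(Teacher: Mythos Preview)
Your three-step outline matches the paper's, but the nonlinear closing has a genuine gap: the contraction in $X=C_tH^1_x\cap C^1_tL^2_x$ does not close. The quadratic part of $N$ is (up to smooth coefficients) the null form $\partial_\nu u^j\partial^\nu u^k$; with $u\in X$ one only has $\partial u\in C_tL^2_x$, hence $(\partial u)^2\in C_tL^1_x$ but not $L^1_tL^2_x$, so the estimate $\|N(u)\|_{L^1_tL^2_x}\lesssim\|u\|_X^2$ your fixed point needs is false at energy regularity. The paper works instead in the stronger space $\mathcal{W}_T$ of Definition~\ref{def:WTnorm}, which carries the characteristic norms $\|u_u\|_{L^2_uL^\infty_v}$ and $\|u_v\|_{L^2_vL^\infty_u}$; the null identity $\partial_\nu u\,\partial^\nu w=-2(u_uw_v+u_vw_u)$ then gives $\|N(u)\|_{L^2_{t,x}}\lesssim\|u\|_{\mathcal{W}_T}^2$ (see \eqref{K:p:1}), and the linear solution operator is bounded into $\mathcal{W}_T$ (Lemma~\ref{lem:linea:well:gen}). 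Your scheme is salvageable if you run it in $\mathcal{W}_T$, but the missing ingredient is the null structure, not merely that ``$N$ vanishes quadratically''.

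Two secondary points. First, the observability inequality you wrote, $\int_0^{T_1}\!\int_\omega|v_t|^2\geq C\|v[0]\|_{H^1\times L^2}^2$, is false: the linearized system has a nontrivial steady state coming from the rotation family $\gamma_p$ (Lemma~\ref{lem:selfadjoint} gives $\mathcal{L}_\gamma\gamma_x=0$), on which the left side vanishes. The correct HUM dual for $H^1\times L^2$ control observes the adjoint $\varphi$ at $L^2\times H^{-1}$ regularity (Lemma~\ref{lem:ob:key-lin}), proved in the paper via Proposition~\ref{prop:psl2} rather than Proposition~\ref{lem:2}; a compactness--uniqueness route can be made to work there, but the first-order term ${\bf b}\,\partial_x\varphi$ is not a compact perturbation of $\Box$, so the paper bypasses this with a direct quantitative argument (propagation of smallness plus Lemma~\ref{lem:wave:timegain}). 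Second, one place where your route is arguably cleaner: you treat $g$ as the effective control and invert $\Pi_T(\phi)$ only at the end, whereas the paper keeps the primitive $\alpha$, obtains the coupled right-hand side $\chi_\omega\alpha+\chi_\omega D({\bf w})\alpha$ in \eqref{eq:visystem11}, and---precisely because a fixed point on $({\bf w},\alpha)$ is then awkward---resorts to a tailored iterative correction (Lemma~\ref{lem:iteration}).
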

The equation is a semilinear one subject to geometric constraints. When dealing with a physically localized control force, the force must first be projected onto the tangent spaces to meet these constraints, making it hard to explicitly characterize the evolution of the flow. Consequently, standard techniques such as duality methods and fixed point arguments cannot be directly applied. 

We propose a three-step method, with each step leveraging distinct properties of the system.
\begin{itemize}[leftmargin=1em]
    \item[] {\it Step 1. On the reduction to a semilinear equation without geometric constraint}; Section  \ref{Sec:cont:1} 
    
    We start with the extrinsic formulation  (see  Section \ref{subsec:geometricsetting} and equation \eqref{eq:controlgeneral}), 
\begin{equation}
\Box \phi^i + S^i_{jk}(\phi)\partial_{\alpha}\phi^j \partial^{\alpha}\phi^k =  \chi_{\omega} \left(\Pi_T(\phi)f\right)^i,  \notag
\end{equation}
for every $i = 1, 2,\ldots N$. This is a system with $N$-coupled equations and implicit controls, namely $(\phi^1,...\phi^N)$ and the projected control force.
   Using the moving frame method,  we reformulate the equation from an intrinsic point of view (see  equation \eqref{eq:visystem11})
  \begin{equation}
\begin{split}
    \Box w^p = \sum_{j,k= 1}^R \sum_{\beta= 0}^1
A_{jk}^{p}(x; {\bf{w}})\partial_{\beta} w^j \partial^{\beta} w^k +  \sum_{j= 1}^R  B_{j}^{p}(x; {\bf{w}}) \partial_{x}w^j+ \sum_{j= 1}^R  C_j^{p}(x; {\bf w}) w^j\\
+ \chi_{\omega} \alpha^p(t, x) +  \chi_{\omega} \sum_{j= 1}^R  D_j^p(x; {\bf{w}}) \alpha^j(t, x), \notag
\end{split}
\end{equation}
for every $i = 1, 2,\ldots R$. It suffices to control the latter system, see Proposition \ref{prop:congeo:1st}.  Note that the new system only has $R$-coupled terms and takes value in Euclidean space.  
\vspace{1mm}

    \item[]   {\it Step 2. On the reduction to a linear control problem}; Section \ref{Sec:cont:2}
    
In the precdeding semilinear controlled equation for ${\bf w}$, the control force is not directly given by $\alpha^j$ but is slightly modified. Due to this, the fixed point technique still remains difficult to apply. Instead, we employ an iterative scheme to determine the controlled trajectory. This approach reduces the nonlinear controllability problem to a linear control problem, as demonstrated by equation \eqref{eq:visystem11li}
\begin{equation}\label{eq:int:lin:cont}
\begin{split}
    \Box w^p- \sum_{1\leq j\leq R} B_{0, j}^{p}(x) \partial_{x}w^j- \sum_{1\leq j\leq R} C_{0, j}^{p}(x) w^j= 
\chi_{\omega} \alpha^p, 
\end{split}
\end{equation}
for every $p = 1, 2,\ldots R$. 
It is worth noting that we have taken advantage of the null structure to treat the nonlinear term $A_{jk}^{p}(x; {\bf{w}})\partial_{\beta} w^j \partial^{\beta} w^k$.
\vspace{1mm}

    \item[] {\it Step 3. Controllability of the linearized system}; Section \ref{subsec:con:3}

   In view of the Hilbert Uniqueness Method \cite{Lions}, the controllability of \eqref{eq:int:lin:cont}  is equivalent to the observability problem: for every  $\varphi[0]\in L^2\times H^{-1}(\mathbb{T}^1; \mathbb{R}^R)$ the solution of 
  \begin{equation*}
        \Box \varphi+ {\bf b}(x) \partial_x \varphi+ {\bf c}(x) \varphi= 0 \;  \textrm{ where ${\bf b}(x),{\bf c}(x)\in M_{R\times R}$},
\end{equation*} 
 satisfies
\begin{equation}
    \int_0^T \|\chi_{\omega} \varphi\|_{L^2}^2 \, dt\geq C_0 \|\varphi[0]\|_{L^2\times H^{-1}}^2. \notag
    \end{equation}
 Although the observability has been extensively studied over the past decades, we were unable to find a specific reference to this equation.  Thus we provide a proof based on the quantitative propagation of smallness argument introduced in our earlier work \cite{KX}. 

\end{itemize}

\vspace{2mm}

\noindent \textbf{Third property.} Next, we present the following uniform-time global controllability result.

\begin{prop}\label{pro:controlbetween}
 Let $\mathcal{N}$ and $\omega$ satisfy  \blackhyperref{setting(S)}{$({\bf S})$}.    Let  $T_2= 6\pi$.  Then for any homotopic closed geodesics\footnote{Throughout this paper,  two closed geodesics are said to be homotopic if their maps are homotopic. Note that generally $\gamma(\cdot)$ and $\gamma(2\cdot)$ are  not homotopic.} $\gamma_0, \gamma_1: \mathbb{T}^1\rightarrow \mathcal{N}$,
 there is a control $f\in C([0, T_2]; L^2(\mathbb{T}^1))$ such that the flow associated to \eqref{eq:wminhomo} carries the data $(\gamma_0, 0)$ at time $t = 0$ into $(\gamma_1, 0)$ at time $t = T_2$. 
\end{prop}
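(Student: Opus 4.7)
The plan is to lift the topological datum (a homotopy between $\gamma_0$ and $\gamma_1$) to an actual controlled wave-map trajectory, by mimicking the three-stage reduction already used in Proposition \ref{pro:controlaroundgeodesic} but now anchored at a non-stationary \emph{template} rather than at a single geodesic. Choose any smooth homotopy $H \in C^\infty([0,1] \times \mathbb{T}^1; \mathcal{N})$ with $H(0, \cdot) = \gamma_0$ and $H(1, \cdot) = \gamma_1$ (whose existence follows from the homotopy hypothesis after mollification), together with a smooth reparametrization $\sigma:[0, T_2] \to [0, 1]$ satisfying $\sigma(0) = 0$, $\sigma(T_2) = 1$ and $\sigma^{(k)}(0) = \sigma^{(k)}(T_2) = 0$ for all $k \geq 1$. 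Set the template
\[
\phi_*(t, x) := H(\sigma(t), x),
\]
so that $\phi_*[0] = (\gamma_0, 0)$ and $\phi_*[T_2] = (\gamma_1, 0)$. Plugging this into the geometric wave operator produces a smooth tangential defect
\[
F(t, x) := \Box \phi_* - \Pi(\phi_*)(\partial_\nu \phi_*, \partial^\nu \phi_*) \in T_{\phi_*}\mathcal{N},
\]
and $\phi_*$ already solves \eqref{eq:wminhomo} with a \emph{non-localized} virtual control $F$.

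The problem thereby reduces to redistributing $F$ into a control supported in $\omega$ that steers the exact wave-map equation from $(\gamma_0, 0)$ to $(\gamma_1, 0)$. Writing $\phi$ in an intrinsic moving frame along $\phi_*$ as in Section \ref{Sec:4} and repeating Steps 1--2 of that reduction, the local coordinates ${\bf w} \in \mathbb{R}^R$ of $\phi$ relative to $\phi_*$ satisfy a system of the form \eqref{eq:visystem11} whose coefficients now depend on $(t, x)$ through the template (rather than only on $x$ as along a geodesic), plus an inhomogeneous term encoding $-F$. The requirement $\phi[0] = (\gamma_0, 0)$, $\phi[T_2] = (\gamma_1, 0)$ becomes ${\bf w}[0] = {\bf w}[T_2] = 0$. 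Applying the iteration scheme of Section \ref{Sec:cont:2}, it suffices to prove null controllability with an arbitrary $L^2$ source for the matrix wave equation
\[
\Box {\bf w} - {\bf b}(t, x)\,\partial_x {\bf w} - {\bf c}(t, x)\,{\bf w} = -F(t, x) + \chi_\omega \alpha(t, x),
\]
with control $\alpha$ supported in $\omega$. By the Hilbert Uniqueness Method this reduces to an observability inequality for the backward adjoint problem on $[0, T_2] \times \mathbb{T}^1$, in the same spirit as Section \ref{subsec:con:3}. The value $T_2 = 6\pi = 3 \cdot 2\pi$ is precisely tuned so that characteristics cross the circle enough times to register the full energy through $\omega$ for the required observability.

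The main obstacle I expect lies in the time-dependent, globally large coefficients ${\bf b}(t, x)$, ${\bf c}(t, x)$ generated by differentiating $H$: unlike in Proposition \ref{pro:controlaroundgeodesic}, one cannot assume smallness of these coefficients, so the observability constant and the fixed-point contraction radius must be controlled purely in terms of geometric features of $H$. This should be possible because the quantitative propagation of smallness from \cite{KX} depends only on a finite number of derivatives of the coefficients, not on their size. A secondary subtlety is keeping the iterate $\phi$ inside a tubular neighbourhood of $\mathcal{N}$ so that the frame decomposition remains well defined; this is arranged by choosing $H$ smooth enough and $\sigma$ flat enough at the endpoints that $F$, and hence ${\bf w}$, stay genuinely small in the relevant norm throughout the iteration, independently of the sizes of $\gamma_0$ and $\gamma_1$.
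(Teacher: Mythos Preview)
Your proposal has a genuine gap. The defect $F$ you introduce is \emph{not} small in general and cannot be made small by any choice of $\sigma$ or by smoothing $H$. Compute
\[
F(t,x)= -\sigma''(t)H_s-(\sigma')^2 H_{ss} + \big(H_{xx}-\Pi(H)(H_x,H_x)\big) + (\sigma')^2\Pi(H)(H_s,H_s).
\]
The spatial piece $H_{xx}-\Pi(H)(H_x,H_x)$ is the geodesic defect of the curve $H(\sigma(t),\cdot)$; it vanishes only when that curve is a geodesic, which is false for a generic intermediate $s$, and its size is fixed by the homotopy, completely independent of $\sigma$. Hence the source $-F$ in your reduced equation for ${\bf w}$ is of order one, and so is the first iterate ${\bf w}_0$. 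At that point the nonlinear term $A^p_{jk}(x;{\bf w})\partial_\beta w^j\partial^\beta w^k$ is no longer perturbative, the moving-frame representation $\phi=\phi_*+\sum w^p{\bf f}_p+{\bf n}(x;{\bf w})$ with $|{\bf n}|\lesssim |{\bf w}|^2$ breaks down, and the contraction in Section~\ref{Sec:cont:2} (which needs $\|{\bf u}[0]\|_{\mathcal H}$ below an explicit threshold $\tilde\varepsilon_T$) fails. Your remark that propagation of smallness depends only on finitely many derivatives of the coefficients addresses only the \emph{linear} observability step; it says nothing about closing the nonlinear iteration with large data.

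The paper's proof avoids this entirely by never linearizing around a large trajectory. The key observation is Lemma~\ref{wave-closed-geodesic}: a wave map constrained to lie on a one-dimensional geodesic $\bar\gamma$ (closed or complete) satisfies, after the change of variable $\phi=\gamma(\varphi)$, the \emph{linear} wave equation $\Box\varphi=\chi_\omega f_0$, because the geodesic equation kills the nonlinearity exactly. Global (not local) control for that linear problem is classical. The paper then decomposes the passage $\gamma_0\to\gamma_1$ into five explicit moves---concentrate mass at a point on $\gamma_0$, hop onto a connecting geodesic $\Gamma$, transport the point along $\Gamma$ to $\gamma_1$, and spread out on $\gamma_1$---each of which is either a linear control problem on a geodesic or a gluing step supported in $\omega$ (Lemma~\ref{lem:glue}). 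This is the missing idea: trade the unmanageable nonlinear perturbation problem for a sequence of intrinsically linear ones on one-dimensional geodesic submanifolds.
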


\begin{remark}
Note that the steady states of the free wave maps equation are closed geodesics. The result can be immediately improved to the set of controlled steady states for the wave maps, namely, the set
\begin{equation*}
    \mathcal{S}:= \{(\phi, 0); \Delta \phi  - \Pi(\phi)\left(\partial_{x}\phi, \partial_{x}\phi\right)= \chi_{\omega}f\}.
\end{equation*}
Thus, for every two states $(\phi_1, 0), (\phi_2, 0)\in \mathcal{S}$ that are homotopic, there is a control such that the flow associated to \eqref{eq:wminhomo} carries the data $(\phi_0, 0)$ at time $t = 0$ into $(\phi_1, 0)$ at time $t = 6\pi$.
\end{remark}

We emphasize that in Proposition \ref{pro:controlbetween}, the control time $T_2$ is {\it independent} of the state’s scale.  Roughly speaking, looking at Figure \ref{fig:movethreegeo}, our goal is  to move the state from $\gamma_0$ to $\gamma_1$. Note that these two states may lie far apart from each other, and the topology of the maps $\gamma_0, \gamma_1: \mathbb{T}^1\rightarrow \mathcal{N}$ can be highly nontrivial. 

\begin{figure}[H]
    \centering
    \includegraphics[width=0.8\linewidth]{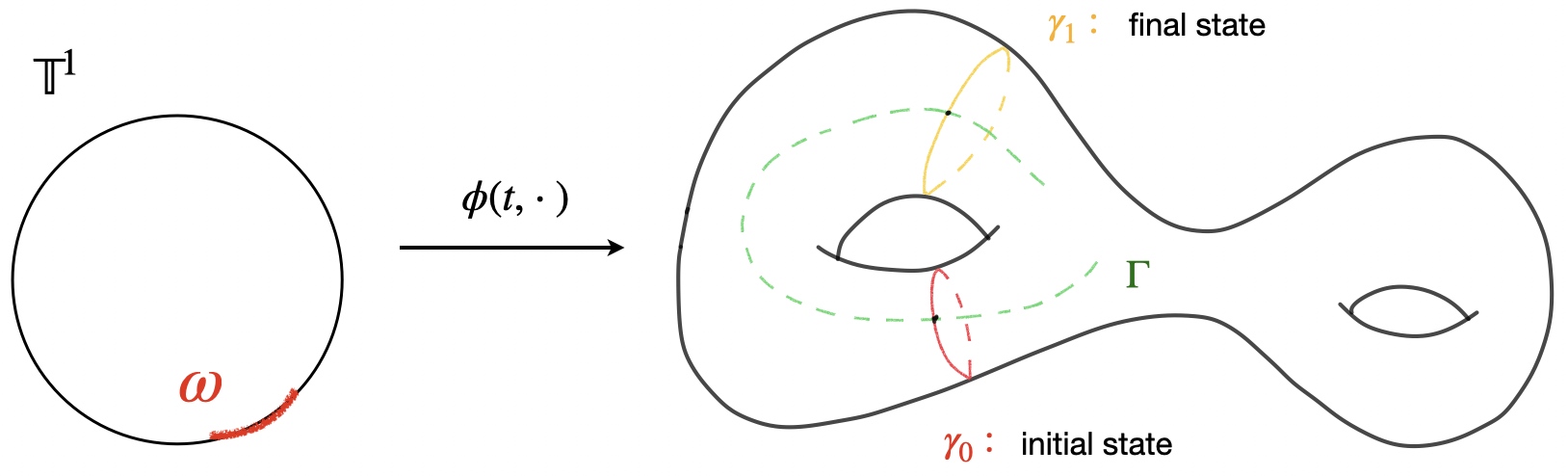}
    \caption{Uniform-time global controllability  between two homotopic steady states. Here $\gamma_0$ and $\gamma_1$ are two closed geodesics.}
    \label{fig:movethreegeo}
\end{figure}

Small-time or uniform-time global controllability remain challenging in nonlinear control theory and there are  many important and longstanding open problems:
\begin{op}[Lions, \cite{Lions-Pb}]
   Small-time global controllability of  Navier–Stokes equations with Dirichlet boundary control.
\end{op}
\begin{op}[Coron, \cite{Coron-Pb}]
   Small-time global controllability between steady states of  nonlinear heat equations, for example,
   \begin{equation*} 
 u_t - u_{xx} - u^3 = \chi_{\omega} f \textrm{ on } \mathbb{T}^1.  
 \end{equation*} 
\end{op}
\begin{op}[Dehman--Lebeau--Zuazua, \cite{Dehman-Lebeau-Zuazua}]
   Uniform-time global controllability of  semilinear wave equations, for example,
   \begin{equation*} 
\partial_t^2 u- \Delta u+ u^3= \chi_{\omega} f.
 \end{equation*} 
\end{op}
By contrast to the first two problems, the last problem on wave equations requires a uniform controlling time instead of a small time. This is due to the finite speed of propagation.  So far the best result on Navier–Stokes equations is given by \cite{Coron-Marbach-Sueur-Zhang}, but this problem is still widely open. 
In the recent work by two of the authors \cite{CX-2024}, the problem on nonlinear heat equations is solved for the harmonic map heat flow case, which is a coupled semilinear heat equation with geometric constraints:
\begin{equation*} 
u_t - u_{xx} + \Pi(u)(u_x, u_x) = \chi_{\omega} f.
\end{equation*}
\vspace{1mm}

Proposition \ref{pro:controlbetween} is a special geometric case of the last open problem concerning wave equations limited to steady states. 
We solve this problem by applying the approach developed in \cite{CX-2024}. This proof hinges on the following key geometric observations; see Section \ref{Sec:gc} for details.
\begin{itemize}[leftmargin=2em]
\item[\tiny$\bullet$]  Exact controllability on closed geodescis and non-closed complete geodesics. 
\item[\tiny$\bullet$]  Mass transport along non-closed complete geodesics.
\item[\tiny$\bullet$]  A gluing strategy: controlling the inner equation and matching it with the outer equation. 
\end{itemize}

\begin{figure}[H]
\centering
\includegraphics[width=1.0\textwidth]{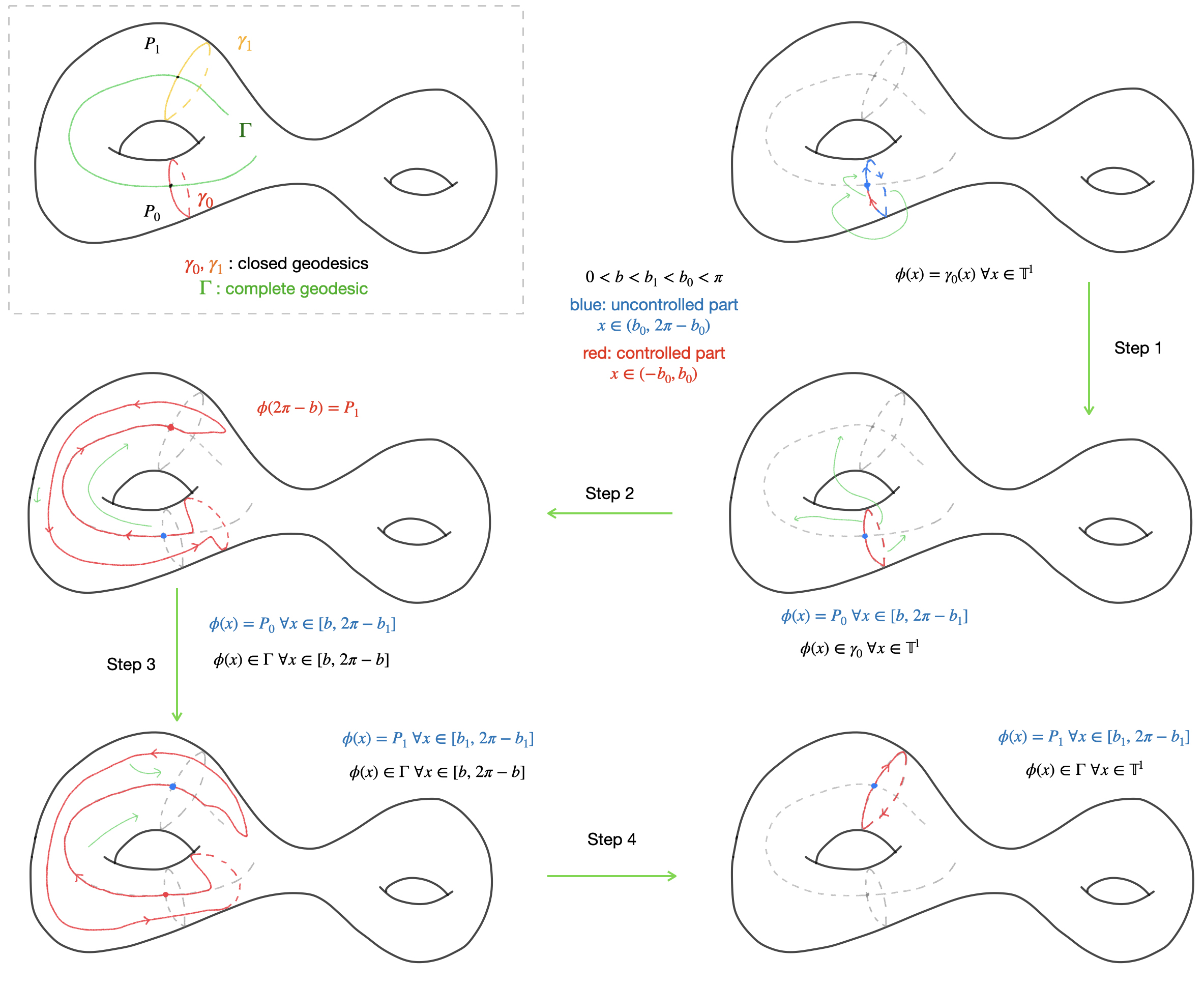}
\caption{The strategy for the uniform-time global controllability between two homotopic closed geodesics.}\label{fig:glo:control:geodesic}
\end{figure}

\vspace{2mm}

\noindent{\bf On the construction of the return trajectory.}

We first establish the following local controllability around any given {\it non-steady}  states.  

\begin{prop}\label{pro:controlaroundcurve}
 Let $\mathcal{N}$ and $\omega$ satisfy  \blackhyperref{setting(S)}{$({\bf S})$}. Let $M>0$. Then there exist $T_3= T_3(M)>0$ and $\delta_3= \delta_3(M)>0$ such that  for any given inital state  $(\phi_0, \phi_{0t})\in \mathcal{H}(\mathbb{T}^1; \mathcal{N})$ with energy smaller than $M$, and any pair of data $
(\phi_1, \phi_{1t}), (\phi_2, \phi_{2t})\in \mathcal{H}(\mathbb{T}^1; \mathcal{N})$
 satisfying 
 \begin{gather*}
      \big\|(\phi_1, \phi_{1t}) -(\phi_0, \phi_{0t}))\big\|_{\mathcal{H}} +  \big\|(\phi_2, \phi_{2t}) - ( \phi_0,  \phi_{0t})\big\|_{\mathcal{H}}<\delta_3,
 \end{gather*}
 there is a control $f\in C([0, T_3]; L^2(\mathbb{T}^1))$ such that the flow associated to \eqref{eq:wminhomo} carries the data $(\phi_1, \phi_{1t})$ at time $t = 0$ into $(\phi_2, \phi_{2t})$ at time $t = T_3$. 
\end{prop}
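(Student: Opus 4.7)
The plan is to apply the \emph{return method}, assembling a controlled reference trajectory from $(\phi_1, \phi_{1t})$ to $(\phi_2, \phi_{2t})$ by concatenating several stages that invoke the three intermediate results already established: Theorem \ref{thm:dynamics}, Proposition \ref{pro:controlaroundgeodesic}, and Proposition \ref{pro:controlbetween}. The key setup observation is that the controlled equation \eqref{eq:wminhomo} can \emph{simulate} the damped equation \eqref{eq:dwm:ori} by choosing $f = a(x)\partial_t\phi$, since $\partial_t\phi$ is automatically tangent to $\mathcal{N}$ so $\Pi_T$ acts trivially and $\chi_\omega a = a$; and it can likewise simulate the anti-damped (time-reversed) equation $\Box\phi - \Pi(\phi)(\partial_\nu\phi,\partial^\nu\phi) = -a(x)\partial_t\phi$ by choosing $f = -a(x)\partial_t\phi$.

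Fix $\delta_3$ small enough that every state within $\delta_3$ of $(\phi_0,\phi_{0t})$ has energy at most $2M$ and is homotopic to $\phi_0$. Let $\delta_1 = \delta_1(2M)$ be the parameter furnished by Proposition \ref{pro:controlaroundgeodesic}, let $T_c = T_c(2M, \delta_1/2)$ be the time furnished by Theorem \ref{thm:dynamics}, and set
\[
T_3 := 2T_c + 2T_1 + T_2.
\]
Running the simulated damped flow from $(\phi_1,\phi_{1t})$, Theorem \ref{thm:dynamics} produces a time $t_1 \in [0, T_c]$ and a closed geodesic $\gamma_1$ with $\|\phi[t_1] - (\gamma_1, 0)\|_{\mathcal{H}} < \delta_1/2$; similarly, starting from $(\phi_2,\phi_{2t})$ we obtain $t_2 \in [0, T_c]$, a geodesic $\gamma_2$, and the auxiliary trajectory $\psi_2$. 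Since $\phi_1$ and $\phi_2$ are homotopic to $\phi_0$ and the (damped) flow in $\mathcal{H}$ depends continuously on time, the geodesics $\gamma_1$ and $\gamma_2$ are mutually homotopic, which is exactly what Proposition \ref{pro:controlbetween} requires.

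The controlled trajectory is assembled on seven subintervals of $[0, T_3]$ whose lengths sum to $T_3$ independently of $t_1, t_2$: (a) on $[0, t_1]$, simulated damped flow from $(\phi_1,\phi_{1t})$, landing within $\delta_1/2$ of $(\gamma_1,0)$; (b) on $[t_1, t_1+T_1]$, Proposition \ref{pro:controlaroundgeodesic} steers from $\phi[t_1]$ to $(\gamma_1,0)$ exactly; (c) on $[t_1+T_1, T_c+T_1]$, hold at $(\gamma_1,0)$ with $f=0$, since harmonic maps are steady states; (d) on $[T_c+T_1, T_c+T_1+T_2]$, Proposition \ref{pro:controlbetween} steers $(\gamma_1,0)$ to $(\gamma_2,0)$; (e) on $[T_c+T_1+T_2, 2T_c+T_1+T_2-t_2]$, hold at $(\gamma_2,0)$; (f) on $[2T_c+T_1+T_2-t_2, 2T_c+2T_1+T_2-t_2]$, Proposition \ref{pro:controlaroundgeodesic} steers $(\gamma_2,0)$ to $(\psi_2(t_2), -\partial_t\psi_2(t_2))$, the time-reversal of $\psi_2[t_2]$, which remains within $\delta_1/2$ of $(\gamma_2,0)$; and (g) on $[2T_c+2T_1+T_2-t_2, T_3]$, simulated anti-damped flow for duration $t_2$, arriving exactly at $(\phi_2,\phi_{2t})$ by time-reversibility.

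The construction is largely an assembly of the three prior tools; the conceptual novelty is the observation that the control can simulate both the damped and the anti-damped equations, enabling dissipation to be used at \emph{both} temporal ends of the trajectory and converting each non-steady endpoint into a problem near a geodesic. The main technical point to verify is uniformity: $\delta_1(2M)$ and $T_c(2M, \cdot)$ must be chosen a priori, so every intermediate state satisfies the energy bound required by Propositions \ref{pro:controlaroundgeodesic}--\ref{pro:controlbetween}; homotopy class must be preserved through continuous deformation (which holds since the simulated damped/anti-damped flows and the small controls of Proposition \ref{pro:controlaroundgeodesic} each deform the map continuously in $H^1$); and the time-reversed state entering stage (g) must indeed lie in the $\delta_1$-neighborhood of $(\gamma_2,0)$, which is immediate from the bound on $\psi_2[t_2]$ since time-reversal changes only the sign of $\phi_t$.
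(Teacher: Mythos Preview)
Your architecture is sound and closely mirrors the ``special trajectory'' construction the paper uses in Section~\ref{subsec:globalcontrol:2} for Theorem~\ref{thm1}, rather than the paper's own proof of Proposition~\ref{pro:controlaroundcurve} in Section~\ref{subsec:globalcontrol:1}. However, stage~(g) contains a sign error that breaks the construction. If $\psi_2$ is the damped flow from $(\phi_2,\phi_{2t})$, then the time-reflected curve $\chi(s,x):=\psi_2(t_2-s,x)$ solves the anti-damped equation with $\chi[0]=(\psi_2(t_2),-\partial_t\psi_2(t_2))$ and $\chi[t_2]=(\psi_2(0),-\partial_t\psi_2(0))=(\phi_2,\,{-}\phi_{2t})$, not $(\phi_2,\phi_{2t})$. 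The remedy is to run the auxiliary damped trajectory $\psi_2$ from the reflected final state $(\phi_2,-\phi_{2t})$ instead; this state has the same energy and the same spatial component (hence the same homotopy class), so all of your uniformity and homotopy checks survive unchanged, and the anti-damped flow in stage~(g) then lands exactly at $(\phi_2,\phi_{2t})$. This is precisely what the paper does in Section~\ref{subsec:globalcontrol:2}.

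With that fix your argument works, but it differs from the paper's proof of Proposition~\ref{pro:controlaroundcurve}. The paper does \emph{not} invoke Proposition~\ref{pro:controlbetween} here. Instead it fixes the geodesic $\gamma$ and the damping time $T_0$ using the reference state $(\phi_0,\phi_{0t})$ alone, and relies on continuous dependence (Lemma~\ref{lem-conti-dep-inh}) to guarantee that any nearby initial datum $(\phi_i,\phi_{it})$, after the \emph{same} damping time $T_0$, is still within $\delta_1$ of the \emph{same} $(\gamma,0)$. Both endpoints are thus steered to a common geodesic, and no bridging between distinct geodesics is required. Your route avoids the continuous-dependence step at the price of importing Proposition~\ref{pro:controlbetween}; the paper's route uses only Theorem~\ref{thm:dynamics} and Proposition~\ref{pro:controlaroundgeodesic}, keeping the proof of Proposition~\ref{pro:controlaroundcurve} independent of Proposition~\ref{pro:controlbetween}.
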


To establish local controllability around the given state $(\phi_0, \phi_{0t})$, a natural approach is to analyze the linearized equation along a given trajectory. Due to the finite speed of propagation, it is necessary to assume that the time period $T$ be larger than $2\pi$. However, the usual approach runs into two issues:
\begin{itemize}[leftmargin=2em]
    \item[\tiny$\bullet$] Typically, controllability is studied along the free trajectory $\bar \phi$, meaning without any control input. However, in this setting, the final state $\bar\phi[T]$ often differs, or may even be far removed, from the initial state $\bar{\phi}[0]$. 
    \item[\tiny$\bullet$] The trajectory is not explicitly known, and the system is subject to geometric constraints. Consequently, expressing local controllability around non-trivial trajectories of the geometric equation is particularly complicated.
\end{itemize}

To overcome these difficulties, we adapt the return method. The heuristic idea is that when the linearized system around a given state is not controllable, one can construct a trajectory where both the initial and final states coincide with this given state, thereby achieving local controllability around the constructed trajectory. 

In our setting, the construction of a return trajectory relies on the above introduced three intermediate properties, and the time-reversibility of the wave maps equation. See  Figure \ref{returntrajectory}.

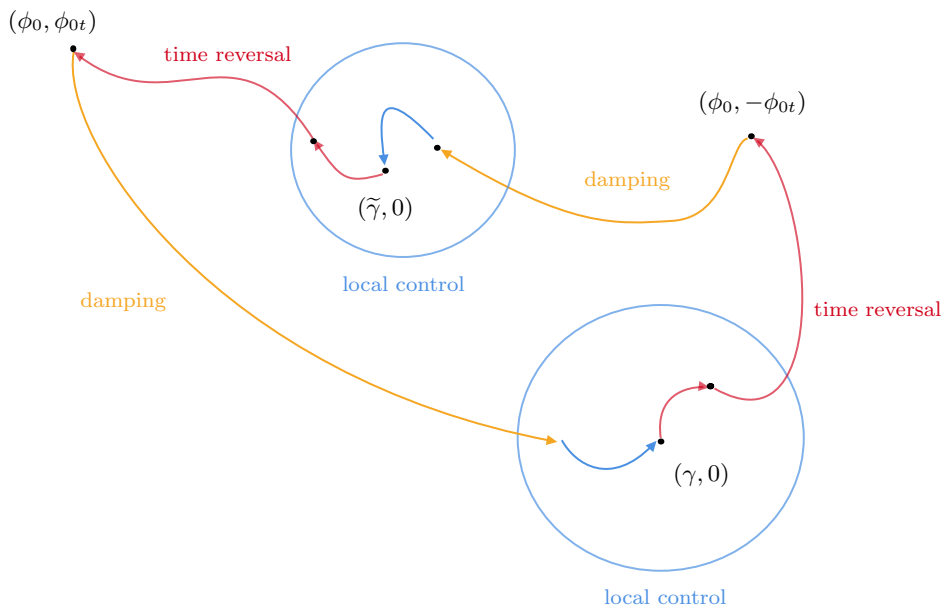
\begin{figure}[H]
    \centering

\tikzset{every picture/.style={line width=0.75pt}} 

\begin{tikzpicture}[x=0.75pt,y=0.75pt,yscale=-1,xscale=1]

\draw  [color={rgb, 255:red, 74; green, 144; blue, 226 }  ,draw opacity=0.7 ] (283.26,229.87) .. controls (283.26,192.96) and (315.23,163.03) .. (354.67,163.03) .. controls (394.1,163.03) and (426.07,192.96) .. (426.07,229.87) .. controls (426.07,266.78) and (394.1,296.7) .. (354.67,296.7) .. controls (315.23,296.7) and (283.26,266.78) .. (283.26,229.87) -- cycle ;
\draw [color={rgb, 255:red, 74; green, 144; blue, 226 }  ,draw opacity=1 ]   (305.3,231.01) .. controls (311.9,243.19) and (329.69,255.18) .. (350.62,233.45) ;
\draw [shift={(352.57,231.34)}, rotate = 131.53] [fill={rgb, 255:red, 74; green, 144; blue, 226 }  ,fill opacity=1 ][line width=0.08]  [draw opacity=0] (5.36,-2.57) -- (0,0) -- (5.36,2.57) -- cycle    ;
\draw [color={rgb, 255:red, 208; green, 2; blue, 27 }  ,draw opacity=0.65 ]   (354.67,229.87) .. controls (351.77,208.11) and (367.45,204.25) .. (376.03,204.26) ;
\draw [shift={(378.95,204.43)}, rotate = 187.11] [fill={rgb, 255:red, 208; green, 2; blue, 27 }  ,fill opacity=0.65 ][line width=0.08]  [draw opacity=0] (5.36,-2.57) -- (0,0) -- (5.36,2.57) -- cycle    ;
\draw  [line width=2.25] [line join = round][line cap = round] (355.02,231.69) .. controls (354.89,231.69) and (354.76,231.69) .. (354.63,231.69) ;
\draw  [line width=2.25] [line join = round][line cap = round] (379.5,203.89) .. controls (380.25,203.89) and (379.85,203.89) .. (379.1,203.89) ;
\draw  [line width=2.25] [line join = round][line cap = round] (399.74,78.15) .. controls (399.87,78.15) and (400,78.15) .. (400.14,78.15) ;
\draw  [line width=2.25] [line join = round][line cap = round] (305.14,231.86) .. controls (305.14,231.86) and (305.14,231.86) .. (305.14,231.86) ;
\draw [color={rgb, 255:red, 245; green, 166; blue, 35 }  ,draw opacity=1 ]   (61.67,35.68) .. controls (53.8,79.75) and (133.12,199.68) .. (300.52,231.42) ;
\draw [shift={(303.05,231.89)}, rotate = 190.39] [fill={rgb, 255:red, 245; green, 166; blue, 35 }  ,fill opacity=1 ][line width=0.08]  [draw opacity=0] (5.36,-2.57) -- (0,0) -- (5.36,2.57) -- cycle    ;
\draw [color={rgb, 255:red, 208; green, 2; blue, 27 }  ,draw opacity=0.65 ]   (381.49,205.01) .. controls (442.63,239.72) and (429.9,103.65) .. (402.88,80.41) ;
\draw [shift={(400.77,78.84)}, rotate = 33.85] [fill={rgb, 255:red, 208; green, 2; blue, 27 }  ,fill opacity=0.65 ][line width=0.08]  [draw opacity=0] (5.36,-2.57) -- (0,0) -- (5.36,2.57) -- cycle    ;
\draw  [line width=2.25] [line join = round][line cap = round] (61.43,33.85) .. controls (61.43,34.03) and (61.43,34.21) .. (61.43,34.38) ;
\draw [color={rgb, 255:red, 245; green, 166; blue, 35 }  ,draw opacity=1 ]   (247.74,86.59) .. controls (310.23,124.63) and (331.9,122.45) .. (360.34,120.71) .. controls (389.21,118.93) and (389.04,80.9) .. (398.08,79.29) ;
\draw [shift={(244.86,84.82)}, rotate = 31.54] [fill={rgb, 255:red, 245; green, 166; blue, 35 }  ,fill opacity=1 ][line width=0.08]  [draw opacity=0] (5.36,-2.57) -- (0,0) -- (5.36,2.57) -- cycle    ;
\draw  [color={rgb, 255:red, 74; green, 144; blue, 226 }  ,draw opacity=0.7 ] (170.16,85.1) .. controls (170.16,55.41) and (195.16,31.34) .. (226.01,31.34) .. controls (256.85,31.34) and (281.85,55.41) .. (281.85,85.1) .. controls (281.85,114.8) and (256.85,138.87) .. (226.01,138.87) .. controls (195.16,138.87) and (170.16,114.8) .. (170.16,85.1) -- cycle ;
\draw  [line width=2.25] [line join = round][line cap = round] (217.48,95.47) .. controls (217.35,95.47) and (217.22,95.47) .. (217.09,95.47) ;
\draw  [line width=2.25] [line join = round][line cap = round] (243.34,83.95) .. controls (243.21,83.95) and (243.08,83.95) .. (242.94,83.95) ;
\draw [color={rgb, 255:red, 74; green, 144; blue, 226 }  ,draw opacity=1 ]   (216.54,89.81) .. controls (214.08,68.74) and (212.38,49.47) .. (241.21,79.51) ;
\draw [shift={(216.89,92.8)}, rotate = 263.36] [fill={rgb, 255:red, 74; green, 144; blue, 226 }  ,fill opacity=1 ][line width=0.08]  [draw opacity=0] (5.36,-2.57) -- (0,0) -- (5.36,2.57) -- cycle    ;
\draw [color={rgb, 255:red, 208; green, 2; blue, 27 }  ,draw opacity=0.65 ]   (183.1,82.32) .. controls (195.02,99.71) and (197.89,102.3) .. (215.99,97.23) ;
\draw [shift={(181.35,79.76)}, rotate = 55.68] [fill={rgb, 255:red, 208; green, 2; blue, 27 }  ,fill opacity=0.65 ][line width=0.08]  [draw opacity=0] (5.36,-2.57) -- (0,0) -- (5.36,2.57) -- cycle    ;
\draw [color={rgb, 255:red, 208; green, 2; blue, 27 }  ,draw opacity=0.65 ]   (64.59,37.98) .. controls (111.83,73.64) and (142.34,17.76) .. (181.35,79.76) ;
\draw [shift={(61.67,35.68)}, rotate = 33.85] [fill={rgb, 255:red, 208; green, 2; blue, 27 }  ,fill opacity=0.65 ][line width=0.08]  [draw opacity=0] (5.36,-2.57) -- (0,0) -- (5.36,2.57) -- cycle    ;
\draw  [line width=2.25] [line join = round][line cap = round] (181.56,80.63) .. controls (181.43,80.63) and (181.3,80.63) .. (181.17,80.63) ;

\draw (27.24,12.97) node [anchor=north west][inner sep=0.75pt]  [font=\footnotesize]  {$( \phi _{0} ,\phi _{0t})$};
\draw (110.12,161.11) node  [font=\scriptsize] [align=left] {\begin{minipage}[lt]{67.7pt}\setlength\topsep{0pt}
\textcolor[rgb]{0.96,0.65,0.14}{damping}
\end{minipage}};
\draw (361.69,100.4) node  [font=\scriptsize] [align=left] {\begin{minipage}[lt]{67.7pt}\setlength\topsep{0pt}
\textcolor[rgb]{0.96,0.65,0.14}{damping}
\end{minipage}};
\draw (151.81,36.75) node  [font=\scriptsize] [align=left] {\begin{minipage}[lt]{67.7pt}\setlength\topsep{0pt}
\textcolor[rgb]{0.82,0.01,0.11}{time reversal}
\end{minipage}};
\draw (476.3,164.69) node  [font=\scriptsize] [align=left] {\begin{minipage}[lt]{67.7pt}\setlength\topsep{0pt}
\textcolor[rgb]{0.82,0.01,0.11}{time reversal}
\end{minipage}};
\draw (392.82,309.78) node  [font=\scriptsize] [align=left] {\begin{minipage}[lt]{99.58pt}\setlength\topsep{0pt}
\textcolor[rgb]{0.29,0.56,0.89}{local control}
\end{minipage}};
\draw (262.13,152.4) node  [font=\scriptsize] [align=left] {\begin{minipage}[lt]{99.58pt}\setlength\topsep{0pt}
\textcolor[rgb]{0.29,0.56,0.89}{local control}
\end{minipage}};
\draw (371.84,53.98) node [anchor=north west][inner sep=0.75pt]  [font=\footnotesize]  {$( \phi _{0} ,-\phi _{0t})$};
\draw (359.32,241.01) node [anchor=north west][inner sep=0.75pt]  [font=\footnotesize]  {$( \gamma ,0)$};
\draw (201.59,107.02) node [anchor=north west][inner sep=0.75pt]  [font=\footnotesize]  {$\left(\tilde{\gamma } ,0\right)$};

\end{tikzpicture}
\caption{ A well-designed return trajectory.  $\gamma$ and $\tilde \gamma$ are closed geodesics. States in the blue ball are approximate closed geodesics. During the orange stage, we set the control as localized damping to use Theorem \ref{thm:dynamics}; when the state is close to a closed geodesic, we apply Proposition \ref{pro:controlaroundgeodesic} during the blue stage. }\label{returntrajectory}
   
\end{figure}

\vspace{2mm}

To obtain Theorem \ref{thm1} with uniform control time from Proposition \ref{pro:controlaroundcurve}, one can rely on  compactness arguments.  
Next, we provide a direct proof of Theorem \ref{thm1}. This proof avoids continuous deformation and compactness arguments, making it more efficient.
Again, it is based on the idea of the return method, namely on the construction of a special trajectory that connects a given initial state $(\phi_0, \phi_{0t})$ and final state $(\phi_1,  \phi_{1t})$ such that the system is controllable around this trajectory. This construction highly relies on the geometric feature of the system, namely Proposition \ref{pro:controlbetween}. See Figure \ref{fig:spe:traject}.
\vspace{2mm}

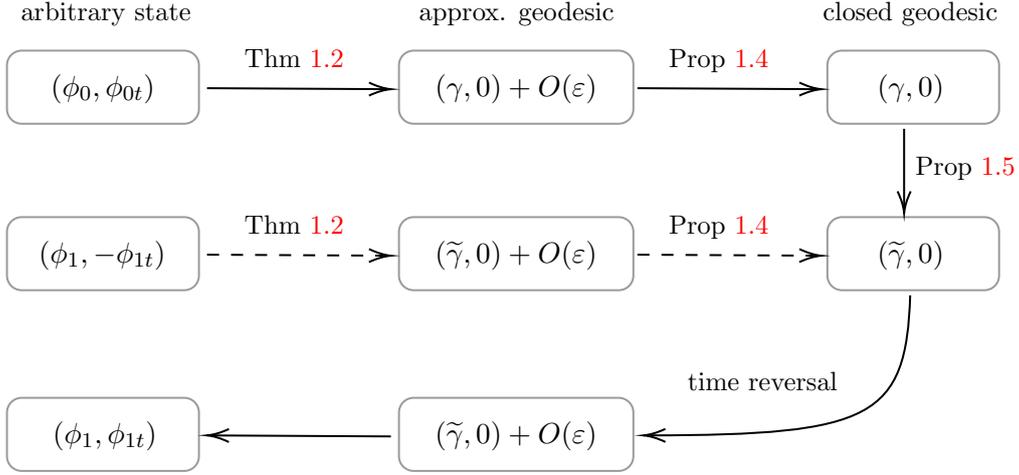
\begin{figure}[H]
    \centering

\tikzset{every picture/.style={line width=0.75pt}} 

\begin{tikzpicture}[x=0.75pt,y=0.75pt,yscale=-1,xscale=1]

\draw  [color={rgb, 255:red, 155; green, 155; blue, 155 }  ,draw opacity=1 ] (35,38.44) .. controls (35,34.38) and (38.29,31.09) .. (42.35,31.09) -- (123.46,31.09) .. controls (127.53,31.09) and (130.82,34.38) .. (130.82,38.44) -- (130.82,60.5) .. controls (130.82,64.56) and (127.53,67.86) .. (123.46,67.86) -- (42.35,67.86) .. controls (38.29,67.86) and (35,64.56) .. (35,60.5) -- cycle ;
\draw  [color={rgb, 255:red, 155; green, 155; blue, 155 }  ,draw opacity=1 ] (230.67,38.44) .. controls (230.67,34.38) and (233.96,31.09) .. (238.02,31.09) -- (340.32,31.09) .. controls (344.38,31.09) and (347.67,34.38) .. (347.67,38.44) -- (347.67,60.5) .. controls (347.67,64.56) and (344.38,67.86) .. (340.32,67.86) -- (238.02,67.86) .. controls (233.96,67.86) and (230.67,64.56) .. (230.67,60.5) -- cycle ;
\draw  [color={rgb, 255:red, 155; green, 155; blue, 155 }  ,draw opacity=1 ] (444.5,38.44) .. controls (444.5,34.38) and (447.79,31.09) .. (451.85,31.09) -- (522.88,31.09) .. controls (526.94,31.09) and (530.23,34.38) .. (530.23,38.44) -- (530.23,60.5) .. controls (530.23,64.56) and (526.94,67.86) .. (522.88,67.86) -- (451.85,67.86) .. controls (447.79,67.86) and (444.5,64.56) .. (444.5,60.5) -- cycle ;
\draw    (134.85,49.98) -- (224.64,50.48) ;
\draw [shift={(226.64,50.49)}, rotate = 180.32] [color={rgb, 255:red, 0; green, 0; blue, 0 }  ][line width=0.75]    (10.93,-3.29) .. controls (6.95,-1.4) and (3.31,-0.3) .. (0,0) .. controls (3.31,0.3) and (6.95,1.4) .. (10.93,3.29)   ;
\draw    (349.69,49.98) -- (439.47,50.48) ;
\draw [shift={(441.47,50.49)}, rotate = 180.32] [color={rgb, 255:red, 0; green, 0; blue, 0 }  ][line width=0.75]    (10.93,-3.29) .. controls (6.95,-1.4) and (3.31,-0.3) .. (0,0) .. controls (3.31,0.3) and (6.95,1.4) .. (10.93,3.29)   ;
\draw  [color={rgb, 255:red, 155; green, 155; blue, 155 }  ,draw opacity=1 ] (35,122.19) .. controls (35,118.13) and (38.29,114.84) .. (42.35,114.84) -- (123.46,114.84) .. controls (127.53,114.84) and (130.82,118.13) .. (130.82,122.19) -- (130.82,144.25) .. controls (130.82,148.31) and (127.53,151.6) .. (123.46,151.6) -- (42.35,151.6) .. controls (38.29,151.6) and (35,148.31) .. (35,144.25) -- cycle ;
\draw  [color={rgb, 255:red, 155; green, 155; blue, 155 }  ,draw opacity=1 ] (230.67,122.19) .. controls (230.67,118.13) and (233.96,114.84) .. (238.02,114.84) -- (340.32,114.84) .. controls (344.38,114.84) and (347.67,118.13) .. (347.67,122.19) -- (347.67,144.25) .. controls (347.67,148.31) and (344.38,151.6) .. (340.32,151.6) -- (238.02,151.6) .. controls (233.96,151.6) and (230.67,148.31) .. (230.67,144.25) -- cycle ;
\draw  [color={rgb, 255:red, 155; green, 155; blue, 155 }  ,draw opacity=1 ] (444.5,122.19) .. controls (444.5,118.13) and (447.79,114.84) .. (451.85,114.84) -- (522.88,114.84) .. controls (526.94,114.84) and (530.23,118.13) .. (530.23,122.19) -- (530.23,144.25) .. controls (530.23,148.31) and (526.94,151.6) .. (522.88,151.6) -- (451.85,151.6) .. controls (447.79,151.6) and (444.5,148.31) .. (444.5,144.25) -- cycle ;
\draw  [dash pattern={on 4.5pt off 4.5pt}]  (134.85,133.73) -- (224.64,134.23) ;
\draw [shift={(226.64,134.24)}, rotate = 180.32] [color={rgb, 255:red, 0; green, 0; blue, 0 }  ][line width=0.75]    (10.93,-3.29) .. controls (6.95,-1.4) and (3.31,-0.3) .. (0,0) .. controls (3.31,0.3) and (6.95,1.4) .. (10.93,3.29)   ;
\draw  [dash pattern={on 4.5pt off 4.5pt}]  (349.69,133.73) -- (439.47,134.23) ;
\draw [shift={(441.47,134.24)}, rotate = 180.32] [color={rgb, 255:red, 0; green, 0; blue, 0 }  ][line width=0.75]    (10.93,-3.29) .. controls (6.95,-1.4) and (3.31,-0.3) .. (0,0) .. controls (3.31,0.3) and (6.95,1.4) .. (10.93,3.29)   ;
\draw    (482.82,70.41) -- (482.82,110.79) ;
\draw [shift={(482.82,112.79)}, rotate = 270] [color={rgb, 255:red, 0; green, 0; blue, 0 }  ][line width=0.75]    (10.93,-3.29) .. controls (6.95,-1.4) and (3.31,-0.3) .. (0,0) .. controls (3.31,0.3) and (6.95,1.4) .. (10.93,3.29)   ;
\draw  [color={rgb, 255:red, 155; green, 155; blue, 155 }  ,draw opacity=1 ] (35,213.09) .. controls (35,209.03) and (38.29,205.73) .. (42.35,205.73) -- (123.46,205.73) .. controls (127.53,205.73) and (130.82,209.03) .. (130.82,213.09) -- (130.82,235.15) .. controls (130.82,239.21) and (127.53,242.5) .. (123.46,242.5) -- (42.35,242.5) .. controls (38.29,242.5) and (35,239.21) .. (35,235.15) -- cycle ;
\draw  [color={rgb, 255:red, 155; green, 155; blue, 155 }  ,draw opacity=1 ] (230.67,213.09) .. controls (230.67,209.03) and (233.96,205.73) .. (238.02,205.73) -- (340.32,205.73) .. controls (344.38,205.73) and (347.67,209.03) .. (347.67,213.09) -- (347.67,235.15) .. controls (347.67,239.21) and (344.38,242.5) .. (340.32,242.5) -- (238.02,242.5) .. controls (233.96,242.5) and (230.67,239.21) .. (230.67,235.15) -- cycle ;
\draw    (136.85,224.64) -- (226.64,225.14) ;
\draw [shift={(134.85,224.63)}, rotate = 0.32] [color={rgb, 255:red, 0; green, 0; blue, 0 }  ][line width=0.75]    (10.93,-3.29) .. controls (6.95,-1.4) and (3.31,-0.3) .. (0,0) .. controls (3.31,0.3) and (6.95,1.4) .. (10.93,3.29)   ;
\draw    (356.1,224.62) .. controls (467.03,224.46) and (483.85,215.83) .. (485.85,154.16) ;
\draw [shift={(352.71,224.63)}, rotate = 0] [color={rgb, 255:red, 0; green, 0; blue, 0 }  ][line width=0.75]    (10.93,-3.29) .. controls (6.95,-1.4) and (3.31,-0.3) .. (0,0) .. controls (3.31,0.3) and (6.95,1.4) .. (10.93,3.29)   ;

\draw (55.43,41.38) node [anchor=north west][inner sep=0.75pt]    {$( \phi_{0}, \phi_{0t})$};
\draw (247.18,41.36) node [anchor=north west][inner sep=0.75pt]    {$( \gamma,0) +O( \varepsilon )$};
\draw (467.86,41.36) node [anchor=north west][inner sep=0.75pt]    {$( \gamma, 0)$};
\draw (152.1,28.7) node [anchor=north west][inner sep=0.75pt]  [font=\small] [align=left] {Thm \ref{thm:dynamics}};
\draw (363.93,28.7) node [anchor=north west][inner sep=0.75pt]  [font=\small] [align=left] {Prop \ref{pro:controlaroundgeodesic} };
\draw (49.43,125.13) node [anchor=north west][inner sep=0.75pt]    {$(\phi_{1}, -\phi_{1t})$};
\draw (247.18,125.11) node [anchor=north west][inner sep=0.75pt]    {$( \tilde\gamma, 0) +O( \varepsilon )$};
\draw (467.86,125.11) node [anchor=north west][inner sep=0.75pt]    {$( \tilde \gamma, 0)$};
\draw (152.1,112.44) node [anchor=north west][inner sep=0.75pt]  [font=\small] [align=left] {Thm \ref{thm:dynamics}};
\draw (363.93,112.44) node [anchor=north west][inner sep=0.75pt]  [font=\small] [align=left] {Prop \ref{pro:controlaroundgeodesic} };
\draw (487.04,82.83) node [anchor=north west][inner sep=0.75pt]  [font=\small] [align=left] {Prop \ref{pro:controlbetween} };
\draw (56.43,216.03) node [anchor=north west][inner sep=0.75pt]    {$( \phi_{1} ,\phi_{1t})$};
\draw (247.18,216.01) node [anchor=north west][inner sep=0.75pt]    {$( \tilde \gamma, 0) +O( \varepsilon )$};
\draw (373.24,191.08) node [anchor=north west][inner sep=0.75pt]  [font=\small] [align=left] {time reversal};
\draw (40.42,5.21) node [anchor=north west][inner sep=0.75pt]  [font=\small] [align=left] {arbitrary state};
\draw (239.17,5.21) node [anchor=north west][inner sep=0.75pt]  [font=\small] [align=left] {approx. geodesic};
\draw (440.86,5.21) node [anchor=north west][inner sep=0.75pt]  [font=\small] [align=left] {closed geodesic};
\end{tikzpicture}
    \caption{ A special trajectory that connects two given states in uniform time.}\label{fig:spe:traject}
\end{figure}

\subsubsection{{\bf Ideas for Theorem \ref{thm:stability}}}\label{sec:strategy2}
\;
\vspace{2mm}

Let $\gamma: \mathbb{T}^1\longrightarrow \mathcal{N}$ be a closed geodesic with negative curvature. In this part we mainly work with the extrinsic formulation; see Section \ref{subsec:geometricsetting} for details.  Recall that the geodesic satisfies 
\begin{equation*}
\Delta \gamma + S_{jk}(\gamma)\partial_x\gamma^j \partial_x\gamma^k = 0.
\end{equation*}
Define the Jacobian operator 
\begin{equation*}
    \mathcal{L}_{\gamma}\varphi := \Delta \varphi + \varphi^r\partial_rS_{jk}(\gamma)\partial_x\gamma^j\partial_x\gamma^k + 2S_{jk}(\gamma)\partial_x\gamma^j\partial_x\varphi^k.
\end{equation*}

\noindent The quantitative stability analysis of the locally damped wave maps is involved for four reasons. 1) The dissipation is localized in a small sub-domain;  2) Due to the rotational symmetry group of closed geodesics, given by $\{(\gamma_p, 0); p\in \mathbb{T}^1\}$, stability cannot be attained throughout any neighborhood of $(\gamma, 0)$;  3) While the geometric equation can be rewritten as a semilinear wave equation in extrinsic formulation, both  linearized operators and nonlinear source terms have complex structures;   
and 4) The stability property is inherently tied to the curvature of  closed geodesics, which is naturally expressed in intrinsic form.  
\vspace{3mm}

 We propose a five-step strategy to prove the exponential stability result.  The key ingredients are summarized as follows:
   \begin{itemize}[leftmargin=2em]
       \item[\tiny$\bullet$]  Describe the state $\phi$ by means of two variables $(\varphi, \alpha)$ where $\varphi$ satisfies an {\it orthogonality/rigidity condition}, while $\alpha$ measures how the geodesic is modulated/rotated. This decomposition offers two advantages: it overcomes the influence of the rotational symmetry group of closed geodesics, and it ensures favorable properties for  $\varphi$. 
       \item[\tiny$\bullet$] The linearized systems for $(\varphi, \alpha)$ is complicated. By introducing an auxiliary function $\Psi= \varphi+ \alpha \gamma_x$, its linearized equation is much simplified.  
       \item[\tiny$\bullet$] Derive a {\it coercive estimate} for $\varphi$ under the operator $\mathcal{L}_{\gamma}$, leveraging the negative curvature and rigidity conditions. Both conditions are essential and necessary.
       \item[\tiny$\bullet$]  Finally, establish the stability of $\Psi$ under a well-chosen energy $\mathcal{E}_{\gamma}(\Psi)$, by combining coercive estimates with the propagation of smallness property.  Note that the first property is intrinsically due to the geometry, while the latter is characterized analytically through extrinsic formulas.
   \end{itemize}
    
\vspace{3mm}
In the sequel, we provide more insights into these five steps.

\noindent {\bf Step 1.}  {\it Decompose the state $\phi$ around the geodesic as $(\varphi, \alpha)$};  see  Section \ref{subsec:decompositiongeode}. 

Let $\gamma$ be a closed geodesic on $\mathcal{N}$. For a state close to $\gamma$, it is natural to decompose it as $ \phi(t, x)= \gamma(x)+ \varphi(t, x)+ \varphi_1(t, x)$  with
 $   \varphi(t, x)\in T_{\gamma(x)}\mathcal{N}, \; \varphi_1(t, x)\in T_{\gamma(x)}^{\perp}\mathcal{N}.$ 
However, the following decompositionturns out to be more effective for our analysis, 
\begin{gather*}
    \phi(t, x)= \gamma(x+ \alpha(t))+ \varphi(t, x)+ \varphi_1(t, x) \textrm{ with } \\
    \varphi(t, x)\in T_{\gamma(x)}\mathcal{N}, \; \varphi_1(t, x)\in T_{\gamma(x)}^{\perp}\mathcal{N},\forall x\in \mathbb{T}^1, \\
    \big\langle \varphi(t, \cdot), \dot\gamma(\cdot) \big\rangle_{L^2(\mathbb{T}^1)}= 0. 
\end{gather*}
We refer to the last equality as the {\it rigidity condition} on $\varphi$, as, without it, infinitely many pairs can satisfy the first two conditions. Note that this rigidity condition will play a significant role in Step 3 and Step 4.

\vspace{3mm}

\noindent {\bf Step 2.}  {\it Characterize the complete system in terms of $(\varphi, \alpha)$};  see Section \ref{subsec:full system varphialpha}.

The linearization for $(\varphi, \alpha)$ is complex and there are problematic coupling terms:
\begin{gather*}
     -  \varphi_{tt}+   \mathcal{L}_{\gamma}\varphi - a  \varphi_t-  \left(a(x)- \frac{l}{L}\right)\dot \alpha \gamma_x 
+ \frac{1}{L} \langle a(\cdot)\varphi_t(t, \cdot), \gamma_x(\cdot)\rangle_{L^2(\mathbb{T}^1)} \gamma_x, \\
 \ddot\alpha+ \frac{l}{L} \dot \alpha+\frac{1}{L}\left\langle  a(\cdot) \varphi_t(t, \cdot), \gamma_x(\cdot) \right\rangle_{L^2(\mathbb{T}^1)}. 
\end{gather*}
The full derivation of the nonlinear system is much more complicated, and we defer this technical aspect to this step, see Proposition  \ref{lem:full:system:varalpha}.
However, passing to $\Psi= \varphi+ \alpha \gamma_x$, the equation becomes much simpler, and no longer involves problematic linear coupling terms:
\begin{equation*}
   -  \Psi_{tt} +   \mathcal{L}_{\gamma}\Psi - a(x) \Psi_t
=  \mathcal{M}(x; \alpha, \dot\alpha, \varphi(\cdot), \varphi_t(\cdot), \varphi_x(\cdot)),
\end{equation*}
where the nonlocal nonlinear term is roughly controlled by
\begin{equation*}
\sum_{j, k} |\partial_{\nu} \varphi^j \partial^{\nu} \varphi^k|+  |\dot \alpha|^2+  |\varphi_t||\dot \alpha|+  (|\varphi|+ |\alpha|) (|\varphi|+ |\varphi_x|+ |\varphi_t|+ |\dot\alpha| ) 
\end{equation*}
Two key aspects of the above {\it quadratic form} should be noted:
\begin{itemize}[leftmargin=2em]
    \item[\tiny$\bullet$] The terms $\partial_{\nu} \varphi^j \partial^{\nu} \varphi^k$ exhibit the null structure. 
    \item[\tiny$\bullet$] The absence of a quadratic term in $\alpha$, $i.e.$ there is no  $\alpha^2$.
\end{itemize}

\vspace{3mm}

\noindent {\bf Step 3.} {\it Obtain a coercive estimate around geodesic with negative curvature}; see Section \ref{sec:coecive}.

To obtain the stability of the equation for $\varphi$ and $\Psi$, we first need to understand the operator $\mathcal{L}_{\gamma}$.
This operator is not self-adjoint for $\varphi\in H^1(\mathbb{T}^1; \mathbb{R}^N)$, but is self-adjoint when restricted to functions in $\gamma^*(T\mathcal{N})$; see Section \ref{subsec:22} for details.
In this step, we benefit from the geometric condition of negative sectional curvature along $\gamma$ to demonstrate the following coercive estimate: 
\[
-\langle \mathcal{L}_{\gamma}\varphi, \varphi\rangle_{L^2(\mathbb{T}^1)}\geq c\big\|\varphi\big\|_{H^1(\mathbb{T}^1)}^2, c>0,
\]
for any  $\varphi\in H^1(\mathbb{T}^1; \gamma^*(T \mathcal{N}))$ satisfying the rigidity condition. 
Both conditions are necessary: see Remark \ref{rem:coer:nece}.

\vspace{3mm}

\noindent {\bf Step 4.}  {\it Investigate the stability of the linearized equation for $\Psi$}; 
see Section \ref{subsec:linearstabilityvarphi}.

In this step, we establish the exponential stability of the linearized equation for $\Psi$.  Introduce an energy function $ \mathcal{E}_{\gamma}$: 
\begin{equation*}
 2 \mathcal{E}_{\gamma}(f, g):= \langle g, g \rangle_{L^2(\mathbb{T}^1)}  -\langle \mathcal{L}_{\gamma}f, f\rangle_{L^2(\mathbb{T}^1)} \;\; \forall (f, g)\in \mathcal{H}_{\gamma}.
\end{equation*}
In light of the coercive estimate, we have 
\begin{equation*}
    \mathcal{E}_{\gamma}(\Psi[t])\sim \mathcal{E}_{\gamma}(\varphi[t])+ |\dot \alpha|^2\sim \|\varphi[t]\|_{\mathcal{H}}^2+ |\dot \alpha|^2.
\end{equation*}

We show that for any function $\Psi\in C([0, 32\pi]; \mathcal{H}_{\gamma})$ satisfying 
\begin{gather*}
    -\Psi_{tt}+ \mathcal{L}_{\gamma} \Psi- a(x)\Psi_t = g,  \\
      \|g\|_{L^2(0, 32\pi; L^2(\mathbb{T}^1))}\leq \delta \mathcal{E}_{\gamma}(\Psi[0]),  
\end{gather*}
it holds that 
\begin{gather*}
      \mathcal{E}_{\gamma}(\Psi[32\pi])\leq (1- c)  \mathcal{E}_{\gamma}(\Psi[0]).
\end{gather*}
The proof relies on two properties: the quantitative propagation of smallness result, and the coercive estimate derived in Step 3.  
\vspace{3mm}

\noindent {\bf Step 5.}      {\it Prove the exponential stability of the full system}; see
Section \ref{subsec:expostabproof}.

Finally, by combining the ideas in Steps 2--4 we obtain the exponential stability of $(\varphi, \alpha)$:
\begin{gather*}
     \|\varphi[t]\|_{\mathcal{H}}^2+ |\dot\alpha(t)|^2 \lesssim e^{- \varepsilon t} \left(\|\varphi[0]\|_{\mathcal{H}}^2+ |\dot\alpha(0)|^2\right)\;\; \forall t\in (0, +\infty), \\
      |\alpha(t)- \bar \alpha|\lesssim e^{- \varepsilon t} \left( \|\varphi[0]\|_{\mathcal{H}}^2+  |\dot\alpha(0)|^2+ |\alpha(0)|^2\right)\;\; \forall t\in (0, +\infty).
\end{gather*}
Note that $\bar \alpha$ may be nonzero due to the existence of infinitely many steady states of the form $(\varphi, \varphi_t, \alpha, \dot \alpha)= (\bar \alpha, 0, 0, 0)$.

\vspace{3mm}

\noindent {\bf Organization of the paper.}
The remainder of the paper is organized as follows.

Section \ref{sec:preliminary} introduces the geometric setting, develops the analysis around closed geodesics, and establishes the relevant well-posedness results.
In Section \ref{sec:propsmall}, we address the quantitative propagation of smallness for wave maps. 

Sections \ref{sec:3}--\ref{Sec:gc} are devoted to proving three key intermediate results: global dynamics of the damped wave maps, Theorem \ref{thm:dynamics}; local exact controllability around closed geodesics, Proposition \ref{pro:controlaroundgeodesic}; and, uniform-time global controllability between homotopic closed geodesics, Proposition \ref{pro:controlbetween}. Building on these results, and by applying the return method, we establish Theorem \ref{thm1} on the global controllability of wave maps in Section \ref{Sec:5}.

Finally, Section \ref{Sec:8} proves Theorem \ref{thm:stability}, which concerns the exponential stability of wave maps around closed geodesics with negative curvature.

The appendices \ref{sec:app:A}--\ref{sec:app:C} gather the proofs of several technical lemmas.

\section{Preliminaries}\label{sec:preliminary}

This preliminary section is devoted to background on the geometric setting and the analysis around geodesics, and the null structure as well as the well-posedness of the controlled wave maps equations.  In particular, the well-posedness results are direct generalizations of those for the case of the sphere target shown in \cite{KX, CKX}.  In Section \ref{Sec:4} concerning the controllability around closed geodesics and Section \ref{Sec:8} concerning the exponential stability around closed geodesics with negative curvature, more geometric settings will be introduced.

\subsection{The geometric setting}\label{subsec:geometricsetting}

Recall from the assumption \blackhyperref{setting(S)}{$({\bf S})$} that $\mathcal{N} \hookrightarrow \mathbb{R}^N$ is a submanifold with dimension $R$.  For $\phi\in \mathcal{N}\subset\mathbb{R}^N$ we denote by $T_{\phi}\mathcal{N}$ the tangent space and by $N_{\phi}\mathcal{N}$ the normal space at this point. Denote by $\Pi_T(\phi) f$ the projection of a vector $f\in T_{\phi}\mathbb{R}^N= \mathbb{R}^N$ to the tangent space $T_{\phi}\mathcal{N}$.  
Let $\nabla$ be the Levi-Civita connection defined in $\mathcal{N}$, and let $\bar \nabla$ be the direction deviation in $\mathbb{R}^N$.  The second fundamental form is a smooth tensor field:
\begin{equation*}
    \Pi: T\mathcal{N}\times T\mathcal{N}\longrightarrow N \mathcal{N}.
\end{equation*}

In a part of this paper, we shall benefit from the extrinsic formulation of wave maps equations and linearized operators and perform explicit global analysis.  Thus, extending the above tensor field to the whole space is useful.   

\begin{lemma}\label{lem:secfunext}
There exist smooth maps
\[
S_{jk}:\mathbb{R}^N\to \mathbb{R}^N,\quad \forall j,k=1,\dots,N,
\]
satisfying
\[
S_{jk}(\phi)= S_{kj}(\phi) \;\; \forall  \phi\in\mathbb{R}^N,
\]
such that the $\mathbb{R}^N$-valued symmetric bilinear form
\begin{equation}\label{eq:2ndfundcoord-ori}
   \tilde\Pi(\phi)(v,w):= -  \sum_{j,k=1}^N S_{jk}(\phi)v^j w^k\quad \forall\, \phi\in\mathbb{R}^N,\; \forall\, v,w\in \mathbb{R}^N, 
\end{equation}
satisfies
\[
\tilde\Pi(\phi)(v,w)= \Pi(\phi)(v,w) \quad \forall\, \phi\in\mathcal{N},\; \forall\, v,w\in T_\phi\mathcal{N}.
\]
\end{lemma}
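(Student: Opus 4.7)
The plan is to extend the second fundamental form from $\mathcal{N}$ to a neighborhood via the nearest-point projection, then to cut off smoothly to all of $\mathbb{R}^N$, and finally to symmetrize the coefficients in $(j,k)$. First, since $\mathcal{N}\subset\mathbb{R}^N$ is a smooth compact embedded submanifold, there exists $\epsilon_0>0$ and a tubular neighborhood $U:=\{x\in\mathbb{R}^N:\dist(x,\mathcal{N})<\epsilon_0\}$ on which the nearest-point map $\pi:U\to\mathcal{N}$ is smooth; moreover $\phi\mapsto P_T(\phi)$, the orthogonal projector $\mathbb{R}^N\to T_\phi\mathcal{N}$, depends smoothly on $\phi\in\mathcal{N}$.

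Next, on $U$ I will introduce an auxiliary $\mathbb{R}^N$-valued bilinear map
\begin{equation*}
    \hat\Pi(\phi)(v,w):=\Pi(\pi(\phi))\bigl(P_T(\pi(\phi))v,\,P_T(\pi(\phi))w\bigr),
\end{equation*}
which is smooth in $\phi\in U$, symmetric in $(v,w)$ (because $\Pi$ is), and which reduces to $\Pi(\phi)(v,w)$ whenever $\phi\in\mathcal{N}$ and $v,w\in T_\phi\mathcal{N}$ (in that case $\pi(\phi)=\phi$ and $P_T$ acts as the identity on tangent vectors). Writing $\hat\Pi(\phi)(v,w)=-\sum_{j,k=1}^N\hat S_{jk}(\phi)v^jw^k$ with smooth coefficients $\hat S_{jk}:U\to\mathbb{R}^N$, the symmetry of $\hat\Pi$ in its two arguments lets me replace $\hat S_{jk}$ by $\tfrac12(\hat S_{jk}+\hat S_{kj})$ without changing the bilinear form, so I may assume $\hat S_{jk}=\hat S_{kj}$ from the outset.

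Finally, pick a smooth cutoff $\chi:\mathbb{R}^N\to[0,1]$ with $\chi\equiv 1$ in an $\epsilon_0/3$-neighborhood of $\mathcal{N}$ and $\supp\chi\subset U$, and set $S_{jk}(\phi):=\chi(\phi)\hat S_{jk}(\phi)$ on $U$ and $S_{jk}(\phi):=0$ off $U$. This yields smooth symmetric $S_{jk}:\mathbb{R}^N\to\mathbb{R}^N$ that meet the required compatibility with $\Pi$ on $\mathcal{N}$. The only step carrying any subtlety is the first—producing a smooth tubular retraction together with a smooth tangent-space projector along $\mathcal{N}$—but both are classical consequences of the smooth compactness of $\mathcal{N}\subset\mathbb{R}^N$, so no genuine obstacle is anticipated.
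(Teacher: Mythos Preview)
Your proposal is correct and follows essentially the same approach as the paper: both use the tubular-neighborhood retraction and tangent projector to extend $\Pi$ as a symmetric bilinear form to a neighborhood of $\mathcal{N}$, then cut off smoothly to all of $\mathbb{R}^N$, with the symmetry of $S_{jk}$ inherited from that of $\Pi$. The only cosmetic difference is that the paper defines $S_{jk}$ directly by evaluating the extended form on standard basis vectors rather than writing out coefficients and symmetrizing, but the content is identical.
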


For example, when $\mathcal{N}= \mathbb{S}^{N-1}\hookrightarrow \mathbb{R}^N$, such an extension can be given by $\tilde \Pi(\phi)(v, w)= -\phi \langle v, w\rangle$.  For a general submanifold, this extension is a consequence of the tubular neighborhood theorem.  The details concerning this extension can be found in Appendix \hyperref[setting(Aex1)]{A.1}. 
In the rest part of this paper, for simplicity of notation we shall simply denote $\tilde \Pi$ by $\Pi$ and use this extended formula. For each component $i$, one has
\begin{equation}\label{eq:2ndfundcoord}
\Pi(\phi)^i(v, w) =  -   \sum_{j, k= 1}^N S^i_{jk}(\phi)v^j w^k \;\; \forall  i=1,2,..., N.
\end{equation}

Using the extrinsic coordinates,  the free wave maps equation, the damped wave maps equation \eqref{eq:dwm:ori}, and the controlled wave maps equation \eqref{eq:wminhomo} can be expressed by the following, where $i = 1,\,2,\ldots,\,N$,
\begin{equation}\label{eq:wm}
\Box \phi^i + S^i_{jk}(\phi)\partial_{\nu}\phi^j \partial^{\nu}\phi^k = 0, 
\end{equation}
\begin{equation}\label{eq:wmdamped}
\Box \phi^i + S^i_{jk}(\phi)\partial_{\nu}\phi^j \partial^{\nu}\phi^k = a(x)\partial_t \phi^i,
\end{equation}
and 
\begin{equation}\label{eq:controlgeneral}
\Box \phi^i + S^i_{jk}(\phi)\partial_{\nu}\phi^j \partial^{\nu}\phi^k = \chi_{\omega} \Pi_T(\phi) f,
\end{equation}
respectively, where we used the Einstein summation convention for $j, k\in \{1, 2,..., N\}$ and $\nu\in \{0, 1\}$ with $(\partial_0, \partial_1, \partial^0, \partial^1)= (\partial_t, \partial_x, -\partial_t, \partial_x)$.

\vspace{2mm} 
The damped equation \eqref{eq:wmdamped} and the controlled wave maps equation \eqref{eq:controlgeneral} are meaningful as illustrated in Section \ref{subsec:wellposedness}.  Throughout the paper, we assume the states are smooth enough, and that the $C^0([0, T]; \mathcal{H})$ solutions can be obtained via a standard passage to the limit argument. 

\subsection{The analysis around closed geodesics}\label{subsec:22}
The  closed geodesics (or harmonic maps), $\gamma: \mathbb{T}^1\rightarrow \mathcal{N}$, are solutions of the elliptic equation 
\begin{equation}\label{eq:geodesic:equation}
\Delta \gamma + S_{jk}(\gamma)\partial_x\gamma^j \partial_x\gamma^k = 0.
\end{equation}
Recall that the rotation group generates a family of harmonic maps  $\{\gamma_p\}_{p\in [0, 2\pi)}$,
\begin{equation}\label{def:closgerota}
    \gamma_{p}(x):= \gamma(x+ p) \; \; \forall p\in [0, 2\pi).
\end{equation}
\begin{defi}\label{definition-isolatedgeo} A closed geodesic $\gamma: \mathbb{T}^1\rightarrow \mathcal{N}$ is called isolated in the $H^1$-topology, if there is a constant $c>0$ with the property such that for any closed geodesic
\begin{align*}
\tilde{\gamma}\notin \{\gamma_p: p\in [0, 2\pi)\}  \textrm{ with }  \gamma_{p}(\cdot):= \gamma(\cdot+ p),
\end{align*}
we have the separation condition 
\begin{align*}
\min_{p\in [0, 2\pi)}\big\|\tilde{\gamma} - \gamma_p\big\|_{H^1(\mathbb{T}^1)}\geq c>0. 
\end{align*}
\end{defi}

\begin{defi}[Approximate closed geodesics]\label{def:app:clo:geo}
A map $\phi(\cdot): \mathbb{T}^1\rightarrow \mathcal{N}$ is said to be a $\varepsilon$-approximate closed geodesic, if there is a closed geodesic    $\gamma: \mathbb{T}^1\rightarrow \mathcal{N}$  such that 
\begin{equation*}
    \|\phi- \gamma\|_{H^1(\mathbb{T}^1)}\leq \varepsilon.
\end{equation*}
A state $(\phi, \phi_t): \mathbb{T}^1\rightarrow T\mathcal{N}$ is said to be a $\varepsilon$-approximate closed geodesic state, if there is a closed geodesic    $\gamma: \mathbb{T}^1\rightarrow \mathcal{N}$  such that 
\begin{equation*}
    \|(\phi, \phi_t)- (\gamma, 0)\|_{\mathcal{H}}\leq \varepsilon.
\end{equation*}
\end{defi}

Theorem \ref{thm:dynamics}, which shall be proved in Section \ref{sec:3}, implies that for any initial state, the unique solution converges to a closed geodesic along a subsequence of times.  To further obtain stability around a geodesic, it is necessary to assume that this geodesic satisfies the above isolation condition.
In order to derive a stronger quantitative rate of convergence, we impose a more stringent condition on the geodesic $\gamma$ toward which the flow converges. The following isolation result is well-known.
\begin{lemma}
\label{cor:isoation} 
Let $\gamma$ be a closed geodesic on $\mathcal{N}$. Assume that sectional curvature is strictly negative on $\gamma$. Then the geodesic $\gamma$ is isolated. 
\end{lemma}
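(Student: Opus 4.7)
The plan is to argue by contradiction using a linearization argument. Assume the conclusion fails: there exists a sequence $\{\tilde\gamma_n\}$ of closed geodesics with $\tilde\gamma_n\notin\{\gamma_p\}_{p\in[0,2\pi)}$ and $\min_{p}\|\tilde\gamma_n-\gamma_p\|_{H^1(\mathbb{T}^1)}\to 0$. For each $n$, select $p_n$ minimizing the weaker $L^2$-distance $\|\tilde\gamma_n-\gamma_p\|_{L^2}$ (which still tends to $0$), and after replacing $\tilde\gamma_n$ by $\tilde\gamma_n(\cdot+p_n)$ assume $p_n=0$. The first-order minimum condition then becomes
\[
\int_{\mathbb{T}^1}\langle\tilde\gamma_n-\gamma,\dot\gamma\rangle_{\mathbb{R}^N}\,dx=0.
\]
Since $\gamma$ and $\tilde\gamma_n$ are smooth solutions of the elliptic equation \eqref{eq:geodesic:equation} with comparable energies, elliptic bootstrapping upgrades the $L^2$ convergence to $H^1$, so $W_n:=\tilde\gamma_n-\gamma\to 0$ in $H^1$. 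Normalize $V_n:=W_n/\|W_n\|_{H^1}$, so $\|V_n\|_{H^1}=1$.

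Subtracting the geodesic equations for $\gamma$ and $\tilde\gamma_n$ and dividing by $\|W_n\|_{H^1}$, a Taylor expansion of $S_{jk}(\gamma+W_n)$ around $\gamma$ gives
\[
\Delta V_n=\mathcal{L}_\gamma V_n+r_n,
\]
where $\mathcal{L}_\gamma$ is the Jacobi operator introduced in Section \ref{sec:strategy2} and $r_n\to 0$ in $L^2(\mathbb{T}^1)$ because $\|W_n\|_{C^0}\to 0$. Hence $\{V_n\}$ is bounded in $H^2(\mathbb{T}^1)$, and Rellich's theorem furnishes a subsequence converging strongly in $H^1$ to some $V$ with $\|V\|_{H^1}=1$ satisfying $\mathcal{L}_\gamma V=0$. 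The tubular neighborhood structure of $\mathcal{N}\hookrightarrow\mathbb{R}^N$ forces the normal component of $W_n(x)$ to be pointwise $O(|W_n(x)|^2)$, so the limit $V(x)$ lies in $T_{\gamma(x)}\mathcal{N}$; and the orthogonality relation passes to the limit as $\langle V,\dot\gamma\rangle_{L^2(\mathbb{T}^1)}=0$.

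Finally, decompose $V=f(x)\dot\gamma(x)+V^\perp(x)$ with $V^\perp\perp\dot\gamma$ pointwise in $T_\gamma\mathcal{N}$. Since $\gamma$ is a geodesic parametrized with $|\dot\gamma|$ constant, the parallel component of the intrinsic Jacobi equation reduces to $f''=0$; by periodicity $f$ is constant, and $\langle V,\dot\gamma\rangle_{L^2}=0$ then forces $f\equiv 0$. For $V^\perp$, pairing the intrinsic Jacobi equation $D_x^2 V^\perp+R(V^\perp,\dot\gamma)\dot\gamma=0$ with $V^\perp$ and integrating by parts on $\mathbb{T}^1$ produces
\[
-\int_{\mathbb{T}^1}|D_x V^\perp|^2\,dx+\int_{\mathbb{T}^1}K(V^\perp,\dot\gamma)|V^\perp|^2|\dot\gamma|^2\,dx=0.
\]
Under the hypothesis that $K<0$ along $\gamma$ (hence bounded away from $0$ by compactness), both terms on the left are non-positive and the second is strictly negative unless $V^\perp\equiv 0$; therefore $V^\perp=0$, and so $V=0$, contradicting $\|V\|_{H^1}=1$.

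The main obstacle I anticipate is the careful bookkeeping of the ambient-vs-intrinsic decomposition: verifying that the normal part of $W_n$ is genuinely quadratic in its tangential part so that the rescaled limit $V$ lies in $\gamma^*(T\mathcal{N})$, and that the extrinsic linearized operator $\mathcal{L}_\gamma$ agrees, on tangent fields, with the intrinsic Jacobi operator appearing in the curvature computation above. Both follow from a Taylor expansion of the extended second fundamental form provided by Lemma \ref{lem:secfunext}, together with the Gauss equation relating extrinsic and intrinsic curvatures; the remaining steps are routine elliptic compactness and standard Riemannian geometry on closed geodesics.
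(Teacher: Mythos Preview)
The paper does not supply a proof of this lemma; it merely records the result as well-known. Your linearization-and-Jacobi-field argument is the standard one and is correct. Its core step --- that a periodic section $V\in H^1(\mathbb{T}^1;\gamma^*(T\mathcal{N}))$ with $\mathcal{L}_\gamma V=0$ and $\langle V,\dot\gamma\rangle_{L^2}=0$ must vanish under strictly negative sectional curvature --- is precisely the kernel computation the paper carries out later in the proof of Proposition~\ref{prop:coercivityofL} (via the identity of Lemma~\ref{lem:selfadjoint}); that coercivity proposition is in effect a quantitative version of your qualitative contradiction step.

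One small bookkeeping remark: you align $\tilde\gamma_n$ by the $L^2$-minimizer and then appeal to elliptic bootstrapping for $H^1$-convergence. Since both $\gamma$ and $\tilde\gamma_n$ solve a smooth second-order ODE, $H^1$-closeness of closed geodesics automatically upgrades to $C^k$-closeness with comparable rates, which directly yields $r_n\to 0$ in $L^2$ after normalization; alternatively, the paper's modulation device \eqref{eq:orthocondition} in Section~\ref{subsec:decompositiongeode} achieves the same $L^2$-orthogonality to $\dot\gamma$ without disturbing the assumed $H^1$-convergence.
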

\begin{remark}\label{rmk:iso:negcur}
The negative curvature assumption is essential, since if a portion of a geodesic has positive sectional curvature, then this geodesic is usually not isolated.  This is because in such a case, the Jacobi fields exhibit oscillatory behavior, and small deformation of $\gamma$ could lead to nearby closed geodesics.  To guarantee the isolation of geodesics, this condition can be relaxed to weak negative sectional curvature together with strict negative Ricci curvature, with the help of more delicate analysis.   
\end{remark}

Moreover, the negative curvature condition  entails non-degeneracy of the following operator $\mathcal{L}_{\gamma}$, which shall be explained in Section \ref{sec:coecive}.
\begin{defi}[Jacobi operator around closed geodesics]\label{def:jacobian}
Let $\gamma: \mathbb{T}^1\rightarrow \mathcal{N}$ be a closed geodesic on $\mathcal{N}$. Define  the {\it{Jacobi operator}}, $ \mathcal{L}_{\gamma}$, which arises upon linearising the geodesic equation around $\gamma$:
\begin{equation}
\mathcal{L}_{\gamma}\varphi := \Delta \varphi + \varphi^r\partial_rS_{jk}(\gamma)\partial_x\gamma^j\partial_x\gamma^k + 2S_{jk}(\gamma)\partial_x\gamma^j\partial_x\varphi^k,
\end{equation}  
with the Einstein summation convention for $r, j, k\in \{ 1,2,...,N\}$.
\end{defi}

In principle, the operator $ \mathcal{L}_{\gamma}$ is defined on tangential vector fields along the curve $\gamma$. Thanks to the extended extrinsic representation \ref{lem:secfunext}, the above operator makes sense for every function $\varphi: \mathbb{T}^1\longrightarrow \mathbb{R}^N$. However, due to the geometric feature that the function $\varphi$  should be the perturbation term of the geodesic on the tangent bundle, we shall focus on the action of $\mathcal{L}_{\gamma}$ on tangential vector fields along $\gamma$,  that is $\varphi\in \Gamma\left( \gamma^*(T\mathcal{N})\right)$:
\begin{equation*}
\varphi: \mathbb{T}^1\longrightarrow T\mathcal{N} \textrm{ such that } \varphi(x)\in T_{\gamma(x)} \mathcal{N}.
\end{equation*}
\begin{defi}
 Let $\gamma: \mathbb{T}^1\rightarrow \mathcal{N}$ be a closed geodesic on $\mathcal{N}$. Define 
 \begin{equation*}
    H^1 (\mathbb{T}^1; \gamma^* (T\mathcal{N})):= \{\varphi: \mathbb{T}^1\longrightarrow T\mathcal{N}: \varphi(x)\in T_{\gamma(x)} \mathcal{N}, \varphi  \textrm{ is an $H^1$-section} \}.
 \end{equation*}
\end{defi}

Here, a subtlety needs to be exploited: while the operator $\mathcal{L}_{\gamma}$ does not act in a self-adjoint manner on $H^1\big(\mathbb{T}^1; \mathbb{R}^N\big)$, this operator does act in self-adjoint fashion if we restrict $\varphi$ to functions in $\gamma^*(T\mathcal{N})$.  Recall the Riemann curvature tensor on $\mathcal{N}$:
\begin{equation*}
    R(X, Y)Z:= \nabla_{Y} \nabla_X Z- \nabla_{X} \nabla_Y Z+ \nabla_{[X, Y]}Z.
\end{equation*}
The following result is a consequence of  \cite[Theorem 6.1.4 (Synge’s second variation 
formula)]{2016-Petersen-book}. Its proof is in Appendix \hyperref[setting(A)]{A.2}. 
\begin{lemma}\label{lem:selfadjoint}
    Let $\varphi, \psi\in H^1(\mathbb{T}^1; \gamma^* (T \mathcal{N}))$. The operator  $\mathcal{L}_{\gamma}$ is self-adjoint in the sense that 
        \begin{equation*}
 \langle \mathcal{L}_{\gamma}\varphi, \psi\rangle_{L^2(\mathbb{T}^1)} = - 
\langle \nabla_{\gamma_x}\varphi, 
\nabla_{\gamma_x}\psi\rangle_{L^2(\mathbb{T}^1)}   + \int_{\mathbb{T}^1}\langle R(\gamma_x, 
\varphi)\gamma_x, \psi\rangle\,dx= \langle \varphi, \mathcal{L}_{\gamma}\psi\rangle_{L^2(\mathbb{T}^1)},
        \end{equation*}
and, in particular, $\mathcal{L}_{\gamma} \gamma_x= 0$ and
    \begin{align*}
        -\langle \mathcal{L}_{\gamma}\varphi, \varphi\rangle_{L^2(\mathbb{T}^1)} = \int_{\mathbb{T}^1}\big(\big\|\nabla_{\gamma_x}\varphi\big\|^2 - \langle R(\gamma_x, \varphi)\gamma_x, \varphi\rangle\big)\,dx.
    \end{align*}
\end{lemma}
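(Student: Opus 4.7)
The plan is to reduce the claim to standard Synge-type computations for the Jacobi operator on vector fields along a geodesic, after identifying $\mathcal{L}_\gamma$ (defined extrinsically through the extended symbol $S_{jk}$ of Lemma~\ref{lem:secfunext}) with its intrinsic counterpart on tangential sections. First I would realise $\mathcal{L}_\gamma\varphi$ as the linearisation of the geodesic equation: given $\varphi\in H^1(\mathbb{T}^1;\gamma^*(T\mathcal{N}))$, choose a smooth one-parameter family $\gamma_s\colon\mathbb{T}^1\to\mathcal{N}$ with $\gamma_0=\gamma$ and $\partial_s\gamma_s|_{s=0}=\varphi$. Differentiating the extrinsic quantity $\Delta\gamma_s^i+S^i_{jk}(\gamma_s)\partial_x\gamma_s^j\partial_x\gamma_s^k$ at $s=0$ reproduces, term by term, exactly the operator $\mathcal{L}_\gamma\varphi$. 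Intrinsically this same expression is the $\mathbb{R}^N$-representative of the tension field $\tau(\gamma_s)=\nabla_{\partial_x\gamma_s}\partial_x\gamma_s\in\Gamma(\gamma_s^*T\mathcal{N})$, and since $\tau(\gamma)=0$, the Gauss formula ensures that the normal contribution $\Pi(\varphi,\tau(\gamma))$ vanishes at $s=0$. Hence $\mathcal{L}_\gamma\varphi$ is itself tangential along $\gamma$ and, by the usual commutation of $\tfrac{D}{\partial s}$ and $\tfrac{D}{\partial x}$ on maps (under the paper's sign convention for $R$),
\begin{equation*}
\mathcal{L}_\gamma\varphi \;=\; \tfrac{D}{\partial s}\big|_{s=0}\tau(\gamma_s) \;=\; \nabla_{\gamma_x}\nabla_{\gamma_x}\varphi + R(\gamma_x,\varphi)\gamma_x.
\end{equation*}

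With this identification, the claim $\mathcal{L}_\gamma\gamma_x=0$ follows either by applying the formula above to the variation $\gamma_s(x)=\gamma(x+s)$, whose members are all geodesics, or by differentiating the extrinsic geodesic equation in $x$ and matching the result coefficient by coefficient with $\mathcal{L}_\gamma(\partial_x\gamma)$. For the bilinear identity I pair with a tangential $\psi$, use that the pairing in $L^2(\mathbb{T}^1)$ only sees the tangential component of $\mathcal{L}_\gamma\varphi$, and integrate by parts covariantly on $\mathbb{T}^1$, exploiting compatibility of $\nabla$ with the metric and the periodicity of $\mathbb{T}^1$ to obtain
\begin{equation*}
\langle\nabla_{\gamma_x}\nabla_{\gamma_x}\varphi,\psi\rangle_{L^2(\mathbb{T}^1)} \;=\; -\langle\nabla_{\gamma_x}\varphi,\nabla_{\gamma_x}\psi\rangle_{L^2(\mathbb{T}^1)}.
\end{equation*}
Combined with the curvature term this yields the first displayed equality. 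Self-adjointness is then manifest from the right-hand side: the kinetic part is visibly symmetric in $(\varphi,\psi)$, and the curvature part is symmetric via the standard identity $\langle R(X,Y)Z,W\rangle=\langle R(Z,W)X,Y\rangle$ applied to $(X,Y,Z,W)=(\gamma_x,\varphi,\gamma_x,\psi)$. Setting $\psi=\varphi$ delivers the final displayed formula.

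The principal obstacle is the identification step. One must justify rigorously that $\mathcal{L}_\gamma\varphi$, originally defined via the extended symbol $S_{jk}$ on $\mathbb{R}^N$ with no a priori geometric meaning off $\mathcal{N}$, coincides on tangential sections with the intrinsic Jacobi operator $\nabla_{\gamma_x}\nabla_{\gamma_x}\varphi+R(\gamma_x,\varphi)\gamma_x$. The delicate points are: (i) ensuring that the extension of the second fundamental form chosen in Lemma~\ref{lem:secfunext} does not contribute spurious normal terms along $\gamma$ (resolved by the vanishing of $\tau(\gamma)$ noted above), and (ii) tracking the sign of the curvature correction that arises from commuting $\tfrac{D}{\partial s}$ and $\tfrac{D}{\partial x}$, for which one should use Petersen's Synge second-variation formula in the paper's sign convention for $R$. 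Once that identification is secured, the remaining steps are essentially bookkeeping.
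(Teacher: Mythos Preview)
Your argument is correct, and the pointwise identification $\mathcal{L}_\gamma\varphi=\nabla_{\gamma_x}\nabla_{\gamma_x}\varphi+R(\gamma_x,\varphi)\gamma_x$ for tangential $\varphi$ is the cleanest way to obtain all the claims at once. The crucial observation you make---that $\tau(\gamma)=0$ forces the normal part $\Pi(\varphi,\tau(\gamma))$ of $\partial_s\tau(\gamma_s)|_{s=0}$ to vanish, so the ambient $s$-derivative agrees with the covariant one---is exactly what makes the extrinsic definition via the extended $S_{jk}$ match the intrinsic Jacobi operator.

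The paper takes a different, more computational route that avoids ever asserting this pointwise identification. It computes $-\langle\mathcal{L}_\gamma\varphi,\varphi\rangle$ as the second variation of the energy $E(c(s,\cdot))$ for $c(s,x)=\exp_{\gamma(x)}(s\varphi(x))$, evaluated once by Synge's formula and once by direct extrinsic expansion of $c(s,x)$; matching the two reduces to an identity relating the $S_{jk}$-terms to $\langle\Pi(\varphi,\varphi),\Pi(\gamma_x,\gamma_x)\rangle$, which is proved by differentiating the identically vanishing function $I(s)=\int\langle-\Pi(c)(c_x,c_x),c_s\rangle\,dx$. Self-adjointness is handled analogously with a two-parameter variation $c(s,t,x)=\exp_{\gamma(x)}(s\varphi+t\psi)$ and a second auxiliary function $II(s,t)\equiv0$. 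The paper's approach never needs to know that $\mathcal{L}_\gamma\varphi$ is tangential (any normal component is killed by pairing with tangential $\psi$), which sidesteps your ``principal obstacle'' entirely; your approach, by contrast, yields the stronger pointwise statement and makes the subsequent integration by parts transparent. Both derive $\mathcal{L}_\gamma\gamma_x=0$ the same way, by differentiating the geodesic equation for $\gamma(\cdot+s)$.
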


As a direct consequence of the preceding lemma, one has the following rotational invariance property: let $b\in \mathbb{R}$, then
\begin{equation}\label{eq:mathcalLselfinva}
\langle \mathcal{L}_{\gamma}(\varphi+ b \gamma_x),(\varphi+ b \gamma_x)\rangle_{L^2(\mathbb{T}^1)}= \langle \mathcal{L}_{\gamma}\varphi, \varphi\rangle_{L^2(\mathbb{T}^1)}.
\end{equation}
This operator will be helpful in Section \ref{Sec:8} concerning the stability around geodesics. Note that in Section \ref{Sec:cont:1} and Sections \ref{subsec:decompositiongeode}--\ref{sec:coecive} we shall provide more geometric discussions.

\subsection{The null structure and the well-posedness results}\label{subsec:wellposedness}

Here, we present  well-posedness results for the damped/controlled wave maps equations. Note that in this paper we also examine several variant systems that share the same null structure. Due to the more complex form of these variants, we provide their well-posedness properties in the respective sections. Specifically, in Section \ref{subsubsec:421}, we address the semilinear wave equation with projected controls \eqref{eq:visystem11},  in Section \ref{subsec:full system varphialpha}, we analyze the coupled system for $(\varphi, \alpha)$.
\vspace{2mm}

Extend the equation $2\pi$-periodically to $x\in \mathbb{R}$. Consider the standard {\it null coordinate},
\begin{equation*}
    u= t+x \textrm{ and } v=t- x.
\end{equation*}
Using the $(u, v)$-coordinate we can express the equation on $\phi$ in a way that the propagation of the wave can be easily captured.  Indeed, observing that 
\begin{equation}\label{null-indentity}
    \partial_{\nu}g \partial^{\nu}h=  -2 (g_u h_v+ g_v h_u) \; \textrm{ and } \;
    \Box g= - 4 g_{uv},
\end{equation}
equation \eqref{eq:wmdamped} can be expressed as
\begin{align*}
    -\phi_{u v}^i&=
    S^i_{jk}(\phi) \phi^j_u \phi^k_v + \frac{1}{4}a \phi^i_t   := F^i(t, x)= F^i(u, v).
\end{align*}
Instead of the typical $C([0, T]; \mathcal{H})$ space, the solutions lie in the stronger space $\mathcal{W}_T$. In the following, when no confusion arises, we may denote $\mathcal{W}_T$ simply by $\mathcal{W}$ and a function $\phi\in (\mathcal{W}_T)^N$ by $\phi\in \mathcal{W}$.
\begin{defi}\label{def:WTnorm}
Let $T>0$. Set $D_T:= [0, T]\times \mathbb{T}^1$. Define \footnote{ Note that under the null coordinates of $(u, v)$ the region $D_T$ is not a standard rectangular-shaped region, thus the $L^2_uL^{\infty}_v(D_T)$-norm ($resp$. $L^2_v L^{\infty}_u$-norm,  $L^{\infty}_u L^2_v$-norm, $L^{\infty}_v L^2_u$-norm and  $L^{2}_uL^2_v$-norm) of a function $\phi$ can be understood as $\| \phi_{ex}\|_{L^2_uL^{\infty}_v(\tilde D_T)}$, where we extend $D_T$ to a rectangle region $\tilde D_T$ under $(u, v)$-coordinate and extend $\phi$ to $ \phi_{ex}$ trivially as zero in the region $\tilde D_T\setminus D_T$. }
\begin{gather}
\mathcal{W}_T:= \{\phi: [0, T]\times \mathbb{T}^1\rightarrow \mathbb{R};  \;\; 
\phi\in C(D_T),  \;\;\;\;\;\;\;\;\;\;\;\;\;\;\;\;\;\;\;\;\;\;\;\;\;\;\;\;\;\;\;\;\;\;\;\; \;\;\;\;\;\;\;\;\;\;\;\;\;\;\;\;\;\;\;   \notag \\
\;\;\;\;\;\;\;\;\;\;\;\;\;\;\;\;\;\;\;\;\;\;\;\;\;\;\; \;\;\;\;\;\;\;\;\;(\phi_x, \phi_t)\in C([0, T]; L^2(\mathbb{T}^1)),  \phi_v\in   L^2_v L^{\infty}_u(D_T),  \phi_u\in  L^2_u 
 L^{\infty}_v(D_T)\},  \notag 
\end{gather}
with 
\begin{gather*}
\|\phi\|_{\mathcal{W}_T}:=\|\phi_v\|_{L^2_v L^{\infty}_u\cap L^{\infty}_u L^2_v (D_T)}+ \|\phi_u\|_{L^2_u L^{\infty}_v\cap L^{\infty}_v L^2_u (D_T)}  \;\;\;\;\;\;\;\;\;\;\;\;\;  \;\;\;\;\;\;\;\; \notag \\
\;\;\;\;\;\;\;\;\;\;\;\;\;\;\;\;\;\;\;\;\;\;\;\;\;\;\;\;\;\;\;\;\;\;\;\;\;\;\;  \;\;\;\;\;\;\;\; \;\;\;\;\;\; + \|\phi\|_{C(D_T)}  +  \|(\phi_x, \phi_t)\|_{C([0, T]; L^2(\mathbb{T}^1))}. 
\end{gather*}
\end{defi}
\vspace{2mm}

The well-posedness of the damped/controlled wave maps is based on direct energy estimates and the null structure. The proof of the similar results for the sphere target case in \cite{KX} can be directly generalized to the general Riemannian manifold target case. Thus we omit the detailed proofs but only comment on several minor modifications.

\begin{lemma}\label{well:wm1}
Let $b: \mathbb{T}^1\rightarrow \mathbb{R}$ be a smooth function. Let $\tilde T>0$. There exists an explicit constant $C_w$ such that, for any $T\in (0, \tilde T]$, for any initial state $\phi[0]\in \mathcal{H}(\mathbb{T}^1; \mathcal{N})$,  and for any source term $f: [- T,  T]\times \mathbb{T}^1\rightarrow \mathbb{R}^{N}$ in $L^2_{t,x}(D_T)$, the equation 
\begin{equation*}
        \Box \phi -   \Pi(\phi)\left(\partial_{\nu}\phi, \partial^{\nu}\phi\right)= b \partial_t \phi+   \Pi_T(\phi) f,  
\end{equation*}
admits a unique solution.   This unique solution belongs to $C([0, T]; \mathcal{H}(\mathbb{T}^1; \mathcal{N}))$ and satisfies the following estimates: 
  \begin{gather*}
  \|\phi[t_2]\|_{\dot{H}^1_x\times L^2_x}\leq C_w \left(\|\phi[t_1]\|_{\dot{H}^1_x\times L^2_x}+ T^{1/2}\|f\|_{L^2_{t, x}(D_T)} \right)\;\; \forall t_1, t_2\in [-T, T],\\
  \|\phi_v\|_{L^{2}_v L^{\infty}_u\cap L^{\infty}_u L^{2}_v(D_T)}+  \|\phi_u\|_{L^{2}_u L^{\infty}_v\cap L^{\infty}_v L^{2}_u(D_T)}\leq C_w \left(\|\phi[0]\|_{\dot{H}^1_x\times L^2_x}+  T^{1/2}\|f\|_{L^2_{t, x}(D_T)}\right). 
\end{gather*}
\end{lemma}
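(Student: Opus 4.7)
The plan is to set up a fixed-point iteration in the space $\mathcal{W}_T$ after reformulating the equation in null coordinates $u=t+x$, $v=t-x$. Using the identities \eqref{null-indentity}, the equation becomes
\begin{equation*}
-\phi^i_{uv} = S^i_{jk}(\phi)\,\phi^j_u\phi^k_v + \tfrac14 b(x)\phi^i_t + \tfrac14 \bigl(\Pi_T(\phi)f\bigr)^i,
\end{equation*}
and can be integrated along null characteristics to give a Duhamel formula for $\phi_u$ and $\phi_v$. I would look for a fixed point on the closed ball in $\mathcal{W}_T$ of radius $C(R_0 + T^{1/2}\|f\|_{L^2_{t,x}(D_T)})$, where $R_0 := \|\phi[0]\|_{\dot H^1\times L^2}$ and $C$ depends only on $\tilde T$ and on uniform bounds for $S_{jk}$ and $\Pi_T$ in a tubular neighborhood of $\mathcal{N}$ (using compactness of $\mathcal{N}$ and the smooth extension from Lemma \ref{lem:secfunext}).

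The central analytic input is the null-form bilinear estimate
\begin{equation*}
\|\phi_u\,\psi_v\|_{L^1_uL^2_v(D_T)} \lesssim \|\phi_u\|_{L^2_u L^\infty_v}\,\|\psi_v\|_{L^\infty_u L^2_v},
\end{equation*}
together with the symmetric counterpart in $L^1_v L^2_u$; these control $S(\phi)(\phi_u,\phi_v)$ in precisely the norm needed for the wave-equation energy estimate. The linear-in-$\phi_t$ damping term $b\phi_t$ and the projected forcing $\Pi_T(\phi)f$ are placed in $L^2_{t,x}(D_T)$ and handled by the standard $L^2_{t,x} \to C_t(\dot H^1\times L^2)$ estimate for the one-dimensional wave equation, which is what produces the $T^{1/2}$ factor in the statement.

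For contraction I would expand the differences $S(\phi_1)(\partial\phi_1,\partial\phi_1) - S(\phi_2)(\partial\phi_2,\partial\phi_2)$ and $\Pi_T(\phi_1)f - \Pi_T(\phi_2)f$ using smoothness of $S_{jk}$ and $\Pi_T$ and the $\mathcal{W}_T$-control of the iterates; the contraction factor is small for small $T$, yielding local existence. The geometric constraint $\phi(t,\cdot)\in\mathcal{N}$ is preserved because $\Pi(\phi)(\partial_\nu\phi,\partial^\nu\phi)$ lies in $N_\phi\mathcal{N}$ while $\Pi_T(\phi)f$ is tangential, so a distance-to-$\mathcal{N}$ computation together with $\phi(0,\cdot)\in\mathcal{N}$ forces $\phi$ to remain on $\mathcal{N}$. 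Globalization to any $T\le\tilde T$ then comes from the energy identity: pairing with $\phi_t$ and using $\Pi(\phi)(\partial_\nu\phi,\partial^\nu\phi)\perp T_\phi\mathcal{N}$ yields
\begin{equation*}
\tfrac{d}{dt}E(\phi[t]) = -2\int_{\mathbb{T}^1} b|\phi_t|^2\,dx + 2\int_{\mathbb{T}^1}\langle\Pi_T(\phi)f,\phi_t\rangle\,dx,
\end{equation*}
and a Gronwall argument gives the announced $C_w$-bound, after which the local solutions patch together.

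The main obstacle I anticipate is a careful implementation of the bilinear null-form estimate on the non-rectangular region $D_T$ in $(u,v)$-coordinates, which must be handled via the zero-extension convention recorded in Definition \ref{def:WTnorm}. A secondary subtlety is that $\Pi_T(\phi)f$ is not a genuine external source but part of the nonlinear fixed-point map, so the Lipschitz dependence of $\Pi_T$ on $\phi$ must be tracked through the iteration; since this is linear in $f$, it causes no loss beyond the factor $\|f\|_{L^2_{t,x}}$. Once these two points are in place the argument is essentially the same as in \cite{KX}, the only change from the sphere case being that uniform bounds on $S_{jk}$ and $\Pi_T$ now come from compactness of $\mathcal{N}$ rather than from the explicit formula $\Pi(\phi)(v,w)=-\phi\langle v,w\rangle$.
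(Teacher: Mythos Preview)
Your outline is correct for existence, uniqueness, and the first (energy) estimate, and the fixed-point scheme you describe is a standard and valid route. The paper likewise defers the existence argument to \cite{KX} and concentrates on the a priori bounds.

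The meaningful difference is in the second estimate. You propose to control the nonlinearity $S^i_{jk}(\phi)\phi^j_u\phi^k_v$ via the bilinear null-form estimate and then close by contraction/bootstrap. The paper does something sharper: it computes
\[
\frac{d}{dv}|\phi_u|^2 \;=\; 2\,\phi_u\cdot\phi_{uv} \;=\; -\tfrac12\,\phi_u\cdot\Pi(\phi)(\partial_\nu\phi,\partial^\nu\phi)\;-\;\tfrac{b}{2}\,\phi_u\cdot\phi_t\;-\;\tfrac12\,\phi_u\cdot\Pi_T(\phi)f,
\]
and observes that the first term on the right \emph{vanishes identically}, since $\phi_u\in T_\phi\mathcal{N}$ while $\Pi(\phi)(\cdot,\cdot)\in N_\phi\mathcal{N}$. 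This geometric orthogonality removes the quadratic nonlinearity from the null-coordinate estimate entirely; what remains integrates to a bound on $\|\phi_u\|_{L^2_uL^\infty_v\cap L^\infty_vL^2_u}$ that is \emph{linear} in $\|\phi[0]\|_{\dot H^1\times L^2}+T^{1/2}\|f\|_{L^2_{t,x}}$, with constant $C_w$ independent of the size of the data. Your bilinear route would instead leave a term of order $\|\phi\|_{\mathcal{W}_T}^2$ to be absorbed, and the resulting constant after bootstrap would depend on the energy level. Since the lemma asserts a single $C_w$ valid for \emph{all} initial data, the orthogonality trick is what actually delivers the stated form of the second inequality; once you have established that the solution stays on $\mathcal{N}$ (which you argue correctly), you should run this pointwise computation directly as an a priori estimate rather than route the nonlinearity through a bilinear bound.
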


This lemma is a generalization of \cite[Equations (2.8)--(2.12) and Lemma 3.2]{KX} with the same proof. Indeed, the energy estimates are straightforward consequences of the time derivation of $E(t)$. The $(u, v)$ coordinate-based estimates are based on the null structure of the wave maps equations. For instance, concerning $\|\phi_u\|_{L^2_u L^{\infty}_v}$ estimates, observe that
\begin{align*}
    \frac{d}{dv} |\phi_u|^2&= 2 \phi_u\cdot \phi_{uv}= -\frac{1}{2} \phi_u \cdot \Pi(\phi)(\partial_{\nu}\phi, \partial^{\nu}\phi) - \frac{b}{2} \phi_u\cdot \phi_t - \frac{1}{2}\phi_u\cdot  \Pi_T(\phi) f \\
    &=  -\frac{b}{2} \phi_u\cdot \phi_t - \frac{1}{2} \phi_u\cdot  \Pi_T(\phi) f.
\end{align*}
Without loss of generality, we assume that $ T\leq 2\pi$, and extend the force term $f$ trivially to $\tilde f$ in  the region  $Q= \{(u, v): |u|\leq 18\pi, |v|\leq 18\pi\}$. We solve the equation in $Q$ and denote the solution $\tilde \phi$. Clearly, by the definition of the $L^2_u L^{\infty}_v$-norm given in the footnote of Definition \ref{def:WTnorm}, one has 
\begin{gather*}
  \|\phi_u\|_{L^2_u L^{\infty}_v((-T, T)\times \mathbb{T}^1)}=  \|(\phi_u)_{ex}\|_{L^2_u L^{\infty}_v(Q)} \leq    \|\tilde\phi_u\|_{L^2_u L^{\infty}_v(Q)}, \\
   \|\phi_u\|_{ L^{\infty}_vL^2_u ((-T, T)\times \mathbb{T}^1)}=  \|(\phi_u)_{ex}\|_{ L^{\infty}_vL^2_u(Q)} \leq    \|\tilde\phi_u\|_{ L^{\infty}_v L^2_u(Q)}
\end{gather*}

Thanks to the direct energy estimate
\begin{equation*}
    \int_{-18\pi}^{18\pi}  \int_{-18\pi}^{18\pi} |\tilde\phi_u|^2(u,  v) \, du dv\lesssim \left( E(0)+ \|\tilde f\|_{L^1_{t}L^2_x(Q)}\right)^2\lesssim  \left( E(0)+ T^{1/2}\| f\|_{L^2_{t, x}(D_T)}\right)^2,
\end{equation*}
hence there exists some $\bar v\in (-18\pi,  18\pi)$ such that 
\begin{equation*}
    \int_{-18\pi}^{18\pi} |\tilde\phi_u|^2(u, \bar v) \, du\lesssim \left( E(0)+ T^{1/2}\| f\|_{L^2_{t, x}(D_T)}\right)^2,
\end{equation*}
We assume that $\bar v=-18\pi$. 
Thus, for any  $u, v\in (-18\pi, 18\pi)$ one has
\begin{align*}
    |\tilde\phi_u(u, v)|^2&\lesssim \left(|\tilde\phi_u|(u, -18\pi)+ \int_{-18\pi}^{v}|\tilde\phi_t(u, v_0)|+|\tilde f(u, v_0)| \, d v_0 \right)^2 \\
    &\lesssim |\tilde\phi_u|^2(u, -18\pi)+ \int_{-18\pi}^{18\pi} |\tilde\phi_t(u, v_0)|^2 \, dv_0+ T\int_{-18\pi}^{v}| \tilde f(u, v_0)|^2 \, d v_0,
\end{align*}
where we have used the fact that $f$ is supported in $t\in [-T, T]$.
 Hence, 
\begin{align*}
    \|\tilde\phi_u\|_{L^2_u L^{\infty}_v(Q)}^2 &\lesssim   \int_{-18\pi}^{18\pi} \left(|\tilde\phi_u|^2(u, -18\pi)+ \int_{-18\pi}^{18\pi} |\tilde\phi_t(u, v_0)|^2 \, dv_0+ T \int_{-18\pi}^{18\pi}|f(u, v_0)|^2 \, d v_0\right) \, d u \\
    &\lesssim \left(\|\tilde\phi[0]\|_{\dot{H}^1_x\times L^2_x}+ T^{1/2}\|\tilde f\|_{L^2_{t, x}(D_T)} \right)^2.
\end{align*}
Finally, it is obvious that 
\begin{equation*}
   \|\tilde\phi_u\|_{ L^{\infty}_vL^2_u(Q)}\leq  \|\tilde\phi_u\|_{L^2_u L^{\infty}_v(Q)}.
\end{equation*}

The following property is an analog of the result shown in \cite[Lemma 2.4]{CKX}.

\begin{lemma}\label{lem-conti-dep-inh}
Let $b: \mathbb{T}^1\rightarrow \mathbb{R}$ be a smooth function. Let $ T >0$ and $M>0$. 
There exists an effectively computable constant 
$C= C( T, M)$ such that,  for any initial states  $\phi[0], \varphi[0]  \in \mathcal{H}(\mathbb{T}^1; \mathcal{N})$, and any source terms $f, g$ in $L^2_{t, x}(D_T)$ satisfying
\begin{gather*}
\lVert \phi[0]\lVert_{\dot{H}^1\times L^2}+ \lVert \varphi[0]\lVert_{\dot{H}^1\times L^2}+ \lVert f\lVert_{L^2_{t, x}(D_T)}+ \lVert g\lVert_{L^2_{t, x}(D_T)}\leq  M,
\end{gather*} the unique solutions of
\begin{equation*}\label{eq:phi1}
    \Box \phi -  \Pi(\phi)\left(\partial_{\nu}\phi, \partial^{\nu}\phi\right)= b\partial_t \phi+  \Pi_T(\phi) f  \; \textrm{ with initial state } \phi[0],
\end{equation*}
and
\begin{equation*}\label{eq:varphi1}
    \Box \varphi - \Pi(\varphi)\left(\partial_{\nu}\varphi, \partial^{\nu}\varphi\right)=  b\partial_t \varphi+  \Pi_T(\phi) g   \; \textrm{ with initial state } \varphi[0],
\end{equation*}
satisfy 
\begin{equation*}
    \|w\|_{\mathcal{W}_T}\leq C \left(\lVert w[0]\lVert_{\mathcal{H}}+ \lVert f-g\lVert_{L^2_{t, x}(D_T)}\right).
\end{equation*}
where $w:= \phi- \varphi$. 
\end{lemma}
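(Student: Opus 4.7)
The plan is to apply the linear-wave $\mathcal{W}_T$-estimate underlying the proof of Lemma \ref{well:wm1} to the equation satisfied by the difference $w := \phi - \varphi$, treating the nonlinear discrepancy as a source in $L^2_{t,x}(D_T)$. Subtracting the two equations (and using that both are forced by $\Pi_T(\phi)$, not by $\Pi_T(\phi)$ vs.\ $\Pi_T(\varphi)$) yields
\begin{equation*}
\Box w - b\,\partial_t w \;=\; \bigl[\Pi(\phi)(\partial_\nu \phi,\partial^\nu \phi) - \Pi(\varphi)(\partial_\nu \varphi,\partial^\nu \varphi)\bigr] \;+\; \Pi_T(\phi)(f-g).
\end{equation*}
Telescoping the bracket,
\begin{equation*}
\bigl[\Pi(\phi)-\Pi(\varphi)\bigr](\partial_\nu \phi,\partial^\nu \phi) \;+\; \Pi(\varphi)(\partial_\nu w,\partial^\nu \phi) \;+\; \Pi(\varphi)(\partial_\nu \varphi,\partial^\nu w),
\end{equation*}
splits the source into one Lipschitz piece of order $|w|\,|\partial \phi|^2$ and two bilinear pieces, each with exactly one factor of $\partial w$. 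The smooth bounded coefficients $\Pi(\varphi)$ and $\int_0^1 D\Pi(\varphi+sw)\,ds$ are uniformly controlled since $\phi,\varphi$ stay in a fixed tubular neighborhood of $\mathcal{N}$.

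\textbf{Source estimate via the null structure.} Using the null identity \eqref{null-indentity}, every bilinear piece reduces to a sum of products $\partial_u w\cdot \partial_v \eta$ and $\partial_v w\cdot \partial_u \eta$ with $\eta\in\{\phi,\varphi\}$. The anisotropic H\"older pairing
\begin{equation*}
\|\partial_u w\cdot \partial_v \eta\|_{L^2(D_T)} \;\leq\; \|\partial_u w\|_{L^\infty_v L^2_u}\,\|\partial_v \eta\|_{L^2_v L^\infty_u} \;\lesssim\; \|w\|_{\mathcal{W}_T}\,\|\eta\|_{\mathcal{W}_T},
\end{equation*}
and its $u\leftrightarrow v$ twin control these bilinear terms by $M\|w\|_{\mathcal{W}_T}$, using the $\mathcal{W}_T$-bounds on $\phi,\varphi$ furnished by Lemma \ref{well:wm1}. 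For the Lipschitz piece,
\begin{equation*}
\bigl\|\,|w|\,|\partial\phi|^2\,\bigr\|_{L^2(D_T)} \;\leq\; \|w\|_{C(D_T)}\,\|\partial\phi\|_{L^4(D_T)}^2,
\end{equation*}
and the $\mathcal{W}_T$-norm controls $\|\partial \phi\|_{L^4(D_T)}^2 \lesssim \|\phi_u\|_{L^\infty_v L^2_u}\|\phi_u\|_{L^2_v L^\infty_u} + (\phi\to\varphi) \lesssim M^2$. Altogether,
\begin{equation*}
\|\mathcal{S}\|_{L^2(D_T)} \;\leq\; C(M)\,\|w\|_{\mathcal{W}_T} \;+\; \|f-g\|_{L^2(D_T)},
\end{equation*}
where $\mathcal{S}$ denotes the source on the right-hand side of the equation for $w$.

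\textbf{Closing the estimate.} Applying the linear counterpart of Lemma \ref{well:wm1} (which is exactly what its proof establishes, for $\Box w - b\partial_t w = \mathcal{S}$ with $\mathcal{S}\in L^2_{t,x}$) on a subinterval $[0,\tau]\subset[0,T]$ yields
\begin{equation*}
\|w\|_{\mathcal{W}_\tau} \;\leq\; C_w\Bigl(\|w[0]\|_{\mathcal{H}} \;+\; \tau^{1/2}\,\|f-g\|_{L^2(D_\tau)} \;+\; \tau^{1/2}\,C(M)\,\|w\|_{\mathcal{W}_\tau}\Bigr).
\end{equation*}
Choosing $\tau = \tau(M)$ small enough that $C_w\,C(M)\,\tau^{1/2} \leq 1/2$ absorbs the last term, and iterating over $\lceil T/\tau\rceil$ such subintervals yields the claimed bound on $[0,T]$ with constant $C = C(T,M)$. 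The principal obstacle is ensuring the quadratic nonlinearity does not cost a derivative: this is precisely where the null structure together with the anisotropic $L^\infty_v L^2_u$ / $L^2_v L^\infty_u$ components of the $\mathcal{W}_T$-norm is essential, since generic products of two $H^1$-waves would not otherwise close in $L^2_{t,x}$.
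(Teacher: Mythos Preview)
Your approach is correct and is exactly the one the paper indicates (the paper omits the proof, declaring it a direct generalization of \cite[Lemma~2.4]{CKX} via the same null-structure and $\mathcal{W}_T$ framework that underlies Lemma~\ref{well:wm1}). One step, however, does not close as written.

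In the Lipschitz piece you write the error term as $|w|\,|\partial\phi|^2$ and invoke $\|\partial\phi\|_{L^4}^2 \lesssim \|\phi_u\|_{L^\infty_v L^2_u}\|\phi_u\|_{L^2_v L^\infty_u}$. The second factor, $\|\phi_u\|_{L^2_v L^\infty_u}$, is \emph{not} a component of $\mathcal{W}_T$: Definition~\ref{def:WTnorm} places $\phi_u$ in $L^2_u L^\infty_v \cap L^\infty_v L^2_u$, and there is no general bound on $\|\phi_u\|_{L^2_v L^\infty_u}$ (already for a free wave $\phi_u$ is independent of $v$, so this norm is $|I_v|^{1/2}\sup_u|\phi_u|$). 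The remedy is that the Lipschitz piece itself carries the null structure you exploited in the bilinear terms: since $\Pi(\phi)(\partial_\nu\phi,\partial^\nu\phi) = -S_{jk}(\phi)\,\partial_\nu\phi^j\partial^\nu\phi^k$ and $\partial_\nu\phi^j\partial^\nu\phi^k = -2(\phi^j_u\phi^k_v+\phi^j_v\phi^k_u)$, the difference $[\Pi(\phi)-\Pi(\varphi)](\partial_\nu\phi,\partial^\nu\phi)$ is $O(|w|)\cdot\phi_u\phi_v$, and the same anisotropic H\"older pairing gives
\[
\bigl\|\,|w|\,\phi_u\phi_v\,\bigr\|_{L^2(D_T)} \;\le\; \|w\|_{C(D_T)}\,\|\phi_u\|_{L^\infty_v L^2_u}\,\|\phi_v\|_{L^2_v L^\infty_u} \;\lesssim\; M^2\,\|w\|_{\mathcal{W}_T}.
\]
With this correction your bootstrap on short intervals goes through unchanged.
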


Finally, we present a simple result for linear wave systems:
\begin{lemma}\label{lem:linea:well:gen} 
 Let ${\bf A}, {\bf B}, {\bf C}: \mathbb{T}^1\rightarrow \mathbb{R}^{K\times K}$ be $C^2$ matrix valued functions. Let $\tilde T>0$. There exists some $\mathcal{C}>0$ effectively computable such that, for any $T\in (0, \tilde T]$, for any initial state  $\varphi[0]\in \mathcal{H}(\mathbb{T}^1; \mathcal{N})$ and any source term $f\in L^2_{t, x}(D_T)$ the equation   
\begin{equation*}
   \Box \varphi+ {\bf A} \partial_x\varphi+ {\bf C} \partial_t\varphi+ {\bf B}\varphi=  f 
\end{equation*}
admits a unique solution, moreover, 
\begin{gather*} 
  \|{\bf \varphi}\|_{\mathcal{W}_T}\leq \mathcal{C} \left( \|\varphi[0]\|_{\mathcal{H}}+  T^{1/2}\|\alpha\|_{L^2_{t, x}(D_T)}\right).
\end{gather*}
\end{lemma}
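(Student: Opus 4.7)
The plan is to adapt the strategy used for Lemma \ref{well:wm1}, treating the lower-order terms $\mathbf{A}\partial_x\varphi$, $\mathbf{C}\partial_t\varphi$, $\mathbf{B}\varphi$ as a perturbation of the free wave operator. Since the coefficients are $C^2$ matrix-valued functions on the compact torus, their $L^\infty$ norms are finite. Existence and uniqueness then follow from a standard Picard iteration: write the equation as $\Box\varphi = f - \mathbf{A}\partial_x\varphi - \mathbf{C}\partial_t\varphi - \mathbf{B}\varphi$ and use Duhamel's formula for the free wave operator on $\mathbb{T}^1$. Convergence of the iterates in $C([0,T];\mathcal{H})$ for $T\leq \tilde T$ follows from a standard energy estimate on the difference.

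For the $\mathcal{W}_T$ bound, first establish the energy estimate. Multiplying the equation by $\partial_t\varphi$, integrating over $\mathbb{T}^1$, and using the $L^\infty$ bounds on the coefficients yields
\begin{equation*}
\frac{d}{dt} E(\varphi[t]) \leq C\bigl(\|\varphi[t]\|_{\mathcal{H}}^2\bigr) + \|f(t,\cdot)\|_{L^2_x}\|\partial_t\varphi(t,\cdot)\|_{L^2_x},
\end{equation*}
with $C$ depending on $\|\mathbf{A}\|_{L^\infty}$, $\|\mathbf{B}\|_{L^\infty}$, $\|\mathbf{C}\|_{L^\infty}$. Gronwall's inequality then gives $\|\varphi[t]\|_{\mathcal H} \lesssim e^{C\tilde T}\bigl(\|\varphi[0]\|_{\mathcal H} + T^{1/2}\|f\|_{L^2_{t,x}(D_T)}\bigr)$, which in particular controls the $L^\infty_v L^2_u$ and $L^\infty_u L^2_v$ pieces of the $\mathcal{W}_T$ norm for $\varphi_u$ and $\varphi_v$.

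Next, pass to the null coordinates $(u,v) = (t+x, t-x)$. Since $\partial_t = \partial_u+\partial_v$ and $\partial_x = \partial_u - \partial_v$, the equation rewrites as
\begin{equation*}
-4\varphi_{uv} = f - (\mathbf{A}+\mathbf{C})\varphi_u + (\mathbf{A}-\mathbf{C})\varphi_v - \mathbf{B}\varphi =: \widetilde f(u,v).
\end{equation*}
Extending $f$ trivially to a larger rectangle $Q$ in $(u,v)$-space as in the proof of Lemma \ref{well:wm1}, pick by averaging a slice $\bar v$ on which $\int |\varphi_u(u,\bar v)|^2\, du$ is controlled by $\|\varphi[0]\|_{\mathcal H}^2 + T\|f\|_{L^2_{t,x}}^2$, and then integrate
\begin{equation*}
\varphi_u(u,v) = \varphi_u(u,\bar v) - \tfrac{1}{4}\int_{\bar v}^v \widetilde f(u,v')\, dv'.
\end{equation*}
Applying Minkowski and Cauchy--Schwarz in $v'$ yields $\|\widetilde f\|_{L^1_v L^2_u} \lesssim T^{1/2}\bigl(\|f\|_{L^2_{t,x}} + \|\varphi\|_{L^\infty_t H^1_x} + \|\partial_t\varphi\|_{L^\infty_t L^2_x}\bigr)$, which together with the energy bound gives $\|\varphi_u\|_{L^2_u L^\infty_v(D_T)} \lesssim \|\varphi[0]\|_{\mathcal H} + T^{1/2}\|f\|_{L^2_{t,x}(D_T)}$. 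The corresponding bound on $\varphi_v$ follows symmetrically by integrating in $u$, and $\varphi \in C(D_T)$ then comes from $\varphi_u, \varphi_v \in L^1_{\mathrm{loc}}$ and the fundamental theorem of calculus.

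The main subtlety is the potential feedback loop in the null-frame estimate: the source $\widetilde f$ contains $\varphi_u$ and $\varphi_v$ themselves. This is resolved by closing the $(t,x)$-energy estimate \emph{first}, independently of the null-frame norms (since only $L^\infty_t L^2_x$ quantities of the derivatives appear in the energy Gronwall argument), and only afterwards treating these now-controlled quantities as known data when bounding the mixed-norm pieces. This decoupling is possible precisely because the $L^\infty_t L^2_x$ norms of $\varphi_u, \varphi_v$ are equivalent (up to constants) to $\|\varphi[t]\|_{\mathcal H}$, and do not require the full $\mathcal{W}_T$ estimate to control.
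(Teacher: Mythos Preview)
The paper does not supply its own proof of this lemma; it is stated as a ``simple result for linear wave systems'' and left without argument. Your approach---closing the $(t,x)$-energy estimate first via Gronwall, then bootstrapping the null-frame norms by integrating $\varphi_{uv}$ in $v$ exactly as in the sketch given for Lemma~\ref{well:wm1}---is the natural adaptation and is correct.

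One small point of care: once you extend $f$ by zero to the larger rectangle $Q$ and solve there, the lower-order pieces $(\mathbf{A}+\mathbf{C})\tilde\varphi_u$, $(\mathbf{A}-\mathbf{C})\tilde\varphi_v$, $\mathbf{B}\tilde\varphi$ appearing in $\widetilde f$ are \emph{not} supported in $\{t\in[0,T]\}$---only the genuine source $f$ is. So the Cauchy--Schwarz step in $v'$ yields the factor $T^{1/2}$ only on the $f$-contribution; the $\tilde\varphi$-contributions are instead bounded by $\sup_t\|\tilde\varphi[t]\|_{\mathcal H}$ with an $O(1)$ constant (depending on $\tilde T$). This does not affect your conclusion, since the energy estimate already gives $\sup_t\|\tilde\varphi[t]\|_{\mathcal H}\lesssim \|\varphi[0]\|_{\mathcal H}+T^{1/2}\|f\|_{L^2_{t,x}}$, and substituting this recovers the stated bound. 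Your observation that the energy estimate closes \emph{before} and independently of the null-frame estimate---precisely because the equation is linear and no geometric cancellation (as in the wave-maps second fundamental form) is needed---is exactly the right structural point.
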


\section{Quantitative propagation of smallness for wave maps}\label{sec:propsmall}

In this section we present quantitative propagation of smallness of the (damped) wave maps equation, as formulated in Proposition \ref{lem:2}, Corollary \ref{cor:propaga:linear}, and Proposition \ref{prop:psl2} below. 
This property shows that if  $\phi_t$ is small for $x$ in a small interval, then it is small for every $x$ in $\mathbb{T}^1$. It is 
closely related to unique continuation properties and observability inequalities, which play a central role in control problems.  See, for example, the works mentioned in Section \ref{sec:contro:wave}.

The current quantitative version for geometric wave equations, along with its proof, was originally introduced  by the last two authors in a previous paper \cite{KX} for the case of a sphere target. Here we generalize it to the general manifold case. 
In this paper, these results will serve as auxiliary lemmas in proving various properties: 
\begin{itemize}[leftmargin=2em]
    \item[\tiny$\bullet$] In Section \ref{sec:3}, we will use Proposition \ref{lem:2} in the proof of Theorem \ref{thm:dynamics}, which concerns the convergence of solutions of the damped wave maps equation to closed geodesics. 
    \item[\tiny$\bullet$] In Section \ref{sec:cont:4th}, we will take advantage of Proposition \ref{prop:psl2} to prove the observability inequality formulated in Lemma \ref{lem:ob:key-lin}, which serves as one of the three steps in establishing local exact controllability around closed geodesics. 
    \item[\tiny$\bullet$] In Section \ref{subsec:linearstabilityvarphi}, we will combine the propagation of smallness, Corollary \ref{cor:propaga:linear}, and a coercive estimate around geodesics with negative curvature to prove  Proposition \ref{prop:propagationofsmallness} concerning the stability of the linearized equation governing $(\varphi, \alpha)$.  
\end{itemize}

\begin{prop}\label{lem:2}
Let $M>0$.  Let $J\subset \mathbb{T}^1$ be an open interval.  Let $b, d: \mathbb{T}^1\longrightarrow \mathbb{R}$ be $C^2$ functions.  There exist some $q>0$ and $C_{q}>0$ effectively computable such that, for any $\delta_1, \delta_2 \in [0, 1]$, if  any solution of the inhomogeneous  wave maps equation
\begin{equation}\label{eq:pro21:eq}
   \Box \phi -   \Pi(\phi)\left(\partial_{\nu}\phi, \partial^{\nu}\phi\right)= b(x) \partial_t\phi + d(x) \partial_x\phi+  \Pi_T(\phi)f 
\end{equation}
satisfies
\begin{gather*}
    E(\phi[0])\leq M,\\  \int_{-16\pi}^{16\pi}\int_{\mathbb{T}^1}\chi_J |\phi_t|^2(t, x) \, dx dt\leq \delta_1  E(\phi[0]), \\ \int_{-16\pi}^{16\pi}\int_{\mathbb{T}^1}|f|^2(t, x) \, dx dt\leq \delta_2  E(\phi[0]),
\end{gather*}
then 
\begin{equation*}
\|\phi_t\|_{L^{\infty}_x(\mathbb{T}^1; L^2_t(-3\pi, 3\pi))}^2\leq C_q \left(\delta_1^{1/q}+  \delta_2^{1/q}\right) E(\phi[0]).
\end{equation*}
\end{prop}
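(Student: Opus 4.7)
The plan is to adapt the approach of \cite{KX} (where the sphere target was treated) to a general Riemannian target, using the extrinsic extension of the second fundamental form provided by Lemma~\ref{lem:secfunext}. The decisive observation is that in null coordinates $u=t+x$, $v=t-x$, equation \eqref{eq:pro21:eq} becomes
\begin{equation*}
    \phi_{uv}= \Pi(\phi)(\phi_u,\phi_v)-\tfrac{1}{4}\bigl(b(x)\phi_t+d(x)\phi_x+\Pi_T(\phi)f\bigr),
\end{equation*}
so that the null form $\partial_\nu\phi^j\partial^\nu\phi^k=-4\phi_u^j\phi_v^k$ factors as one $u$-derivative times one $v$-derivative. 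This factorization interacts favorably with the mixed norms $L^\infty_v L^2_u\cap L^2_u L^\infty_v$ (and $u\leftrightarrow v$) that Lemma~\ref{well:wm1} supplies as a priori bounds of size $\mathcal{O}(\sqrt{M})$.

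First, I would translate the hypothesis $\int_{-16\pi}^{16\pi}\!\int_J|\phi_t|^2\,dx\,dt\le\delta_1 E(\phi[0])$ into chord-wise smallness in the $(u,v)$-plane: the strip $[-16\pi,16\pi]\times J$ is a parallelogram in null coordinates, and by Fubini together with $\phi_t=\phi_u+\phi_v$, on a large set of $v$-characteristics (resp.\ $u$-characteristics) crossing $J$ the trace of $\phi_u+\phi_v$ is small in $L^2$ with gain $\delta_1^{1/2}$.

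Second, to upgrade this to smallness of $\phi_u$ alone along a $v$-characteristic (and symmetrically $\phi_v$ along a $u$-characteristic), I would use the integrated transport identity
\begin{equation*}
    \phi_u(u,v)=\phi_u(u,v_0)-\int_{v_0}^{v}\!\bigl[\Pi(\phi)(\phi_u,\phi_v)+\tfrac{1}{4}\bigl(b\phi_t+d\phi_x+\Pi_T(\phi)f\bigr)\bigr]\,dv',
\end{equation*}
together with the symmetric one for $\phi_v$. The null form lets me pair the uncontrolled factor in $\Pi(\phi)(\phi_u,\phi_v)$ against the a priori bound $\phi_v\in L^2_v L^\infty_u$ (resp.\ $\phi_u\in L^\infty_u L^2_v$), closing a Gronwall-type inequality. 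Each such propagation step enlarges the region where $\phi_u$ is $L^2$-small by a fixed unit in null coordinates, at the cost of an at-most-linear loss in $\delta_1+\delta_2$ controlled by Cauchy--Schwarz against the bounded $\mathcal{W}$-norm of the solution.

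Finally, I iterate the propagation a uniformly bounded number of times—determined only by $|J|$, the target window $[-3\pi,3\pi]$, and the generous cushion between $[-16\pi,16\pi]$ and $[-3\pi,3\pi]$—to cover the whole of $\mathbb{T}^1$ in both null variables. Since the number of iterations is independent of $\delta_1,\delta_2$, the accumulated polynomial loss takes the form $\delta^{1/q}$ for some fixed $q=q(|J|,M)>0$. The main obstacle is closing a single propagation step: this is exactly where the null structure is indispensable and where Lemma~\ref{lem:secfunext} allows the sphere-case argument of \cite{KX} to go through for a general compact $\mathcal{N}$, since it provides a bounded smooth $\Pi(\phi)$ for all $\phi$ in a neighborhood of $\mathcal{N}\hookrightarrow\mathbb{R}^N$. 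The linear drift terms $b\phi_t+d\phi_x$ are absorbed routinely using the boundedness of $b,d$ and the mixed $\mathcal{W}$-bounds.
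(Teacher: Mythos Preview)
Your proposal has a genuine gap at the second step. You propose to upgrade smallness of $\phi_t=\phi_u+\phi_v$ on $J$ to smallness of $\phi_u$ alone, then propagate via the transport equation for $\phi_u$. But $\phi_u$ is \emph{not} small: the proposition must cover solutions close to a closed geodesic $\gamma$ (the steady states, where $\phi_t\equiv 0$), and there $\phi_u\approx\tfrac12\gamma_x$, $\phi_v\approx-\tfrac12\gamma_x$, each of size $\sqrt{M}$. Only their sum is small. No Gronwall argument on the transport equation for $\phi_u$ can close, since the ``initial'' trace $\phi_u(u,v_0)$ is order one.

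The paper's route (inherited from \cite{KX}) is to propagate $\phi_t$ itself in the spatial direction. Since $\phi_t$ satisfies a wave equation, its d'Alembert representation at $x>x_0$ requires \emph{two} traces at $x=x_0$: both $\phi_t(\cdot,x_0)$ and $\phi_{tx}(\cdot,x_0)$. Fubini over $J$ supplies a good $x_0$ for the first. The second is one derivative too many; the key device is to control it only in the negative-order norm $\|\langle\partial_t\rangle^{-1}(\eta_\tau\,\phi_{tx})(\cdot,x_0)\|_{L^2_t}$ with a time cutoff $\eta_\tau$ at scale $\tau$, obtaining a bound $\lesssim\tau^{-2}\sqrt{\delta_1}\,E(\phi[0])$ by trading $\partial_x$ for $\partial_t$ via the equation (Lemma~\ref{lem:choosex0}). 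Each propagation step from $z$ to some $\bar z\in(z+S_0/2,z+S_0)$ halves the exponent---the new $\langle\partial_t\rangle^{-1}\phi_{tx}$ trace is bounded by the square root of the old data (Lemma~\ref{lem:pro:wm})---and iterating finitely many times, then optimizing in $\tau$, is what produces the $\delta^{1/q}$ loss. The null structure you correctly identify is indeed essential, but it enters when integrating by parts the terms $S^i_{jk}(\phi)\phi^j_u\phi^k_{t,v}$ in the d'Alembert formula for $\phi_t$, not in a transport argument for $\phi_u$.
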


This result essentially tells us that once $\phi_t$ is locally small, the state is roughly steady.
Typically, when dealing with wave equations in  euclidean space people investigate the observability inequality of the system
\begin{equation}
    \int_{-16\pi}^{16\pi}\int_{\mathbb{T}^1}\chi_J |\phi_t|^2(t, x) \, dx dt\geq c E(\phi[0]).  \notag
\end{equation}
Such an observability result can be considered only if 0 is the unique steady state. Otherwise, we can assume $\phi[t] \equiv \phi[0]$ as the other non-trivial steady state, which clearly does not satisfy the above mentioned observability inequality.  That is the reason we study the weaker, propagation of the smallness problem.

\vspace{3mm}
Similarly, we have the following analog results on linear wave equations with source terms.
\begin{cor}\label{cor:propaga:linear} 
  Let $J\subset \mathbb{T}^1$ be an open interval. Let ${\bf A}, {\bf B}, {\bf C}: \mathbb{T}^1\rightarrow \mathbb{R}^{K\times K}$ be $C^2$ matrix valued functions.  There exist some $q>0$ and $C_{q}>0$ effectively computable such that, for any $\delta_1, \delta_2 \in [0, 1]$, if  any solution of the inhomogeneous  wave  equation
\begin{equation*}
   \Box \varphi+ {\bf A} \partial_x\varphi+ {\bf C} \partial_t\varphi+ {\bf B}\varphi=  f 
\end{equation*}
satisfies
\begin{gather*}
\int_{-16\pi}^{16\pi}\int_{\mathbb{T}^1}\chi_J |\varphi_t|^2(t, x) \, dx dt\leq \delta_1  \|\varphi[0]\|_{H^1\times L^2(\mathbb{T}^1)}^2, \\ \int_{-16\pi}^{16\pi}\int_{\mathbb{T}^1}|f|^2(t, x) \, dx dt\leq \delta_2 \|\varphi[0]\|_{H^1\times L^2(\mathbb{T}^1)}^2,
\end{gather*}
then 
\begin{equation*}
\|\varphi_t\|_{L^{\infty}_x(\mathbb{T}^1; L^2_t(-3\pi, 3\pi))}^2\leq C_q \left(\delta_1^{1/q}+  \delta_2^{1/q}\right) \|\varphi[0]\|_{\mathcal{H}}^2.   
\end{equation*}
\end{cor}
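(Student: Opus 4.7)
The plan is to run the same argument as Proposition \ref{lem:2}, but in the simpler linear setting. Since the equation here is linear, by rescaling one may assume $\|\varphi[0]\|_{\mathcal{H}} = 1$, and the hypothesis analogous to $E(\phi[0])\leq M$ in Proposition \ref{lem:2} is no longer needed: the bound $M$ enters there only through the quadratic wave-maps nonlinearity $\Pi(\phi)(\partial_\nu\phi,\partial^\nu\phi)$, whereas here the matrix coefficients $\mathbf{A},\mathbf{B},\mathbf{C}$ play the role of the lower-order scalar terms $b\partial_t\phi$ and $d\partial_x\phi$ of Proposition \ref{lem:2}. The plan is therefore to transcribe the argument of the nonlinear proof, with the null-form treatment of the quadratic terms replaced by a crude Gr\"onwall absorption of the linear ones.

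First I would pass to null coordinates $(u,v)=(t+x,t-x)$, rewriting the equation as
$$-4\varphi_{uv} \; = \; f \; - \; \mathbf{A}\,\partial_x\varphi \; - \; \mathbf{C}\,\partial_t\varphi \; - \; \mathbf{B}\,\varphi,$$
with right-hand side linear in $(\varphi,\partial_t\varphi,\partial_x\varphi)$ and with bounded $C^2$ coefficients. A standard linear energy estimate (of the type in Lemma \ref{lem:linea:well:gen}) together with integration of the above identity along characteristics yields uniform control of $\varphi$ in $C([-16\pi,16\pi];\mathcal{H})$, as well as mixed $L^2_uL^\infty_v$ and $L^2_vL^\infty_u$ bounds on $\varphi_u,\varphi_v$, all dominated by $\|\varphi[0]\|_{\mathcal{H}} + \|f\|_{L^2_{t,x}}$. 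The smallness hypothesis on $\chi_J\varphi_t$ combined with $\partial_t=\partial_u+\partial_v$ and a pigeonholing among characteristics then provides smallness of $\varphi_u$ and $\varphi_v$ on an initial characteristic segment sitting inside $J\times(-16\pi,16\pi)$.

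Next I would propagate this smallness along null characteristics. Writing
$$\partial_u(\partial_v\varphi)(u,v) \; = \; -\tfrac{1}{4}\bigl(f - \mathbf{A}\partial_x\varphi - \mathbf{C}\partial_t\varphi - \mathbf{B}\varphi\bigr)(u,v),$$
and similarly with the roles of $u$ and $v$ swapped, one recovers $\partial_v\varphi$ at any point as the value on the initial characteristic plus an $L^1_u$-integral whose $L^2$ control comes from the a priori energy bound and from $\|f\|_{L^2_{t,x}}\leq\delta_2^{1/2}$. By a Cauchy–Schwarz in $u$ this transfers smallness from the initial characteristic to a neighbouring characteristic strip, at the price of one polynomial loss in $\delta_1,\delta_2$. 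Iterating this propagation step finitely many times — a number depending only on the geometry of $\mathbb T^1$ and on $J$ — covers $\mathbb T^1\times(-3\pi,3\pi)$ and yields the claimed $L^\infty_xL^2_t$-bound. The exponent $1/q$ tracks the power losses in this finite iteration, exactly as in the proof of Proposition \ref{lem:2} taken from \cite{KX}.

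The main difficulty is the bookkeeping of the $\delta_1,\delta_2$ powers through the iterations so that everything can be wrapped into a single exponent $q$ depending only on $J$ and on the $C^2$ norms of $\mathbf{A},\mathbf{B},\mathbf{C}$. A secondary point is that none of the lower-order terms $\mathbf{A}\partial_x\varphi$, $\mathbf{C}\partial_t\varphi$, $\mathbf{B}\varphi$ possess any null cancellation; unlike in Proposition \ref{lem:2}, however, this is harmless, since being linear with bounded coefficients they can be absorbed by Gr\"onwall on each characteristic. Beyond these points the proof is a direct linear simplification of Proposition \ref{lem:2}, and one may even take the same value of $q$ (up to an explicit multiplicative constant) as produced there.
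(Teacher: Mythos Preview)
Your proposal is correct and takes essentially the same approach as the paper: the paper itself simply states that the proof of Corollary~\ref{cor:propaga:linear} ``follows exactly the same argument as Proposition~\ref{lem:2}'' and omits it, and your outline is precisely that adaptation, with the correct observation that the linear lower-order terms $\mathbf{A}\partial_x\varphi$, $\mathbf{C}\partial_t\varphi$, $\mathbf{B}\varphi$ replace the role of $b\partial_t\phi$, $d\partial_x\phi$ in Proposition~\ref{lem:2} and that the null-form machinery for the quadratic wave-maps term is unnecessary here.
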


\begin{prop}\label{prop:psl2}
  Let $J\subset \mathbb{T}^1$ be an open interval. Let ${\bf A}, {\bf B}, {\bf C}: \mathbb{T}^1\rightarrow \mathbb{R}^{K\times K}$ be $C^2$ matrix valued functions.    There exists some effectively computable $q>0$ and $C_q>0$ such that, for any $\varepsilon\in (0, 1)$ if the solution of   \begin{equation}\label{eq:prop:psl2:main}
    \begin{cases}
       \Box \varphi+ {\bf A} \partial_x\varphi+ {\bf C} \partial_t\varphi+ {\bf B}\varphi= 0,\\
        \varphi[0]\in L^2\times H^{-1}(\mathbb{T}^1; \mathbb{R}^R),
    \end{cases}
\end{equation} 
satisfies 
    \begin{equation*}
        \int_{-16\pi}^{16\pi} \int_{\mathbb{T}^1} \chi_{J}| \varphi|^2(t, x) \, dx dt\leq \delta \|\varphi[0]\|_{L^2\times H^{-1}}^2
    \end{equation*}
    then 
    \begin{equation*}
    \|\varphi\|^2_{L^{\infty}_x(\mathbb{T}^1; L^2_t (-3\pi, 3\pi))}\leq C_q \delta^{1/q}  \|\varphi[0]\|_{L^2\times H^{-1}}^2.
    \end{equation*}
\end{prop}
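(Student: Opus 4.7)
The plan is to reduce Proposition~\ref{prop:psl2} to Corollary~\ref{cor:propaga:linear} by an antidifferentiation in time, which is the natural device for gaining one derivative of regularity on both the data and the observation. Heuristically, if $\Psi$ is chosen so that $\partial_t\Psi=\varphi$ with $\Psi[0]\in\mathcal{H}$ satisfying $\|\Psi[0]\|_{\mathcal{H}}\lesssim \|\varphi[0]\|_{L^2\times H^{-1}}$, and if $\Psi$ solves the same linear wave equation as $\varphi$, then the smallness of $\varphi=\partial_t\Psi$ on $\chi_J$ is exactly the hypothesis of Corollary~\ref{cor:propaga:linear} for $\Psi$, whose conclusion is precisely $\|\varphi\|_{L^\infty_x L^2_t(-3\pi,3\pi)}^2\lesssim \delta^{1/q}\|\varphi[0]\|_{L^2\times H^{-1}}^2$.

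To carry out this reduction, I would set
\[
\Psi(t,x):=\Psi_0(x)+\int_0^t\varphi(s,x)\,ds,
\]
so that $\partial_t\Psi=\varphi$ and $\partial_t\Psi(0,\cdot)=\varphi(0,\cdot)\in L^2$. Defining $g:=\Box\Psi+\mathbf{A}\partial_x\Psi+\mathbf{C}\partial_t\Psi+\mathbf{B}\Psi$, differentiation in $t$ and the equation \eqref{eq:prop:psl2:main} for $\varphi$ yield $\partial_t g=0$. Hence $g$ is time-independent, and $\Psi$ satisfies the homogeneous equation provided $g(0,\cdot)=0$. Using $\partial_t^2\Psi(0,\cdot)=\varphi_t(0,\cdot)$, this condition amounts to the elliptic ODE system on the circle
\[
L\Psi_0:=\partial_x^2\Psi_0+\mathbf{A}(x)\partial_x\Psi_0+\mathbf{B}(x)\Psi_0=\varphi_t(0,\cdot)-\mathbf{C}(x)\varphi(0,\cdot)\in H^{-1}(\mathbb{T}^1;\mathbb{R}^R).
\]
When this can be solved with $\|\Psi_0\|_{H^1}\lesssim \|\varphi[0]\|_{L^2\times H^{-1}}$, we obtain $\Psi[0]=(\Psi_0,\varphi(0,\cdot))\in\mathcal{H}$, and Corollary~\ref{cor:propaga:linear} applied to $\Psi$ closes the argument.

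The main obstacle lies in the fact that $L\colon H^1\to H^{-1}$ on $\mathbb{T}^1$ is Fredholm of index $0$ but need not be injective or surjective (e.g., when $\mathbf{A}\equiv\mathbf{B}\equiv 0$ the constants lie in $\ker L$). By Floquet theory for linear ODE systems on the circle, both $\ker L$ and $\mathrm{coker}\,L$ are finite-dimensional and consist of smooth sections. I would handle this obstruction by decomposing $L^2\times H^{-1}=\mathcal{G}\oplus\mathcal{F}$, where $\mathcal{G}$ is the closed, finite-codimensional subspace of data on which the right-hand side of the elliptic equation lies in $\mathrm{range}(L)$, and $\mathcal{F}$ is a finite-dimensional complement. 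On $\mathcal{G}$ a pseudoinverse of $L$ provides $\Psi_0$ of the required size and the above reduction applies. On $\mathcal{F}$ the linear wave flow is finite-dimensional, so by a standard unique-continuation/compactness argument the norms $\|\varphi[0]\|_{L^2\times H^{-1}}^2$ and $\int_{-16\pi}^{16\pi}\int_{\mathbb{T}^1}\chi_J|\varphi|^2\,dx\,dt$ are equivalent; the smallness hypothesis then directly forces $\|\varphi\|_{L^\infty_x L^2_t}^2\lesssim \delta\|\varphi[0]\|^2$ for $\mathcal{F}$-data. Summing the two contributions yields the claim. The delicate step will be the quantitative bookkeeping ensuring that the constants from the pseudoinverse of $L$ and from the finite-dimensional norm equivalence on $\mathcal{F}$ combine into a single bound of the form $C_q\delta^{1/q}$, uniform over $\varphi[0]$.
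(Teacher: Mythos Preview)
Your antidifferentiation idea is clean when $L=\partial_x^2+\mathbf{A}\partial_x+\mathbf{B}$ happens to be invertible on $\mathbb{T}^1$, and in that regime it does reduce Proposition~\ref{prop:psl2} to Corollary~\ref{cor:propaga:linear}. The genuine gap is in the cokernel handling. Your splitting $\varphi[0]=\varphi_G[0]+\varphi_F[0]$ does \emph{not} interact with the hypothesis the way you need: smallness of $\int\chi_J|\varphi|^2=\int\chi_J|\varphi_G+\varphi_F|^2$ gives no smallness for $\int\chi_J|\varphi_G|^2$ or $\int\chi_J|\varphi_F|^2$ individually. The norm equivalence on the finite-dimensional $\mathcal{F}$ only yields $\|\varphi_F[0]\|\lesssim\|\chi_J\varphi_F\|_{L^2_{t,x}}$, and by the triangle inequality this is $\lesssim\|\chi_J\varphi\|+\|\chi_J\varphi_G\|\lesssim\delta^{1/2}\|\varphi[0]\|+\|\varphi_G[0]\|$, which carries no decay in $\delta$. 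An absorption argument fails for the same reason: the product of the forward and inverse observation constants on $\mathcal{F}$ is always $\geq 1$. Nor can you apply Corollary~\ref{cor:propaga:linear} to the lifted $\Psi_G$ without first knowing $\|\chi_J\varphi_G\|$ is small. The ``quantitative bookkeeping'' you flag as delicate is in fact the whole obstruction, not a bookkeeping issue; as written, the two pieces cannot be summed.

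The paper avoids this by \emph{not} lifting to $H^1\times L^2$. It reproves the two propagation lemmas directly at the $L^2\times H^{-1}$ level: first locate $x_0\in J$ where $\|\varphi(\cdot,x_0)\|_{L^2_t}$ and $\|\langle\partial_t\rangle^{-1}(\eta\,\varphi_x)(\cdot,x_0)\|_{L^2_t}$ are small, then iterate a finite-step propagation in $x$. This is the same machinery as Proposition~\ref{lem:2} with $(\phi_t,\phi_{tx})$ replaced by $(\varphi,\varphi_x)$, and since it never inverts an elliptic operator on $\mathbb{T}^1$, no Fredholm obstruction appears.
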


\vspace{3mm}

In the sequel, we outline the proof of Proposition \ref{lem:2}, which is largely inspired by the previous paper \cite{KX}, while postponing some technical calculations to  Appendix \hyperref[setting(B1)]{B.1}. The proof of Corollary \ref{cor:propaga:linear} follows exactly the same argument as Propagation \ref{lem:2}, so we omit it. Although the proof of Proposition \ref{prop:psl2} is also similar, due to differences in regularity settings, we provide a sketch of the proof in  Appendix \hyperref[setting(B2)]{B.2}. 

\begin{proof}[Sketch of the proof of Proposition \ref{lem:2}]
    This proposition  is analogous to one contained in \cite[Proposition 2.2]{KX}. It crucially exploits the one-dimensional setup, and the symmetry between the $x$ and the $t$-variables in the one dimensional wave equation. Since the proof introduced in  \cite{KX} can be applied to the general setting of Riemannian manifolds to obtain Proposition \ref{lem:2}, we only sketch it. 
    Recall that  the proof in \cite{KX} is based on two auxiliary lemmas:
    \begin{itemize}
        \item[Step 1] Lemma 2.4 in \cite{KX}: find a point $x_0$ such that 
        \begin{equation*}
          \|\phi_t(\cdot, x_0)\|_{L^2_t(-16\pi, 16\pi)}^2+   \|\langle\partial_t\rangle^{-1}\left(\eta_{-15\pi}^{15\pi}[\tau] \phi_{tx}\right)(\cdot, x_0)\|_{L^2_t(\R)}^2
        \end{equation*}
        is small, where $\eta$ is a cutoff as specified below in \eqref{eq:def:truncfunc}. This formula roughly represents the value of $ \|
        \phi_t(\cdot, x_0)\|_{L^2_t}^2+  \| \phi_{x}(\cdot, x_0)\|_{L^2_t}^2$. 
         \item[Step 2] Lemma 2.5 in \cite{KX}: propagate the smallness of $ \|\phi_t(\cdot, x_0)\|_{L^2_t}^2+  \| \phi_{x}(\cdot, x_0)\|_{L^2_t}^2$ to $x\in (x_0, x_0+ S_0)$. By iterating this step we obtain the smallness of $\|\phi_t(\cdot, x)\|_{L^2_t}^2$ for every $x\in \mathbb{T}^1$.
    \end{itemize}
In the sequel, we briefly outline the generalization of these two lemmas,  namely Lemma \ref{lem:choosex0} and Lemma \ref{lem:pro:wm} in this paper, and leave further technical details are left to  Appendix \hyperref[setting(B1)]{B.1}. 
\vspace{2mm}

Select an even, smooth, non-negative,  truncated function $\mu$ such that 
\begin{equation*}
     \mu(t)=1 \textrm{ on } [0, 1/2], \;\; \mu(t)=0 \textrm{ on } [1, +\infty].
\end{equation*}
For every $\alpha<\beta$, and for every $\tau\in (0, 1)$ we define
\begin{equation}\label{eq:def:truncfunc}
    \eta_{\alpha}^{\beta}[\tau](t):= 
    \begin{cases}
    \mu(\frac{t-\beta}{\tau}), \;\; \forall t\in (\beta, +\infty), \\
    1, \;\; \forall t\in [\alpha, \beta], \\
    \mu(\frac{t-\alpha}{\tau}), \;\;  \forall t\in (-\infty, \alpha).
    \end{cases}
\end{equation}
For $\alpha+ 2\pi<\beta$ and $l\leq \pi$ we  define
\begin{gather}
    P_{\alpha, \beta}^l(y):= \{(t, x): x\in [y, y+l], t\in [\alpha+ x-y, \beta-x+y]\}, 
\end{gather}
and 
\begin{gather}\label{eq:def:norm:Pab}
    \|\psi\|_{L^{\infty}_x L^2_t(P_{\alpha, \beta}^l(y))}:= \sup_{x\in [y, y+l]}   \|\psi(t, x)\|_{L^2_t(\alpha+ x-y, \beta-x+y)}.
\end{gather}

To generalize the first lemma it suffices to replace the nonlinear term $(|\phi_t|^2- |\phi_x|^2)\phi$ by $\Pi(\phi)\left(\partial_{\nu}\phi, \partial^{\nu}\phi\right)$ and to add the  source term $\Pi_T(\phi)f$ which results in lower order corrections. Thus we omit the proof of this lemma. 
    
    \begin{lemma}\label{lem:choosex0}
Under the assumptions of Proposition \ref{lem:2}. 
There exists some effectively computable $C_0>0$ such that, for any  $\delta_1, \delta_2\in [0, 1], \tau\in (0, 1)$,
and for any solution of \eqref{eq:pro21:eq} satisfying
\begin{gather}
    E(\phi[0])\leq M,  \notag\\  \int_{-16\pi}^{16\pi}\int_{\mathbb{T}^1}\chi_J |\phi_t|^2(t, x) \, dx dt\leq \delta_1  E(\phi[0]), 
     \notag \\ \int_{-16\pi}^{16\pi}\int_{\mathbb{T}^1}|f|^2(t, x) \, dx dt\leq \delta_2  E(\phi[0])  \notag
\end{gather}
there exists some $x_0\in [0, 2\pi)$ such that
\begin{equation}
     \|\phi_t(t, x_0)\|_{L^2_t(-16\pi, 16\pi)}^2+   \|\langle\partial_t\rangle^{-1}\left(\eta_{-15\pi}^{15\pi}[\tau] \phi_{tx}\right)(t, x_0)\|_{L^2_t(\R)}^2 \leq C_0 \frac{\sqrt{\delta_1}}{\tau^2} E(\phi[0]).  \notag
\end{equation}
\end{lemma}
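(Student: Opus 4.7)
The plan is to adapt the pigeonhole--multiplier strategy of \cite[Lemma 2.4]{KX} to the general Riemannian target. Two modifications are needed: replace the sphere nonlinearity $(|\phi_t|^2-|\phi_x|^2)\phi$ by $\Pi(\phi)(\partial_\nu\phi,\partial^\nu\phi)$, and carry through the source $\Pi_T(\phi)f$ together with the first-order terms $b\phi_t$ and $d\phi_x$. None of these modifications affects the underlying null structure, so the argument transfers provided we use the smooth bilinear extension from Lemma \ref{lem:secfunext} and the well-posedness/energy estimates of Lemma \ref{well:wm1}.

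Put $F(x) := \|\phi_t(\cdot,x)\|_{L^2_t(-16\pi,16\pi)}^2$ and $G(x) := \|\langle\partial_t\rangle^{-1}(\eta\phi_{tx})(\cdot,x)\|_{L^2_t(\mathbb{R})}^2$ with $\eta := \eta_{-15\pi}^{15\pi}[\tau]$. The bound $\int_J F(x)\,dx \le \delta_1 E$ is immediate from the hypothesis, so the heart of the matter is controlling $\int_J G(x)\,dx$. Set $h := \eta\phi_t$ and $H := \langle\partial_t\rangle^{-1}h$. Since $\eta$ depends only on $t$, one has $\eta\phi_{tx} = \partial_x h$ and hence $G(x) = \|H_x(\cdot,x)\|_{L^2_t}^2$. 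Applying $\langle\partial_t\rangle^{-1}$ to the identity $\Box h = \eta\Box\phi_t - \eta''\phi_t - 2\eta'\phi_{tt}$ (and absorbing $\phi_{tt}$ via a $\partial_t$-integration-by-parts against the smoother $\langle\partial_t\rangle^{-1}$) yields $\Box H = \tilde R$, where $\tilde R$ is a sum of $\langle\partial_t\rangle^{-1}$-smoothed terms: $\eta^{(k)}\phi_t$ for $k=0,1,2$, the null-form nonlinearity $\Pi(\phi)(\partial_\nu\phi,\partial^\nu\phi)$, and the source $\Pi_T(\phi)f$.

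Multiplying $\Box H = \tilde R$ by $\chi(x)^2 H$, with $\chi$ a smooth cutoff supported in $J$ and equal to $1$ on a smaller interval $J'\subset J$, and integrating over $\mathbb{R}_t\times\mathbb{T}^1_x$ (the temporal boundary terms vanish by compact support of $H$ in $t$), one obtains
\[
\int\chi^2 H_x^2\,dt\,dx \;=\; \int\chi^2 H_t^2\,dt\,dx \;-\; 2\int\chi\chi' H_x H\,dt\,dx \;+\; \int\chi^2 H\,\tilde R\,dt\,dx.
\]
The first right-hand term is $\le \delta_1 E$ since $H_t = (\partial_t\langle\partial_t\rangle^{-1})h$ and $\|\partial_t\langle\partial_t\rangle^{-1}\|_{L^2_t\to L^2_t}\le 1$. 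The second, by Cauchy--Schwarz with $\|\chi'\|_{L^\infty}\lesssim\tau^{-1}$, is $\lesssim \tau^{-1}\|H_x\|_{L^2(J\times\mathbb{R})}\,\|H\|_{L^2(J\times\mathbb{R})} \lesssim \tau^{-1}\sqrt{E}\,\sqrt{\delta_1 E} = C\tau^{-1}\sqrt{\delta_1}\,E$. The third is the delicate one: using $\|\eta^{(k)}\|_{L^\infty}\lesssim\tau^{-k}$, the $L^2_t$-boundedness of $\langle\partial_t\rangle^{-1}$, the null structure (which, in one space dimension, permits $\Pi(\phi)(\partial_\nu\phi,\partial^\nu\phi)$ to be estimated in $L^\infty_t L^2_x$ by $CE^{3/2}$ via the embedding $\dot H^1\hookrightarrow L^\infty$), and the hypothesis on $f$, one obtains a bound $\lesssim \tau^{-2}(\sqrt{\delta_1}+\sqrt{\delta_2})E$. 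A Chebyshev argument on $\int_{J'}G(x)\,dx$, together with the $F$-bound by a standard union-refinement, then produces the desired $x_0\in J'$.

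The main technical obstacle is the careful accounting of $\tau$-powers in the $\tilde R$-term, requiring the combination of the null structure with $\langle\partial_t\rangle^{-1}$-smoothing to prevent the loss of either smallness or a power of energy. In the sphere case of \cite{KX}, this is streamlined by the explicit formula $\Pi(\phi)(v,w) = -\phi\langle v,w\rangle$; in the general Riemannian setting we use the bilinear extension $\Pi(\phi)^i = -S_{jk}^i(\phi)v^j w^k$ from Lemma \ref{lem:secfunext}, which preserves the bilinear null-form character pointwise and makes every integration-by-parts step in \cite{KX} carry over essentially verbatim, modulo keeping track of the smooth coefficients $S^i_{jk}$.
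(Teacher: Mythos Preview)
Your multiplier-plus-pigeonhole strategy is the right one and matches what the paper (following \cite[Lemma~2.4]{KX}) does. But your stated conclusion $\lesssim\tau^{-2}(\sqrt{\delta_1}+\sqrt{\delta_2})\,E$ does not prove the lemma, which demands the $\delta_2$-\emph{independent} bound $C_0\tau^{-2}\sqrt{\delta_1}\,E$. The paper flags exactly this point in the remark following the lemma: the source $\Pi_T(\phi)f$ enters $\tilde R$ only through terms whose $L^2_{t,x}$-norm is already uniformly bounded by $C_M\tau^{-2}\sqrt{E}$, regardless of $\delta_2\le 1$. In your pairing $\int\chi^2 H\,\tilde R$, all the smallness must---and does---come from $\|\chi H\|_{L^2}\le(\delta_1 E)^{1/2}$, since $\chi$ localizes to $J$. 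Your own argument, carried out carefully, therefore gives $\lesssim\tau^{-2}\sqrt{\delta_1}\,E$; you have simply overestimated by treating the source as an independent small contribution rather than absorbing $\sqrt{\delta_2 E}\le\sqrt E$ into the uniform $O_M(\tau^{-2}\sqrt E)$ bound on $\|\tilde R\|_{L^2}$.

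Two minor technical slips, neither fatal. First, $\chi$ is a cutoff in $x$, so $\|\chi'\|_{L^\infty}$ is a constant depending only on $J,J'$, not $\tau^{-1}$; you have conflated the spatial and temporal cutoffs. The cross term $-2\int\chi\chi' H_x H$ is best handled by Young's inequality, absorbing $\tfrac12\int\chi^2 H_x^2$ into the left side and bounding $\int(\chi')^2 H^2\lesssim_J\delta_1 E$. Second, the null form $\Pi(\phi)(\partial_\nu\phi,\partial^\nu\phi)$ is not in $L^\infty_t L^2_x$ with norm $E^{3/2}$ as you claim---$|\phi_x|^2$ is only in $L^\infty_t L^1_x$ from energy. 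What you actually need, and what the null structure gives via the $\mathcal W_T$-estimates of Lemma~\ref{well:wm1}, is the $L^2_{t,x}$ bound $\|\phi_u\phi_v\|_{L^2_{u,v}}\le\|\phi_u\|_{L^2_u L^\infty_v}\|\phi_v\|_{L^\infty_u L^2_v}\lesssim E$.
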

Notice that in the preceding inequality, the estimate is independent of $\delta_2\in [0, 1]$. This is becasue the source term $\Pi_T(\phi)f$ only appears in the estimates of $ \left\|\langle \partial_t\rangle^{-1} \left(\eta(t) \phi_{xx}\right)\right\|_{L^2_x(\mathbb{T}^1; L^2_t(\R))}$ and $ \left\|\langle \partial_t\rangle^{-1} \left(\eta_t(t) \phi_{xx}\right)\right\|_{L^2_x(\mathbb{T}^1; L^2_t(\R))}$, see \cite[equations (2.26)--(2.27)]{KX}, which are uniformly bounded by $ \frac{1}{\tau}\sqrt{E(\phi[0])}$ and $ \frac{1}{\tau^2}\sqrt{E(\phi[0])}$. 
    
\vspace{3mm} 
    
    As for the second lemma, instead of characterizing $\phi_t$ as the sum of 6 terms $\{f_k\}_{1\leq k\leq 6}$ as performed in \cite{KX}, in our current setting for each $i= 1, 2,..., N$ we shall write $\phi_t^i$ in terms of 6 functions.  This is due to the fact that $S^i_{jk}(\phi)\partial_{\nu}\phi^j \partial^{\nu}\phi^k$ is slightly more complicated than the nonlinear term $(|\phi_t|^2- |\phi_x|^2)\phi^i$ appearing in the sphere target case.  Otherwise, all the estimates remain the same despite more complicated formulas, and the appearance of the source term $\Pi_T(\phi)f$ is again irrelevant to the analysis.  We put the explicit description of $\phi^i_t$ and  the proof of this lemma in  Appendix \hyperref[setting(B1)]{B.1}.

\begin{lemma}\label{lem:pro:wm}
Under the assumptions of Proposition \ref{lem:2}. There exist  effectively computable values $S_0$ 
 $C_N$, and $ C_{S_0}$ such that,  for any  $\delta_2\in [0,  1], \tau\in (0, 1), \alpha\in [-15\pi, 0), \beta\in (2\pi, 15\pi], z\in [0, 4\pi]$, and for  any solution of \eqref{eq:pro21:eq}
satisfying
\begin{gather*}
    E(\phi[0])\leq M,\\  \int_{-16\pi}^{16\pi}\int_{\mathbb{T}^1}|f|^2(t, x) \, dx dt\leq \delta_2  E(\phi[0])   \notag
\end{gather*}
we have
\begin{equation*}
    \|\phi_t\|_{L^{\infty}_x L^2_t(P_{\alpha, \beta}^{S_0}(z))}\leq C_N \left(    \|\phi_t(t, z)\|_{L^2_t(\alpha, \beta)}+ \|\langle\partial_t \rangle^{-1}\left(\eta_{\alpha}^{\beta}[\tau](t) \phi_{tx}(t, z)\right)\|_{L^2_t(\R)}+  (\delta_2 E(\phi[0]))^{\frac{1}{2}}\right).  
\end{equation*}
Moreover, by denoting $\tau_0:= S_0/16$, there exists $\bar z\in (z+ S_0/2, z+ S_0)$ such that 
\begin{align*}
   &\;\;\;\;\; \left\| \langle\partial_t\rangle^{-1} \left(\eta_{\alpha+ S_0+\tau_0}^{\beta-S_0-\tau_0}[\tau_0](t)\phi_{tx}(t, \bar z)\right)\right\|_{L^2_t(\R)} \notag \\
   &\leq C_{S_0} \left(E(\phi[0])\right)^{\frac{1}{4}}\left(  \|\phi_t(t, z)\|_{L^2_t(\alpha, \beta)}+ \left\|\langle\partial_t \rangle^{-1}\left(\eta_{\alpha}^{\beta}[\tau_0](t) \phi_{tx}(t, z)\right)\right\|_{L^2_t(\R)}+  (\delta_2 E(\phi[0]))^{\frac{1}{2}} \right)^{\frac{1}{2}}.
\end{align*}
\end{lemma}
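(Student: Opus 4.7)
\textbf{Proof sketch of Lemma \ref{lem:pro:wm}.} The plan is to adapt the proof of \cite[Lemma 2.5]{KX} from the sphere case to a general target manifold, accommodating the extra lower order terms and the source $\Pi_T(\phi)f$. Pass to null coordinates $u=t+x$, $v=t-x$; using \eqref{null-indentity}, equation \eqref{eq:pro21:eq} becomes
\[
-4\phi^i_{uv} \;=\; S^i_{jk}(\phi)\bigl(\phi^j_u\phi^k_v+\phi^j_v\phi^k_u\bigr) \;+\; b(x)\phi^i_t+d(x)\phi^i_x \;+\; \bigl(\Pi_T(\phi)f\bigr)^i .
\]
Starting from the vertical segment $\{x=z\}\cap P^{S_0}_{\alpha,\beta}(z)$ and integrating along the two families of characteristics $u=\mathrm{const}$, $v=\mathrm{const}$, I would obtain, for every component $i=1,\dots,N$, a decomposition
\[
\phi^i_t(t,x) \;=\; \sum_{k=1}^{6} f^{\,i}_k(t,x) \quad \text{on } P^{S_0}_{\alpha,\beta}(z),
\]
where $f^{\,i}_1,f^{\,i}_2$ carry the boundary data $\phi^i_t(\cdot,z)$, $\phi^i_x(\cdot,z)$ along characteristics (the latter being the source of the cutoff multiplier $\eta^{\beta}_{\alpha}[\tau]$ and the $\langle\partial_t\rangle^{-1}\phi^i_{tx}$ factor on the right-hand side), $f^{\,i}_3,f^{\,i}_4,f^{\,i}_5$ are the null-form double integrals $\int\!\!\int S^i_{jk}(\phi)\phi^j_u\phi^k_v\,du\,dv$ grouped by orientation, and $f^{\,i}_6$ collects the lower order contributions from $b\phi_t$, $d\phi_x$ and $\Pi_T(\phi)f$.

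For the first four of these pieces the $L^\infty_xL^2_t$ bound on $P^{S_0}_{\alpha,\beta}(z)$ follows by the same one-dimensional integration argument as in \cite{KX}; the crucial point is that the quadratic pieces $f^{\,i}_3,f^{\,i}_4,f^{\,i}_5$ remain of null-form type $\phi_u\,\phi_v$, so the Strichartz-type bounds of Lemma \ref{well:wm1} on $\phi_u\in L^2_uL^\infty_v\cap L^\infty_vL^2_u$ and $\phi_v\in L^2_vL^\infty_u\cap L^\infty_uL^2_v$ give, after integration across a strip of $x$-width $S_0$, a gain of a small factor in $S_0$ that is absorbed into $C_N$ once $S_0$ is fixed small. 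The source term in $f^{\,i}_6$ is handled by Cauchy--Schwarz and the hypothesis on $f$, contributing $(\delta_2 E(\phi[0]))^{1/2}$; the smooth multipliers $b,d$ are treated by the same energy identity as in Lemma \ref{well:wm1}. The presence of the smooth, bounded coefficients $S^i_{jk}(\phi)$ coming from Lemma \ref{lem:secfunext} does not affect the estimates, as they act as pointwise multipliers on the null products.

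For the second estimate, I would argue by pigeonhole in $x$ on the interval $(z+S_0/2,\,z+S_0)$. Differentiating the identity for $\phi^i_t$ in $x$, one gets a representation of $\phi^i_{tx}$ along any vertical slice; after truncating by $\eta^{\beta-S_0-\tau_0}_{\alpha+S_0+\tau_0}[\tau_0]$ (so that the $\tau_0 = S_0/16$ cushion absorbs the boundary effects of the cutoff derivative and keeps the relevant characteristics inside $P^{S_0}_{\alpha,\beta}(z)$) and applying $\langle\partial_t\rangle^{-1}$, the integral of $\|\langle\partial_t\rangle^{-1}(\eta\,\phi_{tx}(\cdot,x))\|_{L^2_t}^{2}$ over $x\in (z+S_0/2,z+S_0)$ is bounded by the square of the right-hand side of the first estimate times a factor involving the energy. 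The $E(\phi[0])^{1/4}$ weight and the square-root appearing in the stated bound come from interpolating between the $L^2_t$-control of $\phi_t$ on $P^{S_0}_{\alpha,\beta}(z)$ (just established) and the uniform $L^\infty_t L^2_x$ energy bound given by Lemma \ref{well:wm1}. Choosing $\bar z$ to realize the average then yields the claim. The main obstacle, relative to the sphere case, is keeping the null-form structure visible through the general $S^i_{jk}(\phi)$; but since only the bilinear pairing of $\phi_u$ against $\phi_v$ matters for the relevant $L^1_{u,v}$ estimates, the structural ingredient of \cite{KX} carries over with only cosmetic bookkeeping over the index $i$.
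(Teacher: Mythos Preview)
Your handling of the quadratic nonlinear pieces $f_3^{\,i},f_4^{\,i},f_5^{\,i}$ has a genuine gap. You propose to work directly with the equation for $\phi$, so that the bulk integrals carry the integrand $S^i_{jk}(\phi)\phi^j_u\phi^k_v$. The null-form bounds from Lemma~\ref{well:wm1} do give control of this quantity, but after integration across the strip of width $S_0$ the resulting contribution to $\|\phi_t(\cdot,x)\|_{L^2_t}$ is of size $\sqrt{S_0}\,E(\phi[0])$: a fixed $O(M)$ number, not a small multiple of $\|\phi_t\|_{L^\infty_xL^2_t(P)}$. Such a term cannot be ``absorbed into $C_N$'' --- the constant $C_N$ multiplies the small boundary data and the source contribution, not a standalone additive $O(1)$ term --- and it cannot be absorbed into the left-hand side either. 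The first estimate therefore does not close.

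The paper (following \cite{KX}) resolves this by first differentiating the equation in $t$, obtaining
\[
-\phi^i_{t,uv} = (\nabla S^i_{jk}(\phi)\cdot\phi_t)\phi^j_u\phi^k_v + S^i_{jk}(\phi)\bigl(\phi^j_u\phi^k_{t,v}+\phi^j_v\phi^k_{t,u}\bigr) + \tfrac14 a\,\phi^i_{tt} + g^i_t,
\]
and then integrating by parts in $v$ (resp.\ $u$) to move the derivative off $\phi^k_{t}$ in the second group. This produces boundary terms along the characteristics (the paper's $H^i_5,H^i_6$) and a bulk integrand $W^i$ in which \emph{every} summand carries an explicit factor of $\phi_t$. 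It is precisely this factor that makes all the nonlinear contributions bounded by $\sqrt{d}\sqrt{E(\phi[0])}\,\|\phi_t\|_{L^\infty_xL^2_t(P)}$, hence absorbable into the left-hand side for $d\le S_0$ small. Your sketch never performs this differentiation-plus-integration-by-parts step, and the structure you describe (integrand $\phi_u\phi_v$ with no $\phi_t$ factor) is exactly what makes the argument fail. Once the first estimate is fixed in this way, your pigeonhole argument for the second estimate is essentially the right one.
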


Proposition \ref{lem:2} follows by combining Lemma \ref{lem:choosex0} and Lemma \ref{lem:pro:wm}.

\end{proof}

\section{Dynamics of the locally damped wave maps}\label{sec:3}

This section is devoted to the proof of Theorem \ref{thm:dynamics}, which constitutes to the first intermediate result for the proof of Theorem \ref{thm1}. Note that in Section \ref{Sec:8}, a stronger exponential convergence result is shown provided the negative sectional curvature condition, namely  Theorem \ref{thm:stability}.

 Theorem \ref{thm:dynamics} is composed of two properties: 1) There exists a time $T_c = T_c(M, \delta)\geq 0$ such that any solution of the locally damped wave maps with energy smaller than $M$ will be  $\delta$-close to a geodesic for some time $t\in [0, T_c]$; 2) For any given initial state, the solution converges to a closed geodesic along a sequence of times.
 In the sequel, the proof is composed of four steps:
\begin{itemize}[leftmargin=2em]
   \item[\tiny$\bullet$] Give a propagation of smallness result; Lemma \ref{prop:uniformsmallness};  
    \item[\tiny$\bullet$] Present an auxiliary result; Lemma \ref{lem:extraction};  
    \item[\tiny$\bullet$] Prove the second property concerning the limiting closed geodesic for a given solution;
    \item[\tiny$\bullet$] Show the first property concerning the existence of a uniform time $T_c(M, \delta)$ in the preceding. 
\end{itemize}

\vspace{2mm}

\noindent {\bf  Step 1. A propagation of smallness result.}
The following lemma is a direct consequence of Proposition \ref{lem:2}. 
\begin{lemma}\label{prop:uniformsmallness}  Let $\mathcal{N}$ and $a(\cdot)$ satisfy the setting \blackhyperref{setting(S)}{$({\bf S})$}.  Given $M>0, \delta>0$, $A>0$ there is an effective computable $\tilde{\delta} = \tilde{\delta}(\delta,A, M)>0$ with the following property.
For any given $b\in \mathbb{R}$, and any function $\phi$ satisfying the locally damped equation \eqref{eq:dwm:ori} assuming the bound 
\begin{gather*}
    \|\phi[b]\|_{\mathcal{H}}\leq M, \\
\int_{I_{ex}} \int_{\mathbb{T}^1}a(x)\big|\phi_t\big|^2\,dx dt<\tilde{\delta}, 
\end{gather*}
then 
 \begin{align*}
 \big\|\phi_t\big\|_{L_x^\infty L_t^2(I\times \mathbb{T}^1)}<\delta,
 \end{align*}
 where the intervals $I:= (b, b+A)$ and $ I_{ex}:= (b- 13\pi, b+A+13\pi)$.
\end{lemma}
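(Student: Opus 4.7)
The plan is to reduce the lemma to Proposition \ref{lem:2} via a time-translation together with a finite covering of $I$ by short output intervals, using the energy monotonicity of the damped equation to control constants uniformly over the cover.

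After the shift $t \mapsto t - b$ we may assume $b = 0$, so $I = (0, A)$ and $I_{ex} = (-13\pi, A + 13\pi)$. Since $a$ is a nontrivial, nonnegative smooth function supported in $\omega$, we may fix an open interval $J \subset \omega$ and a constant $c_0 > 0$ with $c_0 \chi_J(x) \leq a(x)$ on $\mathbb{T}^1$. Pairing \eqref{eq:dwm:ori} with $\phi_t$ and using the orthogonality $\Pi(\phi)(\partial_\nu \phi, \partial^\nu \phi) \perp T_\phi \mathcal{N}$ yields the energy identity
\begin{equation*}
E(\phi[t_2]) + 2 \int_{t_1}^{t_2} \int_{\mathbb{T}^1} a(x) |\phi_t|^2 \, dx \, ds = E(\phi[t_1]) \quad (t_1 \leq t_2),
\end{equation*}
and hence $E(\phi[t]) \lesssim M^2$ for every $t \geq 0$.

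Next I would cover $I$ by short intervals $J_k := (t_k - 3\pi, t_k + 3\pi)$, with centers $t_k \in [3\pi, A - 3\pi]$ chosen in a $6\pi$-spaced arithmetic progression; there are at most $N(A) \lesssim 1 + A$ of them. The offset $3\pi$ is calibrated precisely so that, for each admissible $t_k$, the $32\pi$-long input interval $(t_k - 16\pi, t_k + 16\pi)$ of Proposition \ref{lem:2} lies inside $I_{ex}$. For each $k$, the time translate $\phi_k(t, x) := \phi(t + t_k, x)$ again solves the damped equation with $E(\phi_k[0]) \lesssim M^2$, and by the choice of $J$ and $c_0$,
\begin{equation*}
\int_{-16\pi}^{16\pi} \int_{\mathbb{T}^1} \chi_J |(\phi_k)_t|^2 \, dx \, dt \leq c_0^{-1} \int_{I_{ex}} \int_{\mathbb{T}^1} a(x) |\phi_t|^2 \, dx \, dt < c_0^{-1} \tilde{\delta}.
\end{equation*}
Applying Proposition \ref{lem:2} to $\phi_k$ (with $b(x) := a(x)$, $d \equiv 0$, $f \equiv 0$, and $\delta_2 = 0$) and then translating back in time produces the local smallness
\begin{equation*}
\|\phi_t\|_{L^\infty_x L^2_t(\mathbb{T}^1 \times J_k)}^2 \leq C_q (c_0 M^2)^{-1/q} \tilde{\delta}^{1/q} \cdot M^2.
\end{equation*}

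Summing over $k$ and using $\int_I |\phi_t(t, x)|^2 \, dt \leq \sum_k \int_{J_k} |\phi_t(t, x)|^2 \, dt$ for each $x$ yields
\begin{equation*}
\|\phi_t\|_{L^\infty_x L^2_t(I \times \mathbb{T}^1)}^2 \leq N(A) \, C_q (c_0 M^2)^{-1/q} \tilde{\delta}^{1/q} \cdot M^2,
\end{equation*}
which is strictly less than $\delta^2$ provided $\tilde{\delta} = \tilde{\delta}(\delta, A, M)$ is chosen sufficiently small. The only technical wrinkle, and the main obstacle to monitor, is the edge case $A < 6\pi$, in which no center $t_k \in [3\pi, A - 3\pi]$ exists; in that regime I would still apply a single instance of Proposition \ref{lem:2} centered at $t_0 = A/2$ (whose output interval already contains $I$), and absorb the resulting endpoint shortfall of at most $3\pi - A/2$ on each side into a correspondingly smaller choice of $\tilde{\delta}$. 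No new analytic ingredient beyond Proposition \ref{lem:2} and the standard energy identity is required.
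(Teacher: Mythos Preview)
Your reduction to Proposition~\ref{lem:2} via time translation and a finite cover is exactly what the paper intends (it merely asserts the lemma is ``a direct consequence of Proposition~\ref{lem:2}''), and the covering argument with centers $t_k\in[3\pi,A-3\pi]$ is the right way to make this precise. Two small points deserve attention.

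First, the displayed bound
\[
\|\phi_t\|_{L^\infty_x L^2_t(\mathbb{T}^1 \times J_k)}^2 \leq C_q (c_0 M^2)^{-1/q} \tilde{\delta}^{1/q} \cdot M^2
\]
tacitly assumes $\delta_1 := c_0^{-1}\tilde\delta / E(\phi_k[0]) \leq 1$, which is what Proposition~\ref{lem:2} requires. When $E(\phi_k[0]) < c_0^{-1}\tilde\delta$ that hypothesis fails; in this regime you should instead invoke the $\|\phi_u\|_{L^2_u L^\infty_v}$, $\|\phi_v\|_{L^2_v L^\infty_u}$ bounds of Lemma~\ref{well:wm1} (together with $\phi_t=\phi_u+\phi_v$) to get $\|\phi_t\|_{L^\infty_x L^2_t(J_k)}^2 \lesssim E(\phi_k[0]) < c_0^{-1}\tilde\delta$ directly.

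Second, your edge-case fix for $A < 6\pi$ does not work as written: centering at $t_0=A/2$ gives an input window $(A/2-16\pi,\,A/2+16\pi)$ that overshoots $I_{ex}$ by $3\pi-A/2$ on each side, and on that overshoot you only have the crude bound $\int a|\phi_t|^2 \lesssim M^2$, not $\tilde\delta$-smallness, so there is nothing to ``absorb into a smaller $\tilde\delta$''. In the paper's actual applications the damping smallness always comes from a much longer interval (e.g.\ $[T,\infty)$), so one may freely enlarge $I_{ex}$; alternatively the constants $16\pi/3\pi$ in Proposition~\ref{lem:2} carry slack and can be adjusted. Either way this is a numerology issue, not a flaw in your strategy.
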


Taking the inner product of \eqref{eq:wmdamped}  against $\phi_t$, and integrating by parts, we infer the basic energy monotonicity property 
\begin{equation}\label{eq:energydecrease} 
E(\phi[t_2]) = E(\phi[t_1]) - 2\int_{t_1}^{t_2} \int_{\mathbb{T}^1}a(x)\big|\phi_t\big|^2\,dx dt. 
\end{equation}
In particular, for any given initial state $\phi[0]$ the limit 
\[
\lim_{t\rightarrow +\infty}E(\phi[t]) =: E_{\infty}\geq 0
\]
exists, and given any $\delta>0$ and $\tilde{\delta} = \tilde{\delta}(\delta, 3, M)$ as in the preceding proposition, we can pick $T = T(\tilde{\delta},\phi)$ such that 
\[
\int_T^\infty \int_{\mathbb{T}^1}a(x)\big|\phi_t\big|^2\,dx dt < \tilde{\delta}. 
\]
Using Lemma \ref{prop:uniformsmallness}, this entails the existence of an interval $I\subset [T, \infty)$, $|I| = 3$, such that we have 
\begin{equation}
\big\|\phi_t\big\|_{L_x^\infty L_t^2(I\times \mathbb{T}^1)}<\delta.  \notag
\end{equation}

\noindent {\bf  Step 2. On the extraction of an approximate closed geodesic.} 
The goal is to extract an approximately closed geodesic $\tilde{\phi}$ provided that $\phi_t$ is small on $I\times \mathbb{T}^1$.
\begin{defi}\label{def:Ipsi}
  Given an interval $I_0= (0, 3)$. Define 
 $\psi_0= \psi_0(I_0)\in C_0^\infty(I)$ be a nonnegative smooth function which equals $1$ on the interval $\tilde{I}_0:= (5/4, 7/4)\subset I$. Furthermore, assume that $\psi_0(x)\leq 1$ for every $x\in I_0$ and that the normalization 
\[
\int_{I_0} \psi(t)\,dt = 1.
\]  
For different interval $I_s= (s, s+3)$, the function $\psi_s= \psi_s(I_s)$ is chosen via simple translation.
\end{defi}
The rest part of this step is devoted to the proof of the following lemma. 
\begin{lemma}\label{lem:extraction}
Let $M, \delta>0$.  Given $|I|= 3$ and let $\psi$ be given by Definition \ref{def:Ipsi}.  Suppose the function $\phi$ satisfies the locally damped equation \eqref{eq:dwm:ori} and has the bounds
\begin{gather}\label{eq:smallphit}
\big\|\phi_t\big\|_{L_x^\infty L_t^2(I\times \mathbb{T}^1)}\leq \delta, \\
\|\phi[t]\|_{\mathcal{H}}\leq M \; \forall t\in I.
\end{gather}
Then the function $\tilde \phi$ defined as 
\begin{equation}\label{eq:tildephi}
\tilde{\phi}(x): = \int_I \phi(t, x)\psi(t)\,dt \;\; \forall x\in \mathbb{T}^1
\end{equation}
satisfies 
\begin{gather}
\begin{split} \label{lem42:1}
     \|\tilde \phi- \phi\|_{L^{\infty}_{t, x}(I\times \mathbb{T}^1)}\leq C_0 \delta, \\
\big\|\tilde{\phi}\big\|_{H^2_x(\mathbb{T}^1)}\leq C_1, \\
 \| \tilde \phi_{xx}+ S_{jk}\tilde \phi_x^j \tilde \phi_x^k\|_{L^2_x(\mathbb{T}^1)}\leq C_1 \sqrt{\delta}, \\
 \big\|\sqrt{\psi(t)}\partial_x (\tilde{\phi} - \phi)\big\|_{L_{x, t}^2(I\times \mathbb{T}^1)}\leq C_1\sqrt{\delta},  
 \end{split}
\end{gather}
where the constants $C_0= C_0(M), C_1= C_1(M)$ do not depend on $\delta$ and $\phi$.
\end{lemma}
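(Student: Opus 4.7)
The plan is to prove the four bounds in sequence, using the damped wave maps equation to trade $x$-derivatives for $t$-derivatives, which are small in $L^\infty_x L^2_t$ by hypothesis. The key algebraic device throughout is that, setting $u := \phi - \tilde\phi$, one has $\int_I u(t,x)\psi(t)\,dt = 0$ at every $x\in\mathbb{T}^1$ by the normalization of $\psi$.

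I first handle the pointwise closeness bound: writing
\[
\tilde\phi(x) - \phi(t,x) = -\int_I\Big(\int_t^s \phi_\tau(\tau,x)\,d\tau\Big)\psi(s)\,ds,
\]
applying Cauchy--Schwarz in $\tau$ together with the hypothesis $\|\phi_t\|_{L^\infty_x L^2_t}\leq\delta$ and $|I|=3$ yields the first estimate. For the mean-weighted derivative bound, integrate by parts in $x$ on the circle:
\[
\|\sqrt\psi\, u_x\|_{L^2_{t,x}}^2 = -\int_I\int_{\mathbb{T}^1}\psi(t)\, u\cdot u_{xx}\,dx\,dt,
\]
and use the mean-zero property of $u$ against $\psi\,dt$ to replace $u_{xx}$ by $\phi_{xx}$ without incurring any cost. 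Substituting $\phi_{xx} = \phi_{tt} + S_{jk}(\phi)\phi_t^j\phi_t^k - S_{jk}(\phi)\phi_x^j\phi_x^k + a\phi_t$ from the equation and integrating $\phi_{tt}$ by parts in $t$, every term carries either a factor of $\phi_t$ (small in $L^\infty_x L^2_t$) or a factor of $u$ (small in $L^\infty_{t,x}$ by the first estimate), delivering the bound of order $\sqrt\delta$.

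For the $H^2_x$-bound on $\tilde\phi$, commute $\partial_x^2$ with the time integral, substitute from the equation, and integrate $\phi_{tt}$ twice by parts in $t$ to arrive at
\[
\tilde\phi_{xx} = \int_I\psi''\phi\,dt + \int_I\psi\big[S(\phi)\phi_t^j\phi_t^k + a\phi_t - S(\phi)\phi_x^j\phi_x^k\big]\,dt.
\]
Every piece except the last is bounded in $L^2_x$ by a constant depending only on $M$, while the last is a priori only in $L^1_x$ with bound $\lesssim M^2$. Since $\tilde\phi_x$ has zero mean on the torus, this already gives $\tilde\phi_x \in L^\infty_x$ with $\|\tilde\phi_x\|_{L^\infty}\leq C(M)$. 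For the approximate geodesic equation, I combine the above with the algebraic decomposition
\[
\int_I\psi\, S_{jk}(\phi)\phi_x^j\phi_x^k\,dt - S_{jk}(\tilde\phi)\tilde\phi_x^j\tilde\phi_x^k = J_1 + J_2 + J_3,
\]
where $J_1 := \big[\int_I\psi\, S_{jk}(\phi)\,dt - S_{jk}(\tilde\phi)\big]\tilde\phi_x^j\tilde\phi_x^k$, $J_2 := 2\tilde\phi_x^j\int_I\psi\, S_{jk}(\phi)(\phi_x^k - \tilde\phi_x^k)\,dt$, and $J_3 := \int_I\psi\, S_{jk}(\phi)(\phi_x - \tilde\phi_x)^j(\phi_x - \tilde\phi_x)^k\,dt$. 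The pointwise smallness of $\phi-\tilde\phi$ together with $\tilde\phi_x\in L^\infty_x$ give $\|J_1\|_{L^2_x}\lesssim\delta$; a Cauchy--Schwarz in $t$ combined with the mean-weighted derivative estimate and $\tilde\phi_x\in L^\infty_x$ gives $\|J_2\|_{L^2_x}\lesssim\sqrt\delta$.

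I expect the main obstacle to be the $L^2_x$-control of $J_3$: the weighted $L^2$-smallness of $\phi_x-\tilde\phi_x$ directly yields only $\|J_3\|_{L^1_x}\lesssim\delta$, which is short of what is needed. The plan for this step is to exploit the identity $u_x^j u_x^k = \partial_x(u^j u_x^k) - u^j u_{xx}^k$ inside the time integral, substitute for $u_{xx}$ via the damped wave equation, and combine the pointwise smallness of $u$, the smallness of $\phi_t$ in $L^\infty_x L^2_t$ from the hypothesis, and the bootstrapped $L^\infty_x$-control of $\tilde\phi_x$, so as to convert the $L^1_x$-bound on $J_3$ into the required $\|J_3\|_{L^2_x}\lesssim\sqrt\delta$. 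Once $J_1,J_2,J_3$ are all controlled in $L^2_x$ by $\sqrt\delta$, the third estimate follows, and the same identity upgrades $\tilde\phi_{xx}$ to $L^2_x$, completing the $H^2_x$-bound and closing the bootstrap.
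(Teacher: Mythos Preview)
Your approach to the first and fourth bounds is correct, and the mean-zero identity $\int_I\psi u\,dt=0$ in the fourth bound is a genuine simplification over the paper (which separately bounds the $\tilde\phi_{xx}$-contribution using the $H^2$ estimate it has already obtained).

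The gap is in your plan for $J_3$. The identity $u_x^j u_x^k=\partial_x(u^j u_x^k)-u^j u_{xx}^k$ does not close the estimate. After pulling the $x$-derivative through $\psi S(\phi)$, the term $\partial_x\bigl(\int_I\psi S(\phi)u^j u_x^k\,dt\bigr)$ still cannot be placed in $L^2_x$: the function under the derivative is only in $W^{1,1}_x$ (its derivative lies in $L^1_x$ by the very $\|J_3\|_{L^1_x}\lesssim\delta$ bound you have), not in $H^1_x$. And after substituting $u_{xx}^k=\phi_{xx}^k-\tilde\phi_{xx}^k$ via the equation, the second piece produces $\int_I\psi\,S(\phi)u^j S^k_{lm}(\phi)\phi_x^l\phi_x^m\,dt$, which is controlled only by $\delta\int_I\psi|\phi_x|^2\,dt$. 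Decomposing $\phi_x=\tilde\phi_x+u_x$ handles the $\tilde\phi_x$ part via your $L^\infty_x$ bound, but the remaining $u_xu_x$ part is $\delta K(x)$ with $K(x):=\int_I\psi|u_x|^2\,dt$ --- leaving a circular estimate in which the only new input is $\partial_x(\cdots)$, which you cannot bound in $L^2_x$.

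The ingredient you are missing is the paper's Step~(2): the \emph{uniform-in-$x$} bound $\int_I\psi(t)|\phi_x(t,\bar x)|^2\,dt\leq C(M)$ for every $\bar x\in\mathbb{T}^1$. This is where the geometry enters: since $\phi_x\in T_\phi\mathcal{N}$ is orthogonal to the second fundamental form, one has $\phi_x\cdot\phi_{xx}=\phi_x\cdot(\phi_{tt}+a\phi_t)$, so
\[
\partial_x\int_I\psi|\phi_x|^2\,dt=2\int_I\psi\,\phi_x\cdot(\phi_{tt}+a\phi_t)\,dt,
\]
and integrating from a point $x_1$ where the energy bound already controls $\int_I\psi|\phi_x(t,x_1)|^2\,dt$ yields an $O(\delta)$ correction. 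With this in hand, $K\in L^\infty_x$ with bound $C(M)$, and your $J_3$ closes at once via $\|J_3\|_{L^2_x}\le C\|K\|_{L^\infty_x}^{1/2}\|K\|_{L^1_x}^{1/2}\lesssim\sqrt\delta$. The paper itself uses the alternative decomposition (one factor $u_x$, one factor $\phi_x$) and the same uniform bound on $\int_I\psi|\phi_x|^2$.
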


\begin{proof}[Proof of Lemma \ref{lem:extraction}]
Without loss of generality, we assume that $I= (0, T)$ with $T= 3$.
By the definition of $\tilde \phi$, there is a constant $C_0$ does not depend on $M, \delta$ and $\phi$ such that 
\begin{align*}
    \|\tilde \phi\|_{L^{\infty}_x(\mathbb{T}^1)}&\leq C_0  \| \phi\|_{L^{\infty}_x(\mathbb{T}^1; L^1_t(I))}, \\
     \|\tilde \phi_x\|_{L^{\infty}_x(\mathbb{T}^1)}&\leq C_0  \| \phi_x\|_{L^{\infty}_x(\mathbb{T}^1; L^1_t(I))}.
\end{align*}
Since 
\begin{equation*}
    \phi(s, x)= \phi(t, x)+ \int_t^s \phi_t (u, x) du,
\end{equation*}
we know that for every $(t, x)\in I \times \mathbb{T}^1$, 
\begin{align*}
 \tilde  \phi(t, x)-     \phi(t, x)
&= \int_I \phi(s, x) \psi(s) ds- \phi(t, x) \\
&= \int_I \int_t^s \phi_t (u, x)  \psi(s) du ds.
\end{align*}
Thus 
\begin{equation*}
    \|\tilde \phi- \phi\|_{L^{\infty}_{t, x}(I\times \mathbb{T}^1)}\leq C_0 \delta.
\end{equation*}

Observe for any test function $\zeta\in C^\infty(\mathbb{T}^1; \mathbb{R}^N)$, one has  the identity 
\begin{equation}\label{eq:tildephizeta1}\begin{split}
-\int_{\mathbb{T}^1}\zeta_x\cdot \tilde{\phi}_x\,dx &= - \int_I \int_{\mathbb{T}^1}\zeta_x\cdot\phi_x(t, x)\psi(t)\,dx dt\\
&=  \int_I \int_{\mathbb{T}^1}\zeta\cdot\phi_{xx}(t, x)\psi(t)\,dx dt\\
& = -\int_I \int_{\mathbb{T}^1}\zeta\cdot\phi_t(t, x)\psi_t\,dx dt -  \sum_{i=1}^N\int_I \int_{\mathbb{T}^1}\psi(t)\zeta^i S^i_{jk}(\phi) \phi^j_x \phi^k_x\,dx dt\\
&\hspace{1cm} + \sum_{i=1}^N\int_I \int_{\mathbb{T}^1}\psi(t)\zeta^i S^i_{jk}(\phi) \phi^j_t \phi^k_t\,dx dt + \int_I \int_{\mathbb{T}^1} \psi(t) a(x) \zeta \cdot \phi_t\,dx dt
\end{split}\end{equation}
We aim to show that $\tilde \phi(\cdot)$ (and eventually, also  $\phi(t, \cdot)$) quantify as approximate closed geodesics, namely 
\begin{equation*}
    \tilde \phi_{xx}+  S^i_{jk}(\tilde{\phi}) \tilde{\phi}^j_x \tilde{\phi}^k_x\approx 0.
\end{equation*}
Thus, we can alternatively write the preceding relation in the form 
\begin{equation}\label{eq:tildephizeta2}\begin{split}
-\int_{\mathbb{T}^1}\zeta_x\cdot \tilde{\phi}_x\,dx +  \sum_{i=1}^N \int_{\mathbb{T}^1}\zeta^i S^i_{jk}(\tilde{\phi}) \tilde{\phi}^j_x \tilde{\phi}^k_x\,dx  = e_1 + e_2, 
\end{split}\end{equation}
where we set 
\begin{align*}
&e_1: = -\int_I \int_{\mathbb{T}^1}\zeta\cdot\phi_t(t, x)\psi_t\,dx dt + \int_I \int_{\mathbb{T}^1} \psi(t) a(x) \zeta \cdot \phi_t\,dx dt  \\
& \;\;\;\;\;\;\;\;\;\;\;\;\; \;\;\;\;\;\;\;\;\;\;\;\;\;   + \sum_{i=1}^N \int_I \int_{\mathbb{T}^1}\psi(t)\zeta^i S^i_{jk}(\phi) \phi^j_t \phi^k_t\,dx dt, \\
&e_2: = -\sum_{i=1}^N \int_I \int_{\mathbb{T}^1}\psi(t)\zeta^i S^i_{jk}(\phi) \phi^j_x \phi^k_x\,dx dt +  \sum_{i=1}^N \int_{\mathbb{T}^1}\zeta^i S^i_{jk}(\tilde{\phi}) \tilde{\phi}^j_x \tilde{\phi}^k_x\,dx.
\end{align*}
Then we proceed in the following four steps. Moreover, we shall successively select constants $C_2= C_2(M), ..., C_9= C_9(M)$, {\it all these constants only depend on the value of $M$, but does not depend on $\delta$ and $\phi$.}

\noindent {\it{(1): Bounding the term $e_1$}}. Using the Cauchy-Schwarz inequality both with respect to space and time, we infer 
\begin{align*}
\big|e_1\big|&\leq \big\|\zeta\big\|_{L_x^2(\mathbb{T}^1)}\cdot \big\|\phi_t\big\|_{L_x^\infty L_t^2(I\times \mathbb{T}^1)}\cdot \big\|\psi_t\big\|_{L_t^2(I)}+ \big\|\zeta\big\|_{L_x^2(\mathbb{T}^1)}\cdot \big\|\phi_t\big\|_{L_x^\infty L_t^2(I\times \mathbb{T}^1)}\cdot \big\|\psi\big\|_{L_t^2(I)} \\
&\;\;\;\;\;\;\;\; + C\cdot \big\|\phi_t\big\|_{L_x^\infty L_t^2(I\times \mathbb{T}^1)}^2\cdot \big\|\zeta\big\|_{L_x^2(\mathbb{T}^1)}\\
&\leq C_2 \delta\cdot \big\|\zeta\big\|_{L_x^2(\mathbb{T}^1)}.
\end{align*}

\noindent {\it{(2): Uniform control over averaged $\phi_x$}}. The following formal computation is made rigorous by approximating $\phi$ by smooth solutions. Note that there is some $x_1\in \mathbb{T}^1$ such that 
\begin{align*}
 \int_I \psi(t)\big|\phi_x(t,x_1)\big|^2\,dt\leq \frac{1}{2\pi}E(0). 
\end{align*}
Then we conclude for arbitrary $\bar{x}\in \mathbb{T}^1$ that\footnote{Observe that the nonlinear term in \eqref{eq:wmdamped} is perpendicular to $\phi_x$.} 
\begin{align*}
 \int_I \psi(t)\big|\phi_x(t,\bar{x})\big|^2\,dt -  \int_I \psi(t)\big|\phi_x(t,x_1)\big|^2\,dt &= 2\int_I\int_{x_1}^{\bar{x}}\psi(t) \phi_x\cdot \phi_{xx}\,dt dx\\
 & = 2\int_I\int_{x_1}^{\bar{x}} \psi(t)\phi_x\cdot [\phi_{tt} + a(x)\phi_t],
\end{align*}
and integration by parts transforms the last term on the right into 
\begin{align*}
2\int_I\int_{x_1}^{\bar{x}} \psi(t)\phi_x\cdot [\phi_{tt} + a(x)\phi_t] = 2\int_I\int_{x_1}^{\bar{x}} [-\phi_x\cdot\psi'(t)\phi_{t} - \phi_{xt}\cdot\psi(t)\phi_t + a(x)\psi(t) \phi_t\cdot\phi_x]\,dx dt.
\end{align*}
Direct integration leads to 
\begin{align*}
 -2\int_I\int_{x_1}^{\bar{x}} \phi_{xt}\cdot\psi(t)\phi_t\,dx dt = -\int_I\psi(t)\big|\phi_t(t,\bar{x})\big|^2\,dt + \int_I\psi(t)\big|\phi_t(t, x_1)\big|^2\,dt. 
\end{align*}
Again taking advantage of \eqref{eq:smallphit} and the a priori control of the energy, we conclude that 
\begin{align*}
\Big|2\int_I\int_{x_1}^{\bar{x}} \psi(t)\phi_x\cdot [\phi_{tt} + a(x)\phi_t] \Big|\leq C_3\delta.
\end{align*}
Picking $\delta>0$ sufficiently small, we can then in any event infer the a priori bound  
\begin{align*}
 \int_I \psi(t)\big|\phi_x(t,\bar{x})\big|^2\,dt\leq \frac{2}{\pi}E(0)\;\; \forall \bar{x}\in \mathbb{T}^1. 
\end{align*}

\noindent {\it{(3): Uniform control over $\tilde{\phi}_{xx}$.}} From \eqref{eq:tildephizeta2}, we infer the relation 
\begin{align*}
-\int_{\mathbb{T}^1}\zeta_x\cdot \tilde{\phi}_x\,dx = e_1 - \sum_i\int_I \int_{\mathbb{T}^1}\psi(t)\zeta^i S^i_{jk}(\phi) \phi^j_x \phi^k_x\, dx dt.
\end{align*}
Applying the Cauchy-Schwarz inequality both in space and time to the double integral, and taking advantage of the preceding two steps, we deduce that 
\begin{align*}
\big|e_1\big| + \big|\sum_i\int_I \int_{\mathbb{T}^1}\psi(t)\zeta^i S^i_{jk}(\phi) \phi^j_x \phi^k_x\, dx dt\big|\leq C_4\cdot \big\|\zeta\big\|_{L_x^2}. 
\end{align*}
Therefore, for any  $\zeta\in C^\infty(\mathbb{T}^1; \mathbb{R}^N)$,
\begin{equation*}
    \big|\langle \zeta, \tilde \phi_{xx} \rangle_{L^2(\mathbb{T}^1)}\big|\leq C_4 \big\|\zeta\big\|_{L_x^2}.
\end{equation*}
This then implies the a priori bound 
\begin{align*}
\big\|\tilde{\phi}_{xx}\big\|_{L^2_x(\mathbb{T}^1)}\leq C_4.
\end{align*}

\noindent {\it{(4): Bounding  the term  $e_2$.}} Write 
\begin{align*}
e_2 &= \sum_i\int_{\mathbb{T}^1}\int_I\psi(t)\zeta^i [S^i_{jk}(\tilde{\phi}) - S^i_{jk}(\phi)] \partial_x \tilde{\phi}^j \partial_x\tilde{\phi}^k\,dx\\
&  \;\;\;\;\;\;\;\;\;\;\;\; + \sum_i\int_{\mathbb{T}^1}\int_I\psi(t)\zeta^i S^i_{jk}(\phi) \partial_x (\tilde{\phi}^j - \phi^j) \partial_x\tilde{\phi}^k\,dx \\
& \;\;\;\;\;\;\;\;\;\;\;\;\;\;\;\;\;\;\;\;\;\;\;\;+ \sum_i\int_{\mathbb{T}^1}\int_I\psi(t)\zeta^i S^i_{jk}(\phi) \partial_x (\tilde{\phi}^k - \phi^k) \partial_x  \phi^j\,dx.
\end{align*}
For the first integral on the right, recall the definition of $\tilde \phi$ given in \eqref{eq:tildephi} and the estimate \eqref{eq:smallphit}, we have   
\begin{equation}\label{eq:phitildephidifference}
\big|S^i_{jk}(\tilde{\phi}) - S^i_{jk}(\phi)\big|\leq C\big|\phi - \tilde{\phi}\big|\leq C\big\|\phi_t\big\|_{L_x^\infty L_t^2}\leq C\delta\;\; \forall (t, x)\in I\times \mathbb{T}^1.
\end{equation}
Thus, using {\it Step {(2)}} and the Cauchy-Schwarz inequality, we deduce that 
\begin{align*}
\Big| \sum_i\int_{\mathbb{T}^1}\int_I\psi(t)\zeta^i [S^i_{jk}(\tilde{\phi}) - S^i_{jk}(\phi)] \partial_x \tilde{\phi}^j \partial_x\tilde{\phi}^k\,dx\Big|\leq C\delta\big\|\zeta\big\|_{L_x^2}, 
\end{align*}
where the constant $C$ does not depend on $M, \delta$ and $\phi$.
\vspace{2mm}

For the second sum above, we use the following estimates 
\[
\big\|\sqrt{\psi}(t)\partial_x (\tilde{\phi} - \phi)\big\|_{L_{t,x}^2(I\times \mathbb{T}^1)}^2 = -\int_I \int_{\mathbb{T}^1}\psi(t)(\tilde{\phi}_{xx} - \phi_{xx})\cdot (\tilde{\phi} - \phi)\,dx dt
\]
and 
\begin{align*}
&\int_I \int_{\mathbb{T}^1}\psi(t)\phi_{xx}\cdot (\tilde{\phi} - \phi)\,dx dt\\& = \int_I \int_{\mathbb{T}^1}\psi(t)[\phi_{tt} - S_{jk}(\phi)\phi_x^j \phi_x^k +  S_{jk}(\phi)\phi_t^j \phi_t^k  + a(x)\phi_t]\cdot (\tilde{\phi} - \phi)\,dx dt
\end{align*}
Using integration by parts with respect to $t$ for the contribution of $\phi_{tt}$ and taking advantage of \eqref{eq:smallphit} as well as the Cauchy-Schwarz inequality and the bound \eqref{eq:phitildephidifference}, we deduce 
\begin{align*}
\big|\int_I \int_{\mathbb{T}^1}\psi(t)\phi_{xx}\cdot (\tilde{\phi} - \phi)\,dx dt\big|\leq C_5\delta. 
\end{align*}
The bound from {\it Step {(3)}} and \eqref{eq:phitildephidifference} on the other hand implies 
\begin{align*}
\big|\int_I \int_{\mathbb{T}^1}\psi(t)\tilde{\phi}_{xx}\cdot (\tilde{\phi} - \phi)\,dx dt\big|\leq C_5\delta. 
\end{align*}
Combining the preceding bounds we infer that 
\begin{align}
\big\|\sqrt{\psi}(t)\partial_x (\tilde{\phi} - \phi)\big\|_{L_{x, t}^2(I\times \mathbb{T}^1)}^2\leq C_6\delta. 
\end{align}

Combining the preceding bound with {\it Part {(2)}} and using the Cauchy-Schwarz inequality, we now infer that 
\begin{align*}
&\;\;\;\; \Big|2\sum_i\int_{\mathbb{T}^1}\int_I\psi(t)\zeta^i S^i_{jk}(\phi) \partial_x (\tilde{\phi}^j - \phi^j) \partial_x\tilde{\phi}^k\,dt dx\Big| \\
&\leq C \sum_{i=1}^N \sum_{j= 1}^N \sum_{k= 1}^N  \|  \zeta^i \psi \partial_x (\tilde{\phi}^j - \phi^j) \partial_x\tilde{\phi}^k  \|_{L^{1}_{x, t}(\mathbb{T}^1\times I)} \\
&\leq C  \sum_{i=1}^N \sum_{j= 1}^N \sum_{k= 1}^N  \|\zeta^i\|_{L^2_x} \|\sqrt{\psi}(t)\partial_x (\tilde{\phi} - \phi)\|_{L^2_{x, t}(\mathbb{T}^1\times I)} \|\partial_x \tilde{\phi}^k \|_{L^{\infty}_{x}(\mathbb{T}^1; L^2_t(I)) }  \\
& \leq C_7\sqrt{\delta}\big\|\zeta\big\|_{L_x^2(\mathbb{T}^1)}.
\end{align*}
Similarly, we also obtain the bound on the third term
\begin{align*}
\Big|2\sum_i\int_{\mathbb{T}^1}\int_I\psi(t)\zeta^i S^i_{jk}(\phi) \partial_x (\tilde{\phi}^k - \phi^k) \partial_x \phi^j\,dx\Big|\leq C_7\sqrt{\delta}\big\|\zeta\big\|_{L_x^2(\mathbb{T}^1)}.
\end{align*}

Therefore, we have now shown the bound 
\begin{align*}
\big|e_2\big|\leq C_8\sqrt{\delta}\big\|\zeta\big\|_{L_x^2(\mathbb{T}^1)}.
\end{align*}
Coming back to \eqref{eq:tildephizeta2}, and using {\it Part {(1)}}, {\it Part {(3)}}, and {\it Part {(4)}}, we now have shown that 
\begin{equation}\label{eq:tildephialmostharmonic}\begin{split}
&\big\|-\int_{\mathbb{T}^1}\zeta_x\cdot \tilde{\phi}_x\,dx +  \sum_i\int_{\mathbb{T}^1}\zeta^i S^i_{jk}(\tilde{\phi}) \tilde{\phi}^j_x \tilde{\phi}^k_x\,dx\big\|_{L_x^2}\leq C_9\sqrt{\delta}\big\|\zeta\big\|_{L_x^2(\mathbb{T}^1)},\\
&\big\| \tilde{\phi}\big\|_{H^2(\mathbb{T}^1)}\leq C_9,
\end{split}\end{equation}
where the constant $C_9= C_9(M)$ does not depend on $\phi$ or $\delta$. This also implies that
\begin{equation*}
    \| \tilde \phi_{xx}+ S_{jk}\tilde \phi_x^j \tilde \phi_x^k\|_{L^2_x(\mathbb{T}^1)}\leq C_9 \sqrt{\delta}.
\end{equation*}
This finishes the proof of  Lemma \ref{lem:extraction}. 
\end{proof}

\noindent {\bf Step 3. The existence of a limiting closed geodesic for a given initial state.} 
Let $\phi[0]\in \mathcal{H}(\mathbb{T}^1; \mathcal{N})$ be an initial state. We know from the previous discussion that for any $\delta$, there exists an interval $I$ with $|I|= 3$ such that
\begin{equation}
\big\|\phi_t\big\|_{L_x^\infty L_t^2(I\times \mathbb{T}^1)}<\delta.  \notag
\end{equation}
We shall now pass to a limit to extract an actual closed geodesics $\gamma: \mathbb{T}^1\longrightarrow \mathcal{N}$. For this we replace $\delta$ before by a sequence $\delta_n= 1/n,\,n\geq 1$. Let $\tilde{\phi}^{(n)}$ the corresponding functions obtained as before, on intervals $I^{(n)}, |I^{(n)}| = 3$.  
Thanks to the a prior bounds given by Lemma \ref{lem:extraction}, for every $n\in \mathbb{N}^*$, 
\begin{gather*}
\big\|\tilde{\phi}^{(n)}\big\|_{H^2_x(\mathbb{T}^1)}\leq C_1, \\
 \| \tilde \phi^{(n)}_{xx}+ S_{jk} (\tilde \phi_x^{(n)})^j (\tilde \phi_x^{(n)})^k\|_{L^2_x(\mathbb{T}^1)}\leq C_1 \sqrt{\delta_n}.
\end{gather*}
By Rellich's compactness theorem, a subsequence of the $\{\tilde{\phi}^{(n)}\}_{n\geq 1}$ converges in the sense of the $H^1$-metric to a limit $\gamma\in H^2(\mathbb{T}^1)$. Furthermore, by passing the limit one checks readily that for any $\zeta\in H^1(\mathbb{T}^1)$ we have 
\begin{align*}
\int_{\mathbb{T}^1}\big[-\zeta_x\cdot\gamma_{x} + \sum_i\zeta^i S^i_{jk}(\gamma) \gamma^j_x  \gamma^k_x\big]\,dx = 0.
\end{align*}
Using standard arguments one infers from this that in fact $\gamma$ is a closed geodesic, and it belongs to $C^\infty(\mathbb{T}^1; \mathcal{N})$. Given $\delta>0$, we can pick $n$ sufficiently large such that 
\[
\delta_n^{\frac14} + \big\|\tilde{\phi}^{(n)} - \gamma\big\|_{H^1(\mathbb{T}^1)}<\frac{\delta}{2}.
\]
Moreover, due to the bounds  from Lemma \ref{lem:extraction}  there exists $t_n\in I^{(n)}$ with the property that 
\[
\big\|\phi(t_n,\cdot) - \tilde{\phi}^{(n)}(\cdot)\big\|_{H^1} + \big\|\phi_t(t_n,\cdot)\big\|_{L_x^2}<C_{10}\delta_n^{\frac12}. 
\]
It follows that for $n$ sufficiently large, we have 
\begin{align*}
\big\|\phi(t_n,\cdot) - \gamma\big\|_{H^1} + \big\|\phi_t(t_n,\cdot)\big\|_{L_x^2}<\delta, 
\end{align*}
as desired.  Namely, $\gamma$ is the limiting closed geodesic that we seek.\\

\noindent {\bf Step 4. Uniform time for $\delta$-approximate closed geodesics.} 
Next, we demonstrate the following lemma. 
\begin{lemma}\label{lem:unif:decay:vaphi}
For any $\delta>0$  and any $M>0$, there exists some $\varepsilon>0$ such that, for any solution of the locally damped wave maps equation \eqref{eq:dwm:ori} satisfying 
\begin{gather*}
    E(0)\leq M,  \\
   \textrm{ for every $t\in [0, 32\pi], \phi[t] $ is not a  $\delta$-approximate closed geodesic, }
\end{gather*}
we have the inequality
\begin{equation*}
    E(0)- E(32\pi)\geq \varepsilon.
\end{equation*}
\end{lemma}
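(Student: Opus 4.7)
The natural approach is a contradiction argument combining the propagation of smallness (Lemma \ref{prop:uniformsmallness}), the extraction of approximate closed geodesics (Lemma \ref{lem:extraction}), and a Rellich-type compactness argument, exactly paralleling the reasoning already used in \emph{Step 2} and \emph{Step 3} above but now quantified over a family of solutions. Specifically, I would assume for contradiction that there exist $\delta_0>0$, $M_0>0$ and a sequence of solutions $\phi_n$ of the locally damped wave maps equation \eqref{eq:dwm:ori} such that $E(\phi_n[0])\leq M_0$, $\phi_n[t]$ fails to be a $\delta_0$-approximate closed geodesic (in the sense of Definition~\ref{def:app:clo:geo}) for every $t\in[0,32\pi]$, and nevertheless
\begin{equation*}
E(\phi_n[0])-E(\phi_n[32\pi])\longrightarrow 0 \quad \text{as } n\to\infty.
\end{equation*}

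The energy identity \eqref{eq:energydecrease} rewrites this as
\begin{equation*}
2\int_0^{32\pi}\!\!\int_{\mathbb{T}^1} a(x)|\partial_t\phi_n|^2\,dx\,dt \longrightarrow 0.
\end{equation*}
Fix an interval $I=(b,b+3)\subset[0,32\pi]$ with $b\in[13\pi,16\pi]$, so that its $13\pi$-enlargement $I_{ex}$ lies inside $[0,32\pi]$. Then by Lemma \ref{prop:uniformsmallness} applied with $A=3$, $M=M_0$ and $\delta=1/n$, once $n$ is large enough the dissipation on $I_{ex}$ falls below the threshold $\tilde\delta(1/n,3,M_0)$, hence
\begin{equation*}
\big\|\partial_t\phi_n\big\|_{L^\infty_x L^2_t(I\times\mathbb{T}^1)}\longrightarrow 0.
\end{equation*}

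Next, I apply Lemma \ref{lem:extraction} with this interval $I$ and the corresponding cutoff $\psi$, producing functions $\tilde\phi_n(x):=\int_I\phi_n(t,x)\psi(t)\,dt$ satisfying
\begin{equation*}
\big\|\tilde\phi_n\big\|_{H^2(\mathbb{T}^1)}\leq C_1(M_0),\qquad \big\|\tilde\phi_{n,xx}+S_{jk}(\tilde\phi_n)\tilde\phi_{n,x}^j\tilde\phi_{n,x}^k\big\|_{L^2(\mathbb{T}^1)}\leq C_1(M_0)n^{-1/2}.
\end{equation*}
By Rellich's theorem, a subsequence converges in $H^1(\mathbb{T}^1)$ to a limit $\gamma\in H^2(\mathbb{T}^1;\mathcal{N})$ which, by passing to the limit in the weak formulation of the geodesic equation, must be a closed geodesic. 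Moreover the estimates $\|\tilde\phi_n-\phi_n\|_{L^\infty_{t,x}}\lesssim n^{-1}$ and $\|\sqrt{\psi}\partial_x(\tilde\phi_n-\phi_n)\|_{L^2_{t,x}}\lesssim n^{-1/2}$ from Lemma \ref{lem:extraction}, combined with the smallness of $\partial_t\phi_n$, yield a time $t_n\in I\subset[0,32\pi]$ at which
\begin{equation*}
\big\|\phi_n[t_n]-(\gamma,0)\big\|_{\mathcal{H}}\longrightarrow 0.
\end{equation*}
This directly contradicts the standing hypothesis that $\phi_n[t]$ is never a $\delta_0$-approximate closed geodesic on $[0,32\pi]$, completing the argument.

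The only genuinely delicate point is the bookkeeping in the last step: one must verify both that $\tilde\phi_n\to\gamma$ strongly in $H^1$ and that $\phi_n(t_n,\cdot)$ inherits the $H^1$-closeness to $\tilde\phi_n$ while $\partial_t\phi_n(t_n,\cdot)$ is small in $L^2$ at the same time $t_n$. This is achieved by a pigeonhole choice of $t_n$ inside the subinterval $\tilde{I}\subset I$ on which $\psi\equiv 1$, exploiting that $\|\sqrt{\psi}\partial_x(\tilde\phi_n-\phi_n)\|_{L^2_{t,x}}^2$ controls the $t$-integral of $\|\partial_x(\tilde\phi_n-\phi_n(t,\cdot))\|_{L^2}^2$ over $\tilde I$, and that $\|\partial_t\phi_n\|_{L^\infty_x L^2_t}$ controls $\int_I\int_{\mathbb{T}^1}|\partial_t\phi_n|^2\,dx\,dt$ up to a factor $2\pi$. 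Everything else is a routine compactness and contradiction setup; no new estimate beyond those already established in Sections \ref{sec:propsmall} and the earlier steps of the present section is required.
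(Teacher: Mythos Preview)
Your proposal is correct and follows essentially the same contradiction argument as the paper: assume a sequence of solutions violating the conclusion, use the energy identity together with Lemma~\ref{prop:uniformsmallness} to force $\|\partial_t\phi_n\|_{L^\infty_x L^2_t(I\times\mathbb{T}^1)}\to 0$ on a fixed interval $I\subset[0,32\pi]$, then invoke Lemma~\ref{lem:extraction} and Rellich compactness exactly as in Step~3 to extract a limiting closed geodesic and a time $t_n\in I$ at which $\phi_n[t_n]$ is $\delta$-close to it, yielding the contradiction. The paper's proof is identical in structure, with the specific choice $I=(16\pi-3/2,16\pi+3/2)$.
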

Indeed, to obtain the first property of Theorem \ref{thm:dynamics}, it suffices to select 
\begin{equation*}
    T_c(M, \delta):= 32\pi \left(\frac{M}{\varepsilon}+ 2\right).
\end{equation*}
We claim that for any initial state with energy smaller than $M$, there is some $t\in [0, T_c]$ such that $\phi[t]$ is a $\delta$-approximate closed geodesic. Otherwise, due to Lemma \ref{lem:unif:decay:vaphi}, one has 
\begin{equation*}
    E(32\pi k)- E(32\pi(k+1))\geq \varepsilon, \; \forall k= 0, 1,..., \left[\frac{M}{\varepsilon}\right]+1. 
\end{equation*}
This contradicts the assumption that the initial energy is smaller than $M$. 

\begin{proof}[Proof of Lemma \ref{lem:unif:decay:vaphi}]
We define $T= 32\pi$ and $I= (16\pi- 3/2, 16\pi+3/2)$.
We perform the proof by a contradiction argument. Suppose that for some $\delta>0$, one cannot find such an $\varepsilon$. First, due to Lemma \ref{prop:uniformsmallness}, we select a sequence $\tilde \delta^{(n)}= \tilde \delta^{(n)}(1/n, 3, M)$.
Then, there is a sequence of initial states $\phi^{(n)}[0]$ with energy smaller than $M$  such that 
\begin{gather*}
  \textrm{ for every $t\in [0, 32\pi], \phi^{(n)}[t] $ is not a  $\delta$-approximate closed geodesic, and} \\
E( \phi^{(n)}[0])-  E(\phi^{(n)}[T])< 2 \tilde \delta^{(n)}.
\end{gather*}
This implies, due to \eqref{eq:energydecrease}, that 
\[
\int_0^T \int_{\mathbb{T}^1}a(x)\big|\phi_t^{(n)}(t, x)\big|^2\,dx dt < \tilde \delta^{(n)}. 
\]
Therefore, by Lemma \ref{prop:uniformsmallness}, 
\begin{equation}
 \big\|\phi_t^{(n)}\big\|_{L_x^\infty L_t^2(I\times \mathbb{T}^1)}<\frac{1}{n}.
\end{equation}
In analogy to  Step 3, let $\tilde{\phi}^{(n)}$ be the corresponding functions obtained as before, on the fixed interval $I$.  These functions have uniform $H^2(\mathbb{T}^1)$ bound and strongly converge in the $H^1$-topology to a closed geodesic $\gamma$. For the given $\delta$, one can find some $n$ sufficiently large and some $t_n\in I$ such that 
\begin{align*}
\big\|\phi^{(n)}(t_n,\cdot) - \gamma\big\|_{H^1} + \big\|\phi_t^{(n)}(t_n,\cdot)\big\|_{L^2}<\delta.
\end{align*}
This is in contradiction with our assumption. 

This completes the proof of Theorem \ref{thm:dynamics}.
\end{proof}

\section{Exact controllability around closed geodesics}\label{Sec:4}
This section is devoted to the proof of Proposition \ref{pro:controlaroundgeodesic}, which
constitutes the second intermediate result for the proof of Theorem \ref{thm1}. To control the evolution of $\phi$ near closed geodesics, we shall pass from the {\it extrinsic} equations used in the previous stage to an {\it intrinsic} viewpoint.  As outlined in Section \ref{sec:strategy1}, we propose a three-step argument for this proposition. Accordingly, the whole section is divided into three parts, each addressing one of these steps.
\begin{enumerate}
    \item[Step 1] \; Show that Proposition \ref{pro:controlaroundgeodesic} is equivalent to Proposition \ref{prop:congeo:1st}; see Section \ref{Sec:cont:1}.
    \item[Step 2]  \; Demonstrate that Proposition \ref{prop:congeo:1st} can be derived from  Proposition \ref{prop:congeo:2nd}; see Section \ref{Sec:cont:2}.
    \item[Step 3]  \;  Prove  Proposition  \ref{prop:congeo:2nd};  see Section \ref{subsec:con:3}.
\end{enumerate}

\subsection{On the reduction to a semilinear equation without geometric constraint}\label{Sec:cont:1}
From now on, we fix a closed geodesic $
\gamma: \mathbb{T}^1 \to \mathcal{N}$.  In the end, the choice of $\delta_1$ can be made independent of closed geodesics $\gamma$ with energy smaller than $M$. Actually, relying on the geodesic equation,  the $H^3$-norms of these closed geodesics are uniformly bounded by a constant depending on $M$.

In this first step, we transfer the geometric equation with control $f\in \mathbb{R}^N$ with geometric constraints around the closed geodesic $\gamma$ into a coupled semilinear equation \eqref{eq:visystem11} with control $\{\alpha^p \textrm{ without any constraint}: p= 1,2,..., R\}$.

\subsubsection{The moving frame method under intrinsic coordinates}

Given an initial state $\phi[0]$ that is close to $(\gamma(\cdot), 0)$ in the $\mathcal{H}$ topology. Thanks to the well-posedness result  Lemma \ref{lem-conti-dep-inh}, the solution $\phi[t]$ keeping closing to $(\gamma(\cdot), 0)$ in the $\mathcal{H}$ topology provided that the $L^2_{t, x}$-norm of the control force is small. Thus, in the following, we shall assume that 
\begin{equation*}
  \|\phi(t, \cdot)- \gamma(\cdot)\|_{H^1(\mathbb{T}^1)},  \|\phi(t, \cdot)\|_{L^2(\mathbb{T}^1)}\ll 1 \;\; \forall t\in [0, 64\pi].
\end{equation*}

We start by stating the following result, which provides a smooth moving orthonormal frame around $\gamma$. Its proof is standard and relies on parallel transport and a holonomy correction.  Note that the manifold $\mathcal{N}$ is orientable. For readers' convenience, we put the proofs of Lemmas \ref{lem:movingframe} in Appendix \hyperref[setting(Aex3)]{A.3}. 
\begin{lemma}\label{lem:movingframe}
 There exist $R$ smooth vector valued functions
\[
{\bf f}_p: \mathbb{T}^1 \longrightarrow  T_{\gamma(x)}\mathcal{N}, \quad p=1,\dots,R,
\]
such that for every $x \in \mathbb{T}^1$ the set $\{{\bf f}_1(x),... ,{\bf f}_R(x)\}$ is an orthonormal basis of $T_{\gamma(x)}\mathcal{N}$, and
${\bf f}_1(x) = \gamma_x(x)/|\gamma_x(x)|$.
\end{lemma}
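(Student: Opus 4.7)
The plan is to build the frame by parallel transport along $\gamma$, starting from a well-chosen orthonormal basis at $\gamma(0)$, and then to correct the resulting frame so that it closes up periodically. First, since $\gamma$ is a geodesic, $\nabla_{\gamma_x}\gamma_x = 0$, so in particular $|\gamma_x|$ is constant along $\gamma$ and ${\bf f}_1(x):=\gamma_x(x)/|\gamma_x(x)|$ is parallel along $\gamma$. Choose any orthonormal basis $\{e_2,\dots,e_R\}$ of the orthogonal complement of $\gamma_x(0)$ in $T_{\gamma(0)}\mathcal{N}$, oriented so that $\{{\bf f}_1(0),e_2,\dots,e_R\}$ is a positively oriented basis of $T_{\gamma(0)}\mathcal{N}$. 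Let $\tilde{\bf f}_p(x)$ for $p=2,\dots,R$ denote the parallel transport of $e_p$ along $\gamma$ from $\gamma(0)$ to $\gamma(x)$, $x\in[0,2\pi]$. Since parallel transport is an isometry commuting with $\gamma_x$, the vectors $\{{\bf f}_1(x),\tilde{\bf f}_2(x),\dots,\tilde{\bf f}_R(x)\}$ form a smooth orthonormal frame on $[0,2\pi]$ with $\tilde{\bf f}_p \perp {\bf f}_1$.

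The obstacle is that this frame need not be periodic: parallel transport around $\gamma$ produces a holonomy $H\in O(R-1)$ with $\tilde{\bf f}_p(2\pi)=\sum_{q=2}^R H_{pq}\,e_q$. Because $\mathcal{N}$ is oriented, the full holonomy on $T_{\gamma(0)}\mathcal{N}$ lies in $SO(R)$; as it also fixes ${\bf f}_1(0)$, the restriction satisfies $H\in SO(R-1)$. Exploiting path-connectedness of $SO(R-1)$ (trivial for $R\le 2$, standard for $R\ge 3$), pick a smooth curve $A:[0,2\pi]\to SO(R-1)$ with $A(0)=\mathrm{Id}$ and $A(2\pi)=H^{-1}$. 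Define
\begin{equation*}
{\bf f}_p(x) := \sum_{q=2}^R A_{pq}(x)\,\tilde{\bf f}_q(x),\qquad p=2,\dots,R,\ x\in[0,2\pi].
\end{equation*}
Since $A(x)\in SO(R-1)$, the vectors $\{{\bf f}_1(x),{\bf f}_2(x),\dots,{\bf f}_R(x)\}$ are orthonormal at every $x$ and lie in $T_{\gamma(x)}\mathcal{N}$. At the endpoints, ${\bf f}_p(0)=e_p$ and
\begin{equation*}
{\bf f}_p(2\pi)=\sum_{q}(H^{-1})_{pq}\sum_r H_{qr}\,e_r = e_p,
\end{equation*}
so the frame descends to a smooth section on $\mathbb{T}^1$.

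The main technical point I would expect to need care with is the orientability reduction from $O(R-1)$ to $SO(R-1)$, since it is only here that the assumption that $\mathcal{N}$ is orientable in \blackhyperref{setting(S)}{$({\bf S})$} enters; without it the holonomy could land in a different component of $O(R-1)$ and the correction step would fail. Smoothness of the corrected frame follows from smoothness of $\gamma$, of parallel transport, and of the auxiliary curve $A(\cdot)$, which can be chosen $C^\infty$ by standard approximation in $SO(R-1)$.
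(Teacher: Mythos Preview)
Your proof is correct and follows essentially the same two-step scheme as the paper: parallel transport along $\gamma$ followed by a holonomy correction to close up the frame on $\mathbb{T}^1$. You are in fact slightly more careful than the paper about the key point, namely that orientability of $\mathcal{N}$ forces the holonomy into $SO(R)$ (hence $SO(R-1)$ on $\gamma_x^\perp$), which is what guarantees the connecting path $A(\cdot)$ exists; the paper's appendix writes $O(R)$ here without isolating this reduction. One cosmetic point: to ensure the corrected frame is $C^\infty$ on $\mathbb{T}^1$ and not merely continuous, choose $A$ constant (equal to $\mathrm{Id}$, resp.\ $H^{-1}$) near the endpoints; then the quasi-periodicity $\tilde{\bf f}_p(x+2\pi)=\sum_q H_{pq}\tilde{\bf f}_q(x)$ of the parallel frame makes all derivatives match.
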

We also have the following lemma, which is a straightforward  consequence of the implicit function theorem and the local graph representation.  
\begin{lemma}\label{lem:implicitcoord}
There exist $\delta>0$ and a smooth mapping 
\[
{\bf n}: \mathbb{T}^1\times B_{\delta}(0)\subset \mathbb{T}^1\times\mathbb{R}^R\longrightarrow N\mathcal{N}
\]
satisfying
\[
 \|{\bf n}(x; {\bf w})\|\leq C\|{\bf{w}}\|^2 \quad \text{and} \quad  {\bf n}(x; {\bf w})\in N_{\gamma(x)}\mathcal{N}, \quad \forall (x, {\bf w})\in \mathbb{T}^1\times B_{\delta}(0),
\]
such that for any $x\in \mathbb{T}^1$ and any $\phi\in \mathcal{N}\cap B_{\delta}(\gamma(x))$, there is a unique vector 
\[
{\bf{w}} = (w^1, w^2,\ldots, w^R)^T\in \mathbb{R}^R, \quad \|{\bf{w}}\|<2\delta,
\]
for which 
\begin{equation}\label{eq:localrep1}
\phi = \gamma(x) + \sum_{p=1}^R w^p {\bf f}_p(x) +  {\bf n}(x; {\bf{w}}).
\end{equation}

Moreover, there exists a smooth mapping 
\[
\Phi: \Bigl\{ (x,\phi): x\in \mathbb{T}^1,  \phi\in \mathcal{N}\cap B_\delta(\gamma(x))\Bigr\}\longrightarrow  \mathbb{T}^1\times \mathbb{R}^R,
\]
such that for any $x\in \mathbb{T}^1$ and any $\phi\in \mathcal{N}\cap B_{\delta}(\gamma(x))$, the unique vector ${\bf w}$ is given by 
\begin{equation}\label{eq:localrep2}
  (x,  {\bf w})= \Phi (x; \phi).
\end{equation}
\end{lemma}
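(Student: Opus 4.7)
The plan is to realize the submanifold $\mathcal{N}$ locally near $\gamma(x)$ as a graph over the tangent space $T_{\gamma(x)}\mathcal{N}$, and then read off the desired representation through the orthonormal frame $\{{\bf f}_p(x)\}$ provided by Lemma \ref{lem:movingframe}. First, I would fix $x\in\mathbb{T}^1$ and note that $T_{\gamma(x)}\mathcal{N}$ and $N_{\gamma(x)}\mathcal{N}$ are complementary orthogonal subspaces of $\mathbb{R}^N$, so the linear map
$$
\Psi_x: T_{\gamma(x)}\mathcal{N} \times N_{\gamma(x)}\mathcal{N} \longrightarrow \mathbb{R}^N, \qquad (v,n) \longmapsto \gamma(x) + v + n,
$$
is an affine isomorphism onto $\mathbb{R}^N$. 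Applying the implicit function theorem to the defining equations of $\mathcal{N}$ pulled back through $\Psi_x$, one obtains a smooth graph function $\eta_x:U_x\subset T_{\gamma(x)}\mathcal{N} \to N_{\gamma(x)}\mathcal{N}$, defined on a small ball $U_x$ around the origin, with $\eta_x(0)=0$ and $d\eta_x(0)=0$, such that $\gamma(x) + v + \eta_x(v) \in \mathcal{N}$ precisely parametrizes $\mathcal{N}$ in a neighborhood of $\gamma(x)$.

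Next, I would set
$$
{\bf n}(x;{\bf w}) := \eta_x\!\left(\sum_{p=1}^R w^p {\bf f}_p(x)\right).
$$
Joint smoothness of ${\bf n}$ in $(x,{\bf w})$ follows from the version of the implicit function theorem with smooth parameter dependence, using that both $\gamma$ and the frame $\{{\bf f}_p\}$ are smooth in $x$, together with the smooth choice of a local basis of $N_{\gamma(x)}\mathcal{N}$ (which can be built by the same parallel-transport plus orthonormalization procedure used for the tangential frame). Compactness of $\mathbb{T}^1$ then yields a single radius $\delta>0$ for which ${\bf n}(x;{\bf w})$ is defined and takes values in $N_{\gamma(x)}\mathcal{N}$ for every $(x,{\bf w})\in \mathbb{T}^1\times B_\delta(0)$. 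The quadratic estimate $\|{\bf n}(x;{\bf w})\|\leq C\|{\bf w}\|^2$ is immediate from Taylor's formula applied to $\eta_x$, using $\eta_x(0)=0$, $d\eta_x(0)=0$, and a uniform-in-$x$ Hessian bound (again by compactness).

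For the inverse representation $\Phi$, the orthonormality of $\{{\bf f}_p(x)\}$ and the normality condition ${\bf n}(x;{\bf w})\in N_{\gamma(x)}\mathcal{N}$ essentially give $\Phi$ for free. Indeed, applying the $L^2$-inner product against ${\bf f}_p(x)$ to the identity \eqref{eq:localrep1} forces
$$
w^p = \big\langle \phi - \gamma(x),\, {\bf f}_p(x)\big\rangle_{\mathbb{R}^N}, \qquad p=1,\ldots,R,
$$
and then the uniqueness of the graph representation of $\mathcal{N}$ near $\gamma(x)$ guarantees that this ${\bf w}$ actually reproduces $\phi$ through \eqref{eq:localrep1}, provided $\delta$ is small enough that $\phi$ lies in the chart. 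Smoothness of $\Phi$ is then manifest from the explicit formula.

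The only real obstacle is ensuring that the radius $\delta$ on which the graph representation is valid, the bound $C$ in $\|{\bf n}\|\leq C\|{\bf w}\|^2$, and the inverse map $\Phi$ can all be chosen uniformly in $x\in\mathbb{T}^1$. This uniformity is handled by the compactness of $\mathbb{T}^1$ and the uniform non-degeneracy of $\Psi_x$ (whose operator norm and that of its inverse depend continuously on $x$), so after extracting an infimum one obtains a single $\delta$ that works everywhere. No further analytic machinery is required beyond the parameter-dependent implicit function theorem.
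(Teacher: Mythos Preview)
Your proposal is correct and matches the paper's approach: the paper does not give a detailed proof but simply states that the lemma ``is a straightforward consequence of the implicit function theorem and the local graph representation,'' which is precisely the mechanism you spell out. Your explicit formula $w^p = \langle \phi - \gamma(x), {\bf f}_p(x)\rangle_{\mathbb{R}^N}$ for the inverse map $\Phi$ is a nice addition that the paper leaves implicit.
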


Fix  $x\in \mathbb{T}^1$ and a point $\phi\in \mathcal{N}$ that is close to $\gamma(x)$,  we can introduce the functions ${\bf w}$  via \eqref{eq:localrep1}--\eqref{eq:localrep2}
 with $
(x,  {\bf w})= \Phi (x, \phi)$. 
Define the mapping
\begin{gather}
    \Psi: \mathbb{T}^1 \times B_\delta(0) \longrightarrow  \mathcal{N}\cap\Bigl(\bigcup_{x\in\mathbb{T}^1} B_\delta(\gamma(x))\Bigr) \notag\\
 \label{eq:cons:Psi1}
\Psi(x,{\bf w}): = \gamma(x) + \sum_{p=1}^R w^p {\bf f}_p(x) + {\bf n}(x,{\bf w}).
\end{gather}
Next, we define the tangent frames at $\phi$ using map $\Psi$ restricted to $\{x\}\times \mathbb{R}^{R}$. 
Define $\Psi_x: B_{\delta}(0)\rightarrow \mathcal{N}\cap B_{\delta}(\gamma(x))$
as this restriction map.  Then we define the smooth mapping
\begin{equation*}
    \tilde G_p(x; {\bf w}):= (\Psi_x)_{* {\bf w}} \left( \partial_{w^p}\right)= (\Psi_x)_{* {\bf w}} \left({\bf f}_p(x)\right) \in T_{\phi}\mathcal{N} \;   \; \forall p=1, 2,..., R.
\end{equation*}
For each ${\bf w}$ small enough, the vectors $ \{G_p(x; {\bf w}): p= 1,..., R\}$ form a basis, not necessarily orthonormal, of the tangential space $T_{\phi}\mathcal{N}$. 

 Because the ${\bf w}$-domain is contractible and because the tangent bundle along the geodesic $\gamma$ is trivial over $\mathbb{T}^1$, we can apply the smooth Gram–Schmidt process on  $ \{\tilde G_p(x; {\bf w}): p= 1,..., R\}$,  without meeting any topological obstruction, to obtain an orthonormal basis $\{ G_p(x;{\bf w}): p=1,..., R\}$. Moreover, direct calculation yields 
\begin{gather*}
 G_p(x; {\bf w})= {\bf f}_p(x)+ g_p(x; {\bf w})  
\end{gather*}
with 
\begin{equation*}
    |g_p(x; {\bf w})|\leq C |{\bf w}|,
\end{equation*}
for all $p=1, 2,..., R$.
Hence,  the {\it tangent frames} at $\phi$, which is close to $\gamma(x)$, can be defiend as: 
\begin{equation}
     F_p(x; \phi)=  G_p(x; {\bf w}) \; \; \forall i=1, 2,..., R.
\end{equation}

\vspace{2mm}
 In conclusion, for every $(t, x)$, the solution $\phi(t, x)$ is close to $\gamma(x)$, and
 \begin{itemize}
     \item[\tiny$\bullet$]  the function $\phi(t, x)$  is expressed by $\Psi (x; {\bf w}(t, x))$; 
     \item[\tiny$\bullet$] the tangent frames at $\phi(t, x)$ are given by $\{G_p(x; {\bf w}(t, x)): p=1,...R\}$. 
 \end{itemize}
 Moreover, for any fixed $x_0\in \mathbb{T}^1$, 
 \begin{equation*}
 \phi_t(t, x_0)= 
 \sum_{p= 1}^R \left({\bf f}_p(x_0)+  \partial_{w^p}{\bf n}(x_0; {\bf w}(t, x_0)) \right)\partial_t w^p(t, x_0).
 \end{equation*}
This implies that any $\phi_t(t, x_0)\in T_{\phi(t, x_0)} \mathcal{N}$ determines a unique sequence $\{\partial_t w^p(t, x_0)\}_{p= 1}^R$.
In fact, since we will transform the equation on $\phi$ to an equation on ${\bf w}$, this will be used to fix the initial state ${\bf w}_t(0, \cdot)$.

\subsubsection{The characterization of wave maps}
Now we are in a position to perform the reduction procedure.  Now, the controlled wave maps can be written as 
\eqref{eq:wminhomo}
 \begin{equation}\label{eq:wmcontrol1}
 \Box \phi -  \Pi(\phi)\left(\partial_{\nu}\phi, \partial^{\nu}\phi\right)) = \chi_{\omega}\sum_{p=1}^R\alpha^p(t, x) F_p(x; \phi),
 \end{equation}
 where $\{\alpha^1, \alpha^2,\ldots, \alpha^R\}$ is the set of controls supported in $\omega$.

 Next, we shall take advantage of the local coordinates to transform the equation on $\phi$ into an equation on ${\bf w}$. Indeed, it suffices to  plug the preceding introduced presentation of $\phi,\{F_p\}_{p=1}^R$ into the equation, and one immediately observes that each term  becomes a function of ${\bf w}$.  
Notice that Equation \eqref{eq:wmcontrol1} is equivalent to that $$\textrm{LHS of \eqref{eq:wmcontrol1}}- \textrm{RHS of \eqref{eq:wmcontrol1}}= 0.$$

Since we automatically have the projection of  $\left(\textrm{LHS of \eqref{eq:wmcontrol1}}- \textrm{RHS of \eqref{eq:wmcontrol1}}\right)$ on the normal space $N_{\phi}\mathcal{N}$ are zero, showing $\textrm{LHS} = \textrm{RHS}$ is equivalent to showing the projection of $(\textrm{LHS}- \textrm{RHS})$ on the tangent frames $\{F_p(x; \phi)\}_{p= 1}^R$ are zero:
\begin{equation}\label{eq:projonFi}
   \langle \Box \phi, F_p(x; \phi) \rangle= \chi_{\omega}  \alpha^p(t, x), \; \forall p= 1,2,..., R.
\end{equation}

\vspace{2mm}
Using the representation 
\begin{equation}\label{eq:phirep}
\phi(t, x) = \gamma (x) + \sum_{p=1}^R w^p(t,x) {\bf f}_p(x) + {\bf n}(x; {\bf w}(t, x)),\,
\end{equation}
and keeping in mind that 
\begin{equation*}\label{eq:tildef}
F_p(x; \phi)=   G_p(x; {\bf w})= {\bf f}_p(x)+ g_p(x; {\bf w}),
\end{equation*}
where 
\[
\big|g_p(x, {\bf{w}})\big|\lesssim \|{\bf{w}}\| 
\]
are smooth functions of their arguments. We now reformulate \eqref{eq:projonFi} by using \eqref{eq:phirep}. 
\begin{equation}
    \left\langle \Box\left(\gamma (x) + \sum_{i=1}^R w^i(t,x) {\bf f}_i(x) + {\bf n}\right),  G_p(x; {\bf w}) \right\rangle= \chi_{\omega} \alpha^p(t, x),
\end{equation}
for every $p=1,2,..., R$. 
\vspace{3mm}

Taking advantage of the fact that $\gamma$ is a geodesic, namely, 
\begin{equation}
      \langle \Box(\gamma(x)) ,  G_p(x; 0) \rangle=  \langle \Delta(\gamma(x)) ,  G_p(x; 0) \rangle= 0,  \notag
\end{equation}
the preceding equation becomes
\begin{equation}\label{eq:undereachdir}
     \sum_{i=1}^R \left\langle \Box(w^i(t,x) {\bf f}_i(x) ),  G_p(x; {\bf w}) \right\rangle+   \left\langle \Box {\bf n},  G_p(x; {\bf w}) \right\rangle+ \left\langle \Delta  \gamma(x), g_p(x; {\bf w})\right\rangle=  \chi_{\omega} \alpha^p(t, x),
\end{equation}
for every $p= 1, 2,..., R$.
Successively, we obtain
\begin{align*}
\partial_t ({\bf n} (x; {\bf w}(t, x)))&= \sum_{k= 1}^R \partial_{w^k} {\bf n}(x; {\bf w}) \partial_t w^k, \\
\partial_x ( {\bf n} (x; {\bf w}(t, x)))&= \sum_{k= 1}^R \partial_{w^k} {\bf n}(x; {\bf w}) \partial_x w^k+ \partial_x {\bf n}(x; {\bf w}), 
\end{align*}
as well as the second order
\begin{align*}
\Box(w^i(t,x) {\bf f}_i(x))&= (\Box w^i){\bf f}_i+ 2 (\partial_x w^i)(\partial_x {\bf f}_i)+ w^i (\partial_x^2 {\bf f}_i), \\
\partial_t^2 ({\bf n}(x; {\bf w}(t, x)))&= \sum_{k= 1}^R \partial_{w^k} {\bf n}(x; {\bf w}) \partial_t^2 w^k+ \sum_{k= 1}^R \sum_{r= 1}^R\partial_{w^r}\partial_{w^k} {\bf n}(x; {\bf w})  \partial_t w^k \partial_t w^r, \\
\partial_x^2 ({\bf n}(x; {\bf w}(t, x)))&= \sum_{k= 1}^R \partial_{w^k} {\bf n}(x; {\bf w}) \partial_x^2 w^k+ \sum_{\beta= 0}^1 \sum_{k= 1}^R \sum_{r= 1}^R\partial_{w^r}\partial_{w^k} {\bf n}(x; {\bf w})  \partial_x w^k \partial_x w^r \\
&\;\;\;\;\;\;\;\;\;\; + 2\sum_{k= 1}^R \partial_x \partial_{w^k} {\bf n}(x; {\bf w}) \partial_x w^k+\partial_x^2 {\bf n}(x; {\bf w}), \\
\Box ({\bf n}(x; {\bf w}(t, x)))&= \sum_{k= 1}^R \partial_{w^k} {\bf n}(x; {\bf w}) \Box w^k+ \sum_{\beta= 0}^1 \sum_{k= 1}^R \sum_{r= 1}^R\partial_{w^r}\partial_{w^k} {\bf n}(x; {\bf w})  \partial_{\beta} w^k \partial^{\beta} w^r\\
&\;\;\;\;\;\;\;\;\;\; + 2\sum_{k= 1}^R \partial_x \partial_{w^k} {\bf n}(x; {\bf w}) \partial_x w^k+\partial_x^2 {\bf n}(x; {\bf w}).
\end{align*}

By plugging the preceding formulas into equation \eqref{eq:undereachdir}, it  becomes 
\begin{align*}
   & \Box w^p+ \sum_{i= 1}^R a_{p, i}(x, {\bf w}) \Box w^i+  \sum_{\beta= 0}^1 \sum_{i, j= 1}^R b_{p, i,j}(x, {\bf w}) \partial_{\beta} w^i\partial^{\beta} w^j  \;\;\;\; \;\;\;\;  \;\;\;\; \;\;\;\; \notag \\
 &  \;\;\;\; \;\;\;\;  \;\;\;\; \;\;\;\;   +\sum_{i= i}^R c_{p, i}(x, {\bf w}) \partial_x w^i+ \sum_{i= i}^R d_{p, i}(x, {\bf w})  w^i+ r_p(x, {\bf w})= \chi_{\omega} \alpha^p(t, x)
\end{align*}
for every $p= 1,2,..., R$, and where the functions 
\begin{equation*}
  |a_{p, i}(x, {\bf w})|+    |r_{p}(x, {\bf w})|\lesssim \|\bf{w}\|.
\end{equation*}

\vspace{2mm}
Notice that this coupled equation on ${\bf w}$ is quasilinear. 
By multiplying the inverse matrix of $(\delta_{p, i}+ a_{p, i}(x, {\bf w}))_{1\leq p, i\leq R}$ to the preceding equation we obtain  a coupled system 
\begin{equation}\label{eq:visystem11}
\begin{split}
    \Box w^p = \sum_{\beta= 0}^1 \sum_{j,k= 1}^R
A_{jk}^{p}(x; {\bf{w}})\partial_{\beta} w^j \partial^{\beta} w^k +  \sum_{j= 1}^R B_{j}^{p}(x; {\bf{w}}) \partial_{x}w^j+ \sum_{j= 1}^R  C_j^{p}(x; {\bf w}) w^j\\
+ \chi_{\omega} \alpha^p(t, x) + \chi_{\omega}\sum_{j= 1}^R  D_j^p(x; {\bf{w}}) \alpha^j(t, x),
\end{split}
\end{equation}
for every $p = 1, 2,... , R$, 
where  the functions $A_{jk}^p, B_{j}^{p}, C_j^p$ and $D_j^p$ are smooth.  These functions (matrices) satsify the following Taylor expansion
\begin{align}
\begin{cases} \label{eq:indexcond1}  
       A_{jk}^p(x; {\bf w})=   A_{0,jk}^{p}(x)+  A_{r,jk}^{p}(x; {\bf w}), 
   \\
    B_{j}^{p}(x; {\bf w})=    B_{0, j}^{p}(x)+   B_{r, j}^{p}(x; {\bf w}),\\
     C_{j}^{p}(x; {\bf w})=    C_{0, j}^{p}(x)+   C_{r, j}^{p}(x; {\bf w}),
     \end{cases}
\end{align}
and
\begin{gather}
\begin{cases} \label{eq:indexcond2}
|A^p_{0, jk}(x)|\lesssim 1\\
        |A_{r, jk}^{p}(x; {\bf w})|+ |  B_{r, j}^{p}(x; {\bf w})|+ |  C_{r, j}^{p}(x; {\bf w})|+  |  D_{j}^{p}(x; {\bf w})|\lesssim |{\bf w}|, \\
       |A_{r, jk}^{p}(x; {\bf w}_1)- A_{r, jk}^{p}(x; {\bf w}_2)|+ 
     |B_{r, j}^{p}(x; {\bf w}_1)- B_{r, j}^{p}(x; {\bf w}_2)|\lesssim |{\bf w}_1- {\bf w}_2|, \\
      |C_{r, j}^{p}(x; {\bf w}_1)- C_{r, j}^{p}(x; {\bf w}_2)|+ 
        |D_{j}^{p}(x; {\bf w}_1)- D_{j}^{p}(x; {\bf w}_2)|\lesssim |{\bf w}_1- {\bf w}_2|,
        \end{cases}
\end{gather}
uniformly for every $p, j, k=1,..., R$, for every $x\in \mathbb{T}^1$, and for every ${\bf w}, {\bf w}_1$ and ${\bf w}_2$ in $\mathbb{R}^R$ satisfying  $|{\bf w}|, |{\bf w}_1|, |{\bf w}_2|\leq 1$. 
Since now the state ${\bf w}$ belongs to $\mathbb{R}^R$, in the rest of this section, we slightly abuse notation and use $\mathcal{H}$ to denote
\begin{gather*}
    \mathcal{H}:= \left\{(f, g)\in  H^1(\mathbb{T}^1; \mathbb{R}^R)\times L^2(\mathbb{T}^1; \mathbb{R}^R)\right\} \; \textrm{ with } \\
    \|(f, g)\|_{\mathcal{H}}^2:= \|f\|_{H^1(\mathbb{T}^1)}^2+ \|g\|_{L^2(\mathbb{T}^1)}^2.
\end{gather*}

{\it In conclusion,  Proposition \ref{pro:controlaroundgeodesic} is equivalent to the following control property. }
 \begin{prop}\label{prop:congeo:1st}
Let $T= 64\pi$. The system \eqref{eq:visystem11} with conditions \eqref{eq:indexcond1}--\eqref{eq:indexcond2} is locally exactly controllable in time $T$. More precisely, there is $\delta_2>0$  such that given a pair of data $ ({\bf w}_0, {\bf w}_{0t}), ({\bf w}_1, {\bf w}_{1t})\in H^1(\mathbb{T}^1; \mathbb{R}^R)\times L^2(\mathbb{T}^1; \mathbb{R}^R)$ satisfying
 \[
 \|({\bf w}_0, {\bf w}_{0t})\|_{\mathcal{H}}+  \|({\bf w}_1, {\bf w}_{1t})\|_{\mathcal{H}}\leq \delta_2
 \]
 there is a control $\alpha= (\alpha^1, ..., \alpha^p)\in C([0, T]; L^2(\mathbb{T}^1; \mathbb{R}^R))$ satisfying 
 \[
\big\|\alpha\big\|_{L_t^{\infty} L_x^2([0,T]\times \mathbb{T}^1)}\lesssim \|({\bf w}_0, {\bf w}_{0t})\|_{\mathcal{H}}+  \|({\bf w}_1, {\bf w}_{1t})\|_{\mathcal{H}}, 
 \]
 such that the flow associated to \eqref{eq:visystem11} carries the intial data $({\bf w}_0, {\bf w}_{0t})$ at time $t= 1$ into $({\bf w}_1, {\bf w}_{1t})$ at time $t= T$. 
\end{prop}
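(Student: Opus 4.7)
The plan is to freeze the nonlinear coefficients along an iteration and reduce \eqref{eq:visystem11} to a linear controllability problem of the form promised by the forthcoming Proposition \ref{prop:congeo:2nd}. For a reference pair $(\bar{\bf w},\bar\alpha)$ of small norm, I consider the linearised controlled equation
\[
\Box w^p - \sum_{j=1}^R B_{0,j}^p(x)\partial_x w^j - \sum_{j=1}^R C_{0,j}^p(x) w^j = F^p(\bar{\bf w},\bar\alpha) + \chi_\omega \alpha^p,
\]
where the source $F^p(\bar{\bf w},\bar\alpha)$ collects the null-type quadratic terms $A_{jk}^p(x;\bar{\bf w})\partial_\beta \bar w^j\partial^\beta \bar w^k$, the higher-order corrections $B_{r,j}^p(x;\bar{\bf w})\partial_x\bar w^j$ and $C_{r,j}^p(x;\bar{\bf w})\bar w^j$, and the extra control perturbation $\chi_\omega\sum_j D_j^p(x;\bar{\bf w})\bar\alpha^j$. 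Since \eqref{eq:indexcond2} gives $|D_j^p(x;\bar{\bf w})|\lesssim\|\bar{\bf w}\|$, the true control enters through the clean term $\chi_\omega\alpha^p$ at each step, sidestepping the algebraic obstruction of inverting $(I+D({\bf w}))$ in a direct fixed-point argument.

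\textbf{Iteration and fixed point.} Granting Proposition \ref{prop:congeo:2nd}, there exists for each small boundary data $({\bf w}_0,{\bf w}_{0t}),({\bf w}_1,{\bf w}_{1t})\in\mathcal{H}$ and each source $F\in L^2_{t,x}([0,T]\times\mathbb{T}^1)$ a control $\alpha\in C([0,T];L^2(\mathbb{T}^1))$ driving the linear flow from one to the other, with
\[
\|\alpha\|_{L^\infty_t L^2_x}\lesssim \|({\bf w}_0,{\bf w}_{0t})\|_\mathcal{H}+\|({\bf w}_1,{\bf w}_{1t})\|_\mathcal{H}+\|F\|_{L^2_{t,x}}.
\]
I define the map $\Lambda:(\bar{\bf w},\bar\alpha)\mapsto({\bf w},\alpha)$ by assigning $\alpha$ as above for the source $F(\bar{\bf w},\bar\alpha)$ and letting ${\bf w}$ be the corresponding linear flow, which exists and satisfies a $\mathcal{W}_T$-bound by Lemma \ref{lem:linea:well:gen}. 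The Taylor estimates \eqref{eq:indexcond2} and the null identity \eqref{null-indentity}, combined with the bilinear $L^2_uL^\infty_v\cap L^\infty_vL^2_u$ bounds built into $\mathcal{W}_T$ (cf.\ Lemma \ref{well:wm1}), yield $\|F(\bar{\bf w},\bar\alpha)\|_{L^2_{t,x}}\lesssim\|\bar{\bf w}\|_{\mathcal{W}_T}^2+\|\bar{\bf w}\|_{\mathcal{W}_T}\,\|\bar\alpha\|_{L^\infty_t L^2_x}$. For a sufficiently small radius $\delta_2$, $\Lambda$ stabilises a ball of radius $\sim \|({\bf w}_0,{\bf w}_{0t})\|_\mathcal{H}+\|({\bf w}_1,{\bf w}_{1t})\|_\mathcal{H}$ and acts as a contraction in the metric inherited from $\mathcal{W}_T\times L^\infty_t L^2_x$, producing the unique fixed point and hence the desired control.

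\textbf{Linear controllability via observability.} To establish Proposition \ref{prop:congeo:2nd} itself, I apply the Hilbert Uniqueness Method. The controllability of the linear system above with data in $\mathcal{H}$ and control in $L^2$ is equivalent to an observability estimate
\[
\int_0^T \|\chi_\omega\varphi\|_{L^2(\mathbb{T}^1)}^2\,dt \geq C_0 \|\varphi[0]\|_{L^2\times H^{-1}(\mathbb{T}^1)}^2
\]
for solutions of the adjoint system $\Box\varphi+{\bf b}(x)\partial_x\varphi+{\bf c}(x)\varphi=0$, with coefficients determined by $B_{0,j}^p,C_{0,j}^p$ via integration by parts. I prove this inequality by a contradiction/compactness argument, combined quantitatively with Proposition \ref{prop:psl2}: assuming the inequality fails along a normalised sequence $\varphi_n$, the right-hand side of Proposition \ref{prop:psl2} tends to zero, whence $\varphi_n\to 0$ strongly in $L^\infty_x L^2_t$ on the relevant window; a weak-limit argument then forces the limit to vanish identically, contradicting normalisation. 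The control time $T=64\pi$ is chosen well above the $32\pi$ window that Proposition \ref{prop:psl2} demands, leaving room for transport and patching.

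\textbf{Main obstacle.} The hardest point will be ensuring that the quadratic contribution $A_{jk}^p(x;{\bf w})\partial_\beta w^j\partial^\beta w^k$ can be treated as an $L^2_{t,x}$ source in the linearised equation, as a naive estimate only places $\partial_\beta w^j\partial^\beta w^k$ in $L^1_{t,x}$. Exploiting the null structure $\partial_\beta w^j\partial^\beta w^k=-2(w^j_uw^k_v+w^j_vw^k_u)$ and the dual $L^2_uL^\infty_v/L^\infty_uL^2_v$ bounds of the $\mathcal{W}_T$-norm (Definition \ref{def:WTnorm}) is what allows the quadratic source to land in $L^2_{t,x}$ with a quadratically small constant, and thereby closes the contraction.
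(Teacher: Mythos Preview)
Your approach is valid but genuinely different from the paper's. The paper explicitly remarks that the $(I+D({\bf w}))\alpha$ structure of the control ``makes it more delicate to adopt fixed point arguments'' and opts instead for a Newton-type correction scheme: on the half-interval $[0,32\pi]$ it solves the full \emph{nonlinear} equation with the current control $\alpha_n$, measures the end-state error ${\bf w}_n[32\pi]$, and uses the linear controllability of Proposition~\ref{prop:congeo:2nd} to produce a correction $\beta_n$; the sequence $(\alpha_n)$ is shown to be Cauchy (Lemma~\ref{lem:iteration}). Time-reversibility then handles the second half $[32\pi,64\pi]$. Your route instead freezes all nonlinear terms---including $\chi_\omega D(\bar{\bf w})\bar\alpha$---in the source and runs a Picard fixed point on $(\bar{\bf w},\bar\alpha)$ in $\mathcal{W}_T\times L^\infty_tL^2_x$. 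This sidesteps the delicacy the paper flags, since the $D$-term contributes a Lipschitz factor $\|\bar{\bf w}\|_{L^\infty}$ which is small; the contraction closes for the same reason the paper's Cauchy estimate does. Your scheme is the more standard one; the paper's buys a slightly more explicit error decay ($\|{\bf w}_n[T]\|$ halves at each step) at the cost of solving a nonlinear problem per iterate.

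Two points to tighten. First, you invoke Proposition~\ref{prop:congeo:2nd} for two-point controllability with an $L^2_{t,x}$ source on $[0,64\pi]$, whereas the stated version is null controllability with no source on $[0,32\pi]$; the upgrade is routine (subtract the free evolution of the source, then use null controllability plus time-reversal), but should be said. Second, your observability sketch is imprecise: once Proposition~\ref{prop:psl2} gives $\|\varphi_n\|_{L^\infty_xL^2_t}\to 0$, the ``weak-limit'' step is superfluous, but this alone does not contradict normalisation in $L^2\times H^{-1}$ since it says nothing about $\varphi_t$. The missing ingredient is a lower bound $\int\|\varphi\|_{L^2}^2\,dt\gtrsim\|\varphi[0]\|_{L^2\times H^{-1}}^2$, which the paper obtains via a time-regularity estimate (Lemma~\ref{lem:wave:timegain}). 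With that in hand the argument is a direct quantitative comparison, not compactness.
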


\subsection{On the reduction to a linear control problem}\label{Sec:cont:2}
As illustrated in the previous step, Proposition \ref{pro:controlaroundgeodesic} is equivalent to Proposition \ref{prop:congeo:1st}, which addresses the local controllability of wave equation \eqref{eq:visystem11} on ${\bf w}$ without  geometric constraint. The aim of this section is to  further show that Proposition \ref{prop:congeo:1st} can be derived from a linear controllability result, namely,  Proposition \ref{prop:congeo:2nd}. 

To construct this exact control trajectory that connects $({\bf w}_0, {\bf w}_{0t})$ and  $({\bf w}_1, {\bf w}_{1t})$, it suffices to find a solution satisfying 
\begin{equation*}
    {\bf w}[0]= ({\bf w}_0, {\bf w}_{0t})\xrightarrow{ \textrm{  control } \alpha|_{t\in (0, 32\pi)} }  {\bf w}[32\pi]= (0, 0) \xrightarrow{ \textrm{  control } \alpha|_{t\in (32\pi, 64\pi)} }  {\bf w}[64\pi]= ({\bf w}_1, {\bf w}_{1t}).
\end{equation*}
The first step is the null controllability of equation \eqref{eq:visystem11}, while the second step is a direct consequence of the null controllability of the reserve system (which has a similar structure). Indeed,  suppose that  ${\bf w}[t]|_{t\in (0, 2\pi)}$ is a solution of the equation with control ${\bf \alpha}(t)|_{t\in (0, 2\pi)}$, then 
\begin{align*}
    \tilde {\bf w}(t)&:= {\bf w}(32\pi- t)\;\; \forall t\in (0, 32\pi), \\
     \tilde \alpha(t)&:=  \alpha (32\pi- t)\;\; \forall t\in (0, 32\pi),
\end{align*}
satisfy the same equation. Consequently, it suffices to prove the local controllability of \eqref{eq:visystem11} on the time interval $(0, 32\pi)$. 
\vspace{3mm}

By linearizing the system around $0$ we obtain
\begin{equation}\label{eq:visystem11li}
\begin{split}
    \Box w^p- \sum_{1\leq j\leq R} B_{0, j}^{p}(x) \partial_{x}w^j- \sum_{1\leq j\leq R} C_{0, j}^{p}(x) w^j= 
\chi_{\omega} \alpha^p, 
\end{split}
\end{equation}
for every $p = 1, 2,\ldots R$,
where $\alpha$ supp $[0, T]\times \omega$ is the control that we are free to choose.

In the rest of this section, we show that the following property leads to Proposition \ref{prop:congeo:1st}. While the proof of this property is postponed to Sections \ref{subsec:HUM}--\ref{sec:cont:4th}.

\begin{prop} \label{prop:congeo:2nd}
  Let $T= 32\pi$. The linear coupled wave equation \eqref{eq:visystem11li} is exactly controllable in time $T$ in the sense that, there exists an explicit constant $S$ such that for any initial state $({\bf w}_0, {\bf w}_{0t})\in H^1(\mathbb{T}^1; \mathbb{R}^R)\times L^2(\mathbb{T}^1; \mathbb{R}^R)$, one can construct a control $\alpha\in C([0, T]; L^2(\mathbb{T}^1; \mathbb{R}^R))$ such that the unique solution of  
   \begin{equation*}
\begin{cases}
    \Box w^p- \sum_{1\leq j\leq R} B_{0, j}^{p}(x) \partial_{x}w^j- \sum_{1\leq j\leq R} C_{0, j}^{p}(x) w^j= 
\chi_{\omega} \alpha^p \;\; \forall p = 1, 2,\ldots R, \\
{\bf w}[0]= ({\bf w}_0, {\bf w}_{0t}),
\end{cases}
\end{equation*}
satisfies ${\bf w}[T]= (0, 0)$ and 
\begin{equation*}
        \|\alpha\|_{L^{\infty}(0, T; L^2(\mathbb{T}^1))}+ 
\|{\bf w}\|_{\mathcal{W}_T} \leq S  \|({\bf w}_0, {\bf w}_{0t})\|_{\mathcal{H}}.
    \end{equation*}
    \end{prop}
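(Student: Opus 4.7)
The strategy is the Hilbert Uniqueness Method (HUM, see \cite{Lions}). By duality, exact controllability of the forward linear system in $\mathcal{H}=H^1\times L^2$ with $L^2_x$-controls is equivalent to the observability inequality
\begin{equation*}
\int_0^T\!\int_\omega |\varphi|^2\,dx\,dt \;\geq\; C_0\,\|\varphi[0]\|_{L^2\times H^{-1}}^2
\end{equation*}
for solutions $\varphi$ of the adjoint system, which after integration by parts has the same structure
\begin{equation*}
\Box\varphi + \mathbf{b}(x)\partial_x\varphi + \mathbf{c}(x)\varphi = 0,
\end{equation*}
with $C^2$ matrix-valued coefficients $\mathbf{b},\mathbf{c}$ obtained from $B_{0,j}^p,C_{0,j}^p$. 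Once this observability is established, the standard HUM machinery produces a control $\alpha\in C([0,T];L^2(\mathbb{T}^1;\mathbb{R}^R))$ with $\|\alpha\|_{L^\infty_t L^2_x}\lesssim \|({\bf w}_0,{\bf w}_{0t})\|_\mathcal{H}$, and the bound on $\|{\bf w}\|_{\mathcal{W}_T}$ then follows from Lemma \ref{lem:linea:well:gen}.

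I would prove the observability by contradiction. Suppose it fails: there is a sequence $\varphi_n$ of adjoint solutions with $\|\varphi_n[0]\|_{L^2\times H^{-1}}=1$ and $\delta_n:=\int_0^T\!\int_\omega|\varphi_n|^2\,dx\,dt\to 0$. Since $T=32\pi$, a translation in time by $16\pi$ places the observation window $(0,T)$ into the interval $(-16\pi,16\pi)$ required by Proposition \ref{prop:psl2}. Applying that proposition with $J=\omega$ produces a sub-interval $I\subset(0,T)$ of length $6\pi$ on which
\begin{equation*}
\|\varphi_n\|_{L^\infty_x L^2_t(I)}^2 \leq C_q\,\delta_n^{1/q}\,\|\varphi_n[0]\|^2_{L^2\times H^{-1}} \longrightarrow 0,
\end{equation*}
and in particular $\int_I\!\int_{\mathbb{T}^1}|\varphi_n|^2\,dx\,dt\to 0$.

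To close the contradiction I need a matching full-torus lower bound
\begin{equation*}
\int_I\!\int_{\mathbb{T}^1}|\psi|^2\,dx\,dt \;\geq\; c\,\|\psi[0]\|_{L^2\times H^{-1}}^2 \qquad \text{for all adjoint solutions } \psi.
\end{equation*}
For the pure d'Alembertian this is elementary: expanding $\psi=\sum_k \hat\psi_k(t)e^{ikx}$, each non-zero mode satisfies the one-dimensional estimate $\int_0^{2\pi}|a\cos(kt)+b\sin(kt)/k|^2\,dt\geq \pi(|a|^2+|b|^2/k^2)$, a direct computation handles the zero mode, and the length $|I|=6\pi>2\pi$ absorbs any time-shift. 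For the perturbed operator, I would treat $-\mathbf{b}\partial_x\varphi-\mathbf{c}\varphi$ as a source in a Duhamel formulation and combine with the free-wave estimate, or alternatively close the bound by a standard compactness-uniqueness argument invoking Holmgren's unique continuation theorem for wave equations with $C^2$ coefficients. Combined with the smallness produced by Proposition \ref{prop:psl2}, this forces $\|\varphi_n[0]\|_{L^2\times H^{-1}}\to 0$, contradicting the normalization and thereby establishing the observability inequality.

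The main obstacle will be the full-torus lower bound in the weak space $L^2\times H^{-1}$: the first-order coupling $\mathbf{b}(x)\partial_x\varphi$ sits at the principal-part regularity when measured in these dual norms and therefore does not admit a naive Gronwall absorption, which is why either the hidden-regularity argument or the compactness-uniqueness step requires some care. Once that is handled, the rest is a standard HUM closure together with the quantitative propagation of smallness (Proposition \ref{prop:psl2}), which is precisely the ingredient the authors developed in \cite{KX} for exactly this type of coupled wave system.
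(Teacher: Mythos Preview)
Your proposal is essentially correct and follows the same architecture as the paper: HUM reduction to observability, then combining the quantitative propagation of smallness (Proposition \ref{prop:psl2}) with a full-torus lower bound $\int_I\int_{\mathbb{T}^1}|\varphi|^2\,dx\,dt\gtrsim\|\varphi[0]\|_{L^2\times H^{-1}}^2$. The one substantive difference is in how that lower bound is obtained. You propose either a Duhamel perturbation off the free wave (which you correctly flag as delicate because $\mathbf{b}\partial_x\varphi$ is top-order in the dual norms) or a compactness--uniqueness argument via Holmgren. The paper instead proves it directly and quantitatively: first the energy equivalence $\|\varphi[t]\|_{L^2\times H^{-1}}\sim\|\varphi[0]\|_{L^2\times H^{-1}}$ on $[-16\pi,16\pi]$, and then a short integration-by-parts lemma (Lemma \ref{lem:wave:timegain}, testing $\varphi_{tt}$ against $g(t)(-\Delta+1)^{-1}\varphi$) which yields $\int\|\varphi_t\|_{H^{-1}}^2\,dt\lesssim\int\|\varphi\|_{L^2}^2\,dt$ even in the presence of the first-order term. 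This sidesteps both the perturbation issue you anticipated and any appeal to unique continuation, and keeps the whole argument explicit and constant-tracking. Your compactness--uniqueness route would close the proof but lose the quantitative constant; the paper's trick is worth knowing as it handles exactly the obstacle you identified.
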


\subsubsection{Well-posedness of equation \eqref{eq:visystem11} }\label{subsubsec:421}

The linear equation \eqref{eq:visystem11li} can be  written as follows:
\begin{gather}\label{eq:linsimiwavesim}
    \mathcal{L}_{{\rm in}} {\bf w}= \chi_{\omega} \alpha, 
\end{gather}
where 
\begin{equation}
\mathcal{L}_{{\rm in}} {\bf w}:= 
\begin{pmatrix}
     \Box w^1- \sum_j B_{0, j}^{1}(x) \partial_{x}w^j- \sum_j C_{0, j}^{1}(x) w^j\\
     \Box w^2- \sum_j B_{0, j}^{2}(x) \partial_{x}w^j- \sum_j C_{0, j}^{2}(x) w^j\\
     ...\;\;\;\;\;\;\; ...\;\;\;\;\;\;\; ...\\
      \Box w^R- \sum_j B_{0, j}^{R}(x) \partial_{x}w^j- \sum_j C_{0, j}^{R}(x) w^j
\end{pmatrix}, \;\;  
\alpha= 
\begin{pmatrix}
    \alpha^1\\
    \alpha^2\\
    ...\\
    \alpha^R
\end{pmatrix}.    
\end{equation}
We also write the matrices ${\bf B}_0(x), {\bf C}_0(x): \mathbb{T}^1\rightarrow M_{R\times R}$ as
\begin{equation}\label{eq:def:BC}
    ({\bf B}_0(x))_{p, j}:= B^p_{0, j}(x) \; \textrm{ and }  \;    ({\bf C}_0 (x))_{p, j}:= C^p_{0, j}(x).
\end{equation}

Using this notations the semilinear wave equation \eqref{eq:visystem11} becomes 
\begin{gather}\label{eq:simiwavesim}
    \mathcal{L}_{{\rm in}} {\bf w}- \mathcal{K}(x; {\bf w})= \chi_{\omega}  \alpha+ \chi_{\omega} \mathcal{K}_1(x; {\bf w}) \alpha, 
\end{gather}
where 
\begin{equation}
\mathcal{K}(x;  {\bf w}):= 
\begin{pmatrix}
   \sum_{j,k}\sum_{\beta} 
A_{jk}^{1}(x; {\bf{w}})\partial_{\beta} w^j \partial^{\beta} w^k +  \sum_j B_{r, j}^{1}(x; {\bf{w}}) \partial_{x}w^j+ \sum_j C_{r, j}^{1}(x; {\bf w}) w^j \\
  \sum_{j,k}\sum_{\beta} 
A_{jk}^{2}(x; {\bf{w}})\partial_{\beta} w^j \partial^{\beta} w^k +  \sum_j B_{r, j}^{2}(x; {\bf{w}}) \partial_{x}w^j+ \sum_j C_{r, j}^{2}(x; {\bf w}) w^j\\
...\;\;\;\;\;\;\; ...\;\;\;\;\;\;\; ...\\
  \sum_{j,k}\sum_{\beta} 
A_{jk}^{R}(x; {\bf{w}})\partial_{\beta} w^j \partial^{\beta} w^k +  \sum_j B_{r, j}^{R}(x; {\bf{w}}) \partial_{x}w^j+ \sum_j C_{r, j}^{R}(x; {\bf w}) w^j
\end{pmatrix},    
\end{equation}
\begin{equation}
\mathcal{K}_1(x; {\bf w}) \alpha:= 
\begin{pmatrix}
 \sum_j D_j^1(x; {\bf{w}})\alpha^j(t, x)\\
 \sum_j D_j^2(x; {\bf{w}})\alpha^j(t, x)\\
...\;\;\;\;\;\;\; ...\;\;\;\;\;\;\; ...\\
 \sum_j D_j^R(x; {\bf{w}})\alpha^j(t, x)
\end{pmatrix},   
\end{equation}
\vspace{1mm}
with the conventions $j, k\in \{1,2,..., R\}$ and $\beta\in \{0, 1\}$.
\vspace{2mm}

Let $T>0$. Recall from Lemma \ref{lem:linea:well:gen} the existence of $\mathcal{C}_T>0$ such that for any  initial state $({\bf w}_0, {\bf w}_{0t})\in \mathcal{H}$, 
and any $\alpha: [0,  T]\times \mathbb{T}^1\rightarrow \mathbb{R}^{R}$ in $L^2_{t,x}(D_T)$,  the unique solution of 
\begin{equation}
        \mathcal{L}_{{\rm in}} {\bf w}=  \alpha \textrm{ with }
        {\bf w}[0]= ({\bf w}_0, {\bf w}_{0t}), \label{eq:li:w:1}
\end{equation}
 satisfies 
  \begin{gather} 
  \|{\bf w}\|_{\mathcal{W}_T}\leq \mathcal{C}_T \left( \|({\bf w}_0, {\bf w}_{0t})\|_{\mathcal{H}}+ \|\alpha\|_{L^2_{t, x}(D_T)}\right). \label{eq:li:w:2}
\end{gather}

Notice that the nonlinear terms $\mathcal{K}(x; {\bf w})$ and $\mathcal{K}_1(x; {\bf w}) \alpha$ satisfy 
\begin{gather}
    \|\mathcal{K}(x; {\bf w})\|_{L^2_{t, x}(D_T)}\leq C_N \|{\bf w}\|_{\mathcal{W}_T}^2, \label{K:p:1} \\
\|\mathcal{K}(x; {\bf w}_1)- \mathcal{K}(x; {\bf w}_2)\|_{L^2_{t, x}(D_T)}\leq C_N \|{\bf w}_1- {\bf w}_2\|_{\mathcal{W}_T} ( \|{\bf w}_1\|_{\mathcal{W}_T}+  \|{\bf w}_2\|_{\mathcal{W}_T}), \\
     \|\mathcal{K}_1(x; {\bf w})\alpha\|_{L^2_{t, x}(D_T)}\leq C_N \|{\bf w}\|_{L^{\infty}(D_T)} \|\alpha\|_{L^2_{t, x}(D_T)}, \\
       \|\mathcal{K}_1(x; {\bf w}_1)\alpha_1- \mathcal{K}_1(x; {\bf w}_2)\alpha_2\|_{L^2_{t, x}(D_T)}\leq C_N \|{\bf w}_1- {\bf w}_2\|_{L^{\infty}(D_T)} \|\alpha_1\|_{L^2_{t, x}(D_T)} \notag\\
       \;\;\;\;\;\;\;\;\;\;\;\;\;\;\;\;\;\;\;\;\;\;\;\;\;\;\;\;\;\;\;\;\;\;\;\;\;\;\;\;\;\;\;\;\;\;\;\;\;\;\;\;\;\;\;\;\;\;\;\; +  C_N\|{\bf w}_2\|_{L^{\infty}(D_T)} \|\alpha_1- \alpha_2\|_{L^2_{t, x}(D_T)}, \label{K:p:2}
\end{gather}
provided that
\begin{equation*}
\|{\bf w}\|_{\mathcal{W}_T}, \|{\bf w}_1\|_{\mathcal{W}_T}, \|{\bf w}_2\|_{\mathcal{W}_T}, \|\alpha\|_{L^2_{t, x}(D_T)}, \|\alpha_1\|_{L^2_{t, x}(D_T)}, \|\alpha_2\|_{L^2_{t, x}(D_T)}\leq 1
\end{equation*}
The proof is straightforward and relies on the  identity \eqref{null-indentity}, which can be found in Appendix \hyperref[setting(C1)]{C.1}.

By combining  \eqref{eq:li:w:1}--\eqref{eq:li:w:2}, the preceding estimates on $\mathcal{K}(x; {\bf w})$ and $\mathcal{K}_1(x; {\bf w}) \alpha$, and the bootstrap argument we obtain the  small data well-posedness result,  whose proof can be found in Appendix \hyperref[setting(C1)]{C.1}.
\begin{lemma}\label{lem:semiwave:1}
Let $T>0$.  There exists an explicit constant $\varepsilon_T$ such that for any initial state $({\bf w}_0, {\bf w}_{0t})\in H^1(\mathbb{T}^1; \mathbb{R}^R)\times L^2(\mathbb{T}^1; \mathbb{R}^R)$, and any source terms $\alpha$ and $e$ satisfying 
\begin{equation}
    \|({\bf w}_0, {\bf w}_{0t})\|_{\mathcal{H}}+ \|\alpha\|_{L^2_{t, x}(D_T)}+ \|e\|_{L^2_{t, x}(D_T)}\leq \varepsilon_T, \label{eq:cond:ualphae}
\end{equation}
the inhomogeneous semilinear  wave  equation 
\begin{equation}
    \begin{cases}
        \mathcal{L}_{{\rm in}} {\bf w}- \mathcal{K}(x; {\bf w})=  \chi_{\omega} \alpha+ \chi_{\omega} \mathcal{K}_1(x; {\bf w}) \alpha+ e, \\
        {\bf w}[0]= ({\bf w}_0, {\bf w}_{0t}),
    \end{cases} \notag
\end{equation}
 admits a unique solution.   This unique solution satisfies the following estimates:
  \begin{gather} 
  \|{\bf w}\|_{\mathcal{W}_T}\leq 2\mathcal{C}_T \left( \|({\bf w}_0, {\bf w}_{0t})\|_{\mathcal{H}}+ \|\alpha\|_{L^2_{t, x}(D_T)}+  \|e\|_{L^2_{t, x}(D_T)}\right). \notag
\end{gather}

Moreover, for any triples $({\bf u}_1[0], \alpha_1, e_1)$ and $({\bf u}_2[0], \alpha_2, e_2)$ satisfying \eqref{eq:cond:ualphae}
the solutions of 
\begin{equation}
    \begin{cases}
        \mathcal{L}_{{\rm in}} {\bf w}_k- \mathcal{K}(x; {\bf w}_k)=  \chi_{\omega} \alpha_k+ \chi_{\omega} \mathcal{K}_1(x; {\bf w}_k) \alpha_k+ e_k, \\
        {\bf w}_k[0]= {\bf u}_k[0],  \textrm{  where } \;k= 1, 2,
    \end{cases} \notag
\end{equation}
 satisfy 
  \begin{gather} 
  \|{\bf w}_1- {\bf w}_2\|_{\mathcal{W}_T}\leq 3\mathcal{C}_T \left( \|{\bf u}_1[0]- {\bf u}_2[0]\|_{\mathcal{H}}+ \|\alpha_1- \alpha_2\|_{L^2_{t, x}(D_T)}+  \|e_1- e_2\|_{L^2_{t, x}(D_T)}\right). \notag
\end{gather}
\end{lemma}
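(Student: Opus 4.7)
The plan is to set up a contraction mapping argument in the space $\mathcal{W}_T$ in a small ball whose radius is dictated by the linear bound \eqref{eq:li:w:2}. Given data $({\bf w}_0,{\bf w}_{0t})$, $\alpha$ and $e$ satisfying \eqref{eq:cond:ualphae}, I set
\[
R := 2\mathcal{C}_T\Big(\|({\bf w}_0,{\bf w}_{0t})\|_{\mathcal H}+\|\alpha\|_{L^2_{t,x}(D_T)}+\|e\|_{L^2_{t,x}(D_T)}\Big)\le 2\mathcal{C}_T\,\varepsilon_T,
\]
and define the solution map $\Phi:B_R\subset \mathcal{W}_T\to\mathcal{W}_T$ sending ${\bf v}$ to the unique solution $\tilde{\bf w}$ of the linear problem
\[
\mathcal{L}_{\rm in}\tilde{\bf w}=\mathcal{K}(x;{\bf v})+\chi_\omega\alpha+\chi_\omega\mathcal{K}_1(x;{\bf v})\alpha+e,\qquad \tilde{\bf w}[0]=({\bf w}_0,{\bf w}_{0t}),
\]
which is well defined by Lemma \ref{lem:linea:well:gen} (cf.\ \eqref{eq:li:w:1}--\eqref{eq:li:w:2}).

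First I would verify the self-map property. Applying \eqref{eq:li:w:2} and then the nonlinear bounds \eqref{K:p:1} and the third estimate in the block \eqref{K:p:1}--\eqref{K:p:2}, together with the embedding $\|{\bf v}\|_{L^\infty(D_T)}\lesssim\|{\bf v}\|_{\mathcal{W}_T}$ built into the definition of $\mathcal{W}_T$, yields
\[
\|\Phi({\bf v})\|_{\mathcal{W}_T}\le \tfrac{R}{2}+\mathcal{C}_T\,C_N\bigl(\|{\bf v}\|_{\mathcal{W}_T}^2+\|{\bf v}\|_{\mathcal{W}_T}\|\alpha\|_{L^2_{t,x}(D_T)}\bigr)\le \tfrac{R}{2}+\mathcal{C}_T C_N\bigl(R+\varepsilon_T\bigr)R.
\]
Choosing $\varepsilon_T$ small enough so that $\mathcal{C}_T C_N(R+\varepsilon_T)\le 1/4$ — which is possible since $R\le 2\mathcal{C}_T\varepsilon_T$ — gives $\Phi(B_R)\subset B_R$.

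Next I would check the contraction property using the Lipschitz bounds \eqref{K:p:2}. For ${\bf v}_1,{\bf v}_2\in B_R$,
\[
\|\Phi({\bf v}_1)-\Phi({\bf v}_2)\|_{\mathcal{W}_T}\le \mathcal{C}_T\,C_N\bigl(\|{\bf v}_1\|_{\mathcal{W}_T}+\|{\bf v}_2\|_{\mathcal{W}_T}+\|\alpha\|_{L^2_{t,x}(D_T)}\bigr)\|{\bf v}_1-{\bf v}_2\|_{\mathcal{W}_T},
\]
and for $\varepsilon_T$ small enough the prefactor is $\le 1/2$, yielding a unique fixed point ${\bf w}\in B_R$ which is the desired solution; the $\mathcal{W}_T$-bound $\|{\bf w}\|_{\mathcal{W}_T}\le R$ is exactly the stated estimate. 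Uniqueness in the full space $\mathcal{W}_T$ (not just inside $B_R$) follows by a standard continuity-in-time argument: any other solution agrees with ${\bf w}$ on an initial interval where both sit in $B_R$, and one extends this by iterating on short sub-intervals.

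Finally, the Lipschitz dependence is obtained by writing ${\bf w}_1-{\bf w}_2$ as the solution of the linear inhomogeneous equation
\[
\mathcal{L}_{\rm in}({\bf w}_1-{\bf w}_2)=\bigl(\mathcal{K}(\cdot;{\bf w}_1)-\mathcal{K}(\cdot;{\bf w}_2)\bigr)+\chi_\omega\bigl(\mathcal{K}_1(\cdot;{\bf w}_1)\alpha_1-\mathcal{K}_1(\cdot;{\bf w}_2)\alpha_2\bigr)+\chi_\omega(\alpha_1-\alpha_2)+(e_1-e_2),
\]
with initial data ${\bf u}_1[0]-{\bf u}_2[0]$. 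Applying the linear estimate and the Lipschitz bounds \eqref{K:p:2}, together with $\|{\bf w}_k\|_{\mathcal{W}_T}\le R\le 2\mathcal{C}_T\varepsilon_T$, gives
\[
\|{\bf w}_1-{\bf w}_2\|_{\mathcal{W}_T}\le \mathcal{C}_T\bigl(\|{\bf u}_1[0]-{\bf u}_2[0]\|_{\mathcal H}+\|\alpha_1-\alpha_2\|_{L^2}+\|e_1-e_2\|_{L^2}\bigr)+\tfrac13\|{\bf w}_1-{\bf w}_2\|_{\mathcal{W}_T},
\]
after further shrinking $\varepsilon_T$ so that $\mathcal{C}_T C_N(R+\varepsilon_T)\le 1/3$; absorbing the last term on the right yields the claimed factor of $3\mathcal{C}_T$. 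The main technical care is thus in selecting $\varepsilon_T$ based on $\mathcal{C}_T$ and $C_N$ so that all nonlinear contributions — including the implicit control $\chi_\omega\mathcal{K}_1(x;{\bf w})\alpha$ — can be absorbed into the linear estimate with the precise constants $2\mathcal{C}_T$ and $3\mathcal{C}_T$ demanded by the statement.
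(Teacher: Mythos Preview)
Your proposal is correct and follows essentially the same approach as the paper: the paper phrases the existence part as a ``bootstrap argument'' (a priori assume $\|{\bf w}\|_{\mathcal{W}_T}\le 1$, feed the nonlinear bounds \eqref{K:p:1}--\eqref{K:p:2} into the linear estimate \eqref{eq:li:w:2}, and absorb the quadratic terms), while you set it up as a Banach fixed point on a ball of radius $R=2\mathcal{C}_T(\cdots)$---the two formulations are interchangeable here. For the Lipschitz part, your argument is identical to the paper's: write the linear equation for ${\bf w}_1-{\bf w}_2$, apply \eqref{eq:li:w:2} and the Lipschitz estimates in \eqref{K:p:2}, and absorb the small nonlinear remainder to obtain the constant $3\mathcal{C}_T$.
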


\subsubsection{Proposition \ref{prop:congeo:2nd} implies Proposition \ref{prop:congeo:1st}}

Now we fix $T= 32\pi$.   For ease of notations, we denote the initial state $({\bf w}_0, {\bf w}_{0t})$ by ${\bf u}[0]$. 
Usually, we rely on fixed point arguments to show local controllability results. However, in this framework one notice that the control force is not exactly the functions $\alpha$ that we are choosing but the slightly modified functions $\alpha+ \mathcal{K}_1(x; {\bf w}) \alpha$. This makes it more delicate to adopt fixed point arguments.
Here we use an iteration scheme to prove the controllability of this equation. The idea is to find a sequence of states and controls $\{({\bf w}_n, \alpha_n)\}_{n}$, which forms a Cauchy sequence,  such that for every $n$,
\begin{equation}\label{eq:see:ite}
    \begin{cases}
        \mathcal{L}_{{\rm in}} {\bf w}_n- \mathcal{K}(x; {\bf w}_n)=  \chi_{\omega} \alpha_n+ \chi_{\omega} \mathcal{K}_1(x; {\bf w}_n) \alpha_n, \\
        {\bf w}_n[0]= {\bf u}[0],
    \end{cases} 
\end{equation}
and that ${\bf w}_n[T]$ converges to $(0, 0)$. By passing to the limit of this sequence, we can show the following lemma, which easily yields Proposition \ref{prop:congeo:1st}.
\begin{lemma}\label{lem:conliwavei}
    Let $T= 32\pi$. There exists an explicit $\tilde \varepsilon_T\leq \varepsilon_T$ such that for any initial state ${\bf u}[0]$ satisfying $\|{\bf u}[0]\|_{\mathcal{H}}\leq \tilde \varepsilon_T$, there exist a control $\alpha\in C([0, T]; L^2(\mathbb{T}^1))$ supported in $\omega$ and a solution ${\bf w}$ satisfying 
    \begin{equation*}
    \begin{cases}
        \mathcal{L}_{{\rm in}} {\bf w}- \mathcal{K}(x; {\bf w})=  \chi_{\omega} \alpha+ \chi_{\omega} \mathcal{K}_1(x; {\bf w}) \alpha, \;\; \forall t\in (0, 2\pi), \\
        {\bf w}[0]= {\bf u}[0],\;  {\bf w}[T]= (0, 0),
    \end{cases}
    \end{equation*}
    and 
    \begin{equation*}
        \|\alpha\|_{L^{\infty}(0, T; L^2(\mathbb{T}^1))}, 
\|{\bf w}\|_{\mathcal{W}_T} \leq 3S \mathcal{C}_T  \|{\bf u}[0]\|_{\mathcal{H}},
    \end{equation*}
    where $\mathcal{C}_T$ is the constant given in \eqref{eq:li:w:1}--\eqref{eq:li:w:2}, and $S$ is the constant given in Proposition  \ref{prop:congeo:2nd}.
\end{lemma}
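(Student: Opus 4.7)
The plan is to run a Newton-type iteration that drives the final-state error geometrically to zero, using Proposition \ref{prop:congeo:2nd} as the linear steering tool and Lemma \ref{lem:semiwave:1} to supply a semilinear trajectory at each step. Initialize $\alpha_0\equiv 0$ and let ${\bf w}_0$ be the unique small-data semilinear solution of \eqref{eq:simiwavesim} with control $\alpha_0$ and initial data ${\bf u}[0]$; this exists by Lemma \ref{lem:semiwave:1} as soon as $\tilde{\varepsilon}_T\le\varepsilon_T$. The quantity $e_n:={\bf w}_n[T]\in\mathcal{H}$ will be the error to be driven to zero, with $\|e_0\|_{\mathcal{H}}\lesssim\|{\bf u}[0]\|_{\mathcal{H}}$ by the linear bound in Lemma \ref{lem:semiwave:1}.

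The inductive step is the following. Assuming that $({\bf w}_n,\alpha_n)$ satisfies \eqref{eq:see:ite} with ${\bf w}_n[0]={\bf u}[0]$, apply Proposition \ref{prop:congeo:2nd} together with the time-reversal symmetry of the linear equation \eqref{eq:visystem11li} (under $t\mapsto T-t$, using that the coefficients ${\bf B}_0,{\bf C}_0$ are time-independent and thus that null-controllability upgrades to exact steering between arbitrary states). This produces a correction $\delta\alpha_n\in L^{\infty}_tL^2_x$ supported in $\omega$ such that the solution $\eta_n$ of $\mathcal{L}_{{\rm in}}\eta_n=\chi_{\omega}\delta\alpha_n$ with $\eta_n[0]=0$ satisfies $\eta_n[T]=-e_n$ and
\[
\|\delta\alpha_n\|_{L^{\infty}_tL^2_x}+\|\eta_n\|_{\mathcal{W}_T}\le S\|e_n\|_{\mathcal{H}}.
\]
Define $\alpha_{n+1}:=\alpha_n+\delta\alpha_n$ and let ${\bf w}_{n+1}$ be the semilinear solution with control $\alpha_{n+1}$; the summability estimate below will keep $\|\alpha_{n+1}\|_{L^2_{t,x}}$ below $\varepsilon_T$, so Lemma \ref{lem:semiwave:1} applies. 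By construction $({\bf w}_{n+1},\alpha_{n+1})$ satisfies \eqref{eq:see:ite}.

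Define the mismatch $r_n:={\bf w}_{n+1}-{\bf w}_n-\eta_n$. Subtracting the semilinear equations for ${\bf w}_{n+1}$ and ${\bf w}_n$ and then the linear equation for $\eta_n$ produces a linear equation for $r_n$ with zero initial data and source $[\mathcal{K}(x;{\bf w}_{n+1})-\mathcal{K}(x;{\bf w}_n)]+\chi_{\omega}[\mathcal{K}_1(x;{\bf w}_{n+1})\alpha_{n+1}-\mathcal{K}_1(x;{\bf w}_n)\alpha_n]$. The Lipschitz estimates \eqref{K:p:1}--\eqref{K:p:2}, together with the triangle inequality $\|{\bf w}_{n+1}-{\bf w}_n\|_{\mathcal{W}_T}\le\|\eta_n\|_{\mathcal{W}_T}+\|r_n\|_{\mathcal{W}_T}$ and absorption of the linear $r_n$ term on the left, yield
\[
\|r_n\|_{\mathcal{W}_T}\le C\|{\bf u}[0]\|_{\mathcal{H}}\|e_n\|_{\mathcal{H}},\qquad \|e_{n+1}\|_{\mathcal{H}}=\|r_n[T]\|_{\mathcal{H}}\le C\|{\bf u}[0]\|_{\mathcal{H}}\|e_n\|_{\mathcal{H}},
\]
where the cancellation $e_n+\eta_n[T]=0$ has been used. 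Choosing $\tilde{\varepsilon}_T$ so that $C\tilde{\varepsilon}_T\le 1/2$, the sequence $\|e_n\|_{\mathcal{H}}$ decays geometrically, hence $\sum_n\|\delta\alpha_n\|_{L^{\infty}_tL^2_x}\le 2S\|e_0\|_{\mathcal{H}}\lesssim\|{\bf u}[0]\|_{\mathcal{H}}$. Therefore $\alpha_n$ is Cauchy in $L^{\infty}_tL^2_x$, ${\bf w}_n$ is Cauchy in $\mathcal{W}_T$ by the continuous dependence in Lemma \ref{lem:semiwave:1}, and the limit $({\bf w},\alpha)$ solves \eqref{eq:simiwavesim} with ${\bf w}[0]={\bf u}[0]$ and ${\bf w}[T]=0$, together with the claimed norm bound upon adjusting the constants.

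The principal obstacle is the multiplicative coupling $\chi_{\omega}\mathcal{K}_1(x;{\bf w})\alpha$: because the control enters multiplied by a state-dependent factor, a straightforward fixed-point in $\alpha$ alone does not close cleanly. The Newton-type scheme circumvents this by updating only a linear correction $\delta\alpha_n$ at each iteration, while keeping each $({\bf w}_n,\alpha_n)$ a genuine semilinear pair. The quadratic smallness of $\mathcal{K}(x;{\bf w})$ and the $O(\|{\bf w}\|)$ bound on $\mathcal{K}_1(x;{\bf w})$ from \eqref{eq:indexcond2} are precisely what make the semilinear-minus-linear remainder $r_n$ a factor of $\|{\bf u}[0]\|_{\mathcal{H}}$ smaller than $e_n$, delivering the contraction.
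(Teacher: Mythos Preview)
Your proposal is correct and follows essentially the same Newton-type iteration as the paper: your $\delta\alpha_n$, $\eta_n$, $e_n$, and $r_n$ are precisely the paper's $\beta_n$, ${\bf w}_n^c$, ${\bf w}_n[T]$, and ${\bf w}_{n+1}-\tilde{\bf w}_{n+1}$ (where $\tilde{\bf w}_{n+1}:={\bf w}_n+{\bf w}_n^c$), and the contraction $\|e_{n+1}\|\lesssim\|{\bf u}[0]\|\,\|e_n\|$ is obtained in the same way via the Lipschitz bounds \eqref{K:p:1}--\eqref{K:p:2}. The only cosmetic difference is that the paper packages the remainder estimate through Lemma~\ref{lem:semiwave:1} applied to the pair $(\tilde{\bf w}_{n+1},{\bf w}_{n+1})$, whereas you write the linear equation for $r_n$ directly and absorb; both close under the same smallness condition.
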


\begin{proof}[Proof of Lemma \ref{lem:conliwavei}]
    
We assume that the initial state is smaller than $\tilde \varepsilon_T$, with the value of $\tilde \varepsilon_T\in (0, \varepsilon_T)$ to be fixed later on. In the {\it zeroth step} we simply let $\alpha_0= 0$ and find a unique solution of 
\begin{equation}
    \begin{cases}
        \mathcal{L}_{{\rm in}} {\bf w}_0- \mathcal{K}(x; {\bf w}_0)=  \chi_{\omega} \alpha_0+ \chi_{\omega} \mathcal{K}_1(x; {\bf w}_0) \alpha_0, \\
        {\bf w}_0[0]= {\bf u}[0].
    \end{cases} \notag
\end{equation}
According to Lemma \ref{lem:semiwave:1} this solution satisfies
\begin{equation*}
    \|{\bf w}_0\|_{\mathcal{W}_T}\leq 2\mathcal{C}_T  \|{\bf u}[0]\|_{\mathcal{H}},
\end{equation*}
and in particular 
\begin{equation}\label{es:errorw0}
    \|{\bf w}_0[T]\|_{H^1\times L^2}\leq 2\mathcal{C}_T  \|{\bf u}[0]\|_{\mathcal{H}}.
\end{equation}

Keeping in mind that the value of $\tilde \varepsilon_T$ can be chosen as small as we want, in {\it the first step } we construct $\alpha_1$ as the sum of $\alpha_0$ and $\beta_0$, where $\beta_0$ is a control to be selected such that 
\begin{equation}
    \begin{cases}
        \mathcal{L}_{{\rm in}} {\bf w}_0^c= \chi_{\omega} \beta_0 \\
        {\bf w}_0^c[0]= 0 \; \textrm{ and } \;   {\bf w}_0^c[T]= - {\bf w}_0[T].
    \end{cases} \notag
\end{equation}
According to Proposition \ref{prop:congeo:2nd} there exists such a $\beta_0$, and it satisfies 
  \begin{equation*}
        \|\beta_0\|_{L^{\infty}(0, T; L^2(\mathbb{T}^1))}+ 
\|{\bf w}_0^c\|_{\mathcal{W}_T} \leq S \|{\bf w}_0[T]\|_{\mathcal{H}} \leq 2 \mathcal{C}_TS \|{\bf u}[0]\|_{\mathcal{H}}. 
    \end{equation*}
Next, we observe that the functions  $\tilde {\bf w}_1:= {\bf w}_0+ {\bf w}_0^c$ and $\alpha_1:=\alpha_0+ \beta_0$ satisfy
\begin{equation}\label{eq:tildw1}
    \begin{cases}
        \mathcal{L}_{{\rm in}} \tilde {\bf w}_1- \mathcal{K}(x; \tilde {\bf w}_1)=  \chi_{\omega} \alpha_1+ \chi_{\omega} \mathcal{K}_1(x; \tilde {\bf w}_1) \alpha_1+ e_{1}, \\
        \tilde {\bf w}_1[0]= {\bf u}[0] \; \textrm{ and } \; \tilde {\bf w}_1[T]= (0, 0),
    \end{cases} 
\end{equation}
where the error term $e_1$ equals 
\begin{align*}
    e_1= \mathcal{K}(x; {\bf w}_0)- \mathcal{K}(x; {\bf w}_0+ {\bf w}_0^c)+ \chi_{\omega} \mathcal{K}_1(x; {\bf w}_0) \alpha_0- \chi_{\omega}\mathcal{K}_1(x;  {\bf w}_0+ {\bf w}_0^c) \alpha_1,
\end{align*}
and  satisfies 
\begin{align*}
    \|e_1\|_{L^2_{t, x}(D_T)}\leq C_N \|{\bf w}_0^c\|_{\mathcal{W}_T}(\|{\bf w}_0^c\|_{\mathcal{W}_T}+ \|{\bf w}_0+ {\bf w}_0^c\|_{\mathcal{W}_T}) + C_N \|{\bf w}_0+ {\bf w}_0^c\|_{\mathcal{W}_T} \|\beta_0\|_{L^2_{t,x}} 
    \lesssim \|{\bf u}[0]\|_{\mathcal{H}}^2.
\end{align*}

 Thus by fixing the control term as $\alpha_1$, the unique solution ${\bf w}_1$ of 
\begin{equation}\label{eq:w1}
    \begin{cases}
        \mathcal{L}_{{\rm in}} {\bf w}_1- \mathcal{K}(x;  {\bf w}_1)= \chi_{\omega}  \alpha_1+  \chi_{\omega} \mathcal{K}_1(x;  {\bf w}_1) \alpha_1, \\
         {\bf w}_1[0]= {\bf u}[0],
    \end{cases} 
\end{equation}
is close to $\tilde {\bf w}_1$. Actually, by comparing equations \eqref{eq:tildw1} and \eqref{eq:w1} and by applying Lemma \ref{lem:semiwave:1}, one obtains
\begin{equation}\label{inefore1}
    \|{\bf w}_1- \tilde {\bf w}_1\|_{\mathcal{W}_T}\leq 3 \mathcal{C}_T \|e_1\|_{L^2_{t, x}}\lesssim \|{\bf u}[0]\|_{\mathcal{H}}^2.
\end{equation}
provided that the value of $\tilde \varepsilon_T$ is smaller than some effectively computable constant $\varepsilon_1$.  This implies that 
\begin{gather*}
    \|{\bf w}_1\|_{\mathcal{W}_T}\leq 2 \mathcal{C}_T(1+ S) \|{\bf u}[0]\|_{\mathcal{H}}+  C_1\|{\bf u}[0]\|_{\mathcal{H}}^2, \\
    \|\alpha_1\|_{L^{\infty}(0, T; L^2(\mathbb{T}^1))}\leq 2 \mathcal{C}_TS \|{\bf u}[0]\|_{\mathcal{H}}, \\
    \|{\bf w}_1[T]\|_{H^1\times L^2}\leq C_1 \|{\bf u}[0]\|_{\mathcal{H}}^2,
\end{gather*}
where $C_1$ is some effectively computable constant. 
To be compared with the estimate \eqref{es:errorw0} of ${\bf w}_0[T]$, the error of ${\bf w}_1[T]$ is much smaller. 
Therefore, this motivates us to construct $\{({\bf w}_n, \alpha_n)\}_{n}$  satisfying equation \eqref{eq:wn} below and the estimates
\begin{gather*}
    \|{\bf w}_n\|_{\mathcal{W}_T}\leq 3 \mathcal{C}_T S \|{\bf u}[0]\|_{\mathcal{H}}, \\
    \|\alpha_n\|_{L^{\infty}(0, T; L^2(\mathbb{T}^1))}\leq 3 \mathcal{C}_TS \|{\bf u}[0]\|_{\mathcal{H}}, \\
    \|{\bf w}_n[T]\|_{\mathcal{H}}\leq C  \|{\bf w}_{n-1}[T]\|_{\mathcal{H}}  \|{\bf u}[0]\|_{\mathcal{H}}.
\end{gather*}
More precisely, we have the following lemma.
\begin{lemma}\label{lem:iteration}
    Let $T= 32\pi$. There exist effectively computable constants $\varepsilon_2$ and $C_2$ such that for any initial state ${\bf u}[0]$ satisfying $\|{\bf u}[0]\|_{\mathcal{H}}\leq \varepsilon_2$ and any solution $({\bf w}_n, \alpha_n)$ of 
\begin{equation}\label{eq:wn}
    \begin{cases}
        \mathcal{L}_{{\rm in}} {\bf w}_n- \mathcal{K}(x;  {\bf w}_n)= \chi_{\omega}  \alpha_n+ \chi_{\omega} \mathcal{K}_1(x;  {\bf w}_n) \alpha_n, \\
         {\bf w}_n[0]= {\bf u}[0]
    \end{cases} 
\end{equation}
satisfying 
\begin{gather*}
    \|{\bf w}_n\|_{\mathcal{W}_T}\leq 3 \mathcal{C}_T S \|{\bf u}[0]\|_{\mathcal{H}}, \\
    \|\alpha_n\|_{L^{\infty}(0, T; L^{2}(\mathbb{T}^1))}\leq 3 \mathcal{C}_TS \|{\bf u}[0]\|_{\mathcal{H}}, 
\end{gather*}
we can construct $({\bf w}_{n+1}, \alpha_{n+1})$ satisfying
\begin{equation}\label{eq:wn+1}
    \begin{cases}
        \mathcal{L}_{{\rm in}} {\bf w}_{n+1}- \mathcal{K}(x;  {\bf w}_{n+1})=   \chi_{\omega} \alpha_{n+1}+ \chi_{\omega} \mathcal{K}_1(x;  {\bf w}_{n+1}) \alpha_{n+1}, \\
         {\bf w}_{n+1}[0]= {\bf u}[0],
    \end{cases} 
\end{equation}
and
\begin{gather*}
     \|{\bf w}_{n+1}- {\bf w}_{n}\|_{\mathcal{W}_T}\leq 2S\|{\bf w}_n[T]\|_{\mathcal{H}} ,      \\
     \|\alpha_{n+1}- \alpha_n\|_{L^{\infty}(0, T; L^2)}\leq  S\|{\bf w}_n[T]\|_{\mathcal{H}},     \\
    \|{\bf w}_{n+1}[T]\|_{\mathcal{H}}\leq C_2  \|{\bf w}_{n}[T]\|_{\mathcal{H}}  \|{\bf u}[0]\|_{\mathcal{H}}.
\end{gather*}
\end{lemma}

\begin{proof}[Proof of Lemma \ref{lem:iteration}]
    Inspired by the first step on constructing $\alpha_1$ as $\alpha_0+ \beta_0$, we directly find the control $\beta_n$ such that 
\begin{equation}
    \begin{cases}
        \mathcal{L}_{{\rm in}} {\bf w}_n^c= \chi_{\omega} \beta_n \\
        {\bf w}_n^c[0]= 0,\;\;  {\bf w}_n^c[T]= - {\bf w}_n[T],
    \end{cases} \notag
\end{equation}
which satisfies 
 \begin{equation*}
        \|\beta_n\|_{L^{\infty}(0, T; L^2(\mathbb{T}^1))}+ 
\|{\bf w}_n^c\|_{\mathcal{W}_T} \leq S \|{\bf w}_n[T]\|_{\mathcal{H}}. 
    \end{equation*}
 Thus we fix $\alpha_{n+1}$ as $\alpha_n+ \beta_n$.  Next, we observe that   $\tilde {\bf w}_{n+1}:= {\bf w}_n+ {\bf w}_n^c$  satisfy
\begin{equation}\label{eq:tildwn1}
    \begin{cases}
        \mathcal{L}_{{\rm in}} \tilde {\bf w}_{n+1}- \mathcal{K}(x; \tilde {\bf w}_{n+1})=  \chi_{\omega} \alpha_{n+1}+ \chi_{\omega} \mathcal{K}_1(x; \tilde {\bf w}_{n+1}) \alpha_{n+1}+ e_{n+1}, \\
        \tilde {\bf w}_{n+1}[0]= {\bf u}[0],  \tilde {\bf w}_{n+1}[T]=(0, 0),
    \end{cases} 
\end{equation}
where the error term $e_{n+1}$ satisfies 
\begin{align*}
    e_{n+1}= \mathcal{K}(x; {\bf w}_n)- \mathcal{K}(x; {\bf w}_n+ {\bf w}_n^c)+\chi_{\omega}  \mathcal{K}_1(x;  {\bf w}_n) \alpha_n- \chi_{\omega} \mathcal{K}_1(x;  {\bf w}_n+ {\bf w}_n^c) \alpha_{n+1}.
\end{align*}
We also define ${\bf w}_{n+1}$ as the unique solution of 
\begin{equation}\label{eq:wn1}
    \begin{cases}
        \mathcal{L}_{{\rm in}}  {\bf w}_{n+1}- \mathcal{K}(x;  {\bf w}_{n+1})=   \chi_{\omega} \alpha_{n+1}+ \chi_{\omega}  \mathcal{K}_1(x;  {\bf w}_{n+1}) \alpha_{n+1}, \\
         {\bf w}_{n+1}[0]= {\bf u}[0].
    \end{cases} 
\end{equation}

By selecting the value of $\varepsilon_2$ smaller than some effective computable constant $\varepsilon_0$, we have
\begin{equation*}
\|{\bf w}_n\|_{\mathcal{W}_T}, \|{\bf w}_n^c\|_{\mathcal{W}_T}, \|\tilde {\bf w}_n\|_{\mathcal{W}_T}, \|\alpha_n\|_{L^2_{t, x}(D_T)}, \|\beta_n\|_{L^2_{t, x}(D_T)}, \|\alpha_{n+1}\|_{L^2_{t, x}(D_T)}\leq 1.
\end{equation*}
Thus 
\begin{align*}
   & \;\;\;\;\;  \|e_{n+1}\|_{L^2_{t, x}(D_T)}\\
    &\leq \|\mathcal{K}({\bf w}_n)- \mathcal{K}({\bf w}_n+ {\bf w}_n^c)\|_{L^2_{t, x}(D_T)}+   \|  \mathcal{K}_1(x;  {\bf w}_n) \alpha_n- \mathcal{K}_1(x;  {\bf w}_n+ {\bf w}_n^c) \alpha_{n+1}  \|_{L^2_{t, x}(D_T)}\\
    &\leq C_N \|{\bf w}_n^c\|_{\mathcal{W}_T}\Big(\|{\bf w}_n^c\|_{\mathcal{W}_T}+  \|{\bf w}_n+ {\bf w}_n^c\|_{\mathcal{W}_T}\Big)\\
    & \;\;\;\;\;\; + C_N   \|{\bf w}_n^c\|_{L^{\infty}(D_T)} \|\alpha_n\|_{L^2_{t, x}(D_T)}+  C_N\| {\bf w}_n+ {\bf w}_n^c\|_{L^{\infty}(D_T)} \|\beta_n\|_{L^2_{t, x}(D_T)}    \\
    &\leq 180 C_N \mathcal{C}_T S^3 \|{\bf u}[0]\|_{\mathcal{H}} \|{\bf w}_n[T]\|_{\mathcal{H}}.
\end{align*}
By selecting the value of $\varepsilon_2$ sufficiently small, one has 
\begin{equation*}
180 C_N \mathcal{C}_T S^3 \|{\bf u}[0]\|_{\mathcal{H}} \|{\bf w}_n[T]\|_{\mathcal{H}}\leq 540 C_N \mathcal{C}_T^2 S^4 \|{\bf u}[0]\|_{\mathcal{H}}^2 \leq 1.
\end{equation*}
Thus, thanks to Lemma \ref{lem:semiwave:1}, by comparing equations \eqref{eq:tildwn1} and \eqref{eq:wn1} one obtains
\begin{align}
    \|{\bf w}_{n+1}- \tilde {\bf w}_{n+1}\|_{\mathcal{W}_T}&\leq 3 \mathcal{C}_T \|e_{n+1}\|_{L^2_{t, x}} \notag \\
    &\leq 540 C_N \mathcal{C}_T^2 S^3 \|{\bf u}[0]\|_{\mathcal{H}} \|{\bf w}_n[T]\|_{\mathcal{H}}.
\end{align}
Therefore, 
\begin{align*}
     \|{\bf w}_{n+1}-  {\bf w}_{n}\|_{\mathcal{W}_T}&\leq  \|{\bf w}_{n+1}- \tilde {\bf w}_{n+1}\|_{\mathcal{W}_T}+  \|{\bf w}_{n}^c\|_{\mathcal{W}_T} \\
     &\leq 540 C_N \mathcal{C}_T^2 S^3 \|{\bf u}[0]\|_{\mathcal{H}} \|{\bf w}_n[T]\|_{\mathcal{H}}+ S \|{\bf w}_n[T]\|_{\mathcal{H}}, 
\end{align*}
and 
\begin{gather*}
     \|\alpha_{n+1}- \alpha_n\|_{L^{\infty}(0, T; L^2)}\leq S \|{\bf w}_n[T]\|_{\mathcal{H}},\\
      \|{\bf w}_{n+1}[T]\|_{\mathcal{H}}=  \|{\bf w}_{n+1}[T]- \tilde {\bf w}_{n+1}[T]\|_{\mathcal{H}} \leq 540 C_N \mathcal{C}_T^2 S^3 \|{\bf u}[0]\|_{\mathcal{H}} \|{\bf w}_n[T]\|_{\mathcal{H}}.
\end{gather*}
Recall the constants $\varepsilon_0$ given after equation \eqref{eq:wn1}, $\varepsilon_1$ given after equation \eqref{inefore1}, and $\varepsilon_T$ given in Lemma \ref{lem:semiwave:1}.  By fixing $\varepsilon_2$ such that  
  \begin{equation}
      540 C_N \mathcal{C}_T^2 S^4 \varepsilon_2^2\leq 1 \textrm{ and } \varepsilon_2\leq \min\{\varepsilon_0, \varepsilon_1, \varepsilon_T\},
  \end{equation}
  and $C_2:=  540 C_N \mathcal{C}_T^2 S^3$, we conclude the proof of the Lemma \ref{lem:iteration}. 
\end{proof}

Now, we come back to the proof of Lemma \ref{lem:conliwavei}.
Recall that $C_1$ is given after equation \eqref{inefore1} and that $C_2$ is given in Lemma \ref{lem:iteration}.
By selecting $\tilde \varepsilon_T\in (0, \varepsilon_T)$ such that
\begin{equation*}
   8 \tilde \varepsilon_T C_1\leq \mathcal{C}_T S \textrm{ and } C_2 \tilde \varepsilon_T\leq 1/2,
\end{equation*}
 we know that $({\bf w}_1, \alpha_1)$ satisfies 
\begin{gather*}
    \|{\bf w}_1\|_{\mathcal{W}_T}\leq \frac{5}{2} \mathcal{C}_TS \|{\bf u}[0]\|_{H^1\times L^2(\mathbb{T}^1)}, \\
    \|\alpha_1\|_{L^{\infty}(0, T; L^2(\mathbb{T}^1))}\leq 2 \mathcal{C}_TS \|{\bf u}[0]\|_{H^1\times L^2(\mathbb{T}^1)}, \\
    \|{\bf w}_1[T]\|_{H^1\times L^2}\leq \frac{C_T}{8} \|{\bf u}[0]\|_{H^1\times L^2(\mathbb{T}^1)}.
\end{gather*}
Then, using Lemma  \ref{lem:iteration} we find a solution $({\bf w}_2, \alpha_2)$ such that 
\begin{gather*}
     \|{\bf w}_{2}- {\bf w}_{1}\|_{\mathcal{W}_T}\leq 2S\|{\bf w}_1[T]\|_{\mathcal{H}}\leq \frac{1}{4} \mathcal{C}_TS \|{\bf u}[0]\|_{\mathcal{H}} ,      \\
     \|\alpha_{2}- \alpha_1\|_{L^{\infty}(0, T; L^2)}\leq  S\|{\bf w}_1[T]\|_{\mathcal{H}}\leq  \frac{1}{8} \mathcal{C}_TS \|{\bf u}[0]\|_{\mathcal{H}}    \\
    \|{\bf w}_{2}[T]\|_{\mathcal{H}}\leq C_2  \|{\bf w}_{1}[T]\|_{\mathcal{H}}  \|{\bf u}[0]\|_{\mathcal{H}}\leq \frac{1}{2} \|{\bf w}_{1}[T]\|_{\mathcal{H}},
\end{gather*}
thus 
\begin{gather*}
    \|{\bf w}_2\|_{\mathcal{W}_T}\leq \frac{11}{4} \mathcal{C}_TS \|{\bf u}[0]\|_{\mathcal{H}}, \\
    \|\alpha_2\|_{L^{\infty}(0, T; L^2(\mathbb{T}^1))}\leq (2+ \frac{1}{8}) \mathcal{C}_TS \|{\bf u}[0]\|_{\mathcal{H}}, \\
    \|{\bf w}_2[T]\|_{H^1\times L^2}\leq \frac{C_T}{16} \|{\bf u}[0]\|_{\mathcal{H}}.
\end{gather*}
Using the induction argument, we can prove that 
\begin{gather*}
    \|{\bf w}_n\|_{\mathcal{W}_T}\leq \left(3- \frac{1}{2^n}\right) \mathcal{C}_TS \|{\bf u}[0]\|_{\mathcal{H}}, \\
    \|\alpha_n\|_{L^{\infty}(0, T; L^2(\mathbb{T}^1))}\leq \left(\frac{9}{4}- \frac{1}{2^{n+1}}\right) \mathcal{C}_TS \|{\bf u}[0]\|_{\mathcal{H}}, \\
    \|{\bf w}_n[T]\|_{H^1\times L^2}\leq \frac{\mathcal{C}_T}{2^{n+2}} \|{\bf u}[0]\|_{\mathcal{H}},
\end{gather*}
and 
\begin{gather*}
     \|{\bf w}_{n+1}- {\bf w}_{n}\|_{\mathcal{W}_T}\leq 2S\|{\bf w}_n[T]\|_{\mathcal{H}}\leq \frac{1}{2^{n+1}} \mathcal{C}_TS \|{\bf u}[0]\|_{\mathcal{H}} ,      \\
     \|\alpha_{n+1}- \alpha_{n}\|_{L^{\infty}(0, T; L^2)}\leq  S\|{\bf w}_n[T]\|_{\mathcal{H}}\leq   \frac{1}{2^{n+2}} \mathcal{C}_TS \|{\bf u}[0]\|_{\mathcal{H}}.
\end{gather*}

Therefore,  the sequence $\{({\bf w}_n, \alpha_n)\}_{n}$ that we find as solutions of \eqref{eq:see:ite} is a   Cauchy sequence in $\mathcal{W}_T\times C^0([0, T]; L^2)$. Hence there exists $({\bf w}, \alpha)$ such that 
\begin{equation*}
    ({\bf w}_n, \alpha_n)\xrightarrow{n\rightarrow \infty} ({\bf w}, \alpha) \textrm{ in } \mathcal{W}_T\times C^0([0, T]; L^2). 
\end{equation*}
Using the standard argument on passing the limit, we know that $({\bf w}, \alpha)$ satisfy 
\begin{equation}
    \begin{cases}
        \mathcal{L} {\bf w}- \mathcal{K}(x;  {\bf w})=  \chi_{\omega} \alpha+ \chi_{\omega} \mathcal{K}_1(x;  {\bf w}) \alpha, \\
         {\bf w}[0]= {\bf u}[0].
    \end{cases} 
\end{equation}
Moreover, we know that ${\bf w}[T]= (0, 0)$.  This finishes the proof of Lemma \ref{lem:conliwavei}.
\end{proof}

\subsection{Controllability of the linearized system}\label{subsec:con:3}

Thanks to the preceding two steps,  it suffices to prove the linear controllability result Proposition \ref{prop:congeo:2nd}.  This type of controllability has been extensively studied in the literature.  See, for instance, the works mentioned in Section \ref{sec:contro:wave}. 
The closest work might be \cite{Li-Yu-2006}, where the controllability of a quasilinear wave equation with boundary control in $C^2\times C^1$-space is shown using the characteristic method. However,  we did not find an exact reference dealing with equation \eqref{eq:obdua}. Thus, we provided a proof, which is inspired by the propagation of smallness argument introduced by the last two authors \cite{KX}.

\subsubsection{Hilbert Uniqueness Method}\label{subsec:HUM}
Using the standard duality argument \cite{Lions},  the exact controllability of wave equations in $H^1\times L^2(\mathbb{T}^1)$ space is equivalent to the observability inequality of the adjoint system for states in $L^2\times H^{-1}(\mathbb{T}^1)$ space.
Then, after a change of time variables (to transform the backward system into a forward system), showing Proposition \ref{prop:congeo:2nd} is equivalent to proving the observability estimates:

\begin{lemma}[Observability inequality]\label{lem:ob:key-lin}
Let $T= 32\pi$. Define the matrix valued functions  
$${\bf b}(x)= \partial_x {\bf B}_{0}^T (x), {\bf c}(x)= {\bf B}_0^{T}(x)- {\bf C}_0^T (x) \; \forall x\in \mathbb{T}^1.$$
Then there exists $C_0>0$ such that every solution of 
    \begin{equation}\label{eq:obdua}
    \begin{cases}
        \Box \varphi+ {\bf b}(x) \partial_x \varphi+ {\bf c}(x) \varphi= 0,\\
        \varphi[0]\in L^2\times H^{-1}(\mathbb{T}^1; \mathbb{R}^R),
    \end{cases}
\end{equation} 
satisfies
\begin{equation}\label{eq:in-ob-l2}
    \int_0^T \|\chi_{\omega} \varphi\|_{L^2}^2 \, dt\geq C_0 \|\varphi[0]\|_{L^2\times H^{-1}}^2.
\end{equation}
\end{lemma}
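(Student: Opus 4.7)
I plan to derive \eqref{eq:in-ob-l2} by coupling the quantitative propagation of smallness, Proposition \ref{prop:psl2}, with a much simpler \emph{full-torus} observability inequality via a self-improving dichotomy. Intuitively, if the localized observation $\int_0^T \|\chi_\omega \varphi\|_{L^2}^2$ is tiny compared with $\|\varphi[0]\|_{L^2\times H^{-1}}^2$, Proposition \ref{prop:psl2} upgrades this to smallness of $\varphi$ on an entire space-time slab $I \times \mathbb{T}^1$, which is only consistent with the dynamics if $\varphi[0]$ itself is small.

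\textbf{Step 1: dichotomy via propagation of smallness.} Set $X := L^2 \times H^{-1}(\mathbb{T}^1;\mathbb{R}^R)$, $E := \|\varphi[0]\|_X^2$, and $F := \int_0^T \|\chi_\omega \varphi\|_{L^2}^2\,dt$. If $F \geq E$ the claim is trivial, so assume $\delta := F/E < 1$. Translate time by $16\pi$ so that the interval $(0,T) = (0,32\pi)$ aligns with the $(-16\pi,16\pi)$ window required by Proposition \ref{prop:psl2}; the standard $X$-well-posedness of \eqref{eq:obdua} gives $\|\varphi[16\pi]\|_X^2 \lesssim E$, bringing the hypothesis into scope with $J = \omega$. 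Applying Proposition \ref{prop:psl2} then yields
\begin{equation*}
\int_{13\pi}^{19\pi} \int_{\mathbb{T}^1} |\varphi|^2 \,dx\,dt \;\leq\; 2\pi\,\|\varphi\|_{L^\infty_x L^2_t((13\pi,19\pi))}^2 \;\leq\; C_q\,\delta^{1/q}\,E.
\end{equation*}

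\textbf{Step 2: full-torus observability.} It then remains to establish
\begin{equation*}
\|\varphi[0]\|_X^2 \;\leq\; C_1 \int_{13\pi}^{19\pi}\int_{\mathbb{T}^1}|\varphi|^2\,dx\,dt,
\end{equation*}
which combined with Step 1 gives $E \leq 2\pi C_1 C_q\,\delta^{1/q}\,E$, hence $\delta \geq (2\pi C_1 C_q)^{-q}$ and the claimed observability with $C_0 := (2\pi C_1 C_q)^{-q}$. For the unperturbed system ($b = c = 0$) this is an elementary Plancherel computation: decomposing $\varphi$ on $\mathbb{T}^1$ into traveling waves $a_k e^{ik(x+t)} + b_k e^{ik(x-t)}$ and using that the interval length $6\pi$ exceeds the spatial period $2\pi$, both sides are equivalent to $\sum_k (|a_k|^2+|b_k|^2)$. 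For the general equation I would argue by compactness–uniqueness: if the inequality failed, a sequence $\varphi_n$ with $\|\varphi_n[0]\|_X = 1$ and $\int_{13\pi}^{19\pi}\int|\varphi_n|^2 \to 0$ would admit, after extraction, a weak-$*$ limit $\varphi_\infty$ solving \eqref{eq:obdua} and vanishing on $(13\pi,19\pi)\times \mathbb{T}^1$; any $t_0$ in that interval then satisfies $\varphi_\infty[t_0] = 0$ in $X$, and forward/backward well-posedness forces $\varphi_\infty \equiv 0$.

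\textbf{Main obstacle.} The genuinely delicate step is the compactness part of Step 2 at the low $L^2\times H^{-1}$ regularity, where the initial trace does not compactly embed into itself and so one cannot directly upgrade the weak convergence $\varphi_n[0] \to 0$ to strong. The intended fix is to treat the lower-order terms $b(x)\partial_x \varphi + c(x)\varphi$ as a source via Duhamel's formula, writing $\varphi = \varphi^{\mathrm{free}} + \mathcal{D}\varphi$; since $\mathcal{D}$ gains a derivative on the source, it acts compactly on $X$, so the full-torus observability for \eqref{eq:obdua} reduces to the same inequality for the free equation modulo a compact perturbation, closing the compactness–uniqueness loop.
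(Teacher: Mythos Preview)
Your two-step architecture --- Step 1 (apply Proposition \ref{prop:psl2} to pass from smallness of the localized observation to smallness of $\int_I\int_{\mathbb{T}^1}|\varphi|^2$) and Step 2 (a full-torus lower bound $\|\varphi[0]\|_{L^2\times H^{-1}}^2\lesssim\int_I\int_{\mathbb{T}^1}|\varphi|^2$), then close the dichotomy --- is exactly the scheme the paper uses. The divergence is in Step 2: the paper does \emph{not} argue by compactness--uniqueness but proves the full-torus bound directly and quantitatively via Lemma \ref{lem:wave:timegain}, namely
\[
\int_{-2\pi}^{2\pi}\|\varphi_t(t,\cdot)\|_{H^{-1}}^2\,dt\ \lesssim\ \int_{-3\pi}^{3\pi}\|\varphi(t,\cdot)\|_{L^2}^2\,dt,
\]
obtained by testing the equation against $g(t)(-\Delta+1)^{-1}\varphi$ with a time cutoff $g$ and integrating by parts. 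Together with the trivial energy equivalence $\|\varphi[t]\|_X\sim\|\varphi[0]\|_X$ this gives Step 2 in a few lines, with explicit constants.

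Your proposed compactness route has a genuine gap at precisely the point you flag. The assertion that ``$\mathcal{D}$ gains a derivative on the source, so it acts compactly on $X$'' breaks down for the first-order coefficient ${\bf b}(x)\partial_x\varphi$: with $\varphi\in C_tL^2_x$ one has ${\bf b}\,\partial_x\varphi\in C_tH^{-1}_x$, and the Duhamel integral of an $H^{-1}_x$ source lands back in $C_tL^2_x\cap C^1_tH^{-1}_x$ --- the same regularity as $\varphi$. The derivative Duhamel gains is exactly cancelled by the derivative $\partial_x$ loses, so $\varphi[0]\mapsto(\mathcal{D}\varphi)|_I$ is \emph{not} compact from $X$ to $L^2(I\times\mathbb{T}^1)$, and the Fredholm reduction to the free equation does not close. (Your argument would work if ${\bf b}\equiv 0$, i.e.\ for a zero-order potential.) Aubin--Lions only yields precompactness of $\varphi_n$ in $C_tH^{-\varepsilon}_x$, which does not upgrade $\varphi_n[0]\rightharpoonup 0$ to strong convergence in $X$. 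The paper's multiplier computation sidesteps this entirely: the problematic term $\int g\,{\bf b}\partial_x\varphi\cdot(-\Delta+1)^{-1}\varphi$ is bounded by $\int\|\varphi\|_{L^2}^2$ simply because $\|{\bf b}\partial_x\varphi\|_{H^{-1}}\lesssim\|\varphi\|_{L^2}$, so no compactness is needed.
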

Note that, typically, duality arguments lead to choosing the control as the optimal one in the space $L^2(0, T; L^2)$. In case of wave equations, this control is explicitly constructed via using the HUM operator, which ensures that it actually belongs to $C([0, T]; L^2)$.

\subsubsection{On the proof of the observability inequality}
\label{sec:cont:4th}

This observability is proved in two steps. 

\vspace{1mm}

\noindent {\it (1)} First, on the one hand, we deliver a lower bound for $\|\varphi\|_{L^2_t(- 3\pi, 3\pi; L^2_x(\mathbb{T}^1))}$. Let us define the energy function of $\varphi$ as 
\begin{equation}\label{def:F:energy}
F(\varphi):= \|\varphi\|_{L^2(\mathbb{T}^1)}^2+ \|\varphi_t\|_{H^{-1}(\mathbb{T}^1)}^2, 
\end{equation}
where the $H^{-1}$-norm is given by 
\begin{equation*}
    \|f\|_{H^{-1}(\mathbb{T}^1)}^2:=  \|\langle \partial_x \rangle^{-1} f\|_{L^{2}(\mathbb{T}^1)}^2.
\end{equation*}

Using straightforward energy estimates we know that for every $T>0$ there exists a constant $C$ such that 
\begin{equation} \label{eq:one:enees}
   C^{-1}F(\varphi[0])\leq  F(\varphi[t])\leq C F(\varphi[0])\;\; \forall t\in [-T, T],
\end{equation}
thus 
\begin{equation}\label{es:Flbd}
 \int_{-2\pi}^{2\pi} \left(\|\varphi_t(t, \cdot)\|_{H^{-1}}^2+ \|\varphi(t, \cdot)\|_{L^2}^2 \right) \, dt\geq C F(0).
\end{equation}

We also have the following standard estimate on wave equations; see, for instance \cite[Lemma 3.4]{Zhang-2000-2} for the case of a one-variable wave equation. Its proof can be found in Appendix \hyperref[setting(C2)]{C.2}. 
\begin{lemma}\label{lem:wave:timegain}
    The solutions of \eqref{eq:obdua} satisfies
    \begin{gather}
        \int_{-2\pi}^{2\pi} \|\varphi_t(t, \cdot)\|_{H^{-1}(\mathbb{T}^1)}^2 \, dt  \lesssim \int_{-3\pi}^{3\pi} \|\varphi(t, \cdot)\|_{L^2(\mathbb{T}^1)}^2 \, dt. \label{es:th-1bd} 
    \end{gather}
\end{lemma}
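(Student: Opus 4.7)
The plan is to prove the estimate by a time-integration-by-parts argument that trades time derivatives for the equation, exploiting the key algebraic identity $(1-\partial_x^2)^{-1}\partial_x^2 = -\Id + (1-\partial_x^2)^{-1}$, which is bounded on $L^2$. Fix a smooth cutoff $\chi \in C_c^\infty(\mathbb{R})$ with $\chi \equiv 1$ on $[-2\pi, 2\pi]$ and $\supp \chi \subset [-3\pi, 3\pi]$, and introduce the auxiliary function $v(t,\cdot) := (1-\partial_x^2)^{-1}\varphi_t(t,\cdot)$, so that pointwise in $t$ one has
\[
\|\varphi_t\|_{H^{-1}(\mathbb{T}^1)}^2 = \langle v, \varphi_t\rangle_{L^2_x}, \qquad \|v\|_{L^2_x}\leq \|\varphi_t\|_{H^{-1}}.
\]

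Integrating by parts in $t$ on the product $\chi^2\langle v, \varphi_t\rangle$ (boundary terms vanish by compact support) gives
\[
\int_{\mathbb{R}} \chi^2 \|\varphi_t\|_{H^{-1}}^2\, dt
 = -\int_{\mathbb{R}} 2\chi\chi'\,\langle v, \varphi\rangle_{L^2_x}\, dt - \int_{\mathbb{R}} \chi^2\, \langle v_t, \varphi\rangle_{L^2_x}\, dt.
\]
Using \eqref{eq:obdua} to replace $\varphi_{tt} = \partial_x^2\varphi + {\bf b}(x)\partial_x\varphi + {\bf c}(x)\varphi$, the key identity above yields
\[
v_t = (1-\partial_x^2)^{-1}\varphi_{tt} = -\varphi + (1-\partial_x^2)^{-1}\bigl[\varphi + {\bf b}(x)\partial_x\varphi + {\bf c}(x)\varphi\bigr].
\]
Since ${\bf b},{\bf c}\in C^2$, multiplication by either is bounded on $H^{-1}(\mathbb{T}^1)$, and $(1-\partial_x^2)^{-1}: H^{-1}(\mathbb{T}^1)\to H^1(\mathbb{T}^1)\hookrightarrow L^2(\mathbb{T}^1)$ is bounded. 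Hence $\|v_t\|_{L^2_x}\lesssim \|\varphi\|_{L^2_x}$ pointwise in $t$, and the second integral on the right above is controlled by $\int_{-3\pi}^{3\pi}\|\varphi\|_{L^2_x}^2\, dt$.

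For the remaining integral, Young's inequality combined with $\|v\|_{L^2_x}\leq \|\varphi_t\|_{H^{-1}}$ gives, for any $\epsilon>0$,
\[
\Bigl|\int 2\chi\chi'\,\langle v, \varphi\rangle\, dt\Bigr|
\leq \epsilon \int \chi^2\|\varphi_t\|_{H^{-1}}^2\, dt + C_\epsilon \int_{-3\pi}^{3\pi}\|\varphi\|_{L^2_x}^2\, dt.
\]
Choosing $\epsilon<1/2$ absorbs the first term on the right into the left-hand side, yielding $\int \chi^2\|\varphi_t\|_{H^{-1}}^2\, dt \lesssim \int_{-3\pi}^{3\pi}\|\varphi\|_{L^2_x}^2\, dt$, which implies \eqref{es:th-1bd} since $\chi\equiv 1$ on $[-2\pi,2\pi]$. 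I do not foresee a serious obstacle here: this is a clean duality computation. The only subtle point is the matching of norms---the single spatial derivative lost in passing to $\|\varphi_t\|_{H^{-1}}$ exactly matches the order of smoothing gained by composing $(1-\partial_x^2)^{-1}$ with $\partial_x^2$ from the equation, so that no genuine loss of derivative occurs.
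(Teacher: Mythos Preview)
Your proof is correct and follows essentially the same approach as the paper: a time cutoff, integration by parts in $t$, substitution of $\varphi_{tt}$ via the equation, and the key identity $(1-\partial_x^2)^{-1}\partial_x^2=-\Id+(1-\partial_x^2)^{-1}$. The only minor difference is that for the cross term involving the cutoff derivative, the paper writes $\langle v,\varphi\rangle=\tfrac12\partial_t\|\varphi\|_{H^{-1}}^2$ and integrates by parts once more (yielding a clean $\ddot g$ term bounded by $\|\varphi\|_{H^{-1}}^2$), whereas you use Young's inequality and absorption---both are equally valid.
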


Estimates \eqref{es:Flbd} and \eqref{es:th-1bd} show that 
\begin{equation}\label{l2lowerbd}
    \int_{-3\pi}^{3\pi}\int_{\mathbb{T}^1}  \varphi^2(t, x) \, dx dt \geq C F(0).
\end{equation} 
\vspace{1mm}

\noindent {\it (2)}
Next,  on the other hand, we use the propagation of smallness result to present an upper bound for $\|\varphi\|_{L^2_t(- 3\pi, 3\pi; L^2_x(\mathbb{T}^1))}$. 
The following is a direct consequence of Proposition \ref{prop:psl2}.
\begin{lemma}\label{lem:psl2}
    There exists some effectively computable $q>0$ and $C>0$ such that, for any $\varepsilon\in (0, 1)$ if the solution of \eqref{eq:obdua} satisfies 
    \begin{equation}
        \int_{-16\pi}^{16\pi} \|\chi_{\omega} \varphi\|_{L^2}^2 \, dt\leq \varepsilon \|\varphi[0]\|_{L^2\times H^{-1}}^2
    \end{equation}
    then 
    \begin{equation}
        \|\varphi\|^2_{L^{\infty}_x(\mathbb{T}^1; L^2_t (-3\pi, 3\pi))}\leq C \varepsilon^{1/p}  \|\varphi[0]\|_{L^2\times H^{-1}}^2.
    \end{equation}
\end{lemma}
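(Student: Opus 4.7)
\textbf{Proof proposal for Lemma \ref{lem:psl2}.}

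The plan is to reduce the statement directly to Proposition \ref{prop:psl2}, which is already in place. Indeed, equation \eqref{eq:obdua} has exactly the structural form treated there, under the correspondence ${\bf A}(x) = {\bf b}(x)$, ${\bf C}(x) = 0$, ${\bf B}(x) = {\bf c}(x)$ and $K = R$. The only prerequisite to check is the $C^2$ regularity of the coefficients; but since ${\bf b} = \partial_x {\bf B}_0^T$ and ${\bf c} = {\bf B}_0^T - {\bf C}_0^T$ are built out of the matrices $B_{0,j}^p$ and $C_{0,j}^p$ appearing in the linearized system \eqref{eq:visystem11li}, and these in turn are assembled from smooth geometric data (the orthonormal moving frame $\{{\bf f}_p\}$ and the smooth extension of the second fundamental form along the smooth geodesic $\gamma$), the coefficients are in fact $C^\infty(\mathbb{T}^1)$, so the hypothesis is trivially met.

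The second step is to reconcile the cutoff used in Lemma \ref{lem:psl2} with the sharp indicator of an open interval required by Proposition \ref{prop:psl2}. Since $\omega \subset \mathbb{T}^1$ is a non-empty open set (assumption \blackhyperref{setting(S)}{$({\bf S})$}), I would choose a small open interval $J$ with $\overline{J} \subset \omega$ on which the cutoff $\chi_\omega$ is uniformly bounded below by a constant $c > 0$. Then
\begin{equation*}
\int_{-16\pi}^{16\pi}\!\int_{\mathbb{T}^1}\chi_J(x)\,|\varphi(t,x)|^2\,dx\,dt \;\leq\; c^{-2} \int_{-16\pi}^{16\pi} \|\chi_\omega \varphi(t,\cdot)\|_{L^2(\mathbb{T}^1)}^2\,dt \;\leq\; c^{-2}\,\varepsilon\,\|\varphi[0]\|_{L^2\times H^{-1}}^2,
\end{equation*}
which places us exactly in the hypothesis of Proposition \ref{prop:psl2} with $\delta := c^{-2}\varepsilon$. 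Invoking that proposition then yields
\begin{equation*}
\|\varphi\|_{L^\infty_x(\mathbb{T}^1; L^2_t(-3\pi,3\pi))}^2 \;\leq\; C_q\,(c^{-2}\varepsilon)^{1/q}\,\|\varphi[0]\|_{L^2\times H^{-1}}^2,
\end{equation*}
and absorbing $c^{-2/q}$ into a new constant gives precisely the conclusion of Lemma \ref{lem:psl2}.

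There is essentially no genuine obstacle here: all of the analytic content (the choice of a good base point $x_0$ for the $L^2\times H^{-1}$ data, the propagation of smallness along characteristics, and the unique-continuation-type estimates) has been packaged into Proposition \ref{prop:psl2}, whose own proof is sketched in Appendix \hyperref[setting(B2)]{B.2} as a regularity-adapted variant of the argument of Proposition \ref{lem:2}. The only mild care required is the two trivial reductions above (matching coefficients and switching from the smooth cutoff $\chi_\omega$ to a sharp $\chi_J$), neither of which introduces any loss beyond a multiplicative constant — harmless in view of the power-law dependence $\delta^{1/q}$ supplied by Proposition \ref{prop:psl2}.
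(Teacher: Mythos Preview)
Your proposal is correct and matches the paper's approach exactly: the paper states just before Lemma \ref{lem:psl2} that it ``is a direct consequence of Proposition \ref{prop:psl2}'' and gives no further details. The two reductions you spell out (identifying the coefficients and passing from $\chi_\omega$ to a sharp $\chi_J$ on a subinterval) are precisely the trivialities the paper leaves implicit.
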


By combining this lemma and the estimates \eqref{l2lowerbd},  we deduce the required observability inequality \eqref{eq:in-ob-l2} for $T= 32\pi$ concerning Lemma \ref{lem:ob:key-lin}. Therefore, we conclude the proof of the local exact controllability around geodesics, namely  Proposition \ref{pro:controlaroundgeodesic}.

\section{Uniform-time global exact controllability between homotopic closed geodesics}\label{Sec:gc}

In this section we prove Proposition \ref{pro:controlbetween}, which constitutes to the third intermediate property for the proof of Theorem \ref{thm1}. 
 Looking at Figure \ref{fig:movethreegeo}, our objective is to transfer the state from $\gamma_0$ to $\gamma_1$. The two states may be widely separated, and the maps $\gamma_0, \gamma_1 : \mathbb{T}^1 \to \mathcal{N}$ may exhibit complicated topology.

The proof is inspired by the earlier work \cite{CX-2024}, where the first and third author introduced a novel method to the small-time global controllability between steady states of the harmonic map heat flow. As illustrated in Section \ref{sec:strategy1}, it highlights the control on geodesics and the idea of gluing. 
In Section \ref{subsec:con:gluing}, we establish key control and gluing results for geodesics. Then, in Section \ref{subsec:gloconbewgeo}, we explicitly construct controlled trajectories connecting closed geodesics.

\subsection{The control and gluing on geodesics}\label{subsec:con:gluing}
Without loss of generality, in this entire section,  we fix $0<b< b_1< b_0< \pi$ and define the following domains in $\mathbb{T}^1$.
\begin{gather*}
\omega= (- b_0, b_0):  \textrm{ the controlled domain defined in \blackhyperref{setting(S)}{$({\bf S})$}}, \\
\omega_{\textrm{in}}= [b, b_0)\cup (2\pi- b_0,  2\pi-b]: \textrm{  the controlled domain for the inner equation, } \\
\tilde \omega_{\textrm{in}}= [b, b_1]\cup [2\pi- b_1, 2\pi- b]: \textrm{ a subset of } \omega_{\textrm{in}},   \\
\omega_{\textrm{out}}= (- b, b): \textrm{  the (controlled) domain for the outer equation. }
\end{gather*}

We also define the inner and outer equations:
\begin{equation}
 \Box \phi - \Pi(\phi)\left(\partial_{\nu}\phi, \partial^{\nu}\phi\right)= \chi_{\omega_{\textrm{in}}} \Pi_T(\phi)f \; \textrm{ for } x\in (b, 2\pi- b),  \tag{\textrm{inner}}
\end{equation}
and 
\begin{equation}
 \Box \phi  -  \Pi(\phi)\left(\partial_{\nu}\phi, \partial^{\nu}\phi\right)= \chi_{\omega_{\textrm{out}}} \Pi_T(\phi)f \; \textrm{ for } x\in \omega_{\textrm{out}}. \tag{\textrm{outer}}
\end{equation}
Note that boundary conditions are required to guarantee the uniqueness of solutions to these two equations.
We can glue two classical solutions to these two equations to obtain a classical solution of the original controlled wave maps equation:
\begin{equation*}
     \Box \phi -  \Pi(\phi)\left(\partial_{\nu}\phi, \partial^{\nu}\phi\right)= \chi_{\omega} \Pi_T(\phi)f \; \textrm{ for } x\in \mathbb{T}^1
\end{equation*}
provided that these two solutions coincide on the common boundary, $\{b, 2\pi- b\}$.
\vspace{2mm}

Recall that a {\it closed geodesic} is  a periodic curve $\gamma: \mathbb{T}^1 \rightarrow \mathcal{N}$ satisfying the geodesic equation $\Delta\gamma -  \Pi(\gamma)\left(\partial_{x}\gamma, \partial_{x}\gamma\right)= 0$.  Next, we introduce the so-called non-closed complete geodesics:
\begin{defi}
 A  {\it  non-closed complete geodesic} is a non-periodic curve $\Gamma: \mathbb{R} \rightarrow \mathcal{N}$ satisfying the geodesic equation.
\end{defi}

 One has the following basic fact concerning geodesics, which will be useful in our proof of Proposition \ref{pro:controlbetween}.  Its proof is straightforward. There is a geodesic connecting two points $p_0$ and $p_1$. Then, by the Hopf-Rinow theorem, one can extend this geodesic. The extended curve may be either a closed or a non-closed complete geodesic. 
\begin{lemma} \label{lem:conne:points}
For any two points $p_0$ and $p_1$ in the same connected component of $\mathcal{N}$, there exists a complete or a closed geodesic $\Gamma$ containing both points.
\end{lemma}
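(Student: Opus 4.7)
The plan is to invoke the Hopf--Rinow theorem twice: once to obtain a minimizing geodesic segment between $p_0$ and $p_1$, and once to extend this segment to a maximal (complete) geodesic defined on all of $\mathbb{R}$. Since $(\mathcal{N},g)$ is compact by the standing assumption \blackhyperref{setting(S)}{$({\bf S})$}, it is geodesically complete, so both applications are justified.

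First I would fix a connected component $\mathcal{N}_0$ of $\mathcal{N}$ containing $p_0$ and $p_1$. By Hopf--Rinow applied to $\mathcal{N}_0$, there exists a minimizing unit-speed geodesic $\Gamma_0: [0, L] \to \mathcal{N}_0$ with $\Gamma_0(0) = p_0$ and $\Gamma_0(L) = p_1$, where $L = \mathrm{dist}_g(p_0, p_1)$. Using geodesic completeness, this segment extends uniquely (by solving the geodesic ODE forward and backward in time) to a smooth geodesic $\Gamma: \mathbb{R} \to \mathcal{N}_0$ with $\Gamma|_{[0,L]} = \Gamma_0$; in particular, $\Gamma$ passes through both $p_0$ and $p_1$.

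The final step is a dichotomy. If $\Gamma$ is periodic, i.e.\ there exists some minimal $T > 0$ with $\Gamma(t+T) = \Gamma(t)$ for all $t \in \mathbb{R}$, then I rescale by setting $\gamma(s) := \Gamma\bigl(Ts/(2\pi)\bigr)$ for $s \in \mathbb{T}^1$. A direct computation shows that $\gamma$ is a constant-speed solution of $\Delta \gamma - \Pi(\gamma)(\partial_x \gamma, \partial_x \gamma) = 0$, hence a closed geodesic in the sense of the paper, and it contains both $p_0$ and $p_1$ in its image. If instead $\Gamma$ is not periodic, then $\Gamma: \mathbb{R} \to \mathcal{N}$ is by definition a non-closed complete geodesic containing both points.

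I do not anticipate any serious obstacle: the result is essentially a repackaging of Hopf--Rinow together with the elementary observation that a complete geodesic on a compact manifold is either periodic or injective on no finite interval of full image, and that periodicity can be rescaled to the canonical domain $\mathbb{T}^1$. The only point requiring mild care is the verification that the rescaled map $\gamma$ in the periodic case indeed satisfies the harmonic map equation with the normalization $\mathbb{T}^1 = \mathbb{R}/2\pi\mathbb{Z}$, which follows from the scaling invariance of the geodesic equation under affine reparametrizations.
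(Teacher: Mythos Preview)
Your proposal is correct and follows essentially the same approach as the paper: connect $p_0$ and $p_1$ by a geodesic segment, extend it to a complete geodesic via Hopf--Rinow (using compactness of $\mathcal{N}$), and then split into the periodic versus non-periodic cases. The paper's own proof is just a two-sentence sketch of exactly this argument, so you have simply supplied the details it omits.
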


Now, we start by proving the following control results on geodesics. Let $\gamma$ be a closed geodesic. We denote $\bar\gamma$ to be the one dimensional submanifold given by this closed geodesic.   The controlled wave maps equation {\it restricted} on $T\bar\gamma$ means the initial state takes value in $T\bar\gamma$, and the control $f$ is always tangent to the geodesic. 
\begin{lemma}[Control on closed geodesics] \label{wave-closed-geodesic}
Let $\gamma: \mathbb{T}^1\rightarrow \mathcal{N}$ be a closed geodesic.  Let $T\geq 2\pi- b_0$. 
We consider the controlled wave maps equation restricted on $T\bar\gamma$: 
\begin{equation}\label{eq:conhmf:whole}
\begin{cases}
 \Box \phi -  \Pi(\phi)\left(\partial_{\nu}\phi, \partial^{\nu}\phi\right)= \chi_{\omega} \Pi_T(\phi)f, \\
 \phi[0](\cdot)\in H^1\times L^2(\mathbb{T}^1; T\bar\gamma).
 \end{cases}
\end{equation}
Then the solution of the controlled wave maps stays in $T\bar\gamma$. 
Moreover, after a change of variables, the geometric equation can be transformed into a linear controlled wave equation. 

Consequently, the system is globally exactly  controllable in time $T$ in $H^1$-topology, $i. e.$, for any two homotopic states $(\phi_{0}, \phi_{0t}), (\phi_{1}, \phi_{1t})\in H^1\times L^2(\mathbb{T}^1; T\bar\gamma)$, there exists a control $f$ such that the unique solution carries the data $(\phi_{0}, \phi_{0t})$ at time $t= 0$ into $(\phi_{1}, \phi_{1t})$ at time $t= T$.
\end{lemma}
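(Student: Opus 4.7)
The plan is to exploit the fact that $\bar\gamma$ is a totally geodesic one-dimensional submanifold of $\mathcal{N}$, along which the wave maps system degenerates to a scalar linear wave equation; the statement then reduces to the classical interior controllability of the one-dimensional wave equation on the circle.

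Set $L = \int_{\mathbb{T}^1}|\gamma_x|\,dx$ and let $s: \mathbb{R}/L\mathbb{Z} \to \bar\gamma$ denote the unit-speed parametrization of the geodesic. Any state $(\phi, \phi_t) \in H^1\times L^2(\mathbb{T}^1; T\bar\gamma)$ lifts to a scalar $u: \mathbb{R} \to \mathbb{R}$ via $\phi(x) = s(u(x))$, subject to the quasi-periodicity condition $u(x+2\pi) - u(x) = nL$, where $n \in \mathbb{Z}$ is the winding number of $\phi$ around $\bar\gamma$. The constraint that the flow remain in $T\bar\gamma$ forces the effective force to be tangent to $\bar\gamma$, so that $\Pi_T(\phi)f = F(t,x)\,s'(u)$ for some scalar $F$ supported in $\omega$.

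Since $s$ is a geodesic in $\mathcal{N}$, the geodesic equation together with \eqref{eq:2ndfundcoord} yields
\begin{equation*}
s''(u) = -S(s(u))(s',s') = \Pi(s(u))(s',s').
\end{equation*}
Using $|s'|\equiv 1$ and the chain rule, one computes
\begin{equation*}
\Box\phi - \Pi(\phi)(\partial_\nu\phi,\partial^\nu\phi) = s'(u)\Box u + s''(u)\partial_\nu u\,\partial^\nu u - \Pi(s(u))(s',s')\,\partial_\nu u\,\partial^\nu u = s'(u)\,\Box u,
\end{equation*}
so pairing \eqref{eq:conhmf:whole} with $s'(u)$ collapses it to the scalar linear equation
\begin{equation*}
\Box u = \chi_\omega F \quad \text{on } [0,T]\times \mathbb{T}^1.
\end{equation*}
This closed form for $u$ yields at once the invariance of $T\bar\gamma$ and the asserted linearization.

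Finally, two states $(\phi_j, \phi_{jt})$, $j = 0, 1$, are homotopic in $H^1(\mathbb{T}^1; \bar\gamma)$ if and only if their lifts $u_j$ share a common winding number $n$. Writing $u = nLx/(2\pi) + v$ with $v$ strictly $2\pi$-periodic reduces the problem to steering an arbitrary $v[0] \in H^1 \times L^2(\mathbb{T}^1)$ to an arbitrary target $v[T]$ under $\Box v = \chi_\omega F$ on $\mathbb{T}^1$. By the Hilbert Uniqueness Method this is equivalent to the observability inequality for the adjoint wave equation with control set $\omega = (-b_0, b_0)$, which holds (e.g.\ by d'Alembert's formula or sideways energy estimates) as soon as $T$ exceeds the length $2\pi - 2b_0$ of the complement; since $T \geq 2\pi - b_0$ is assumed, the reduction is complete. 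The one delicate bookkeeping point is precisely the matching of winding numbers in the lift, which is why the homotopy hypothesis enters essentially; all remaining steps are routine once the arc-length reduction is in place.
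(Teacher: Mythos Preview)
Your proof is correct and follows essentially the same approach as the paper: both parametrize $\phi$ along the geodesic (you by arc length, the paper via $\gamma$ itself), use the geodesic equation to cancel the second-fundamental-form terms, and reduce to a scalar linear wave equation with a quasi-periodicity condition encoding the winding number, after which standard one-dimensional controllability applies. The only cosmetic difference is your arc-length normalization and explicit splitting $u = nLx/(2\pi)+v$, whereas the paper works directly with $\phi(t,x)=\gamma(\varphi(t,x))$ and the condition $\varphi(t,2\pi)=2\pi K+\varphi(t,0)$.
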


\begin{proof}[Proof of Lemma \ref{wave-closed-geodesic}]
It is clear that when  the initial state $\phi[0]$ takes value in $T\bar\gamma$ and the control stays in the tangent direction, then the unique solution stays in $T\bar\gamma$. Assume the degree of the initial state as a map from $\mathbb{T}^1$ to $\gamma$ is
\begin{equation*}
    \textrm{deg} (\phi(0, \cdot); \mathbb{T}^1, \gamma)= K\in \mathbb{Z}.
\end{equation*}
This degree does not change along the evolution of the curve $\phi(t, \cdot): \mathbb{T}^1\rightarrow \gamma$.
From now on, we characterize the solution using a new variable on geodesic:   for every $(t, x)\in [0, T]\times \mathbb{T}^1$ we assume
\begin{gather*}
\phi(t, x)=  \gamma(\varphi(t, x)), \\
f(t, x)=  f_0(t, x)  \gamma_s(\varphi(t, x))\in T_{\bar \gamma(\varphi)}\gamma\subset  T_{\bar \gamma(\varphi)}\mathcal{N}
\end{gather*}
with functions $
    \varphi, f_0: [0, T]\times (0, 2\pi)\rightarrow \mathbb{R}$ satisfying for every $t\in [0, T]$,
    \begin{equation*}
        \varphi(t, 2\pi)= 2\pi K+   \varphi(t, 0)  \;\textrm{ and } \; f_0(t, 2\pi)= f_0(t, 0).
    \end{equation*}
The above compatibility condition ensures that the degree of the closed curve $\phi(t, \cdot)$ does not change.  
Under this new variable, by taking differentiations we obtain
\begin{align*}
\partial_t \phi&=  \gamma_s(\varphi) \varphi_t, \;  \; \; \partial_x \phi=  \gamma_s(\varphi) \varphi_x, \\
\partial_{tt} \phi&=  \gamma_{ss}(\varphi) (\varphi_t)^2+  \gamma_s(\varphi) \varphi_{tt}, \\
\partial_{xx} \phi&=  \gamma_{ss}(\varphi) (\varphi_x)^2+  \gamma_s(\varphi) \varphi_{xx}.
\end{align*}
Substitute these into the controlled wave maps equation,
\begin{align*}
  &\;\;\;\; \Box  \phi -    \Pi( \phi)\left(\partial_{\nu}\phi, \partial^{\nu} \phi\right)- \chi_{\omega} \Pi_T(\phi)f   \\
& =  -\gamma_{ss}(\varphi) (\varphi_t)^2-  \gamma_s(\varphi) \varphi_{tt}+  \gamma_{ss}(\varphi) (\varphi_x)^2+ \gamma_s(\varphi) \varphi_{xx}  +    \Pi( \phi)\left( \gamma_s(\varphi) \varphi_t,  \gamma_s(\varphi) \varphi_t\right)  \\
& \;\;\; \;\;\; \;\;\; \;\;\; \;\;\;\;\;\; \;\;\; \;\;\; \;\;\; \;\;\; -  \Pi( \phi)\left(\gamma_s(\varphi) \varphi_x, \gamma_s(\varphi) \varphi_x\right)-  \chi_{\omega} f_0  \gamma_s(\varphi)\\
& = - (\varphi_t)^2  \left( \gamma_{ss}(\varphi)  -   \Pi( \phi)\left( \gamma_s(\varphi), \gamma_s(\varphi)\right) \right)+  (\varphi_x)^2  \left(\gamma_{ss}(\varphi)  -   \Pi( \phi)\left(\gamma_s(\varphi), \gamma_s(\varphi)\right) \right) \\
& \;\;\;\;\;\;\;\;\;\;\;\;\;\;\;\; +  \gamma_s(\varphi) \Big(\Box \varphi-  \chi_{\omega} f_0   \Big)  \\
&=   \gamma_s(\varphi) \Big(\Box \varphi-  \chi_{\omega} f_0   \Big).
\end{align*}

Therefore, the pair $(\phi, f)$ is a solution of the controlled wave maps equation is equivalent to $(\varphi, f_0)$ is a solution of the linear controlled wave equation
\begin{equation}\label{eq:wave:linear:sec6}
\begin{cases}
\Box \varphi=  \chi_{\omega} f_0 \;\; \forall (t, x)\in (0, T)\times (0, 2\pi), \\
\varphi(t, 2\pi)- \varphi(t, 0)= 2\pi K \;\; \forall t\in (0, T).
\end{cases}
\end{equation}
This linear system is exactly controllable in the sense stated in the lemma. 
\end{proof}

Similarly, we obtain the following result on non-closed complete geodesics. Let $\Gamma$ be a non-closed complete geodesic. We denote $\bar\Gamma$ to be the one dimensional submanifold given by this closed geodesic.   The controlled wave maps equation {\it restricted} on $T\bar\Gamma$ means the initial state takes value in $T\bar\Gamma$, and the control $f$ is always tangent to the geodesic. 
\begin{cor}
\label{wave-complete-geodesic}
Let $T\geq 2\pi- b_0$.  Let $\Gamma$ be a non-closed complete geodesic. Then the same exact controllability result as in Lemma \ref{wave-closed-geodesic} holds for the controlled wave maps equation restricted in $T\bar\Gamma$.
\end{cor}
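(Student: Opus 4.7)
The proof follows exactly the template of Lemma \ref{wave-closed-geodesic}, with the only genuine modification lying in the topological compatibility condition. First, I would verify that if $\phi[0]$ takes values in $T\bar{\Gamma}$ and the control is chosen tangentially as $f(t,x) = f_0(t,x)\Gamma_s(\varphi(t,x))$, then the unique solution of \eqref{eq:wminhomo} remains in $T\bar{\Gamma}$ throughout its evolution. This rests on the geodesic identity $\Gamma_{ss} = \Pi(\Gamma)(\Gamma_s,\Gamma_s)$, which produces exactly the same cancellations as in the closed case when one substitutes the parametrization $\phi(t,x) = \Gamma(\varphi(t,x))$ into the equation: the quadratic second fundamental form term cancels with the $\Gamma_{ss}(\varphi)(\varphi_x^2 - \varphi_t^2)$ contribution, leaving only a tangential scalar equation.

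Next, with $\phi(t,x) = \Gamma(\varphi(t,x))$ for a scalar function $\varphi : [0,T]\times \mathbb{T}^1 \to \mathbb{R}$, the key structural difference with Lemma \ref{wave-closed-geodesic} is that $\Gamma$ is non-periodic, so any continuous loop $\phi(t,\cdot) : \mathbb{T}^1 \to \bar{\Gamma}$ has vanishing winding number along $\bar{\Gamma}$. Consequently, the compatibility condition reduces to the genuine periodicity $\varphi(t,2\pi) = \varphi(t,0)$, with no integer shift $2\pi K$ appearing. The lifts of $\varphi(0,\cdot)$ and $\varphi_t(0,\cdot)$ are constructed from the initial data by selecting a continuous local inverse of $\Gamma$ on a compact portion of $\bar{\Gamma}$ that is large enough to contain the trajectory by finite speed of propagation; the trivial-degree property is then preserved in time by continuity. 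The same calculation as in the closed geodesic case reduces the geometric controlled equation to the linear wave equation
\begin{equation*}
\Box\varphi = \chi_{\omega} f_0 \quad \text{on } (0,T)\times \mathbb{T}^1
\end{equation*}
with truly periodic boundary conditions.

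The conclusion then follows from the classical exact controllability of the one-dimensional linear wave equation on the circle with interior control supported in $\omega = (-b_0,b_0)$: the geometric control condition is satisfied, and the minimal controllability time equals the length of the uncontrolled arc, namely $2\pi - 2b_0 < 2\pi - b_0 \le T$. This delivers the required $f_0$, and pushing forward through $\phi = \Gamma(\varphi)$ yields the control $f$ on the original system, thereby establishing the global exact controllability between any two (automatically homotopic) states in $H^1\times L^2(\mathbb{T}^1; T\bar{\Gamma})$. The only mild obstacle in the whole argument is the possible non-injectivity of $\Gamma$, but this is purely bookkeeping once one works on a compact sub-arc of $\bar{\Gamma}$ via finite speed of propagation; no new analytic ingredient beyond Lemma \ref{wave-closed-geodesic} is required.
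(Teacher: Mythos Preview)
Your proposal is correct and follows essentially the same approach as the paper: parametrize $\phi(t,x)=\Gamma(\varphi(t,x))$, use the geodesic identity to cancel the second fundamental form term, and reduce to the linear controlled wave equation $\Box\varphi=\chi_\omega f_0$ on $\mathbb{T}^1$, whose exact controllability is classical. You supply more detail than the paper does on why the compatibility condition is genuine periodicity (degree zero, since $\Gamma$ is non-periodic) and on handling possible self-intersections of $\bar\Gamma$ via the lift, points the paper leaves implicit by simply declaring $\varphi:[0,T]\times\mathbb{T}^1\to\mathbb{R}$.
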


\begin{proof}[Proof of Corollary \ref{wave-complete-geodesic}]

Recall the definition of a non-closed complete geodesic. 
A {\it non-closed complete geodesic} $\Gamma$ is given by  $\bar \phi (\R)=\bar\Gamma$, where the map $\bar\phi$ 
\begin{align}
\bar \phi: \mathbb{R}&\rightarrow \bar\Gamma \subset \mathcal{N} \label{def:com:geodesic} \\
s& \mapsto \bar \phi(s)
\end{align}
satisfies 
\begin{equation}\label{def:com:geodesic:2}
   \Delta \bar \phi  -   \Pi(\bar \phi)\left(\partial_{x}\bar\phi, \partial_{x}\bar \phi\right)= 0.
\end{equation}

Suppose that the initial state $\phi[0]$ takes value in $T\bar\Gamma$ and the control stays in the tangent direction, then the solutions stays in $T\bar\Gamma$. We may assume for every $(t, x)\in [0, T]\times \mathbb{T}^1$,
\begin{gather*}
\phi(t, x)= \bar \phi(\varphi(t, x)), \\
f(t, x)=  f_0(t, x) \bar \phi_s(\varphi(t, x))\in T_{\bar \phi(\varphi)}\bar\Gamma\subset  T_{\bar \phi(\varphi)}\mathcal{N}
\end{gather*}
with functions
\begin{equation*}
    \varphi, f_0: [0, T]\times \mathbb{T}^1\rightarrow \mathbb{R}.
\end{equation*}

Similarly, $(\phi, f)$ is a solution of the controlled wave maps equation is equivalent to $(\varphi, f_0)$ is a solution of the linear controlled wave equation
\begin{equation*}
    \Box \varphi-  \chi_{\omega} f_0= 0.
\end{equation*}
Thus, the system is exactly controllable in the sense stated in the corollary.
\end{proof}

We also study the following inner controlled system on non-closed complete geodesics. While an analogue exact controllability result can be shown,  we present here only a weaker version that will be used later on. 
\begin{lemma}[Inner controlled system on non-closed complete geodesics]
\label{wave-inner-geodesic}
Let $T\geq 2\pi- b$. Let $\Gamma$ be a non-closed complete geodesic of  $\mathcal{N}$ and let $Q_0, Q_1$ be two points in this geodesic. We consider the controlled wave maps equation restricted on $T\bar\Gamma$ for $(t, x)\in [0, T]\times (b, 2\pi- b)$,
\begin{equation}\label{eq:conhmf:whole:inner}
\begin{cases}
 \Box \phi -  \Pi(\phi)\left(\partial_{\nu}\phi, \partial^{\nu}\phi\right)= \chi_{\omega_{\textrm{in}}} \Pi_T(\phi)f, \\
 \phi(t, b)= Q_0, \;\;  \phi(t, 2\pi- b)= Q_1, \\
 \phi[0](\cdot)\in T\bar\Gamma,
 \end{cases}
\end{equation}
where the initial state takes value in $T\bar\Gamma$, and the control $f$ is always tangent to the geodesic. Then the solution of the controlled wave maps stays in $T\bar\Gamma$. 

Moreover, for any point $P\in \bar\Gamma$ and any initial state $\phi[0]\in C^2\times C^1([b, 2\pi- b]; T\bar\Gamma)$, there exists a control $f\in C^0([0, T]\times [b, 2\pi- b])$ such that the unique strong solution satisfies $\phi\in C^0([0, T]; C^2\times C^1([b, 2\pi- b]; T\bar\Gamma))$
and
\begin{align*}
\phi(T, x)&= P \; \forall x\in [b_1, 2\pi- b_1], \\
\phi_t(T, x)&= 0\; \forall x\in [b, 2\pi- b].
\end{align*}
\end{lemma}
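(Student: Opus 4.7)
The strategy is to carry out the same geodesic reduction as in Corollary~\ref{wave-complete-geodesic}. Writing $\phi(t,x) = \bar\phi(\varphi(t,x))$ and $f(t,x) = f_0(t,x)\,\bar\phi_s(\varphi(t,x))$, where $\bar\phi:\R\to\bar\Gamma$ parametrises the geodesic as in \eqref{def:com:geodesic}--\eqref{def:com:geodesic:2}, the tangentiality of the control forces $\phi$ to stay in $T\bar\Gamma$, and the wave maps equation on $\phi$ becomes the linear wave equation
\begin{equation*}
\Box\varphi = \chi_{\omega_{\textrm{in}}} f_0 \quad\text{on }[0,T]\times(b,2\pi-b),
\end{equation*}
with the Dirichlet conditions $\varphi(t,b)=s_0$, $\varphi(t,2\pi-b)=s_1$, where $Q_j=\bar\phi(s_j)$. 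Letting $p\in\R$ be such that $P=\bar\phi(p)$, the problem reduces to finding $f_0\in C^0([0,T]\times[b,2\pi-b])$ supported in $\omega_{\textrm{in}}$ driving $\varphi$ to the profile $\varphi(T,\cdot)\equiv p$ on $[b_1,2\pi-b_1]$ and $\varphi_t(T,\cdot)\equiv 0$ on $[b,2\pi-b]$.

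I plan to construct $\varphi$ directly and then define $f_0:=\Box\varphi$. On the uncontrolled strip $[b_0,2\pi-b_0]\times[0,T]$, the constraint $\Box\varphi=0$ forces the d'Alembert form $\varphi(t,x)=F(x-t)+G(x+t)$. The final-time targets $\varphi(T,\cdot)=p$ and $\varphi_t(T,\cdot)=0$ there translate into $F\equiv c_1$ on $[b_0-T,2\pi-b_0-T]$ and $G\equiv c_2$ on $[b_0+T,2\pi-b_0+T]$ with $c_1+c_2=p$, while the initial data $(\varphi_0,\varphi_1)$ determine $F,G$ on $[b,2\pi-b]$. Since $T\geq 2\pi-b$, the first constancy interval lies strictly to the left of $b$ and the second strictly to the right of $2\pi-b$, so there is no conflict, and one can smoothly interpolate $F,G$ between the determined and the constant regions by a $C^2$ extension. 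This produces the desired $\varphi$ on the uncontrolled strip.

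It remains to extend $\varphi$ to the two control rectangles $[b,b_0]\times[0,T]$ and $[2\pi-b_0,2\pi-b]\times[0,T]$, where no PDE constraint is in effect. On each of these, the four edges carry prescribed data: the initial data on the bottom, the constant $s_0$ or $s_1$ on the outer vertical edge, the d'Alembert trace on the inner vertical edge, and the target profile on the top (namely $\varphi(T,\cdot)=p$ on the portion inside $[b_1,2\pi-b_1]$, freely chosen on the buffer $[b,b_1]$ or $[2\pi-b_1,2\pi-b]$, together with $\varphi_t(T,\cdot)=0$ everywhere). The short buffer segments provide the flexibility needed to bridge the boundary value $s_0$ or $s_1$ with the interior value $p$ at $t=T$, and a standard $C^2$ interpolation via Hermite-type polynomials combined with bump functions localised near the corners delivers the extension. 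The main technical obstacle will be the $C^2$ matching across the interfaces $x=b_0$ and $x=2\pi-b_0$, since $f_0=\Box\varphi$ must be continuous on the whole strip and vanish on $\partial\omega_{\textrm{in}}$; this requires careful checking of corner compatibility conditions, which are guaranteed by the assumed $C^2\times C^1$ regularity of $(\varphi_0,\varphi_1)$ and the extra freedom in choosing $F,G$ on the intermediate segments. Once $\varphi$ is in hand, $f:=f_0\,\bar\phi_s(\varphi)$ and $\phi:=\bar\phi(\varphi)$ yield the desired solution of~\eqref{eq:conhmf:whole:inner}.
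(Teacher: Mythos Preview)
Your reduction to the scalar wave equation on $(b,2\pi-b)$ with Dirichlet data is identical to the paper's. From that point the two arguments diverge.

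The paper avoids the interpolation entirely by subtracting a well-chosen steady profile: one picks $\bar\varphi\in C^5([b,2\pi-b])$ with $\bar\varphi(b)=s_0$, $\bar\varphi(2\pi-b)=s_1$, $\bar\varphi\equiv p$ on $[b_1,2\pi-b_1]$, and $\Delta\bar\varphi=g_0$ supported in $\tilde\omega_{\textrm{in}}\subset\omega_{\textrm{in}}$. Then $\tilde\varphi:=\varphi-\bar\varphi$ satisfies a controlled wave equation with \emph{homogeneous} Dirichlet conditions, and one simply quotes null controllability of the 1D wave equation with interior control adjacent to both endpoints. The target profile, the inhomogeneous boundary data, and the buffer zones $[b,b_1]$, $[2\pi-b_1,2\pi-b]$ are all absorbed into $\bar\varphi$ in one stroke.

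Your direct d'Alembert construction is correct and is in effect what underlies the null-controllability result the paper invokes; it has the merit of being fully explicit. One clarification is worth making: the d'Alembert functions $F,G$ for the solution on the uncontrolled strip are a priori pinned down by the initial data only on $[b_0,2\pi-b_0]$, not on $[b,2\pi-b]$. What you are really doing is \emph{choosing} the free extensions of $F$ on $[b_0-T,b_0)$ and of $G$ on $(2\pi-b_0,2\pi-b_0+T]$ --- first to meet the constancy forced by the target at $t=T$, then to interpolate smoothly. Extending them via the initial-data formula over the control intervals is a legitimate choice that eases corner compatibility at $t=0$, but it is a choice, not a consequence. With that understood, your construction goes through; the paper's route simply trades the explicit $C^2$ bookkeeping for a one-line appeal to a standard result.
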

\begin{proof}
Assume this non-closed complete geodesic is given by the map $\bar \phi$ defined in \eqref{def:com:geodesic}--\eqref{def:com:geodesic:2}.
Since $\phi(0,\cdot)$ is a continuous curve on the non-closed complete geodesic $\Gamma$, we assume that
\begin{equation*}
Q_0= \phi(0, b)= \bar \phi (q_0),  \; Q_1= \phi(0, b)= \bar \phi (q_1)  \textrm{ and } P= \bar \phi(p).
\end{equation*}
Similar to the proof of Corollary \ref{wave-complete-geodesic}, we assume 
\begin{gather*}
\phi(t, x)= \bar \phi(\varphi(t, x)), \\
f(t, x)=  f_0(t, x) \bar \phi_s(\varphi(t, x))\in T_{\bar \phi(\varphi)}\bar\Gamma.
\end{gather*}
Thus the pair $(\varphi, f_0)$ satisfies the linear wave equation with Dirichlet boundary condition:
\begin{equation*}
\begin{cases}
 \Box \varphi=  \chi_{\omega_{\textrm{in}}} f_0 \;\; \forall (t, x)\in [0, T]\times (b, 2\pi- b),\\
 \varphi(t, b)= q_0 \; \textrm{ and } \;
 \varphi(t, 2\pi- b)= q_1  \;\; \forall t\in [0, T],\\
 \varphi[0]\in C^2\times C^1([b, 2\pi- b]; \mathbb{R}).
\end{cases}
\end{equation*}

It suffices to find a control $f_0$ such that at time $T$
\begin{gather*}
    \varphi(T, x)= p \; \forall x\in [b_1, 2\pi- b_1], \\
    \varphi_t(T, x)= 0 \; \forall x\in [b, 2\pi- b].
\end{gather*}
This is a direct consequence of the exact controllability of the wave equation. Indeed, we construct a steady pair $(\bar \varphi, g_0)\in  C^5\times C^3([b, 2\pi- b])$ with $g_0$ supported in $\tilde \omega_{\textrm{in}}$,
\begin{equation*}
\begin{cases}
 \Delta \bar \varphi=  \chi_{\tilde \omega_{\textrm{in}}} g_0 \;\;  \forall  x\in  (b, 2\pi- b), \\
 \bar \varphi(b)= q_0, \;\;
\bar  \varphi(2\pi- b)= q_1, \\
\bar  \varphi(x)= p \; \forall x\in [b_1, 2\pi- b_1].
\end{cases}
\end{equation*}
Then, the function $\tilde \varphi:= \varphi- \bar \varphi$ satisfies 
\begin{equation*}
\begin{cases}
 \Box \tilde \varphi=  \chi_{\omega_{\textrm{in}}} f_0- \chi_{\tilde \omega_{\textrm{in}}} g_0, \\
 \tilde \varphi(t, b)= 0, \;\;
\tilde  \varphi(t, 2\pi- b)= 0, \\
\tilde \varphi[0]= \varphi[0]- (\bar \varphi, 0) \in C^2\times C^1([b, 2\pi- b]; \mathbb{R}).
\end{cases}
\end{equation*}
This linear system with zero Dirichlet condition is null controllable. 
Therefore, we conclude the proof of Lemma~\ref{wave-inner-geodesic}.
\end{proof}

Notice that the outer equation has control over its entire domain.  Thus one has the following gluing argument. 
\begin{lemma}[Outer gluing for closed curves]\label{lem:glue}
Let $\mathcal{C}$ be a given closed curve on $\mathcal{N}$. Assume  the pair   $(\phi, \phi_t,  f): [0, T]\times (b, 2\pi- b)\rightarrow T\mathcal{N}\times \mathbb{R}^N$ belonging to  $ C\big([0, T]; C^2\times C^1\times C^0([b, 2\pi- b])\big)$ is a solution to the controlled inner equation: 
\begin{equation*}
    \Box \phi  -\Pi(\phi)\left(\partial_{\nu}\phi, \partial^{\nu}\phi\right)= \chi_{\omega_{\textrm{in}}} \Pi_T(\phi)f \; \; \forall x\in (b, 2\pi- b)
\end{equation*}
Assume that $\phi(0, b)$ and $\mathcal{C}$ belong to the same connected component of $\mathcal{N}$.
Then we can extend it to a periodic pair $(\tilde \phi, \tilde \phi_t, \tilde f): [0, T]\times \mathbb{T}^1 \rightarrow T\mathcal{N}\times \mathbb{R}^N$ belong to $C\big([0, T]; C^2\times C^1\times C^0(\mathbb{T}^1\big)$ such that
\begin{gather*}
   \Box \tilde\phi -   \Pi( \tilde\phi)\left(\partial_{\nu} \tilde\phi, \partial^{\nu} \tilde\phi\right)= \chi_{\omega} \Pi_T(\tilde \phi) \tilde f \; \; \forall x\in \mathbb{T}^1, \\
\tilde \phi(t, \cdot) \textrm{ is homotopic to  } \mathcal{C} \;\;  \forall t\in [0, T].
\end{gather*}
\end{lemma}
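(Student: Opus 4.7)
My plan is to construct the outer extension $\tilde\phi$ on $[0,T]\times [-b,b]$ explicitly, and then read off the control $\tilde f$ from the wave maps equation itself. Denote $Q_\ell(t) := \phi(t,b)$ and $Q_r(t) := \phi(t, 2\pi - b)$; by hypothesis these, together with $\partial_x\phi$, $\partial_{xx}\phi$, $\partial_t\phi$ and $\partial_{tx}\phi$ evaluated at $x = b$ and $x = 2\pi - b$, are continuous curves in $\mathbb{R}^N$. I would first pick at $t=0$ a smooth path $\alpha_0 : [-b,b] \to \mathcal{N}$ with $\alpha_0(-b) = Q_r(0)$, $\alpha_0(b) = Q_\ell(0)$, whose first and second $x$-derivatives at $\pm b$ match those of $\phi(0,\cdot)$ at the inner boundaries. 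Such $\alpha_0$ exists because $Q_r(0), Q_\ell(0)$ and $\mathcal{C}$ lie in the same connected component of $\mathcal{N}$; moreover, pre-concatenating $\alpha_0$ with any loop at $Q_r(0)$ lets us adjust the homotopy class of the closed curve $\alpha_0 \star \phi(0,\cdot)|_{[b,\,2\pi - b]}$ by any element of $\pi_1(\mathcal{N})$, so it can be arranged to be freely homotopic to $\mathcal{C}$.

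Second, I would deform $\alpha_0$ continuously in $t$ by a moving-frame construction. Pick a smooth orthonormal frame of $T\mathcal{N}$ along $\alpha_0$ and smooth cutoffs $\eta_\pm$ supported near $x=\pm b$, and set $\alpha(t,x) := \exp_{\alpha_0(x)}\bigl( V(t,x) \bigr)$, where $V(t,x) \in T_{\alpha_0(x)}\mathcal{N}$ is a Hermite-type polynomial in $x$ in a neighborhood of each endpoint (weighted by $\eta_\pm$), with coefficients depending continuously on $t$. The coefficients are chosen so that at $x = \pm b$ the five vectors
\begin{equation*}
\alpha, \quad \partial_x\alpha, \quad \partial_{xx}\alpha, \quad \partial_t\alpha, \quad \partial_{tx}\alpha
\end{equation*}
coincide with the corresponding inner data; this is a finite-dimensional linear interpolation problem in the tangent space and is always solvable. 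At $t=0$ one has $V \equiv 0$, so $\alpha(0,\cdot) = \alpha_0$. The outcome is $\alpha \in C([0,T]; C^2([-b,b];\mathcal{N}))$ with $\partial_t\alpha \in C([0,T]; C^1([-b,b]))$.

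Third, set $\tilde\phi := \phi$ on $[b, 2\pi-b]$ and $\tilde\phi := \alpha$ on $[-b,b]$ (extended $2\pi$-periodically). The five matching conditions deliver the announced regularity across $x = \pm b$. Define
\begin{equation*}
\tilde f := \Box\tilde\phi - \Pi(\tilde\phi)(\partial_\nu\tilde\phi, \partial^\nu\tilde\phi)
\end{equation*}
on the outer region, which automatically lies in $T_{\tilde\phi}\mathcal{N}$, since for any $\mathcal{N}$-valued map this expression equals the tangential part of $\Box\tilde\phi$ (the intrinsic covariant wave operator). Extend $\tilde f := f$ on the inner region. The support of $\tilde f$ lies in $(-b,b)\cup \omega_{\textrm{in}} \subset \omega$, so the required identity $\Box\tilde\phi - \Pi(\tilde\phi)(\partial_\nu\tilde\phi, \partial^\nu\tilde\phi) = \chi_\omega \Pi_T(\tilde\phi)\tilde f$ holds on $[0,T]\times\mathbb{T}^1$. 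Finally, the map $t \mapsto \tilde\phi(t,\cdot)$ is continuous into $C^0(\mathbb{T}^1;\mathcal{N})$, so the free-homotopy class is constant in $t$, and the homotopy assertion reduces to $t=0$, which was arranged from the start.

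The main difficulty is the second step: enforcing all five boundary matching conditions at each endpoint while staying on $\mathcal{N}$ and depending continuously on $t$. A naive extrinsic approach via the nearest-point projection in a tubular neighborhood of $\mathcal{N}$ is awkward because $\partial_{xx}\phi$ at a boundary point carries a nontrivial normal component (namely $\Pi(\phi)(\partial_x\phi, \partial_x\phi)$), so second-order matching would force one to keep track of normal-bundle data. Working intrinsically via the exponential map and Hermite interpolation in the tangent bundle sidesteps this issue, and the continuity in $t$ of all boundary data provides the required regularity of the polynomial coefficients.
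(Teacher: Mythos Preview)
Your proposal is correct and is precisely a detailed elaboration of the paper's own proof, which consists of two sentences (extend $\phi$ to a periodic $\tilde\phi$ in the desired homotopy class, then read off the control on the outer arc where full control is available). One small caveat worth noting: your definition $\tilde f := \Box\tilde\phi - \Pi(\tilde\phi)(\partial_\nu\tilde\phi,\partial^\nu\tilde\phi)$ on the outer region requires $\alpha_{tt}$, but the Hermite coefficients determined by $\partial_{xx}\phi(t,\pm b)$ are only $C^0$ in $t$, so your construction as stated does not guarantee $\alpha_{tt}$ exists or that $\tilde f$ glues continuously at $x=\pm b$; this is easy to patch (e.g.\ by extending $\phi$ via the free wave map in a thin collar of the boundary before interpolating), and the paper does not address this point either.
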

\begin{proof}
    It suffices to extend the function $\phi|_{x\in (b, 2\pi- b)}$ to a periodic function $\tilde \phi|_{x\in \mathbb{T}^1}$ which is homotopic to the given curve $\mathcal{C}$. Clearly, this newly constructed function satisfies the periodic controlled wave maps equation with a control $\tilde f$ that is supported in $\omega_{\textrm{in}}\cup \omega_{\textrm{out}}= \omega$.
    \end{proof}
\vspace{2mm}

To summarize, by combining Lemma \ref{wave-inner-geodesic} and Lemma \ref{lem:glue} one obtains the following property, which will play a significant role in the proof of Proposition \ref{pro:controlbetween} (more precisely in {\it Step 3}).  
\begin{prop}[Transport the mass on non-closed complete geodesics]\label{lem:trans:com:geo}
Let $Q_0, Q_1, P$ be three points on $\mathcal{N}$. Let $\Gamma$ be a non-closed complete geodesic. 
Let $T\geq 2\pi- b$. Given an initial state $(u_0, u_{0t})\in (C^2\times C^1(\mathbb{T}^1))\cap \mathcal{H}(\mathbb{T}^1; \mathcal{N})$ satisfying 
\begin{gather}
    u_0(b)= Q_0, \;\; u_0(2\pi- b)= Q_1,  \label{eq:cond:pro:6.6} \\
    (u_0(x), u_{0t}(x))\in T \bar\Gamma \;\;  \forall x\in (b, 2\pi- b), \label{eq:cond:pro:6.6:2}
\end{gather}
 we can construct a pair $(\phi,  f)$  as solution of 
\begin{gather*}
\Box \phi  - \Pi(\phi)\left(\partial_{\nu}\phi, \partial^{\nu}\phi\right)= \chi_{\omega} \Pi_T(\phi)f \; \; \forall (t, x)\in [0, T]\times \mathbb{T}^1,  \\
 (\phi(0, \cdot), \phi_t(0, \cdot))= (u_0, u_{0t})(\cdot) \;\; \forall x\in \mathbb{T}^1,
\end{gather*}
such that
\begin{gather*}
 \phi(t, b)= Q_0, \;\;  \phi(t, 2\pi- b)= Q_1 \; \; \forall t\in [0, T], \\
 \phi(t, x)\in T\bar\Gamma \; \; \forall (t, x)\in [0, T]\times (b, 2\pi- b), \\
 \phi(T, x) = P \; \; \forall x\in [b_1, 2\pi- b_1], \\ 
  \phi_t(T, x)= 0 \; \; \forall x\in \mathbb{T}^1, \\
  \phi(t, \cdot) \textrm{ is homotopic to  }  u_0(\cdot).
\end{gather*}
\end{prop}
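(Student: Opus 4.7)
The plan is to split the construction in two: first control the inner slab $[b,2\pi-b]$ so that the evolution stays on $\bar\Gamma$ and reaches the target $P$ with zero velocity; then build by hand a smooth outer trajectory on $[-b,b]$ that matches the traces of the inner solution at $x=\pm b$, starts from $(u_0,u_{0t})$ at time $0$, and finishes with zero velocity at time $T$; finally splice the two by Lemma~\ref{lem:glue}.

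First, I would apply Lemma~\ref{wave-inner-geodesic} to the restriction $(u_0,u_{0t})|_{[b,2\pi-b]}$, which by hypotheses \eqref{eq:cond:pro:6.6}--\eqref{eq:cond:pro:6.6:2} takes values in $T\bar\Gamma$ with Dirichlet values $Q_0,Q_1$ at the endpoints. This yields a control $f_{\rm in}\in C^0([0,T]\times[b,2\pi-b])$ and a solution $\phi_{\rm in}\in C^0([0,T];C^2\times C^1([b,2\pi-b];T\bar\Gamma))$ of the inner controlled wave maps system, satisfying $\phi_{\rm in}(t,b)\equiv Q_0$, $\phi_{\rm in}(t,2\pi-b)\equiv Q_1$, and at final time $\phi_{\rm in}(T,x)=P$ on $[b_1,2\pi-b_1]$ with $\partial_t\phi_{\rm in}(T,\cdot)\equiv 0$ on $[b,2\pi-b]$.

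Next, I construct a $C^0([0,T];C^2\times C^1([-b,b];\mathcal{N}))$ trajectory $\phi_{\rm out}$ on the outer slab by interpolation in $t$, such that (i) $\phi_{\rm out}(0,\cdot)=u_0|_{[-b,b]}$ and $\partial_t\phi_{\rm out}(0,\cdot)=u_{0t}|_{[-b,b]}$, (ii) the one-sided traces of $\phi_{\rm out}$ and its $x$-derivatives at $x=\pm b$ coincide with those of $\phi_{\rm in}$ for all $t\in[0,T]$ (ensuring $C^2_x\times C^1_t$ matching), and (iii) $\partial_t\phi_{\rm out}(T,\cdot)\equiv 0$. Such an interpolation exists because $\mathcal{N}$ is connected (use Lemma~\ref{lem:conne:points} if necessary to link points by geodesics) and we can freely choose a smooth terminal curve compatible with the inner traces at $x=\pm b$, then glue it to the initial curve via a time cutoff whose derivative vanishes at $t=0$ and $t=T$ in the appropriate way. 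Because the outer equation has control on the entire outer domain, the control is then \emph{read off} from the equation:
\begin{equation*}
f_{\rm out}(t,x):=\Box\phi_{\rm out}-\Pi(\phi_{\rm out})(\partial_\nu\phi_{\rm out},\partial^\nu\phi_{\rm out}),
\end{equation*}
which, since $\phi_{\rm out}$ takes values in $\mathcal{N}$, is automatically tangent to $\mathcal{N}$ at $\phi_{\rm out}$ and therefore of the form $\Pi_T(\phi_{\rm out})\tilde f_{\rm out}$.

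Finally, I concatenate $\phi_{\rm in}$ and $\phi_{\rm out}$ into a single $C^0([0,T];C^2\times C^1(\mathbb{T}^1;\mathcal{N}))$ trajectory $\phi$ on $\mathbb{T}^1$, with control $f$ given by $f_{\rm in}$ on $[b,2\pi-b]$ and $f_{\rm out}$ on $[-b,b]$ (supported in $\omega_{\rm in}\cup\omega_{\rm out}=\omega$), as permitted by Lemma~\ref{lem:glue}. The conditions $\phi(t,\pm b)=Q_{0},Q_{1}$ and the inner invariance in $T\bar\Gamma$ come from step one; the final-state conditions $\phi(T,x)=P$ on $[b_1,2\pi-b_1]$ and $\phi_t(T,\cdot)\equiv 0$ on $\mathbb{T}^1$ are the concatenation of the two terminal conditions; and the homotopy $\phi(t,\cdot)\simeq u_0(\cdot)$ is automatic from the continuity of $t\mapsto\phi(t,\cdot)$ into $C(\mathbb{T}^1;\mathcal{N})$.

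\emph{Main obstacle.} The delicate step is the construction of $\phi_{\rm out}$: it must simultaneously match the lateral traces dictated by $\phi_{\rm in}$ (which are nontrivial in $t$), reproduce the prescribed initial Cauchy data $(u_0,u_{0t})$ on $[-b,b]$, and hit the zero-velocity condition at $t=T$, all while staying on $\mathcal{N}$. The key point is that since the outer region carries the full control, we do not need any controllability estimate there; instead, a smooth homotopy in $\mathcal{N}$ with the prescribed traces can be produced explicitly using a time cutoff that kills the velocity at $t=T$, and the control is then recovered as a residual of the wave maps equation. The remaining pieces (Dirichlet-type matching, homotopy class, tangency of $f$) follow by standard arguments once $\phi_{\rm out}$ is in hand.
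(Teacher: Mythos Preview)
Your proposal is correct and follows essentially the same approach as the paper: the paper simply states that the proposition is obtained by combining Lemma~\ref{wave-inner-geodesic} (the inner controlled system on $\bar\Gamma$) and Lemma~\ref{lem:glue} (the outer gluing), which is precisely your two-step strategy of controlling on $[b,2\pi-b]$ and then reading off the outer control from a hand-built extension. Your write-up is in fact more detailed than the paper's, which leaves the outer construction entirely implicit in Lemma~\ref{lem:glue}; one minor simplification you could note is that the lateral traces $\phi_{\rm in}(t,b)=Q_0$ and $\phi_{\rm in}(t,2\pi-b)=Q_1$ are constant in $t$, so only the $x$-derivative traces are genuinely time-dependent.
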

\begin{remark}\label{lem:trans:clo:geo}
 A similar result holds if we replace the non-closed complete geodesic $\Gamma$ by a closed geodesic. 
\end{remark}

This result is based on the following intuition; see Figure \ref{fig:transgeodesic}. Suppose the initial state lies on a given non-closed complete geodesic $\Gamma$ for every $x$ in the uncontrolled domain, 
\begin{equation*}
    (\phi(0, x), \phi_t(0, x)) \in T\bar\Gamma \;\; \forall x\in (b, 2\pi- b),
\end{equation*}
and suppose that the target state satisfies
\begin{align*}
    (\phi_1(x), \phi_{1t}(x)) \in T\bar\Gamma \;\; \forall x\in (b_1, 2\pi- b_1).
\end{align*}
Then there exists a control such that the final state coincides with $(\phi_1(x), \phi_{1t}(x))$ for $x\in (b_1, 2\pi- b_1)$, and that the final state is homotopic to the initial state.   Here we only proved the special case that $(\phi_1(x), \phi_{1t}(x))= (P, 0) \; \forall x\in (b_1, 2\pi- b_1)$.
\begin{figure}[H]
    \centering
    \includegraphics[width=1.0\linewidth]{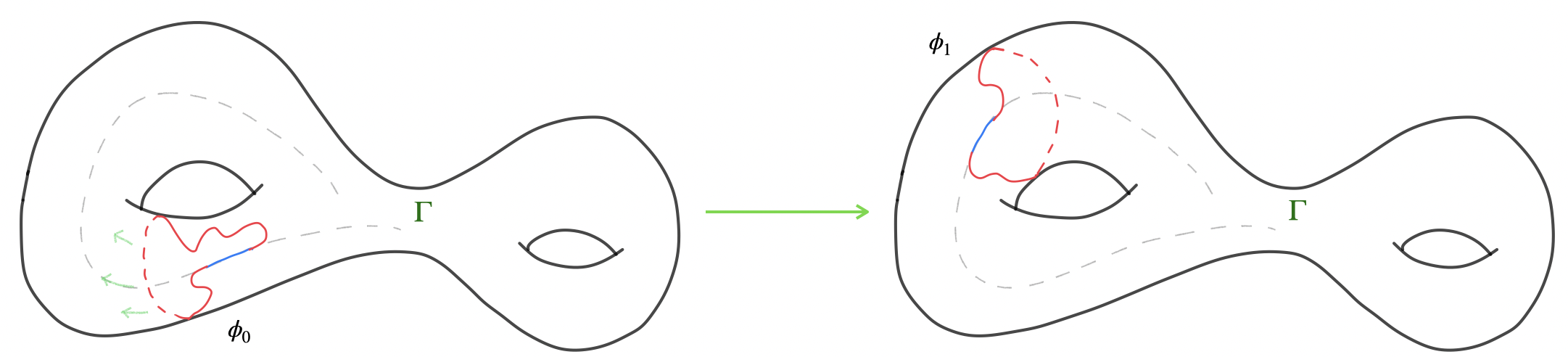}
    \caption{ Consider the map $\phi_0: \mathbb{T}^1 \rightarrow \mathcal{N}$ shown on the left-hand side.  The blue  part corresponds to the map on the uncontrolled region, $x\in (b, 2\pi- b)$, while the red part corresponds to the map on the controlled region, $x\in (-b, b)$. On the uncontrolled domain, the deviation of the map $\phi_{0x}$ is small. By transporting mass along the non-closed complete geodesic $\Gamma$ via the inner equation and performing a gluing with the outer equation, the system evolves from the initial state  $(\phi_0, \phi_{0t})$ to the final state $(\phi_1, \phi_{1t})$, where $\phi_0$ and $\phi_1$ are homotopic. }
    \label{fig:transgeodesic}
\end{figure}

\subsection{The global controllability between closed geodesics}\label{subsec:gloconbewgeo}
Armed with the preceding auxiliary results, we are in a position to construct the explicit trajectory that connects two homotopic closed geodesics. The proof is a combination of Lemmas \ref{lem:conne:points}--\ref{wave-closed-geodesic}, and  Proposition \ref{lem:trans:com:geo}.  The complete process is illustrated in Figure \ref{fig:glo:control:geodesic}. 
\begin{proof}[Proof of Proposition \ref{pro:controlbetween}]
The construction is composed by 5 steps. Given two closed geodesics $\gamma_0, \gamma_1: \mathbb{T}^1\rightarrow \mathcal{N}$ that are homotopic.  Select two points on the given closed geodesics: $Q_0\in \gamma_0$ and $Q_1\in \gamma_1$.  According Lemma \ref{lem:conne:points} there exists either a non-closed complete geodesic or a closed geodesic connecting $Q_0$ and $Q_1$.  We assume here it is a non-closed complete geodesic $\Gamma$.  The case of a closed geodesic can be treated similarly, by replacing Proposition \ref{lem:trans:com:geo} with Remark \ref{lem:trans:clo:geo}. 
\vspace{2mm}

\noindent {\it Step 1. Control on the closed geodesic $\gamma_0$ with mass concentrating on $P_0$. } 
Let $T_1:= 2\pi- b$.  Since the initial state is $(\gamma_0, 0)$, by applying Lemma \ref{wave-closed-geodesic} we find a control to steer the state to  $\phi[T_1]\in C^2\times C^1(\mathbb{T}^1)$ such that the following holds:
\begin{gather*}
    \phi(T_1, x)\in \gamma_0  \;\textrm{ and } \; \phi_t(T_1, x)= 0  \; \; \forall x\in \mathbb{T}^1, \\
    \phi(T_1, x)= P_0 \;\; \forall x\in [b, 2\pi- b_1], \\
    \phi(T_1, \cdot) \textrm{ is homotopic to  }  \gamma_0.
\end{gather*}
In this state, the solution concentrates at a single point $P_0$ for most values of $x$.
\vspace{2mm}

\noindent {\it Step 2. Move towards the non-closed complete geodesic $\Gamma$ while keeping the mass concentrated on $P_0$. }  This step relies on the gluing Lemma \ref{lem:glue} concerning the outer equation.  Let $T_2= 2\pi$. Since $(\phi, \phi_t)(T_1, x)= (P_0, 0) \; \forall x\in [b, 2\pi- b_1]$, we can find a control such that the state becomes  $\phi[T_2]\in C^2\times C^1(\mathbb{T}^1)$:
\begin{gather*}
  \phi_t(T_2, x)= 0  \; \; \forall x\in \mathbb{T}^1, \\
  \phi(T_2, x)\in  \Gamma \;\; \forall x\in [b, 2\pi- b], \\
    \phi(T_2, x)= P_0 \;\; \forall x\in [b, 2\pi- b_1], \\
   \phi(T_2, 2\pi- b)= P_1, \\
    \phi(T_2, \cdot) \textrm{ is homotopic to  }  \gamma_0.
\end{gather*}
Again, the solution concentrates at a single point $P_0$ for most values of $x$.
\vspace{2mm}

\noindent {\it Step 3. Transport the mass from $P_0$ to $P_1$ on the non-closed complete geodesic $\Gamma$. }  Let $T_3= 4\pi- b$. Note that the state $\phi[T_2]$ satisfies the conditions \eqref{eq:cond:pro:6.6}--\eqref{eq:cond:pro:6.6:2} in Proposition \ref{lem:trans:com:geo} with $P_0= Q_0$ and $P_1= Q_1$.
Thus by applying Proposition \ref{lem:trans:com:geo} we can construct a control such that the solution becomes $\phi[T_3]\in C^2\times C^1(\mathbb{T}^1)$:
\begin{gather*}
 \phi(T_3, b)= P_0 \; \textrm{ and } \;  \phi(T_3, 2\pi- b)= P_1, \\
 \phi(T_3, x) = P_1 \; \; \forall x\in [b_1, 2\pi- b_1], \\ 
 \phi(T_3, x)\in  \bar\Gamma \;\; \forall x\in [b, 2\pi- b], \\
  \phi_t(T_3, x)= 0 \; \; \forall x\in \mathbb{T}^1,  \\
    \phi(T_3, \cdot) \textrm{ is homotopic to  }  \gamma_0.
\end{gather*}
Moreover, since the two closed geodesics $\gamma_0$ and $\gamma_1$ are homotopic,
\begin{equation*}
       \phi(T_3, \cdot) \textrm{ is homotopic to  } \gamma_1.
\end{equation*}
Thanks to this step, the solution is transported from $P_0$ to $P_1$ for most values of $x$.
\vspace{2mm}

\noindent {\it Step 4. Move towards the closed geodesic $\gamma_1$ while keeping the mass concentrated on $P_1$.  }   This step is similar to {\it Step 2}.  Let $T_4= 4\pi$.
Using again the gluing Lemma \ref{lem:glue} concerning the outer equation.   Since $(\phi, \phi_t)(T_3, x)= (P_1, 0) \; \forall x\in [b_1, 2\pi- b_1]$, and the controlled domain is $\omega= [- b_0, b_0]$, we can find a control such that the state becomes  $\phi[T_4]\in C^2\times C^1(\mathbb{T}^1)$:
\begin{gather*}
  \phi_t(T_4, x)= 0  \; \; \forall x\in \mathbb{T}^1, \\
  \phi(T_4, x)\in  \gamma_1 \;\; \forall x\in \mathbb{T}^1, \\
    \phi(T_4, x)= P_1 \;\; \forall x\in [b_1, 2\pi- b_1], \\
    \phi(T_4, \cdot) \textrm{ is homotopic to  }  \gamma_1.
\end{gather*}
Hence, the state is now contained in the curve described by $\gamma_1$.
\vspace{2mm}

\noindent {\it Step 5. Control on the closed geodesic $\gamma_1$. } 
Let $T_5= 6\pi$. Finally, since $\phi[T_4]\in T\bar\gamma_1$, we can apply Lemma \ref{wave-closed-geodesic} to find control such that the solution becomes $\phi[T_5]= (\gamma_1, 0)$. 

This finishes the proof of Proposition \ref{pro:controlbetween}.

\end{proof}

\section{Global exact controllability of wave maps}\label{Sec:5}

This section is devoted to the proof of Theorem  \ref{thm1}. 
As illustrated in Section \ref{sec:strategy1}, the proofs are based on the return method,  together with the three intermediate results Theorem \ref{thm:dynamics}, Propositions \ref{pro:controlaroundgeodesic}--\ref{pro:controlbetween}. While the main issue is on the construction of the return trajectory.

\subsection{A well-designed return trajectory for any given state}\label{subsec:globalcontrol:1}

This return trajectory is designed to prove  Proposition \ref{pro:controlaroundcurve}.
Before presenting its proof, we first show that it easily yields the global exact controllability. For any given initial and final states $(\phi_0, \phi_{0t})$ and $(\phi_1, \phi_{1t})$, we can find a continuous trajectory $(\phi_s, \phi_{st})$ for $s\in (0, 1)$. By Proposition \ref{pro:controlaroundcurve} the wave maps equation is controllable around $(\phi_s, \phi_{st})$ for any $s\in (0, 1)$. Then we can move the state along a sequence of states on this given trajectory to achieve the final state. Note that this deformation relies on a compactness argument. 
Thus, the required control time depends on $(\phi_0, \phi_{0t})$ and $(\phi_1, \phi_{1t})$ instead of $M$.

To obtain a uniform control time only depending on $M$ in Theorem \ref{thm1}, one needs to improve Proposition \ref{pro:controlaroundcurve} at a lower regularity  to gain some compactness. Namely, one shall improve Theorem \ref{thm:dynamics} and Propositions \ref{pro:controlaroundgeodesic}.  This task is feasible although it demands additional technical work. Another idea is to first obtain global controllability on a compact set (with respect to $\mathcal{H}$-topoogy), and then benefit on the better regularity of closed geodesics and Theorem \ref{thm:dynamics}.

Indeed, for any given initial and final states  $(\phi_0, \phi_{0t}), (\phi_1, \phi_{1t})$ with energy smaller than $M$. Thanks to Theorem \ref{thm:dynamics}  and Proposition \ref{pro:controlaroundgeodesic}, we can find trajectories to connect 
\begin{gather*}
    (\phi_0, \phi_{0t})\longrightarrow (\gamma_0, 0)   \; \textrm{ and } \;
    (\gamma_1, 0)\longrightarrow  (\phi_1, \phi_{1t}),
\end{gather*}
in a uniform time $(0, T)$, where $\gamma_0, \gamma_1$ are two closed geodesics. Note that the $H^2$-norm of closed geodesics with energy smaller than $M$ is uniformly bounded by $M_0$.  There exists a constant $C(M_0)$ such that any two closed curves with $H^2$-norm smaller than $M_0$ can continuously deform from one to another, and during this deformation the curve always has $H^2$-norm smaller than $C(M_0)$.  Due to the compactness embedding, there are finitely many open balls with radius $\delta_3(C(M_0))$  in $H^1$-topology that cover all closed curves with $H^2$-norm smaller than $C(M_0)$. By Proposition \ref{pro:controlaroundcurve}, for every two curves $l_1$ and  $l_2$ inside the same open ball, there exists a control that steers the state from $(l_1, 0)$ to $(l_2, 0)$ during time $64\pi$. Assume the number of balls for the above open covering is $K$, then set $T(M)$ as $64K \pi$. Therefore, we can construct a control in time interval $T(M)$ such that the solution has initial state $(\gamma_0, 0)$ and final state $(\gamma_1, 0)$.  Thus this finishes the proof.

\vspace{3mm}

Now, let us return to the proof of Proposition \ref{pro:controlaroundcurve}, which is divided into two steps.

{\it Step 1. On the construction of a return trajectory from $(\phi_0, \phi_{0t})$ to $(\phi_0, \phi_{0t})$.}

According to Proposition \ref{pro:controlaroundgeodesic}, there exists $\delta_1= \delta_1(M)>0$ such that for any closed geodesic $\gamma$ with energy smaller than $M$, the system is locally controllable around $(\gamma, 0)$ in time period $(0, 64\pi)$. 
For this value $\delta_1$, thanks to  Theorem \ref{thm:dynamics}, there exist some  $T_c= T_c(M, \delta_1)$ such that:
\begin{itemize}[leftmargin=3em]
    \item[\tiny$\bullet$] for the fixed state $(\phi_0, \phi_{0t})$ with energy smaller than $M$, there exist a closed geodesic $\gamma$ and $T_0\leq T_c$ such that the unique solution of the locally damped equation with  initial state $(\phi_0, \phi_{0t})$, which we denote by $\phi$,  satisfies 
\begin{gather*}
   \|\phi[T_0]- (\gamma, 0)\|_{\mathcal{H}}\leq \frac{\delta_1}{2}. 
\end{gather*}
And, due to  Proposition \ref{pro:controlaroundgeodesic},  there exists a control  $f_0\in L^{\infty}(T_0, T_0+ 64\pi; L^2(\mathbb{T}^1))$ to steer the state from $\phi[T_0]$ to $(\gamma, 0)$.
 \item[\tiny$\bullet$] for the fixed state $(\phi_0, -\phi_{0t})$ with energy smaller than $M$, there exist a closed geodesic $\tilde \gamma$ and $T_1\leq T_c$  such that the unique solution of the locally damped equation with  initial state $(\phi_0, -\phi_{0t})$, which we denote by $\tilde \phi$,  satisfies 
\begin{gather*}
   \|\tilde \phi[T_1]- (\tilde \gamma, 0)\|_{\mathcal{H}}\leq \frac{\delta_1}{2}. 
\end{gather*}
And, due to  Proposition \ref{pro:controlaroundgeodesic},  there exists a control  $f_1\in L^{\infty}(T_1, T_1+ 64\pi; L^2(\mathbb{T}^1))$ to steer the state from $\tilde \phi[T_1]$ to $(\gamma, 0)$.
\end{itemize}

Now we construct the explicit control $\bar h$ and denote the solution by $\bar \phi$.  First, during the time interval $(0, T_0)$ we apply the localized damping control to make the solution close to $(\gamma, 0)$; then, during $(T_0, T_0+ 64\pi)$, we apply the explicit control $h_0$ to steer the state to $(\gamma, 0)$. 
\begin{equation*}
    \bar f(t, x):= 
    \begin{cases}
    a(x) \bar\phi_t(t, x)\;\; \forall x\in \mathbb{T}^1 \;\;  \forall t\in (0, T_c), \\
    f_0(t, x)\;\; \forall x\in \mathbb{T}^1,  \forall t\in (T_c, T_c+ 64\pi).
    \end{cases}
\end{equation*}

Next, during the time $(T_0+ 64\pi, 2T_0+ 128\pi)$ we benefit on the time-reversal property of the wave maps and define the state as
\begin{equation*}
    \bar \phi(t, x):= \bar \phi(2T_0+ 128\pi- t, x),
\end{equation*}
and the control as 
\begin{equation*}
    \bar f(t, x):= 
    \begin{cases}
    f(2T_0+ 128\pi- t, x)\;\; \forall x\in \mathbb{T}^1,  \forall t\in (T_0+ 64\pi, T_0+ 128\pi), \\
     -a(x) \bar\phi_t(t, x)\;\; \forall x\in \mathbb{T}^1,  \forall t\in (T_0+ 128\pi, 2T_0+ 128\pi).
    \end{cases}
\end{equation*}
Hence, at time $t= 2T_0+ 128\pi$ the state becomes 
\begin{equation*}
    (\bar\phi, \bar \phi_t)(2T_0+ 128\pi) = (\phi_0, -\phi_{0t}).
\end{equation*}

Finally, we perform the above construction again, to steer the state from $(\phi_0, -\phi_{0t})$ to $(\phi_0, \phi_{0t})$ via the closed geodesic $(\tilde \gamma, 0)$ during the time interval $(2T_0+ 128\pi, 2T_0+ 128\pi+ 2T_1+ 128\pi)$.

Define $\bar T= 2T_0+ 2T_1+ 256\pi$.  Clearly, this trajectory satisfies 
\begin{gather*}
   \bar \phi[0]= \bar \phi[\bar T]= (\varphi_0, \varphi_{0t}) \textrm{ and } \\
   \bar \phi[T_0+ 64\pi]= (\gamma, 0), \;  \bar \phi[2T_0+ 128\pi]= (\varphi_0, -\varphi_{0t}), \\
   \bar \phi[2T_0+ 128\pi+ T_1+ 64\pi]= (\tilde \gamma, 0).
\end{gather*}

{\it Step 2. On the controllability around the return trajectory.}

The analysis concerning a direct consideration of the controllability around $\bar \phi$ is rather involved. Instead, we benefit from the continuous dependence property and the local exact controllability around closed geodesics. 

By Lemma \ref{lem-conti-dep-inh} there exists some $\delta>0$ such that for any initial state $(\tilde \phi_0, \tilde \phi_{0t})$ satisfying 
\begin{equation*}
    \|(\tilde \phi_0, \tilde \phi_{0t})- (\phi_0, \phi_{0t})\|_{\mathcal{H}}\leq \delta,
\end{equation*}
the unique solution $\tilde \phi$ of the damped equation 
\eqref{eq:dwm:ori} with this initial state satisfies 
\begin{gather*}
   \|(w_x, w_t, w)\|_{L^{\infty}_t L^2_x(Q)}+  \|w_u\|_{L^{2}_u L^{\infty}_v\cap L^{\infty}_v L^{2}_u(Q)}+  \|w_v\|_{L^{2}_v L^{\infty}_u\cap L^{\infty}_u L^{2}_v(Q)} \\
   \leq C \lVert w[0]\lVert_{\mathcal{H}}\leq \frac{\delta_1}{100} ,
\end{gather*}
where $w:= \phi- \bar\phi, Q= [0, T_0]\times \mathbb{T}^1$. In particular, since the constructed trajectory  $\bar \phi$ is close to $(\gamma, 0)$, we have 
\begin{equation*}
      \|\tilde \phi [T_0]- (\gamma, 0)\|_{\mathcal{H}}\leq \delta_1. 
\end{equation*}
According to the definition of $\delta_1$, using Proposition \ref{pro:controlaroundgeodesic} we find a control in period $(T_0, T_0+ 64\pi)$ such that 
\begin{equation*}
    \tilde \phi[T_0+ 64\pi]= (\gamma, 0).
\end{equation*}

Similarly, any state $(\tilde \phi_1, \tilde \phi_{1t})$ satisfying 
\begin{equation*}
    \|(\tilde \phi_1, \tilde \phi_{1t})- (\phi_0, \phi_{0t})\|_{\mathcal{H}}\leq \delta,
\end{equation*} 
we can construct a controlled trajectory in period $(0, T_0+ 64\pi)$ with the initial state $(\tilde \phi_1, \tilde \phi_{1t})$ and final state $(\gamma,0)$. With the time-reversal property, we can construct a controlled trajectory satisfying 
\begin{equation*}
   \tilde  \phi[0]= (\tilde \phi_0, \tilde \phi_{0t}),  \; \;
  \tilde \phi[T_0+ 64\pi]= (\gamma, 0),
   \; \; \tilde \phi[2T_0+ 128 \pi]= (\tilde\phi_1, -\tilde \phi_{1t}).
\end{equation*}
By repeating the above construction on time interval $(2T_0+ 128 \pi, 2T_0+ 128 \pi+ 2T_1+ 128 \pi)$ we can let 
\begin{equation*} 
  \tilde \phi[2T_0+ 128 \pi+ T_1+ 64\pi]= (\tilde \gamma, 0),\;
    \; \tilde \phi[2T_0+ 128 \pi+ 2T_1+ 128\pi]= (\tilde\phi_1, \tilde \phi_{1t}).
\end{equation*}
This finishes the proof of Proposition \ref{pro:controlaroundcurve}.

\subsection{A special trajectory connecting any two given states}\label{subsec:globalcontrol:2}

Again, we adapt the idea of return method to construct a well-designed trajectory such that the system is controllable around it. The proof is similar to the first case, except that now we shall also benefit from the global controllability  result established in  Stage 3 to find a more efficient control and to obtain uniform control time. 
\vspace{2mm}

{\it Step 1. On the construction of a  trajectory from $(\phi_0, \phi_{0t})$ to $(\phi_1, \phi_{1t})$.}

Let $M>0$. Recall the definition of $\delta_1= \delta_1(M)$ and $T_c= T_c(M, \delta_1)$ in {\it Step 1} of Section \ref{subsec:globalcontrol:1}. Thus, for given states $(\phi_0, \phi_{0t})$ and $(\phi_1, \phi_{1t})$ with energy smaller than $M$, 
\begin{itemize}[leftmargin=3em]
    \item[\tiny$\bullet$] there exist a closed geodesic $\gamma$ and $T_0\leq T_c$ such that the unique solution of the locally damped equation with  initial state $(\phi_0, \phi_{0t})$, which we denote by $\phi$,  satisfies 
\begin{gather*}
   \|\phi[T_0]- (\gamma, 0)\|_{\mathcal{H}}\leq \frac{\delta_1}{2}. 
\end{gather*}
And, due to  Proposition \ref{pro:controlaroundgeodesic},  there exists a control  $f_0\in L^{\infty}(T_0, T_0+ 64\pi; L^2(\mathbb{T}^1))$ to steer the state from $\phi[T_0]$ to $(\gamma, 0)$.
 \item[\tiny$\bullet$] There exist a closed geodesic $\tilde \gamma$ and $T_1\leq T_c$  such that the unique solution of the locally damped equation with  initial state $(\phi_1, -\phi_{1t})$, which we denote by $\tilde \phi$,  satisfies 
\begin{gather*}
   \|\tilde \phi[T_1]- (\tilde \gamma, 0)\|_{\mathcal{H}}\leq \frac{\delta_1}{2}. 
\end{gather*}
And, due to  Proposition \ref{pro:controlaroundgeodesic},  there exists a control  $f_1\in L^{\infty}(T_1, T_1+ 64\pi; L^2(\mathbb{T}^1))$ to steer the state from $\tilde \phi[T_1]$ to $(\gamma, 0)$.

\item[\tiny$\bullet$] Moreover, thanks to Proposition \ref{pro:controlbetween}, there exists a control $ f_2\in L^{\infty}(T_0+ 64\pi,  T_0+ 70\pi; L^2(\mathbb{T}^1))$ to steer the state from $(\gamma, 0)$ to $(\tilde \gamma, 0)$. 
\end{itemize}
\vspace{2mm}

After the above preparation,  we are in a position to construct the following explicit control,
\begin{equation*}
    \bar f(t, x):= 
    \begin{cases}
     a(x) \bar\phi_t(t, x) \textrm{ for }  t\in (0, T_0), \\
     f_0(t, x) \textrm{ for }  t\in (T_0, T_0+ 64\pi),\\
     f_2(t, x) \textrm{ for }  t\in (T_0+ 64\pi,  T_0+ 70\pi), \\
       f_1(T_0+ 134\pi+ T_1- t, x)  \textrm{ for }  t\in (T_0+ 70\pi, T_0+ 134\pi), \\
      -a(x) \bar\phi_t(t, x) \textrm{ for }   t\in (T_0+ 134\pi, T_0+ 134\pi+  T_1), 
    \end{cases}
\end{equation*}
such that the unique solution $\bar \phi$ with the initial state $\bar \phi[0]= (\phi_0, \phi_{0t})$ satisfies the following
\begin{gather*}
 \bar \phi[0]= (\phi_0, \phi_{0t}), \\ \bar \phi[T_0]= \phi[T_0] \textrm{ which is close to } (\gamma, 0),  \\
 \bar \phi[T_0+ 64\pi]= (\gamma, 0), \\
  \bar \phi[T_0+ 70\pi]= (\tilde \gamma, 0), \\
   \bar \phi[T_0+ 134\pi]= (\tilde \phi, -\tilde \phi_t)(T_1) \textrm{ which is close to } (\tilde \gamma, 0), \\
    \bar \phi[T_0+ 134\pi+  T_1]= ( \phi_1,  \phi_{1t}). 
\end{gather*}

{\it Step 2. On the controllability around the return trajectory.}

Similar to {\it Step 2} of Section \ref{subsec:globalcontrol:1}, this step is a direct consequence of  the continuous dependence property and the local exact controllability around closed geodesics. Thus we omit it.

\section{Exponential stability around closed geodesic with negative sectional curvature}\label{Sec:8}

In this section we present the proof of Theorem \ref{thm:stability}, as illustrated in Section \ref{sec:strategy2},  following a five-step strategy :
\begin{itemize}
    \item[(1)] {\it Decompose the state $\phi$ as $(\varphi, \alpha)$ around the geodesic};  See Lemma \ref{lem:def:full:decom} in Section \ref{subsec:decompositiongeode}.
\vspace{1mm}
   
 \item[(2)] {\it Express the full system on $(\varphi, \alpha)$};  See Proposition \ref{lem:full:system:varalpha} in Section \ref{subsec:full system varphialpha}.
\vspace{1mm} 

  \item[(3)] {\it A coercive estimate around geodesic with negative curvature};  See Proposition \ref{prop:coercivityofL} in Section \ref{sec:coecive}
  \vspace{1mm}

     \item[(4)] {\it  Stability of the linearized equation on $\Psi$};  See Proposition \ref{prop:propagationofsmallness} in Section \ref{subsec:linearstabilityvarphi}.
\vspace{1mm}

    \item[(5)] {\it Exponential stability of the full system}; see Section \ref{subsec:expostabproof}.

\end{itemize}

\subsection{Decomposition around geodesics: a shifted projection }\label{subsec:decompositiongeode}
To investigate states $\phi(t, x)$ sufficiently close to a given geodesic $\gamma$, we shall perform a decomposition of $\phi$. It is natural to 
write
\begin{gather*}
    \phi(t, x)= \gamma(x)+ \varphi(t, x)+ \varphi_1(t, x) \textrm{ with } \\
    \varphi(t, x)\in T_{\gamma(x)}\mathcal{N}, \; \varphi_1(t, x)\in N_{\gamma(x)}\mathcal{N}.
\end{gather*}
However, this decomposition is not adequate, as we shall require  the tangent component $\varphi$ to satisfy the following {\it orthogonality/rigidity condition}, 
\begin{equation}
    \langle \varphi(t, \cdot), \gamma_x (\cdot)\rangle_{L^2(\mathbb{T}^1)}= 0
\end{equation}
As we will see later on in Section \ref{sec:coecive}, this condition is necessary to obtain coercive estimates for $\langle \mathcal{L}_{\gamma} \varphi, \varphi\rangle_{L^2(\mathbb{T}^1)}$ which we introduced in Definition \ref{def:jacobian}.
\vspace{3mm}

To enforce the preceding rigidity condition, we note that using an implicit function theorem,  for any state $\phi(t, \cdot)$ which is close to $\gamma(\cdot)$ in the $H^1$-topology, one can find a unique $\alpha(t)\in \mathbb{T}^1$, such that 
\begin{equation}\label{eq:orthocondition}
    \big\langle \phi(t, \cdot)- \gamma(\cdot+ \alpha(t)), \gamma_x(\cdot) \big\rangle_{L^2(\mathbb{T}^1)}= 0,
\end{equation}
with
$$|\alpha(t)|\lesssim \|\phi(t)- \gamma\|_{L^1(\mathbb{T}^1)}.$$ Indeed, there is a unique $\alpha(t)$ with $|\alpha(t)|\ll1$ and such that
\begin{equation}\label{es:12321}
    \big\langle \phi(t, \cdot)- \gamma(\cdot), \dot\gamma(\cdot) \big\rangle_{L^2(\mathbb{T}^1)}= \big\langle -\gamma(\cdot)+ \gamma(\cdot+ \alpha(t)), \gamma_x(\cdot) \big\rangle_{L^2(\mathbb{T}^1)}.
\end{equation}
By differentiating the preceding equation with respect to time, we also know that 
\begin{equation*}
   |\dot \alpha|\lesssim \|\phi_t\|_{L^1(\mathbb{T}^1)}. 
\end{equation*}

A variant of the preceding decomposition of $\phi(t, x)$ then is to write
\begin{gather*}
    \phi(t, x)= \gamma(x+ \alpha(t))+ \varphi(t, x)+ \varphi_1(t, x) \textrm{ with } \\
    \varphi(t, x)\in T_{\gamma(x+ \alpha(t))}\mathcal{N},  \; \varphi_1(t, x)\in N_{\gamma(x+ \alpha(t))} \mathcal{N}.
\end{gather*}
This time the evolution of both $\varphi$ and $\varphi_1$ will become harder to express, and in fact, ensuring that $\varphi(t, x)$ stays in $T_{\gamma(x)}\mathcal{N}$ is important. Therefore, we propose yet another decomposition that combines the advantages of the preceding two decompositions, enunciated in Lemma \ref{lem:def:full:decom}. It is motivated by the following simple observation.
\begin{lemma}\label{lem:genprojection}
    Let $\mathcal{N}\subset \mathbb{R}^N$ be a compact Riemannian submanifold. There are constants $c, C>0$ such that 
    for any $p, q, r\in \mathcal{N}$ satisfying 
    \begin{equation*}
        |q- p|+ |r- p|\leq c,
    \end{equation*}
    there is a unique decomposition  
\begin{equation}\label{decom:first}
        r= q+ \psi+ \psi_1, \; \psi\in T_{p}\mathcal{N}, \psi_1\in N_p \mathcal{N}.
    \end{equation}
     The functions $(\psi, \psi_1)$ satisfy
    \begin{gather}
   C^{-1} |q-r|\leq  |\psi|\leq C |q-r|, \label{ine:es1psi} \\
        |\psi_1|\leq C(|\psi|^2+ |q- p||\psi|). \label{ine:es2psi1}
    \end{gather}

    Alternatively, there is a function $F$ depending smoothly on $p, q\in \mathcal{N}$ and $\psi\in T_p\mathcal{N}$ with values in $N_p\mathcal{N}$: 
    \begin{equation}\label{eq:F:ine}
   F(p, q; \psi)\in N_p\mathcal{N}, \;\;   |F(p, q; \psi)|\leq C(|\psi|^2+ |q- p||\psi|),  
    \end{equation}
    such that for any $p, q, r\in \mathcal{N}$ 
     there is a unique decomposition 
    \begin{equation}
        r= q+ \psi+ F(p, q; \psi), \; \psi\in T_{p}\mathcal{N},  \;\psi_1:= F(p, q; \psi)\in N_p \mathcal{N},
    \end{equation}
    and that for any $p, q$ closed enough and $\psi\in T_p\mathcal{N}$ small enough, 
    \begin{equation*}
        q+ \psi+ F(p, q; \psi)\in \mathcal{N}.
    \end{equation*}
    Furthermore, the function $F$ can be extended to a smooth function on $p, q\in \mathbb{R}^N$ and $\psi\in \mathbb{R}^N$ satisfying
     \begin{gather}
    |F|(p, q; \psi)\leq C(|\psi|^2+ |q- p||\psi|),\label{eq:es:F1} \\
   |\nabla_{p, q, \psi}F|(p, q; \psi)\leq C (|\psi|+ |q- p|),\; \; 
   |\nabla_{p, q, \psi}^2 F|(p, q; \psi)\leq C.   \label{eq:es:F2}
    \end{gather}
\end{lemma}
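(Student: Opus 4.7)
The plan is to reduce everything to the standard graph representation of $\mathcal{N}$ over its tangent spaces. For each $p\in\mathcal{N}$ there is a smooth function $h_p:U\subset T_p\mathcal{N}\to N_p\mathcal{N}$ with $h_p(0)=0$, $dh_p(0)=0$, and $|h_p(v)|\lesssim|v|^2$, $|dh_p(v)|\lesssim|v|$, such that near $p$ the manifold $\mathcal{N}$ is the graph $\{p+v+h_p(v):v\in U\}$. Compactness of $\mathcal{N}$ gives uniform constants and smooth joint dependence on $p$ of both $h_p$ and the orthogonal splitting $\mathbb{R}^N=T_p\mathcal{N}\oplus N_p\mathcal{N}$, the latter being the tubular neighborhood theorem already invoked in Lemma \ref{lem:secfunext}.

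Given this, I would write $q-p=v_q+h_p(v_q)$ and $r-p=v_r+h_p(v_r)$ with $v_q,v_r\in T_p\mathcal{N}$, subtract to obtain $r-q=(v_r-v_q)+(h_p(v_r)-h_p(v_q))$, and read off $\psi:=v_r-v_q\in T_p\mathcal{N}$ and $\psi_1:=h_p(v_r)-h_p(v_q)\in N_p\mathcal{N}$. Uniqueness is immediate from the direct sum. The bound $|\psi|\leq|r-q|$ is tautological by orthogonality; the Lipschitz estimate $|h_p(v_r)-h_p(v_q)|\lesssim (|v_q|+|v_r|)|v_r-v_q|$, combined with $|v_r|\leq|\psi|+|v_q|$, gives \eqref{ine:es2psi1}, and feeding this into $|r-q|^2=|\psi|^2+|\psi_1|^2$ under the smallness hypothesis yields the lower bound in \eqref{ine:es1psi}.

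For the alternative formulation I would define $F(p,q;\psi):=h_p(v_q+\psi)-h_p(v_q)$, where $v_q$ is the $T_p\mathcal{N}$-component of $q-p$. By construction, $q+\psi+F(p,q;\psi)=p+(v_q+\psi)+h_p(v_q+\psi)$ lies in $\mathcal{N}$ whenever $v_q+\psi\in U$, and the Lipschitz bound just used reproduces \eqref{eq:F:ine}. The derivative estimates \eqref{eq:es:F2} follow by direct differentiation, using the uniform $C^2$ bounds on $h_p$ jointly in $p$ and $v$. To produce the global smooth extension to $\mathbb{R}^N\times\mathbb{R}^N\times\mathbb{R}^N$, I would multiply the locally-defined $F$ by a smooth cutoff $\chi(p,q,\psi)$ supported in the region where $h_p$ is defined and on which the bounds already hold; because $F$ vanishes to the required quadratic order, the cutoff preserves \eqref{eq:es:F1}--\eqref{eq:es:F2}.

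The only non-routine point — and hence the main obstacle — is ensuring that the family $p\mapsto h_p$, together with the orthogonal projections onto $T_p\mathcal{N}$ and $N_p\mathcal{N}$, depends $C^2$-smoothly on $p\in\mathcal{N}$ with constants uniform in $p$. This is standard on a compact manifold (one parameterizes $h_p$ via the exponential map of the ambient normal bundle, or equivalently via a uniform implicit function theorem applied to the defining equations of $\mathcal{N}$), but it must be recorded explicitly, since the uniform $C^2$ bounds on $F$ in \eqref{eq:es:F2} — which will be used later in Section \ref{Sec:8} — depend crucially on this regularity of the splitting in $p$.
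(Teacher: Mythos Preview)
Your proposal is correct and follows essentially the same route as the paper. The paper's proof defines $\psi$ as the orthogonal projection of $r-q$ onto $T_p\mathcal{N}$ and then invokes the implicit function theorem to obtain the inverse map $r=G(p,q;\psi)$, setting $F:=G-q-\psi$; your graph representation $h_p$ is precisely a concrete realization of that implicit function, and your formula $F(p,q;\psi)=h_p(v_q+\psi)-h_p(v_q)$ coincides with the paper's $F$. Your version is in fact more explicit than the paper's (which declares the estimates \eqref{ine:es1psi}--\eqref{ine:es2psi1} ``can be checked directly'' without further detail), and your identification of the uniform-in-$p$ smoothness of $h_p$ as the one point requiring care is apt.
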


\begin{proof}[Proof of Lemma \ref{lem:genprojection}]
    The proof of the first part is straightforward,  it suffices to select $\psi$ as the projection of $r- q$ on $T_p\mathcal{N}$ and further define $\psi_1 \in N_p \mathcal{N}$. This decomposition is obviously unique.  Inequalities \eqref{ine:es1psi}--\eqref{ine:es2psi1} can be checked directly.   
    
    Next, we turn to the second part of this lemma.  Provided the condition that $q, r$ are sufficiently close to $p$, we notice that for fixed $p$ and $q$ there is a bijection between $r$ and $\psi\in T_p \mathcal{N}$. Indeed, for any $r\in \mathcal{N}$ closed enough to $p$ we define $\psi$ as the projection of $r- q$ on $T_p\mathcal{N}$, this is a locally  smooth diffeomorphism. Conversely,  using the implicit function theorem,  any $\psi\in T_p \mathcal{N}$  uniquely determines a point $r\in \mathcal{N}$: $r= G(p, q; \psi)$. Moreover, by the choice of $r$, $\psi$ is the projection of $r- q$ on $T_p\mathcal{N}$. We can therefore construct the function $F(p, q; \psi):= G(p, q; \psi)- q- \psi$. Thus 
    \begin{equation*}
        G(p, q; \psi)= q+ \psi+ F(p, q; \psi), \psi\in T_{p}\mathcal{N},  \; F(p, q; \psi)\in N_p \mathcal{N}.
    \end{equation*}
    Thanks to the uniqueness of the decomposition in forms of \eqref{decom:first}, $F(p, q; \psi)$ is exactly the value of $\psi_1$ in \eqref{decom:first} with $r= G(p, q; \psi)$, $p$ and $q$. Thus, the estimates \eqref{ine:es1psi}--\eqref{ine:es2psi1} yield \eqref{eq:F:ine}. Finally, we can extend the function $F$ to $p, q, \psi\in \mathbb{R}^N$, and this extended function satisfies the estimates \eqref{eq:es:F1}--\eqref{eq:es:F2}.
\end{proof}

\begin{defi}\label{def:F2}
Fix a closed geodesic $\gamma: \mathbb{T}^1\longrightarrow \mathcal{N}$. Let $\delta> 0$ be a sufficiently small constant.  We define a function $\mathcal{F}$ in terms of $F$ constructed in Lemma \ref{lem:genprojection}:
\begin{equation*}
    \mathcal{F}: (x;  \varphi, \alpha)\in \mathbb{T}^1\times \mathbb{R}^N\times (- \delta, \delta)\mapsto \mathcal{F}(x; \varphi, \alpha):= F(\gamma(x), \gamma(x+ \alpha); \varphi), 
\end{equation*} 
in particular, when restricting $\mathcal{F}$ on $T\mathcal{N}\times (- \delta, \delta)$ we obtain
\begin{equation*}
    \mathcal{F}: (x;  \varphi, \alpha)\in \mathbb{T}^1\times T_{\gamma(x)}\mathcal{N}\times (- \delta, \delta)\mapsto \mathcal{F}(x; \varphi, \alpha)\in N_{\gamma(x)}\mathcal{N}. 
\end{equation*} 
\end{defi}
Clearly, $\mathcal{F}$ is smooth with respect to $(x, \varphi, \alpha)\in \mathbb{T}^1\times \mathbb{R}^N\times (- \delta, \delta)$, and it satisfies 
\begin{equation}\label{esonF2}
\begin{cases}
        |\mathcal{F}|+ |\partial_x \mathcal{F}|+ |\partial_{xx} \mathcal{F}| \lesssim |\varphi|^2+ |\varphi||\alpha|,  \\
        |\partial_{\varphi^l} \mathcal{F}|+  |\partial_{\alpha} \mathcal{F}|+  |\partial_{x, \varphi^l}^2 \mathcal{F}|+  |\partial_{x, \alpha}^2 \mathcal{F}|\lesssim |\varphi|+ |\alpha|, \\
        |\nabla_{\varphi, \alpha}^2 \mathcal{F}|\lesssim 1, 
        \end{cases}
\end{equation}
provided that $|\varphi|+ |\alpha|\ll 1$.
In fact, the parameter $\alpha$ will be chosen such that the orthogonality condition \eqref{eq:orthocondition} holds. Combining \eqref{eq:orthocondition} forcing the unique choice of $\alpha$, Lemma \ref{lem:genprojection} on the projection, and Definition \ref{def:F2} concerning $\mathcal{F}$,  one immediately obtains the following result:
\begin{lemma}\label{lem:def:full:decom}
    Let $\gamma: \mathbb{T}^1\longrightarrow \mathcal{N}$ be a closed geodesic. Let $\mathcal{F}$ given in Definition \ref{def:F2}. There exist  effectively computable $c, C>0$ such that for every closed curve $\phi: \mathbb{T}^1\longrightarrow \mathcal{N}$ satisfying 
    \begin{equation}\label{es:unif:curvebound}
        \|\phi- \gamma\|_{H^1(\mathbb{T}^1)}\leq c,
    \end{equation}
    there is a unique decomposition 
    \begin{equation}\label{eq:def:full:decom}
        \phi(x)= \gamma(x+ \alpha)+ \varphi(x)+ \varphi_1(x) \;\; \forall  x\in \mathbb{T}^1,
    \end{equation}
    such that the following condition holds:
    \begin{equation*}
    \begin{split}
        (P1)& \;\;\; |\alpha|\leq Cc, \; \; \; \varphi(x)\in T_{\gamma(x)}\mathcal{N}\; \textrm{ and }  \; \varphi_1(x)\in N_{\gamma(x)}\mathcal{N} \; \; \forall x\in \mathbb{T}^1,\;\;\;\;\;\;\;\;\;\;\;\;\;\;\;\;\;\;\;\; \;\;\;\;\;\;\;\;\;\;\;\;\;\;\;\;\; \\
        (P2)& \;\;\;  \varphi_1(x)=   \mathcal{F}(x; \varphi, \alpha), 
\;\;\;\;\;\;\;\;\;\;\;\;\;\;\;\;\;\;\;\;\;\;\;\;\;\;\;\\
        (P3)&  \;\;\; \big\langle \varphi(\cdot), \gamma_x(\cdot) \big\rangle_{L^2(\mathbb{T}^1)}= 0.  
    \end{split}
    \end{equation*}
Moreover, one has the following estimates:
\begin{gather}\label{es:alpvarbound1}
      |\alpha|+ \|\varphi\|_{L^2(\mathbb{T}^1)}\leq C \|\phi- \gamma\|_{L^2(\mathbb{T}^1)}, \\ \|\varphi\|_{L^{\infty}(\mathbb{T}^1)}\leq  C \|\phi- \gamma\|_{L^{\infty}(\mathbb{T}^1)}, \; \|\varphi\|_{H^1(\mathbb{T}^1)}\leq C \|\phi- \gamma\|_{H^1(\mathbb{T}^1)},  \label{es:alpvarbound2}   \\
       \|\phi- \gamma\|_{H^1(\mathbb{T}^1)} \leq C \left(\|\varphi\|_{H^1(\mathbb{T}^1)}+ |\alpha|\right).  \label{es:alpvarbound3}  
\end{gather}

    Conversely, there exists effectively computable $\delta>0$ such that, for every pair $(\alpha, \varphi)\in \mathbb{R}\times  H^1(\mathbb{T}^1; \gamma^*(T\mathcal{N}))$ satisfying the condition ($P3$) and 
    \begin{equation*}
         |\alpha|+ \|\varphi\|_{H^1(\mathbb{T}^1)}\leq \delta,
    \end{equation*}
    the function $\phi$ given by \eqref{eq:def:full:decom} with $ \varphi_1(x)=   \mathcal{F}(x; \varphi, \alpha)$ satisfies ($P1$)--($P3$)  as well as  estimates  \eqref{es:unif:curvebound} and \eqref{es:alpvarbound1}--\eqref{es:alpvarbound3}.
\end{lemma}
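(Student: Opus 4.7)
The plan is to produce $\alpha$ first, via the scalar orthogonality equation, and then apply the pointwise projection machinery of Lemma \ref{lem:genprojection} with $p=\gamma(x)$ and $q=\gamma(x+\alpha)$ on the fiber level. For the existence of $\alpha$, I would use the implicit function theorem applied to the smooth functional
\begin{equation*}
G(\alpha;\phi):=\big\langle \phi(\cdot)-\gamma(\cdot+\alpha),\gamma_x(\cdot)\big\rangle_{L^2(\mathbb{T}^1)}.
\end{equation*}
Since $\partial_\alpha G(0;\gamma)=-\|\gamma_x\|_{L^2}^2\neq 0$ and $G$ is continuous in $\phi$, for $\|\phi-\gamma\|_{L^2}\leq c$ small there exists a unique small $\alpha=\alpha(\phi)$ with $G(\alpha;\phi)=0$, and differentiation of the defining relation (or a direct expansion, as in equation~\eqref{es:12321}) yields the quantitative bound $|\alpha|\lesssim \|\phi-\gamma\|_{L^2}$.

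Once $\alpha$ is fixed, I would apply Lemma \ref{lem:genprojection} pointwise in $x$ with $p=\gamma(x)$, $q=\gamma(x+\alpha)$, $r=\phi(x)$: the hypotheses are met because $|q-p|\lesssim|\alpha|$ and $|r-p|\lesssim\|\phi-\gamma\|_{L^\infty}$, both controlled by $c$ via Sobolev embedding $H^1(\mathbb{T}^1)\hookrightarrow L^\infty$. This produces a unique pair $(\varphi(x),\varphi_1(x))\in T_{\gamma(x)}\mathcal{N}\oplus N_{\gamma(x)}\mathcal{N}$ with $\varphi_1(x)=F(\gamma(x),\gamma(x+\alpha);\varphi(x))=\mathcal{F}(x;\varphi,\alpha)$, giving (P1) and (P2). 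Property (P3) is then automatic: since $\varphi_1(x)\in N_{\gamma(x)}\mathcal{N}$ and $\gamma_x(x)\in T_{\gamma(x)}\mathcal{N}$ we have $\langle\varphi_1,\gamma_x\rangle=0$ pointwise, so $\langle\varphi,\gamma_x\rangle_{L^2}=\langle\phi-\gamma(\cdot+\alpha),\gamma_x\rangle_{L^2}=G(\alpha;\phi)=0$.

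The quantitative bounds \eqref{es:alpvarbound1}--\eqref{es:alpvarbound3} then follow by combining the $\alpha$-estimate with \eqref{ine:es1psi}--\eqref{ine:es2psi1}: the two-sided bound $|\varphi(x)|\sim|\phi(x)-\gamma(x+\alpha)|$ gives the $L^2$ and $L^\infty$ controls on $\varphi$ after using $|\gamma(x+\alpha)-\gamma(x)|\lesssim|\alpha|$, and the $H^1$-estimate is obtained by differentiating the relation $\phi=\gamma(\cdot+\alpha)+\varphi+\mathcal{F}(\cdot;\varphi,\alpha)$ and exploiting \eqref{esonF2} to absorb the $\partial_x\mathcal{F}$ contribution provided $|\alpha|+\|\varphi\|_{H^1}$ is small. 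Uniqueness is immediate from the uniqueness of $\alpha$ (implicit function theorem) and the pointwise uniqueness in Lemma \ref{lem:genprojection}.

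For the converse, given $(\alpha,\varphi)$ with $|\alpha|+\|\varphi\|_{H^1}\leq\delta$ and satisfying (P3), I would define
\begin{equation*}
\phi(x):=\gamma(x+\alpha)+\varphi(x)+\mathcal{F}(x;\varphi,\alpha).
\end{equation*}
The fact that $\phi(x)\in\mathcal{N}$ is precisely the content of the last assertion in Lemma \ref{lem:genprojection}: the image of the map $\psi\mapsto q+\psi+F(p,q;\psi)$ lies in $\mathcal{N}$ for $p,q$ close and $\psi$ small. Conditions (P1)--(P3) hold by construction, and the bound \eqref{es:alpvarbound3} combined with \eqref{esonF2} yields \eqref{es:unif:curvebound} upon choosing $\delta$ small. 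The main (minor) obstacle I anticipate is carefully justifying uniformity of constants in the implicit function theorem as $\gamma$ varies, and verifying smoothness/invariance of $\mathcal{F}$ along the fibers in the $H^1$-setting; these are resolved by the explicit estimates \eqref{eq:es:F1}--\eqref{eq:es:F2} and the compactness of $\mathbb{T}^1$.
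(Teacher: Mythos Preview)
Your proposal is correct and follows essentially the same approach as the paper: first fix $\alpha$ via the implicit function theorem applied to the orthogonality functional $G(\alpha;\phi)$, then invoke Lemma~\ref{lem:genprojection} pointwise with $(p,q,r)=(\gamma(x),\gamma(x+\alpha),\phi(x))$ to obtain $(\varphi,\varphi_1)$, and derive the estimates by differentiating the decomposition and using \eqref{esonF2}. Your verification of (P3) via $\langle\varphi_1,\gamma_x\rangle=0$ pointwise is in fact more explicit than the paper's, which simply declares existence and uniqueness a direct consequence of the preceding results.
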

We emphasize that the rigidity condition ($P3$) makes the above decomposition unique. Otherwise, for every $\alpha$ sufficiently small, one can always find a decomposition such that conditions ($P1$)--($P2$) hold.
\begin{remark}\label{rem:def:full:decom}
Due to this unique decomposition, investigating the flow $\phi(t, \cdot)$ is equivalent to studying the evolution of the pair $$( \varphi(t, \cdot), \alpha(t))\in H^1(\mathbb{T}^1; \gamma^*(T\mathcal{N}))\times \mathbb{R} \textrm{ satisfying the condition } (P3).$$
More precisely, characterizing the state $\phi[t]= (\phi, \phi_t)$ at any given time $t$ is equivalent to characterizing  $(\varphi, \varphi_t, \alpha, \dot \alpha)(t)$ satisfying 
\begin{gather*}
    \varphi(t, \cdot)\in H^1(\mathbb{T}^1; \gamma^*(T\mathcal{N})) \textrm{ with }  \big\langle \varphi(\cdot), \gamma_x(\cdot) \big\rangle_{L^2(\mathbb{T}^1)}= 0, \\
 \varphi_t(t, \cdot)\in L^2(\mathbb{T}^1; \gamma^*(T\mathcal{N})) \textrm{ with }  \big\langle \varphi_t(\cdot), \gamma_x(\cdot) \big\rangle_{L^2(\mathbb{T}^1)}= 0.
\end{gather*}

Moreover,  there exist $\delta_{tr}$ and $C_{tr}$ such that for any state $\phi[t]$ satisfying
\begin{equation*}
    \|\phi[t]- (\gamma, 0)\|_{\mathcal{H}}\leq \delta_{tr}
\end{equation*}
the decomposed component $(\varphi, \varphi_t, \alpha, \dot \alpha)(t)$ satisfies estimates \eqref{es:alpvarbound1}--\eqref{es:alpvarbound3} as well as
\begin{gather}
C_{tr}^{-1} \|\phi_t(t)\|_{L^2(\mathbb{T}^1)}\leq   |\dot \alpha(t)|+ \|\varphi_t(t)\|_{L^2(\mathbb{T}^1)} \leq C_{tr} \|\phi_t(t)\|_{L^2(\mathbb{T}^1)}, \label{eq:equiv:phivarphi:0} \\
 C_{tr}^{-1} \|\phi[t]- (\gamma, 0)\|_{\mathcal{H}}\leq  |\alpha(t)|+ |\dot \alpha(t)|+ \|\varphi[t]\|_{\mathcal{H}} \leq C_{tr} \|\phi[t]- (\gamma, 0)\|_{\mathcal{H}}.\label{eq:equiv:phivarphi}
\end{gather}
Conversely, for any $(\varphi, \varphi_t, \alpha, \dot \alpha)(t)\in H^1\times L^2(\mathbb{T}^1; \gamma^*(T\mathcal{N}))\times \mathbb{R}^2$ satisfying 
\begin{gather*}
\big\langle \varphi(\cdot), \gamma_x(\cdot) \big\rangle_{L^2(\mathbb{T}^1)}= \big\langle \varphi_t(\cdot), \gamma_x(\cdot) \big\rangle_{L^2(\mathbb{T}^1)}= 0, \\
    |\alpha(t)|+ |\dot \alpha(t)|+ \|\varphi[t]\|_{\mathcal{H}}\leq \delta_{tr},
\end{gather*}
the corresponding state  $\phi[t]$ satisfies the inequalities \eqref{es:alpvarbound1}--\eqref{eq:equiv:phivarphi}. 
\end{remark}

\begin{proof}[Proof of Lemma \ref{lem:def:full:decom}]
    The existence and uniqueness of such a decomposition is a direct consequence of the aforementioned results. It suffices to prove the estimates \eqref{es:alpvarbound1}--\eqref{es:alpvarbound3}. According to \eqref{eq:orthocondition}--\eqref{es:12321} one has 
\begin{equation*}
    |\alpha|\lesssim  \|\phi- \gamma\|_{L^2(\mathbb{T}^1)}. 
\end{equation*}
This, together with the bound \eqref{es:unif:curvebound} and the decomposition \eqref{eq:def:full:decom}, yields 
\begin{equation}
    |\gamma(x+ \alpha)- \gamma(x)+ \varphi(x)+ \varphi_1(x)|= |\phi(x)- \gamma(x)|\leq \|\phi- \gamma\|_{L^{\infty}(\mathbb{T}^1)}   \notag
\end{equation}
and 
\begin{align*}
    |\gamma(x+ \alpha)- \gamma(x)+ \varphi(x)+ \varphi_1(x)|&\geq |\varphi(x)|- |\varphi_1(x)|- |\gamma(x+ \alpha)- \gamma(x)|\\
    &\geq |\varphi(x)|- C(|\alpha|+ |\varphi(x)|^2+  |\alpha||\varphi(x)|).
\end{align*}
Thus
\begin{equation*}
    |\varphi(x)|\lesssim  |(\phi- \gamma)(x)|+ |\alpha|,
\end{equation*}
hence
\begin{equation*}
  \|\varphi\|_{L^2(\mathbb{T}^1)}\lesssim  \|\phi- \gamma\|_{L^2(\mathbb{T}^1)} \textrm{ and }    \|\varphi\|_{L^{\infty}(\mathbb{T}^1)}\leq C \|\phi- \gamma\|_{L^{\infty}(\mathbb{T}^1)}.
\end{equation*}

Again, due to the relation  \eqref{eq:def:full:decom}, we have 
\begin{equation*}
    \varphi(x)+ \mathcal{F}(x; \alpha, \varphi(x))= \phi(x)- \gamma(x+ \alpha).
\end{equation*}
Differentiating the preceding equation yields
\begin{equation*}
    \varphi_x(x)+ \partial_x \mathcal{F}(x; \alpha, \varphi)+ \nabla_{\varphi}\mathcal{F} (x; \alpha, \varphi(x))  \cdot \varphi_x(x)= \phi_x(x)- \gamma_x(x+ \alpha),
\end{equation*}
then 
\begin{equation*}
    |\varphi_x(x)|\leq |\varphi(x)|+ |\alpha|+ |\phi_x- \gamma_x|(x).
\end{equation*}
The inequality \eqref{es:alpvarbound3} is a direct consequence of the decomposition \eqref{eq:def:full:decom} and the definition of the mapping $\mathcal{F}$.
This finishes the first part of this lemma. 

Finally, for any given pair $(\alpha, \varphi)\in \mathbb{R}\times \gamma^*(T\mathcal{N})$ sufficiently small satisfying the condition ($P3$), it suffices to 
show the function $\phi$ generated by \eqref{eq:def:full:decom} satisfies \eqref{es:unif:curvebound}. In fact, 
\begin{equation*}
    (\phi- \gamma)(x)= \gamma(x+ \alpha)- \gamma(x)+ \varphi(x)+ \mathcal{F}(x; \alpha, \varphi(x))
\end{equation*}
and 
\begin{equation*}
    (\phi_x- \gamma_x)(x)= \gamma_x(x+\alpha)- \gamma_x(x)+ \varphi_x(x)+ \partial_x \mathcal{F}+ \nabla_{\varphi}\mathcal{F} \cdot \varphi_x.
\end{equation*}
The required estimate immediately follows. 
\end{proof}

Regarding estimates in Remark \ref{rem:def:full:decom}, it only remains to prove \eqref{eq:equiv:phivarphi:0}. Since 
\begin{equation*}
  \phi(t, x)= \gamma(x+ \alpha(t))+   \varphi(t, x)+ \mathcal{F}(x; \alpha(t), \varphi(t, x)),
\end{equation*}
one has
\begin{equation*}
  \phi_t(t, x)= \dot\alpha(t)\gamma_x(x+ \alpha(t))+  \varphi_t(t, x)+ \dot\alpha \partial_{\alpha} \mathcal{F}(x; \alpha, \varphi)+ \nabla_{\varphi}\mathcal{F} (x; \alpha, \varphi(t, x))  \cdot \varphi_t(t, x).
\end{equation*}
The first part of \eqref{eq:equiv:phivarphi:0} is a direct consequence of the above formula. To prove the second part, it suffices to notice that 
\begin{align*}
 \langle \dot\alpha(t)\gamma_x(x+ \alpha(t)),  \varphi_t(t, x)\rangle_{L^2(\mathbb{T}^1)}&=  \langle \dot\alpha(t)\gamma_x(x),  \varphi_t(t, x)\rangle_{L^2(\mathbb{T}^1)}+ O(|\alpha|)|\dot\alpha| \|\varphi_t\|_{L^2} \\
 &= O(|\alpha|)|\dot\alpha| \|\varphi_t\|_{L^2}. 
\end{align*}

To end this section, let us introduce the following spaces and energy function:
\begin{defi}\label{def:ene:varphi}
Let $\gamma$ be a closed geodesic on $\mathcal{N}$.  Define the spaces
\begin{gather*}
\mathcal{H}_{\gamma}:=   \big\{(f, g): f \in H^1(\mathbb{T}^1; \gamma^*(T\mathcal{N})), \; g\in L^2(\mathbb{T}^1; \gamma^*(T\mathcal{N}))  \},\\
  \mathcal{H}_{\gamma, 0}:=   \big\{(f, g)\in \mathcal{H}_{\gamma}:     \big\langle f(\cdot), \dot\gamma(\cdot) \big\rangle_{L^2(\mathbb{T}^1)}= 0 \big\}, \\
  \mathcal{H}_{\gamma, 0, 0}:=   \big\{(f, g)\in \mathcal{H}_{\gamma}:     \big\langle f(\cdot), \dot\gamma(\cdot) \big\rangle_{L^2(\mathbb{T}^1)}=  \big\langle g(\cdot), \dot\gamma(\cdot) \big\rangle_{L^2(\mathbb{T}^1)}= 0 \big\}, 
\end{gather*} 
and the energy (recall Section~\ref{sec:strategy2} for the definition)
\begin{equation}
   2 \mathcal{E}_{\gamma}(f, g):= \langle g, g \rangle_{L^2(\mathbb{T}^1)}  -\langle \mathcal{L}_{\gamma}f, f\rangle_{L^2(S^1)}, \; \forall (f, g)\in \mathcal{H}_{\gamma}.
\end{equation} 
\end{defi}
This function will be discussed in Section \ref{subsec:linearstabilityvarphi}, while the following observation will be useful. 
\begin{lemma}\label{lem:key:ob:Psi}
For every $(\varphi, \varphi_t, \alpha, \dot \alpha)\in \mathcal{H}_{\gamma, 0, 0}\times \mathbb{R}^2$, the functions $(\Psi, \Psi_t)$ defined by 
\begin{align*}
    \Psi(x)&= \varphi(x)+ \alpha \gamma_x(x)\;\; \forall x\in \mathbb{T}^1, \\
      \Psi_t(x)&= \varphi_t(x)+ \dot \alpha \gamma_x(x)\;\; \forall x\in \mathbb{T}^1,
\end{align*}
satisfy
\begin{equation*}
   \mathcal{E}_{\gamma}(\Psi, \Psi_t)=   \mathcal{E}_{\gamma}(\varphi, \varphi_t)+  \frac{1}{2}(\dot \alpha)^2 \|\gamma_x\|_{L^2(\mathbb{T}^1)}^2.
\end{equation*}
\end{lemma}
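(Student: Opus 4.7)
The plan is to expand each of the two terms in the definition $2\mathcal{E}_\gamma(\Psi,\Psi_t) = \langle \Psi_t,\Psi_t\rangle_{L^2} - \langle \mathcal{L}_\gamma \Psi,\Psi\rangle_{L^2}$ separately and observe that the cross terms vanish for different reasons: orthogonality in one case, and the rotational/kernel invariance of $\mathcal{L}_\gamma$ in the other.

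First I would expand the kinetic part. Writing $\Psi_t = \varphi_t + \dot\alpha\,\gamma_x$ and using bilinearity,
\begin{equation*}
\langle \Psi_t,\Psi_t\rangle_{L^2(\mathbb{T}^1)} = \|\varphi_t\|_{L^2}^2 + 2\dot\alpha\,\langle \varphi_t,\gamma_x\rangle_{L^2(\mathbb{T}^1)} + (\dot\alpha)^2\|\gamma_x\|_{L^2}^2.
\end{equation*}
Since $(\varphi,\varphi_t)\in \mathcal{H}_{\gamma,0,0}$, the defining rigidity condition gives $\langle\varphi_t,\gamma_x\rangle_{L^2}=0$, so the middle term is killed and
\begin{equation*}
\langle \Psi_t,\Psi_t\rangle_{L^2} = \|\varphi_t\|_{L^2}^2 + (\dot\alpha)^2\|\gamma_x\|_{L^2}^2.
\end{equation*}

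Next I would handle the potential part using the rotational invariance identity \eqref{eq:mathcalLselfinva}, which is a direct consequence of $\mathcal{L}_\gamma \gamma_x = 0$ together with self-adjointness of $\mathcal{L}_\gamma$ on $\gamma^*(T\mathcal{N})$ from Lemma \ref{lem:selfadjoint}. Applying that identity with $b=\alpha$ yields immediately
\begin{equation*}
\langle \mathcal{L}_\gamma \Psi,\Psi\rangle_{L^2(\mathbb{T}^1)} = \langle \mathcal{L}_\gamma(\varphi+\alpha\gamma_x),\varphi+\alpha\gamma_x\rangle_{L^2(\mathbb{T}^1)} = \langle \mathcal{L}_\gamma\varphi,\varphi\rangle_{L^2(\mathbb{T}^1)}.
\end{equation*}

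Combining these two computations, I would conclude
\begin{equation*}
2\mathcal{E}_\gamma(\Psi,\Psi_t) = \|\varphi_t\|_{L^2}^2 - \langle \mathcal{L}_\gamma\varphi,\varphi\rangle_{L^2} + (\dot\alpha)^2\|\gamma_x\|_{L^2}^2 = 2\mathcal{E}_\gamma(\varphi,\varphi_t) + (\dot\alpha)^2\|\gamma_x\|_{L^2}^2,
\end{equation*}
and divide by two to obtain the stated equality. There is no real obstacle here: the lemma is essentially a bookkeeping identity that records the two specific cancellations which justify passing from $(\varphi,\alpha)$ to the single auxiliary variable $\Psi$ in Step 2 of the stability strategy. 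The only points worth emphasizing in the write-up are (i) that the orthogonality $\langle\varphi_t,\gamma_x\rangle_{L^2}=0$ is precisely the second component of the rigidity condition built into $\mathcal{H}_{\gamma,0,0}$, and (ii) that the $\mathcal{L}_\gamma$-invariance relies on $\gamma_x \in \ker \mathcal{L}_\gamma$, which in turn comes from differentiating the geodesic equation \eqref{eq:geodesic:equation}.
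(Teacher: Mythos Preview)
Your proof is correct and follows essentially the same approach as the paper: expand the kinetic term using the orthogonality $\langle\varphi_t,\gamma_x\rangle_{L^2}=0$ from $\mathcal{H}_{\gamma,0,0}$, and handle the potential term via the rotational invariance \eqref{eq:mathcalLselfinva}. Your write-up is in fact slightly more explicit than the paper's in identifying the two separate cancellation mechanisms.
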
 
Indeed, thanks to the rotational invariance  \eqref{eq:mathcalLselfinva},
\begin{align*}
 2\mathcal{E}_{\gamma}(\Psi, \Psi_t)&= \langle \Psi_t, \Psi_t \rangle_{L^2(\mathbb{T}^1)}  -\langle \mathcal{L}_{\gamma}\Psi, \Psi\rangle_{L^2(S^1)}, \\
 &= \langle \varphi_t+ \dot \alpha \gamma_x, \varphi_t+ \dot \alpha \gamma_x \rangle_{L^2(\mathbb{T}^1)}  -\langle \mathcal{L}_{\gamma}\varphi, \varphi\rangle_{L^2(S^1)}, \\
 &=  \langle \varphi_t, \varphi_t\rangle_{L^2(\mathbb{T}^1)}+ \langle  \dot \alpha \gamma_x,  \dot \alpha \gamma_x \rangle_{L^2(\mathbb{T}^1)}  -\langle \mathcal{L}_{\gamma}\varphi, \varphi\rangle_{L^2(S^1)}\\
 &=  2\mathcal{E}_{\gamma}(\varphi, \varphi_t)+ (\dot \alpha)^2 \|\gamma_x\|_{L^2(\mathbb{T}^1)}^2.
\end{align*}

\vspace{3mm}

\subsection{Characterization of the full system on $(\varphi,  \alpha)$}\label{subsec:full system varphialpha}

In this section, we show that under the decomposition proposed in Lemma \ref{lem:def:full:decom} and Remark \ref{rem:def:full:decom}, the state $(\varphi, \varphi_t, \alpha, \dot \alpha)\in \mathcal{H}_{\gamma, 0,0}\times \mathbb{R}^2$ satisfies a coupled system. 
\begin{prop}\label{lem:full:system:varalpha}
There are constant $\delta_{de}$ and  explicit nonlocal nonlinear mappings 
\begin{align*}
    \mathcal{M}, \mathcal{M}_1&: (x; \alpha, \dot\alpha, \varphi(\cdot), \varphi_t(\cdot), \varphi_x(\cdot))\longrightarrow \mathbb{R}^N, \\
    \mathcal{O}&: (\alpha, \dot\alpha, \varphi(\cdot), \varphi_t(\cdot), \varphi_x(\cdot))\longrightarrow \mathbb{R},
\end{align*}
such that the equation on $(\varphi, \varphi_t, \alpha, \dot \alpha)\in \mathcal{H}_{\gamma, 0,0}\times \mathbb{R}^2$ can be written as 
\begin{equation}\label{eq:varphialphafull}
\begin{cases}
 -  \varphi_{tt}(t, x)+   \mathcal{L}_{\gamma}\varphi(t, x) - a(x) \varphi_t(t, x) 
&=  \left(a(x)- \frac{l}{L}\right)\dot \alpha \gamma_x(x) \\
&\;\;\;\; \;\;- \frac{1}{L} \langle a(\cdot)\varphi_t(t, \cdot), \gamma_x(\cdot)\rangle_{L^2(\mathbb{T}^1)} \gamma_x(x) \\
&\;\;\;\;\;\;\;\;\;\;\; + \mathcal{M}_1(x; \alpha, \dot\alpha, \varphi(\cdot), \varphi_t(\cdot), \varphi_x(\cdot))\\
  \ddot\alpha+ \frac{l}{L} \dot \alpha+\frac{1}{L}\left\langle  a(\cdot) \varphi_t(t, \cdot), \gamma_x(\cdot) \right\rangle_{L^2(\mathbb{T}^1)} &=  \mathcal{O}(\alpha, \dot\alpha, \varphi(\cdot), \varphi_t(\cdot), \varphi_x(\cdot)),
    \end{cases}
\end{equation}
where 
\begin{equation}\label{def:L:l}
 L := \int_{\mathbb{T}^1} |\gamma_x|^2 \, dx \;\textrm{ and } \; l:=    \int_{\mathbb{T}^1} a(x) |\gamma_x|^2 \, dx.
\end{equation}
Moreover, the function $\Psi[t]\in \mathcal{H}_{\gamma}$ defined as
\begin{equation}\label{eq:def:Psi}
    \Psi(t, x):= \varphi(t, x)+ \alpha(t) \gamma_x(x),
\end{equation}
satisfies
\begin{equation}\label{eq:Psi:final:def}
 -  \Psi_{tt}(t, x)+   \mathcal{L}_{\gamma}\Psi(t, x) - a(x) \Psi_t(t, x)
=  \mathcal{M}(x; \alpha, \dot\alpha, \varphi(\cdot), \varphi_t(\cdot), \varphi_x(\cdot)).
\end{equation} 
The mappings are bounded by 
\begin{align}
& |\mathcal{M}(x; \alpha, \dot\alpha, \varphi(\cdot), \varphi_t(\cdot), \varphi_x(\cdot))| + |\mathcal{M}_1(x; \alpha, \dot\alpha, \varphi(\cdot), \varphi_t(\cdot), \varphi_x(\cdot))|  \notag \\
&\;\;\;\;\;\;\;\;\;\;\;\; \lesssim \left( \sum_{j, k} |\partial_{\nu} \varphi^j \partial^{\nu} \varphi^k|+  |\dot \alpha|^2+  |\varphi_t||\dot \alpha|+  (|\varphi|+ |\alpha|) (|\varphi|+ |\varphi_x|+ |\varphi_t|+ |\dot\alpha| )\right)(x)   \notag   \\
&\;\;\;\;\;\;\;\;\;\;\;\;\;\;\;\;\;\;\;\;\;\;\;\; \;\;\;\;+ \left\|\sum_{j, k} |\partial_{\nu} \varphi^j \partial^{\nu} \varphi^k|+   (|\varphi|+ |\alpha|) (|\varphi|+ |\varphi_x|+ |\varphi_t|+ |\dot\alpha|) \right\|_{L^1(\mathbb{T}^1)} \notag \\
&\;\;\;\;\;\;\;\;\;\;\;\;\;\;\;\;\;\;\;\;\;\;\;\;\;\;\;\;\;\;\;\;\;\;\;\;\;\;\;\; \;\;\;\;  + \|\varphi_t\|_{L^1(\mathbb{T}^1)} \Big(|\alpha|+ |\dot \alpha|+ |\varphi|(x)+  \|\varphi\|_{L^1(\mathbb{T}^1)}\Big) + |\dot \alpha|^2  \label{ine:non:varphi}
\end{align}
and 
\begin{align}
& \;\;\;\; |\mathcal{O}(\alpha, \dot\alpha, \varphi(\cdot), \varphi_t(\cdot), \varphi_x(\cdot))|    \notag \\
&\;\;\;\;\;\;\;\;\;\;\;\;\;\;\;\;\;\;\;\;\;\;\;\;\;\;\;\; \;\;\;\;\lesssim \left\|\sum_{j, k} |\partial_{\nu} \varphi^j \partial^{\nu} \varphi^k|+   (|\varphi|+ |\alpha|) (|\varphi|+ |\varphi_x|+ |\varphi_t|+ |\dot\alpha|) \right\|_{L^1(\mathbb{T}^1)} \notag \\
&\;\;\;\;\;\;\;\;\;\;\;\;\;\;\;\;\;\;\;\;\;\;\;\;\;\;\;\;\;\;\;\;\;\;\;\;\;\;\;\;\;\;\;\;\;\;\;\;\;\;\;\; \;\;\;\;  + \|\varphi_t\|_{L^1(\mathbb{T}^1)} \Big( |\dot \alpha|+   \|\varphi\|_{L^1(\mathbb{T}^1)}\Big) + |\dot \alpha|^2 \label{ine:non:alpha} 
\end{align}
provided that 
\begin{equation*}
    |\alpha|+ \|\varphi\|_{L^{\infty}(\mathbb{T}^1)}\leq \delta_{de}.
\end{equation*}
\end{prop}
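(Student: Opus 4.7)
The plan is to substitute the decomposition
\[
\phi(t,x)=\gamma(x+\alpha(t))+\varphi(t,x)+\mathcal{F}(x;\alpha(t),\varphi(t,x))
\]
into the damped wave map equation \eqref{eq:wmdamped}, then split the resulting identity into its $T_{\gamma(x)}\mathcal{N}$ and $N_{\gamma(x)}\mathcal{N}$ components, and finally exploit the rigidity condition $(P3)$ to close the equation for $\alpha$. First, I would compute $\phi_t,\phi_x,\phi_{tt},\phi_{xx}$ via the chain rule, using the bounds \eqref{esonF2} on $\mathcal{F}$ to collect every term that is cubic or higher, or that is quadratic with a prefactor of $|\varphi|+|\alpha|$, into a generic nonlinear remainder satisfying the pointwise bound on the right-hand side of \eqref{ine:non:varphi}. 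The three crucial linear contributions to isolate are $\dot\alpha\,\gamma_x(x+\alpha)=\dot\alpha\,\gamma_x(x)+O(|\alpha||\dot\alpha|)$ in $\phi_t$, the analogous term $\ddot\alpha\,\gamma_x$ in $\phi_{tt}$ (the cross term $(\dot\alpha)^2\gamma_{xx}$ is already quadratic), and the full linearization of $\Pi(\phi)(\partial_\nu\phi,\partial^\nu\phi)$ at $(\gamma,0)$, whose tangential part combines with $\Delta\varphi$ to produce exactly $\mathcal{L}_\gamma\varphi$ by Definition \ref{def:jacobian}; the quadratic remainders $S^i_{jk}(\phi)\partial_\nu\varphi^j\partial^\nu\varphi^k$ inherit the null structure.

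Second, I would project the resulting equation onto $\gamma_x$ in $L^2(\mathbb{T}^1)$: differentiating the rigidity condition $\langle\varphi,\gamma_x\rangle=0$ twice in $t$ yields $\langle\varphi_{tt},\gamma_x\rangle=0$, and the self-adjointness Lemma \ref{lem:selfadjoint} together with $\mathcal{L}_\gamma\gamma_x=0$ gives $\langle\mathcal{L}_\gamma\varphi,\gamma_x\rangle=0$. Pairing \eqref{eq:varphialphafull}${}_1$ with $\gamma_x$ therefore collapses the $\varphi$-terms, leaving exactly
\[
\ddot\alpha\,L + \dot\alpha\,l + \langle a(\cdot)\varphi_t,\gamma_x\rangle_{L^2}=\langle \mathcal{R},\gamma_x\rangle_{L^2}
\]
with $L,l$ as in \eqref{def:L:l} and $\mathcal{R}$ the nonlinear remainder; dividing by $L$ yields the second equation of \eqref{eq:varphialphafull} and the bound \eqref{ine:non:alpha}. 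The three forcing terms on the right of \eqref{eq:varphialphafull}${}_1$ (namely $(a(x)-l/L)\dot\alpha\,\gamma_x$ and the non-local correction $-L^{-1}\langle a\varphi_t,\gamma_x\rangle\gamma_x$) arise precisely as the subtraction needed to remove the $\gamma_x$-component from the raw identity so that the remaining equation is consistent with the orthogonality of $\varphi_t$ and $\varphi_{tt}$ to $\gamma_x$.

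Third, to obtain the cleaner equation \eqref{eq:Psi:final:def} for $\Psi=\varphi+\alpha\gamma_x$, I would simply add $\gamma_x$ times the $\alpha$-equation to the $\varphi$-equation. The key algebraic cancellation is that $\mathcal{L}_\gamma(\alpha\gamma_x)=0$, so the linear term on the left becomes $\mathcal{L}_\gamma\Psi$; simultaneously $-\alpha\gamma_x\,\ddot\alpha$ combines with $-\varphi_{tt}$ to yield $-\Psi_{tt}$, and $-a(x)(\varphi_t+\dot\alpha\gamma_x)=-a(x)\Psi_t$ absorbs both the damping term on $\varphi$ and the $(a(x)-l/L)\dot\alpha\gamma_x$ forcing plus the $(l/L)\dot\alpha\gamma_x$ produced by multiplying the $\alpha$-equation by $\gamma_x$. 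Similarly, the non-local term $-L^{-1}\langle a\varphi_t,\gamma_x\rangle\gamma_x$ cancels against its counterpart coming from the $\alpha$-equation. Everything else is pushed into the single nonlinear mapping $\mathcal{M}$, which inherits the bound \eqref{ine:non:varphi}.

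The main technical obstacle will be tracking the quadratic contributions to ensure the remainders really fit the bounds \eqref{ine:non:varphi}--\eqref{ine:non:alpha}; in particular, one must verify that no pure $\alpha^2$ term appears. This is a consequence of the geometric fact that $\gamma(x+\alpha)$ is itself a closed geodesic for every $\alpha$, so the wave map equation is satisfied exactly along the modulation family $\{\gamma_p\}_{p\in\mathbb{T}^1}$, meaning that when $\varphi,\varphi_t,\varphi_x,\dot\alpha$ all vanish the residual must vanish identically; hence every quadratic term involving $\alpha$ must multiply at least one of $\varphi,\varphi_x,\varphi_t,\dot\alpha$. The $\partial_\nu\varphi^j\partial^\nu\varphi^k$ null-type term comes from the $\Pi(\phi)(\partial_\nu\phi,\partial^\nu\phi)$ expansion about $\gamma$; the $L^1(\mathbb{T}^1)$ nonlocal terms come from projecting on $\gamma_x$ in the $\alpha$-equation (pushed into $\mathcal{M}_1$ and $\mathcal{M}$ through the $-L^{-1}\langle a\varphi_t,\gamma_x\rangle$ subtraction), and the extension of $\mathcal{F}$ in Lemma \ref{lem:genprojection} together with \eqref{esonF2} justifies the smoothness needed to bound all Taylor remainders uniformly once $|\alpha|+\|\varphi\|_{L^\infty}\le\delta_{de}$.
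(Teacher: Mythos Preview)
Your proposal is correct and follows essentially the same strategy as the paper: substitute the decomposition into \eqref{eq:wmdamped}, extract the $\alpha$-equation by pairing with $\gamma_x$ (using the rigidity condition $(P3)$ together with $\mathcal{L}_\gamma\gamma_x=0$ and Lemma~\ref{lem:selfadjoint}), and then combine to obtain the $\Psi$-equation; your geometric explanation for the absence of a pure $\alpha^2$ term is also the right one.

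One technical point is worth flagging. When you expand $\Box\phi$ via the chain rule, the normal part $\mathcal{F}$ contributes a term $(\partial_{\varphi^l}\mathcal{F})\,\Box\varphi^l$, which is \emph{not} quadratic in first derivatives but rather $O(|\varphi|+|\alpha|)$ times the top-order unknown $\Box\varphi$. The paper does not project onto $T_{\gamma(x)}\mathcal{N}$; instead it keeps the full $\mathbb{R}^N$ equation, collects this contribution into a matrix $(I_N+\mathcal{U}_1)\Box\varphi$ with $\mathcal{U}_1=(\partial_{\varphi^l}\mathcal{F}^i)_{i,l}$, and multiplies by $(I_N+\mathcal{U}_1)^{-1}=I_N+\mathcal{U}_2$ to isolate $\Box\varphi$ (this is the origin of the intermediate equation \eqref{equationfullonvarphi:1st}). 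Your tangential projection would also dispose of this term, since $\partial_{\varphi^l}\mathcal{F}\in N_{\gamma(x)}\mathcal{N}$; either route works, but you should make the mechanism explicit rather than absorbing a top-order term into a ``generic remainder''. A second minor point: the paper derives the $\Psi$-equation directly from the precursor form still containing $\ddot\alpha$ (then substitutes \eqref{eq:final:alpha}), which is algebraically equivalent to your combination of the final $\varphi$- and $\alpha$-equations, though the correct operation is to \emph{subtract} $\gamma_x$ times the $\alpha$-equation, not add it.
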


Let $\phi$ be a solution of 
\begin{equation}\label{eq:sol:full:phi}
    \Box \phi + S_{jk}(\phi)\partial_{\nu}\phi^j \partial^{\nu}\phi^k - a(x)\partial_t\phi= 0,
\end{equation}
sufficiently close to a closed geodesic $\gamma$.
Thanks to Remark \ref{rem:def:full:decom} and Definition \ref{def:ene:varphi}, there is a unique  $(\alpha, \dot \alpha, \varphi, \varphi_t)(t)\in \mathbb{R}^2\times \mathcal{H}_{\gamma, 0, 0}$ such that for every $x\in \mathbb{T}^1$ and every $t\in [0,  T]$,
 \begin{gather}
        \phi(t, x)= \gamma(x+ \alpha(t))+ \varphi(t, x)+ \varphi_1(t, x), \\
         \varphi_1(t, x)=   \mathcal{F}(x; \varphi(t, x), \alpha(t)) \in N_{\gamma(x)} \mathcal{N},
 \end{gather}
and we denote 
\begin{equation}
   \psi(t, x):=  \varphi(t, x)+ \varphi_1(t, x).
\end{equation}

Keep in mind $\Box= -\partial_t^2+ \partial_x^2$, the operator $\mathcal{L}_{\gamma}$ from Definition \ref{def:jacobian} 
\begin{equation}
\mathcal{L}_{\gamma}\varphi = \varphi_{xx} + \varphi^r\partial_rS_{jk}(\gamma)\partial_x\gamma^j\partial_x\gamma^k + 2S_{jk}(\gamma)\partial_x\gamma^j\partial_x\varphi^k=: \varphi_{xx}+ \tilde{\mathcal{L}}_{\gamma} \varphi,
\end{equation} 
and  the geodesic equation 
\begin{equation*}
 \gamma_{xx} + S_{jk}(\gamma)\partial_x\gamma^j \partial_x\gamma^k = 0, \; \forall x\in \mathbb{T}^1,
\end{equation*}
with the Einstein convention $r, j, k= 1,2,..., N$.

\vspace{2mm}
By differentiating $\phi$ we obtain 
\begin{align*}
    \phi(t, x)&= \gamma(x+ \alpha(t))+ \psi(t, x), \\
    \phi_x(t, x)&= \gamma_x(x+ \alpha(t))+ \psi_x(t, x), \\
    \phi_{xx}(t, x)&= \gamma_{xx}(x+ \alpha(t))+ \psi_{xx}(t, x), \\
 \phi_{t}(t, x)&= \gamma_{x}(x+ \alpha(t)) \dot{\alpha}(t)+ \psi_{t}(t, x), \\
 \phi_{tt}(t, x)&= \gamma_{xx}(x+ \alpha(t)) |\dot{\alpha}(t)|^2+ \gamma_{x}(x+\alpha(t)) \ddot{\alpha}(t)+    \psi_{tt}(t, x).
 \end{align*}
Thus 
\begin{align*}
   &\;\;\;\; \Box \phi(t, x)- a(x) \phi_t(t, x) \\
    &= \Box \psi(t, x)- a(x) \psi_t(t, x) \\
    &\;\;\;\; \;\;\;\;  - \left( a(x)\dot \alpha(t) \gamma_x(x+ \alpha(t))+ \gamma_{xx} (x+ \alpha(t))|\dot\alpha(t)|^2+ \gamma_{x}(x+ \alpha(t))\ddot\alpha(t) \right) \\
     &\;\;\;\;\;\;\;\; \;\;\;\;  + \gamma_{xx} (x+ \alpha(t))\\
      &= \Box \psi(t, x)- a(x) \psi_t(t, x) \\
    &\;\;\;\;\;\;\;\;  - \left( a(x)\dot \alpha(t) \gamma_x(x+ \alpha(t))+ \gamma_{xx} (x+ \alpha(t))|\dot\alpha(t)|^2+ \gamma_{x}(x+ \alpha(t))\ddot\alpha(t) \right) \\
     &\;\;\;\;\;\;\;\; \;\;\;\;  - S_{jk}\big(\gamma(x+ \alpha(t))\big) \gamma^j_x(x+ \alpha(t)) \gamma^k_x(x+ \alpha(t)). 
\end{align*}
Substituting this formula into equation \eqref{eq:sol:full:phi} we get 
\begin{align}
&\;\;\;\; \Box \psi(t, x) + \tilde{\mathcal{L}}_{\gamma}\psi(t, x) - a(x) \psi_t(t, x)  \label{eq:fullfirstpsi}  \\
&= a(x) \dot \alpha(t) \gamma_x(x+ \alpha(t))+ \gamma_{xx} (x+ \alpha(t))|\dot\alpha(t)|^2+ \gamma_{x}(x+ \alpha(t))\ddot\alpha(t)    \notag \\
&\;\;\;\;\;\;\;\; \;\;\;\;\;\;\;\;\;\;\;\;\;\;\;\; \;\;\;\;\;\;\;\;  + \psi^r\partial_rS_{jk}\big(\gamma(x)\big)\gamma^j_x(x) \gamma^k_x(x) + 2S_{jk}\big(\gamma(x)\big)\gamma^j_x(x)\psi^k_x(t, x)    \notag  \\
 &\;\;\;\;\;\;\;\;\;\;\;\;\;\;\;\; \;\;\;\;\;\;\;\;\;\;\;\;\;\;\;\;  \;\;\;\;\;\;\;\; \;\;\;\;\;\;\;\; \;\;\;\; \;\;\;\;   +   S_{jk}\big(\gamma(x+ \alpha(t))\big) \gamma^j_x(x+ \alpha(t)) \gamma^k_x(x+ \alpha(t)) 
  \notag  \\
&\;\;\;\;\;\;\;\; \;\;\;\;\;\;\;\;\;\;\;\;\;\; \;\;\;\;\;\;\;\; \underbrace{  \;\;\;\;\;\;\;\; \;\;\;\;\;\;\;\;  \;\;\;\;\;\;\;\; \;\;\;\;\;\;\;\; \;\;\;\;\;\;\;\; \;\;\;\;\;\;\;\; \;\;\;  \;\;\;\;\;\;\;\; \;\;\;\;\;\;\;\; - S_{jk}(\phi)\partial_{\nu}\phi^j \partial^{\nu}\phi^k }_{\textrm{ denoted by } \mathcal{Q}_0(x; \alpha, \dot\alpha, \psi, \psi_t, \psi_x)    } \notag  
\end{align}
In the sequel we show that the nonlinear term $\mathcal{Q}_0(x; \alpha, \dot\alpha, \psi, \psi_t, \psi_x)$ has the following properties:  
\begin{itemize}
  \item[(K1)] it only contains second and higher order terms on $(\alpha, \dot\alpha, \psi, \psi_t, \psi_x)$, and it does not contain third or higher order terms on $(\psi_t, \psi_x)$. Moreover, the quadratic terms on $(\psi_t, \psi_x)$ must appear in forms of $\partial_{\nu} \psi^j \partial^{\nu} \psi^k$. This property is necessary to control the $L^2_{t, x}$-norm of $\mathcal{Q}_0(x; \alpha, \dot\alpha, \psi, \psi_t, \psi_x)$, since 
\begin{equation*}
      \|\psi\|_{L^{\infty}((0, T)\times \mathbb{T}^1)}\lesssim \|\psi\|_{\mathcal{W}_T},\; \|\partial_{\nu} \psi^j \partial^{\nu} \psi^k\|_{L^2((0, T)\times \mathbb{T}^1)}\lesssim \|\psi\|_{\mathcal{W}_T}^2,
\end{equation*}    
while neither $\psi_t \psi_t$ nor $\psi_x \psi_x$ are controlled by the same upper bound. 
\item[(K2)] this term $|\mathcal{Q}_0(x; \alpha, \dot\alpha, \psi, \psi_t, \psi_x)|$  is uniformly bounded by 
\begin{equation}\label{ine:unif:bound:Q}
   \lesssim  \sum_{j, k} |\partial_{\nu} \psi^j \partial^{\nu} \psi^k|+ |\psi_t|  |\dot \alpha| + |\psi_x| (|\psi|+  |\alpha|)+ |\dot \alpha|^2+  |\psi|   |\alpha|, \; \forall x\in \mathbb{T}^1,
\end{equation}
provided that 
\begin{equation*}
    |\alpha|,  |\psi|\leq 1.
\end{equation*}
\end{itemize}

Now, we give a full expansion of $\mathcal{Q}_0(x; \alpha, \dot\alpha, \psi, \psi_t, \psi_x)$ and demonstrate the bound \eqref{ine:unif:bound:Q}. To simplify notation, we simply denote $\alpha(t)$ by $\alpha$.
First, we have 
\begin{align*}
&\;\;\;\; S_{jk}(\phi)\partial_{\nu}\phi^j \partial^{\nu}\phi^k\\
&= S_{jk}\big(\gamma(x+ \alpha)+ \psi\big)\partial_{\nu}\big(\gamma^j(x+ \alpha)+ \psi^j\big) \partial^{\nu}\big(\gamma^k(x+ \alpha)+ \psi^k\big) \\
& =  S_{jk}\big(\gamma(x+ \alpha)+ \psi\big)\partial_{\nu} \psi^j \partial^{\nu} \psi^k+ 2   S_{jk}\big(\gamma(x+ \alpha)+ \psi\big) \partial_{\nu}\big(\gamma^j(x+ \alpha) \big) \partial^{\nu}\big(\psi^k\big) \\
& \;\;\;\; -  S_{jk}\big(\gamma(x+ \alpha)+ \psi\big) \gamma_{x}^j(x+ \alpha) \gamma_{x}^k(x+ \alpha) |\dot \alpha|^2 +  S_{jk}\big(\gamma(x+ \alpha)+ \psi\big) \gamma_{x}^j(x+ \alpha) \gamma_{x}^k(x+ \alpha)\\
& = 2 S_{jk}\big(\gamma(x+ \alpha)+ \psi\big)\gamma^j_x(x+ \alpha)  \psi^k_x +  S_{jk}\big(\gamma(x+ \alpha)+ \psi\big) \gamma_{x}^j(x+ \alpha) \gamma_{x}^k(x+ \alpha)\\
&  \;\;\;\;\;\;\;\;\;\;\;\; \;\;\;\;\;\;\;\;\;\;\;\; \;\; +  S_{jk}\big(\gamma(x+ \alpha)+ \psi\big)\partial_{\nu} \psi^j \partial^{\nu} \psi^k- 2 S_{jk}\big(\gamma(x+ \alpha)+ \psi\big)\gamma^j_x(x+ \alpha) \dot\alpha \psi^k_t \\
& \;\;\;\;  \;\;\;\;\;\;\;\;\;\;\;\;\;\;\;\;\; \;\;\;\;\;\; \underbrace{ \;\;\;\;\;\;\;\;\;\;\;\;\;\;\;\; \;\;\;\;\;\;\;\;\;\;\;\; -  S_{jk}\big(\gamma(x+ \alpha)+ \psi\big) \gamma_{x}^j(x+ \alpha) \gamma_{x}^k(x+ \alpha) |\dot \alpha|^2 \;\;\;\;\;\;\;\;  }_{\textrm{ denoted by }  \mathcal{Q}_{0, 1} (x; \alpha, \dot\alpha, \psi, \psi_t, \psi_x)    } 
\end{align*}
Clearly, the nonlinear term $\mathcal{Q}_{0, 1}(x; \alpha, \dot\alpha, \psi, \psi_t, \psi_x)$ satisfies the two properties: $(K1)$ and $(K2)$.

By substituting the above equation into $\mathcal{Q}_0$, 
\begin{align*}
&\;\;\;\; - \mathcal{Q}_0(x; \alpha, \dot\alpha, \psi, \psi_t, \psi_x)\\
&= \mathcal{Q}_{0, 1}(x; \alpha, \dot\alpha, \psi, \psi_t, \psi_x) +  \underbrace{ 2 S_{jk}\big(\gamma(x+ \alpha)+ \psi\big)\gamma^j_x(x+ \alpha)  \psi^k_x - 2S_{jk}\big(\gamma(x)\big)\gamma^j_x(x)\psi^k_x(t, x) }_{\textrm{ denoted by } \mathcal{Q}_{0, 2}(x; \alpha,  \psi,  \psi_x)    }  \\
& \;\;\;\;\;\;\;\; \;\;\;\;\;\;\;\;+  S_{jk}\big(\gamma(x+ \alpha)+ \psi\big) \gamma_{x}^j(x+ \alpha) \gamma_{x}^k(x+ \alpha)  -\psi^r\partial_rS_{jk}\big(\gamma(x)\big)\gamma^j_x(x) \gamma^k_x(x)    \notag  \\
& \;\;\;\;\;\;\;\; \;\;\;\;\;\;\; \;\underbrace{\;\;\; \;\;\;\;\; \;\;\;\;\;\;\; \;\;\;\;\;\;\; \;\;\;\;\;\;\;\;\;\;\;\;\;\;\;\;\;\;\;\;\;\;\;\;-   S_{jk}\big(\gamma(x+ \alpha)\big) \gamma^j_x(x+ \alpha) \gamma^k_x(x+ \alpha). \;\;\;\;\;  }_{\textrm{ denoted by } \mathcal{Q}_{0, 3}(x; \alpha,  \psi)    } 
\end{align*}
We easily deduce 
\begin{align*}
|\mathcal{Q}_{0, 2}(x; \alpha, \psi, \psi_x)|
\lesssim |\psi_x| \left( |\alpha|+ |\psi|\right)
\end{align*}
and 
\begin{align*}
 |\mathcal{Q}_{0, 3}(x; \alpha,  \psi)| \lesssim |\psi| |\alpha|    
\end{align*}
provided that $
     |\alpha|,  |\psi|\leq 1$.
Thus both $\mathcal{Q}_{0, 2}$ and $\mathcal{Q}_{0, 3}$, and therefore $\mathcal{Q}_{0}$, satisfy the properties $(K1)$ and $(K2)$.
\vspace{4mm}

Since 
\begin{equation*}
     \psi(t, x):=  \varphi(t, x)+ \varphi_1(t, x)=   \varphi(t, x)+  \mathcal{F}\big(x; \varphi(t, x), \alpha(t)\big),
\end{equation*}
with $\mathcal{F}$ smooth and satisfying \eqref{esonF2}. From equation \eqref{eq:fullfirstpsi} on $\psi$, we obtain the equation on $\varphi$. Indeed, 
\begin{align*}
    \psi_x(t, x)=\varphi_x (t, x)+   \partial_x\mathcal{F}\big(x; \varphi(t, x), \alpha(t)\big)  +   \varphi^l_x \partial_{\varphi^l} \mathcal{F} \big(x; \varphi(t, x), \alpha(t)\big)
\end{align*}
with the Einstein convention for $l= 1,2,.., N$. Successively, 
\begin{align*}
     \psi_t(t, x)&=\varphi_t(t, x)+     \varphi^l_t \partial_{\varphi^l} \mathcal{F} \big(x; \varphi(t, x), \alpha(t)\big)+ \dot\alpha \partial_{\alpha} \mathcal{F} \big(x; \varphi(t, x), \alpha(t)\big)  \\
    \psi_{xx}&=\varphi_{xx}+     \varphi^l_{xx} \partial_{\varphi^l} \mathcal{F} + \partial_{xx}  \mathcal{F} + 2 \varphi^l_{x} \partial^2_{x, \varphi^l} \mathcal{F} + \varphi^l_{x} \varphi^m_{x} \partial^2_{\varphi^l, \varphi^m} \mathcal{F}  \\ 
     \psi_{tt}&=\varphi_{tt}+     \varphi^l_{tt} \partial_{\varphi^l} \mathcal{F} +  \varphi^l_{t} \varphi^m_{t} \partial^2_{\varphi^l, \varphi^m}\mathcal{F}+ 2 \dot \alpha \varphi_t^l \partial^2_{\varphi^l, \alpha} \mathcal{F} +  |\dot \alpha|^2 \partial^2_{\alpha, \alpha} \mathcal{F}  + \ddot \alpha \partial_{\alpha} \mathcal{F} \\
    \tilde{\mathcal{L}}_{\gamma}\psi &=  \psi^r\partial_rS_{jk}(\gamma)\partial_x\gamma^j\partial_x\gamma^k + 2S_{jk}(\gamma)\partial_x\gamma^j\partial_x\big(\psi^k\big)\\
    &=  \tilde{\mathcal{L}}_{\gamma}\varphi+  \mathcal{F}^r\partial_rS_{jk}(\gamma)\partial_x\gamma^j\partial_x\gamma^k + 2S_{jk}(\gamma)\partial_x\gamma^j\big(\partial_x \mathcal{F}^k+ \varphi^l_x \partial_{\varphi^l} \mathcal{F}^k \big), 
\end{align*}
with the Einstein convention for $l, m, j, k, r= 1,2,.., N$.

Define the $N\times N$ matrix 
\begin{equation}
    \mathcal{U}_1(x; \varphi, \alpha):= \left(\partial_{\varphi^l} \mathcal{F}^i (x; \varphi, \alpha)\right)_{1\leq i, l\leq N}.
\end{equation}
It satisfies 
\begin{equation*}
    |\mathcal{U}_1(x; \varphi, \alpha)|\lesssim  |\varphi|+ |\alpha|, \textrm{ if }  |\varphi|, |\alpha|\leq 1.
\end{equation*}
We further define the inverse by
\begin{equation*}
    I_n+ \mathcal{U}_2(x; \varphi, \alpha)= (I_n+ \mathcal{U}_1(x; \varphi, \alpha))^{-1},
\end{equation*}
which also satisfies 
\begin{equation}\label{es:macalM}
    |\mathcal{U}_2(x; \varphi, \alpha)|\lesssim  |\varphi|+ |\alpha|, \textrm{ provided that }  |\varphi|, |\alpha|\ll 1.
\end{equation}
Notice that 
\begin{align*}
    \Box \psi&= \big(I_n+ \mathcal{U}_1(x; \varphi(t, x), \alpha(t))\big)  \Box \varphi  - \ddot \alpha \partial_{\alpha} \mathcal{F}   +  \left( \partial_{xx}  \mathcal{F} + 2 \varphi^l_{x} \partial^2_{x, \varphi^l} \mathcal{F} + \varphi^l_{x} \varphi^m_{x} \partial^2_{\varphi^l, \varphi^m} \mathcal{F} \right)    \\
&  - \left(\varphi^l_{t} \varphi^m_{t} \partial^2_{\varphi^l, \varphi^m}\mathcal{F}+ 2 \dot \alpha \varphi_t^l \partial^2_{\varphi^l, \alpha} \mathcal{F} +  |\dot \alpha|^2 \partial^2_{\alpha, \alpha} \mathcal{F} \right).
\end{align*}
Define 
\begin{align*}
 \mathcal{Q}_1(x; \alpha, \dot\alpha, \varphi, \varphi_t, \varphi_x)&:=  \left(\partial_{xx}  \mathcal{F}  + 2 \varphi^l_{x} \partial^2_{x, \varphi^l} \mathcal{F} + \varphi^l_{x} \varphi^m_{x} \partial^2_{\varphi^l, \varphi^m} \mathcal{F} \right)(x; \varphi, \alpha)    \\
&  \;\;\;\;\;\;\;\;\;\;\;\;\;\;\;\;\;\;\;\;  - \left(\varphi^l_{t} \varphi^m_{t} \partial^2_{\varphi^l, \varphi^m}\mathcal{F}+ 2 \dot \alpha \varphi_t^l \partial^2_{\varphi^l, \alpha} \mathcal{F} +  |\dot \alpha|^2 \partial^2_{\alpha, \alpha} \mathcal{F}  \right)(x; \varphi, \alpha),
\end{align*}
which satisfies 
\begin{equation}\label{es:macalq1}
    |\mathcal{Q}_1(x; \alpha, \dot\alpha, \varphi, \varphi_t, \varphi_x)|\lesssim |\varphi|^2+ |\alpha| |\varphi|+ |\varphi_x|(|\varphi|+ |\alpha|)+ |\varphi_t| |\dot\alpha|+ |\dot \alpha|^2+ \sum_{j, k}|\partial_{\nu} \varphi^j \partial^{\nu} \varphi^k|.
\end{equation}
Then 
\begin{equation*}
    \Box \psi(t, x)= \big(I_n+ \mathcal{U}_1(x; \varphi(t, x), \alpha(t))\big)  \Box \varphi(t, x)  - \ddot \alpha \partial_{\alpha} \mathcal{F}  + \mathcal{Q}_1(x; \alpha, \dot\alpha, \varphi, \varphi_t, \varphi_x).
\end{equation*}

We also define $\mathcal{Q}_2$ as the difference between $\tilde{\mathcal{L}}_{\gamma}\psi - a(x) \psi_t$ and $\tilde{\mathcal{L}}_{\gamma}\varphi - a(x) \varphi_t$:
\begin{align*}
\mathcal{Q}_2(x; \alpha, \dot\alpha, \varphi, \varphi_t, \varphi_x)&= \left(\mathcal{F}^r\partial_rS_{jk}(\gamma)\partial_x\gamma^j\partial_x\gamma^k + 2S_{jk}(\gamma)\partial_x\gamma^j\big(\partial_x \mathcal{F}^k+ \varphi^l_x \partial_{\varphi^l} \mathcal{F}^k \big) \right)(x; \varphi, \alpha) \\
&  \;\;\;\;\;\;\;\; \;\;\;\;\;\;\;\;\;\;\;\;\;\;\;\;\;\;\;\;\;\;\;\;   - a(x) \left(      \varphi^l_t \partial_{\varphi^l} \mathcal{F} + \dot\alpha \partial_{\alpha} \mathcal{F}  \right)\big(x; \varphi, \alpha\big),
\end{align*}
which satisfies
\begin{align}
    |\mathcal{Q}_2(x; \alpha, \dot\alpha, \varphi, \varphi_t, \varphi_x)|&\lesssim |\varphi|^2+ |\alpha| |\varphi|+ |\varphi_x|(|\varphi|+ |\alpha|)+ |\varphi_t|(|\varphi|+ |\alpha|)+ |\dot \alpha| (|\varphi|+ |\alpha|) \notag\\
   & \lesssim (|\varphi|+ |\alpha|) (|\varphi|+ |\varphi_x|+ |\varphi_t|+ |\dot\alpha| )  \label{es:macalq2}
\end{align}

Therefore, equation \eqref{eq:fullfirstpsi} is equivalent to 
\begin{align*}
  &  \;\;\;\;\big(I_n+ \mathcal{U}_1(x; \varphi(t, x), \alpha(t))\big)  \Box \varphi(t, x)+   \tilde{\mathcal{L}}_{\gamma}\varphi(t, x) - a(x) \varphi_t(t, x)   \\
&= a(x)\dot \alpha \gamma_x(x+ \alpha)+  \ddot\alpha  \big(\gamma_{x}(x+ \alpha)- \partial_{\alpha}\mathcal{F}(x; \varphi, \alpha) \big) \\
& \;\;\;\;\;\;\;\;  \;\;\;\; + \gamma_{xx} (x+ \alpha)|\dot\alpha|^2-\mathcal{Q}_1(x; \alpha, \dot\alpha, \varphi, \varphi_t, \varphi_x) - \mathcal{Q}_2(x; \alpha, \dot\alpha, \varphi, \varphi_t, \varphi_x)+ \mathcal{Q}_0(x; \alpha, \dot\alpha, \psi, \psi_t, \psi_x)
\end{align*}

By multiplying  both sides of this equation on the left by the matrix $ I_n+ \mathcal{U}_2(x; \varphi, \alpha)$, we obtain  the equation for $\varphi$:
\begin{align}
 - \varphi_{tt}(t, x)+   \mathcal{L}_{\gamma}\varphi(t, x) - a(x) \varphi_t(t, x) 
&= a(x) \dot \alpha \gamma_x(x+ \alpha)+  \mathcal{Q}(x; \alpha, \dot\alpha, \varphi, \varphi_t, \varphi_x)   \notag \\
&\;\;\;\; +\ddot\alpha  \underbrace{\big(I_n+ \mathcal{U}_2(x; \varphi, \alpha)\big)   \big(\gamma_{x}(x+ \alpha)- \partial_{\alpha}\mathcal{F}(x; \varphi, \alpha) \big)}_{\textrm{ denoted by } \mathcal{P}(x; \varphi, \alpha)+ \gamma_x(x)},    \label{equationfullonvarphi:1st}
\end{align} 
where $\mathcal{P}(x; \varphi, \alpha)$ is bounded by $|\alpha|+ |\varphi|$, and
$\mathcal{Q}$  is composed of second or higher order terms and is given by 
\begin{align*}
  &\;\;\;\;   \mathcal{Q}(x; \alpha, \dot\alpha, \varphi, \varphi_t, \varphi_x)  \\
  &= 
  \mathcal{U}_2(x; \varphi, \alpha) \left( -\tilde{\mathcal{L}}_{\gamma}\varphi(t, x) + a(x) \varphi_t(t, x)+  a(x) \dot \alpha \gamma_x(x+ \alpha) \right)   \\
  & \;\;\;\;\;\;\;\;\;\;\;\; \;\;\;\;\;\;\;\;\;\; \;\;\;\;\;\;\;\;\;\;  + \big( I_n+ \mathcal{U}_2(x; \varphi, \alpha)\big)  \Big(  \gamma_{xx} (x+ \alpha)|\dot\alpha|^2-\mathcal{Q}_1(x; \alpha, \dot\alpha, \varphi, \varphi_t, \varphi_x) \\
  &\;\;\;\;\;\;\;\;\;\; \;\;\;\;\;\;\;\;\;\; \;\;\;\;\;\;\;\;\;\; \;\;\;\;\;\;\;\;\;\; \;\;\;\;\;\;\;\;\;\; \;\;\;\;\;\;\;\;\;\;  - \mathcal{Q}_2(x; \alpha, \dot\alpha, \varphi, \varphi_t, \varphi_x)+ \mathcal{Q}_0(x; \alpha, \dot\alpha, \psi, \psi_t, \psi_x)  \Big)   
\end{align*}
Indeed, concerning $\mathcal{Q}_0(x; \alpha, \dot\alpha, \psi, \psi_t, \psi_x)$, direct calculation yields 
\begin{align}
&  |\mathcal{Q}_0(x; \alpha, \dot\alpha, \psi, \psi_t, \psi_x)| \notag \\
 &\;\;\;\;\;\;\;\;\lesssim \sum_{j, k}|\partial_{\nu} \psi^j \partial^{\nu} \psi^k|+ |\psi_t|  |\dot \alpha| + |\psi_x| (|\psi|+  |\alpha|)+ |\dot \alpha|^2+  |\psi|   |\alpha|  \notag\\
  & \;\;\;\;\;\;\;\;\;\;\;\;\;\;\;\;\lesssim \sum_{j, k} |\partial_{\nu} \varphi^j \partial^{\nu} \varphi^k| +  |\varphi_t|  |\dot \alpha|+ |\dot \alpha|^2+  (|\varphi|+ |\alpha|) (|\varphi|+ |\varphi_x|+ |\dot\alpha|).\label{es:macalq0}
\end{align}
By combining \eqref{es:macalM}--\eqref{es:macalq0}, we obtain estimates on the nonlinear term $\mathcal{Q}$:
\begin{align}
    &|\mathcal{Q}(x; \alpha, \dot\alpha, \varphi, \varphi_t, \varphi_x)| \notag \\
  &\;\;\;\;\;\;\;\;\lesssim \sum_{j, k} |\partial_{\nu} \varphi^j \partial^{\nu} \varphi^k|+  |\dot \alpha|^2+  |\varphi_t||\dot \alpha|+  (|\varphi|+ |\alpha|) (|\varphi|+ |\varphi_x|+ |\varphi_t|+ |\dot\alpha| ).  
\end{align}
 provided that  $|\varphi|, |\alpha|\ll 1$.
\vspace{4mm}

 Next, we turn to the equation for $\alpha$.
Recall that $\alpha(t)$ is chosen such that the orthogonality condition holds: 
\begin{equation}\label{eq:var:gamx1}
    \langle \varphi(t, \cdot), \gamma_x(\cdot) \rangle_{L^2(\mathbb{T}^1)}= 0\;\; \forall t\in [0, T].
\end{equation}
Differentiating twice equation \eqref{eq:var:gamx1} we obtain 
\begin{equation}
    \langle \varphi_{t}(t, \cdot), \gamma_x(\cdot) \rangle_{L^2(\mathbb{T}^1)}=   \langle \varphi_{tt}(t, \cdot), \gamma_x(\cdot) \rangle_{L^2(\mathbb{T}^1)}= 0\;\; \forall t\in [0, T],
\end{equation}
Thanks to Lemma \ref{lem:selfadjoint}, we have
\begin{equation}
   \langle \mathcal{L}_{\gamma} \varphi(t, \cdot), \gamma_x(\cdot) \rangle_{L^2(\mathbb{T}^1)}=  \langle \varphi(t, \cdot), \mathcal{L}_{\gamma} 
 \gamma_x(\cdot) \rangle_{L^2(\mathbb{T}^1)}= 0\;\; \forall t\in [0, T]. 
\end{equation}

By integrating equation \eqref{equationfullonvarphi:1st} against $\gamma_x$, we obtain in light of the definition of $L$ and $l$ in \eqref{def:L:l}
\begin{align*}
&\;\;\;\;\;\; \ddot\alpha    \underbrace{\left\langle \mathcal{P}(x; \varphi(x), \alpha)+ \gamma_x(x), \gamma_x(x) \right\rangle_{L^2(\mathbb{T}^1)} }_{ \approx L+ |\alpha|+ \|\varphi\|_{L^1}}\\
&= - \left\langle a(x) \dot \alpha \gamma_x(x+ \alpha)+ a(x) \varphi_t(t, x)+   \mathcal{Q}(x; \alpha, \dot\alpha, \varphi, \varphi_t, \varphi_x), \gamma_x(\cdot) \right\rangle_{L^2(\mathbb{T}^1)} \\
&= - \dot \alpha  \underbrace{\left\langle a(x) \gamma_x(x+ \alpha), \gamma_x(x) \right\rangle_{L^2(\mathbb{T}^1)} }_{\approx 
 l+ |\alpha|} \\
& \;\;\;\;\;\;\;\;\;\;\;\;\;\;\;\;\;\;\;\;\; \;\;\;\;\;\;\;\; \;\;\;\;\;\;\;\; \;\;\;\;\;\;\;\;  - \underbrace{\left\langle  a(x) \varphi_t(t, x), \gamma_x(x) \right\rangle_{L^2(\mathbb{T}^1)} }_{\textrm{ first order term $\|\varphi_t\|_{L^1}$ }} \\
&\;\;\;\;\;\;\;\;\;\;\;\;\;\;\;\;\;\;\;\;\;\;\;\;\;\; \;\;\;\;\;\;\;\; \;\;\;\;\;\;\;\; \;\;\;\;\;\;\;\; \;\;\;\;\;\;\;\; \;\;\;\;\;\;\;\; \;\;\;\;\;\;\;\; \;\;\;\;\;\;\;\; -\underbrace{ \left\langle    \mathcal{Q}(x; \alpha, \dot\alpha, \varphi, \varphi_t, \varphi_x), \gamma_x(\cdot) \right\rangle_{L^2(\mathbb{T}^1)} }_{\textrm{ higher order terms }} 
\end{align*}

Hence $\alpha$ satisfies 
\begin{equation}\label{eq:final:alpha}
  \ddot\alpha+ \frac{l}{L} \dot \alpha= -\frac{1}{L}\left\langle  a(\cdot) \varphi_t(t, \cdot), \gamma_x(\cdot) \right\rangle_{L^2(\mathbb{T}^1)} +  \mathcal{O}(\alpha, \dot\alpha, \varphi(\cdot), \varphi_t(\cdot), \varphi_x(\cdot)) 
\end{equation}
with 
\begin{align*}
 &  \mathcal{O}(\alpha, \dot\alpha, \varphi(\cdot), \varphi_t(\cdot), \varphi_x(\cdot))  \\
  & \;\;\;\; \;\;\;\; =  \dot \alpha \left(\frac{\langle a(x)\gamma_x(x), \gamma_x(x)\rangle_{L^2(\mathbb{T}^1)}}{\langle \gamma_x(x), \gamma_x(x)\rangle_{L^2(\mathbb{T}^1)}} -  \frac{\left\langle a(x) \gamma_x(x+ \alpha), \gamma_x(x) \right\rangle_{L^2(\mathbb{T}^1)}}{\left\langle \mathcal{P}(x; \varphi(x), \alpha)+ \gamma_x(x), \gamma_x(x) \right\rangle_{L^2(\mathbb{T}^1)}} \right) \\
  &\;\;\;\;\;\;\;\;\;\; \;\;\;\;    +  \left\langle  a(x) \varphi_t(t, x), \gamma_x(x) \right\rangle_{L^2(\mathbb{T}^1)}  \left(\frac{1}{L}-  \frac{1}{\left\langle \mathcal{P}(x; \varphi(x), \alpha)+ \gamma_x(x), \gamma_x(x) \right\rangle_{L^2(\mathbb{T}^1)}} \right)\\
&  \;\;\;\;\;\; \;\;\;\;\;\; \;\;\;\; \;\;\;\; \;\;\;\; \;\;\;\;   -   \frac{ \left\langle    \mathcal{Q}(x; \alpha, \dot\alpha, \varphi, \varphi_t, \varphi_x), \gamma_x(x) \right\rangle_{L^2(\mathbb{T}^1)}   }{\left\langle \mathcal{P}(x; \varphi(x), \alpha)+ \gamma_x(x), \gamma_x(x). \right\rangle_{L^2(\mathbb{T}^1)}}
\end{align*}
One easily checks that estimate \eqref{ine:non:alpha} holds for $\mathcal{O}$.

\vspace{3mm}

Now, by substituting \eqref{eq:final:alpha} into equation \eqref{equationfullonvarphi:1st} for $\varphi$,  we obtain 
\begin{align}
 -  \varphi_{tt}(t, x)+   \mathcal{L}_{\gamma}\varphi(t, x) - a(x) \varphi_t(t, x) 
&=  \left(a(x)- \frac{l}{L}\right)\dot \alpha \gamma_x(x)- \frac{1}{L} \langle a(\cdot)\varphi_t(t, \cdot), \gamma_x(\cdot)\rangle_{L^2(\mathbb{T}^1)} \gamma_x(x) \notag\\
&\;\;\;\;\;\;\;\;\;\;\;\;\;\;\;\;\;\;\;\; + \mathcal{M}_1(x; \alpha, \dot\alpha, \varphi(\cdot), \varphi_t(\cdot), \varphi_x(\cdot)).    \label{eq:final:varphi}
\end{align} 
Here the high order term is given by
\begin{align}
 &  \mathcal{M}_1(x; \alpha, \dot\alpha, \varphi(\cdot), \varphi_t(\cdot), \varphi_x(\cdot)) \notag\\
 &\;\;\;\; \;\;\;\;  \;\;\;\;  = - \left(\frac{l}{L} \dot \alpha+ \frac{1}{L}\left\langle  a(\cdot) \varphi_t(t, \cdot), \gamma_x(\cdot) \right\rangle_{L^2(\mathbb{T}^1)} \right) \mathcal{P}(x; \varphi, \alpha) \notag\\
 &\;\;\;\;  \;\;\;\; \;\;\;\; \;\;\; \;\;\;\;\;\;\; +  \Big(\mathcal{P}(x; \varphi, \alpha)+ \gamma_x(x)\Big) \mathcal{O}(\alpha, \dot\alpha, \varphi(\cdot), \varphi_t(\cdot), \varphi_x(\cdot))   \notag \\
 &\;\;\;\; \;\;\;\;  \;\;\;\; \;\;\;\; \;\;\;\;\;\;\;\;\;\;\;\;\;\;\;\;\;\; + a(x) \dot \alpha \left(\gamma_x(x+ \alpha)- \gamma_x(x) \right) +\mathcal{Q}(x; \alpha, \dot\alpha, \varphi, \varphi_t, \varphi_x), \label{eq:def:M3}
\end{align}
and satisfies estimate \eqref{ine:non:varphi}.
\vspace{3mm}

The coupled system \eqref{eq:final:alpha}--\eqref{eq:final:varphi}  for $(\varphi, \alpha)$ encapsulates the evolution of $\phi(t,\cdot)$. However, the linear coupling terms on the right cannot be  treated as a perturbation. Thus we introduce a new function 
\begin{equation}
  \Psi(t, x):= \varphi(t, x)+ \alpha(t) \gamma_x(t, x).
\end{equation}
We observe that this is governed by a simpler equation:
\begin{align}
&\;\;\;\; -  \Psi_{tt}(t, x)+   \mathcal{L}_{\gamma}\Psi(t, x) - a(x) \Psi_t(t, x) \notag \\
&=  - \varphi_{tt}(t, x)+   \mathcal{L}_{\gamma}\varphi(t, x) - a(x) \varphi_t(t, x)- \ddot \alpha(t) \gamma_x(x)+ \alpha(t)\mathcal{L}_{\gamma} \gamma_x- \dot  \alpha(t) a(x)\gamma_x(x) \notag \\
&=  a(x) \dot \alpha \Big(\gamma_x(x+ \alpha)- \gamma_x(x)\Big)+  \ddot\alpha \mathcal{P}(x; \varphi, \alpha)+  \mathcal{Q}(x; \alpha, \dot\alpha, \varphi, \varphi_t, \varphi_x)   \notag \\
& =  \mathcal{M}(x; \alpha, \dot\alpha, \varphi(\cdot), \varphi_t(\cdot), \varphi_x(\cdot)),
\end{align}
where, by plugging  \eqref{eq:final:alpha} into above equation, the nonlinear term is given by 
\begin{align}
&  \mathcal{M}(x; \alpha, \dot\alpha, \varphi(\cdot), \varphi_t(\cdot), \varphi_x(\cdot)) \notag \\
 &\;\;\;\; = a(x) \dot \alpha \Big(\gamma_x(x+ \alpha)- \gamma_x(x)\Big)+  \mathcal{Q}(x; \alpha, \dot\alpha, \varphi, \varphi_t, \varphi_x)  \notag \\
 & \;\;\;\; \;\;\;\; - \left(\frac{l}{L} \dot \alpha + \frac{1}{L}\left\langle  a(\cdot) \varphi_t(t, \cdot), \gamma_x(\cdot) \right\rangle_{L^2(\mathbb{T}^1)} -  \mathcal{O}(\alpha, \dot\alpha, \varphi(\cdot), \varphi_t(\cdot), \varphi_x(\cdot)) \right) \mathcal{P}(x; \varphi, \alpha),
\end{align}
and satisfies estimate \eqref{ine:non:varphi}. 
This concludes the proof of Proposition \ref{lem:full:system:varalpha}.  

\vspace{3mm}

\subsection{Coercive estimates around closed geodesics}\label{sec:coecive}

Recall from Lemma  \ref{lem:selfadjoint} that the Jacobian operator $\mathcal{L}_{\gamma}$ is self-adjoint on functions in $\gamma^*(T\mathcal{N})$. Under the negative curvature assumption, we further demonstrate it is positive definite in a codimension one submanifold, more precisely, for functions $\varphi\in \gamma^*(T \mathcal{N})$ satisfying the condition ($P3$). 
\begin{prop}\label{prop:coercivityofL} 
Let $\gamma$ be a closed geodesic on $\mathcal{N}$. Assume that sectional curvature is strictly negative on $\gamma$.
 Then the operator $\mathcal{L}_{\gamma}$ given in Definition \ref{def:jacobian} is coercive in the following sense: 
there exists a constant $c_{co}>0$ such that for any  $\varphi\in H^1(\mathbb{T}^1; \gamma^*(T \mathcal{N}))$ satisfying the rigidity condition ($P3$),
we have the lower bound 
\[
-\langle \mathcal{L}_{\gamma}\varphi, \varphi\rangle_{L^2(\mathbb{T}^1)}\geq c_{co} \big\|\varphi\big\|_{H^1(\mathbb{T}^1)}^2. 
\]
\end{prop}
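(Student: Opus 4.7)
The plan is to exploit the self-adjoint formula from Lemma \ref{lem:selfadjoint}, split $\varphi$ into its tangential component along $\gamma_x$ and its orthogonal component, and then show separately that the rigidity condition $(P3)$ controls the first part while the negative sectional curvature controls the second. Throughout, we use that along a geodesic $|\gamma_x|\equiv L_0>0$ is a positive constant and $\nabla_{\gamma_x}\gamma_x=0$.

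\textbf{Step 1: Starting identity.} By Lemma \ref{lem:selfadjoint},
\begin{equation*}
-\langle \mathcal{L}_{\gamma}\varphi,\varphi\rangle_{L^2(\mathbb{T}^1)}
=\int_{\mathbb{T}^1}\|\nabla_{\gamma_x}\varphi\|^2\,dx
-\int_{\mathbb{T}^1}\langle R(\gamma_x,\varphi)\gamma_x,\varphi\rangle\,dx.
\end{equation*}
For any $\varphi(x)\in T_{\gamma(x)}\mathcal{N}$, the sectional curvature formula gives
\begin{equation*}
\langle R(\gamma_x,\varphi)\gamma_x,\varphi\rangle
=K(\gamma_x,\varphi)\bigl(|\gamma_x|^2|\varphi|^2-\langle\gamma_x,\varphi\rangle^2\bigr).
\end{equation*}
Since sectional curvature is strictly negative along $\gamma$ and $\gamma$ is compact, there exists $\kappa>0$ with $K(\gamma_x,\varphi)\leq-\kappa$, hence
\begin{equation*}
-\langle R(\gamma_x,\varphi)\gamma_x,\varphi\rangle
\geq \kappa L_0^{2}\,|\varphi_\perp|^2,
\end{equation*}
where $\varphi_\perp$ is the component of $\varphi$ pointwise orthogonal to $\gamma_x$.

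\textbf{Step 2: Orthogonal decomposition along $\gamma$.} Write
\begin{equation*}
\varphi(x)=f(x)\,\frac{\gamma_x(x)}{L_0}+\varphi_\perp(x),
\qquad
\langle\varphi_\perp(x),\gamma_x(x)\rangle=0\ \ \forall x\in\mathbb{T}^1,
\end{equation*}
with $f\in H^1(\mathbb{T}^1;\mathbb{R})$ and $\varphi_\perp$ an $H^1$-section of $(\gamma_x)^\perp\subset\gamma^*(T\mathcal{N})$. Because $\nabla_{\gamma_x}\gamma_x=0$ and parallel transport preserves inner products, $\nabla_{\gamma_x}\varphi_\perp$ is again pointwise orthogonal to $\gamma_x$, so
\begin{equation*}
\int_{\mathbb{T}^1}\|\nabla_{\gamma_x}\varphi\|^2\,dx
=\int_{\mathbb{T}^1}(f_x)^2\,dx+\int_{\mathbb{T}^1}\|\nabla_{\gamma_x}\varphi_\perp\|^2\,dx.
\end{equation*}

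\textbf{Step 3: Using rigidity on $f$.} The hypothesis $(P3)$ reads $L_0\int_{\mathbb{T}^1}f\,dx=0$, so the Poincaré–Wirtinger inequality yields
\begin{equation*}
\int_{\mathbb{T}^1}f^2\,dx\leq C_{P}\int_{\mathbb{T}^1}(f_x)^2\,dx.
\end{equation*}

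\textbf{Step 4: Combining.} From Steps 1–3,
\begin{equation*}
-\langle \mathcal{L}_{\gamma}\varphi,\varphi\rangle_{L^2(\mathbb{T}^1)}
\geq \int_{\mathbb{T}^1}(f_x)^2\,dx
+\int_{\mathbb{T}^1}\|\nabla_{\gamma_x}\varphi_\perp\|^2\,dx
+\kappa L_0^{2}\int_{\mathbb{T}^1}|\varphi_\perp|^2\,dx.
\end{equation*}
In particular the right-hand side controls $\|f\|_{H^1(\mathbb{T}^1)}^2+\|\varphi_\perp\|_{L^2(\mathbb{T}^1)}^2+\|\nabla_{\gamma_x}\varphi_\perp\|_{L^2(\mathbb{T}^1)}^2$.

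\textbf{Step 5: From intrinsic to extrinsic $H^1$.} For a tangential section $\psi$ along $\gamma$, the extrinsic derivative decomposes as $\psi_x=\nabla_{\gamma_x}\psi+(\psi_x)^\perp$, and the normal part is expressed via the second fundamental form of $\mathcal{N}$ restricted to $\gamma$. Since $\gamma$ is $C^\infty$ and compact, this gives the pointwise bound $|(\psi_x)^\perp|\leq C_\Pi|\psi|$, hence
\begin{equation*}
|\psi_x|^2\leq 2|\nabla_{\gamma_x}\psi|^2+2C_\Pi^{2}|\psi|^2.
\end{equation*}
Applying this to $\psi=\varphi$ and combining with Step 4 and Poincaré on $f$ yields
\begin{equation*}
\|\varphi\|_{H^1(\mathbb{T}^1)}^2\leq C\bigl(-\langle\mathcal{L}_{\gamma}\varphi,\varphi\rangle_{L^2(\mathbb{T}^1)}\bigr),
\end{equation*}
which is the desired coercivity with $c_{co}=1/C$.

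\textbf{Main obstacle.} The only delicate step is Step 5, namely making sure that after bounding the intrinsic energy $\int\|\nabla_{\gamma_x}\varphi\|^2+\int|\varphi|^2$ we truly recover the extrinsic $H^1(\mathbb{T}^1;\mathbb{R}^N)$ norm appearing in the statement. This requires the bounded second fundamental form along $\gamma$, which is standard for smooth compact submanifolds and is independent of negative curvature. The two essential ingredients—negative sectional curvature to kill the $\varphi_\perp$-direction and the rigidity condition $(P3)$ to kill the $\gamma_x$-direction of the kernel of $\mathcal{L}_\gamma$—enter in Steps 1 and 3 respectively, consistent with Remark \ref{rem:coer:nece} that both are necessary.
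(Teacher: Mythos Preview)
Your proof is correct and takes a genuinely different route from the paper. The paper argues by contradiction and compactness: assuming $\mathcal S(\varphi_n)\to 0$ along a normalized sequence satisfying $(P3)$, it extracts a weak $H^1$-limit $\varphi_*$, shows (via the curvature term) that $\varphi_*$ is parallel to $\gamma_x$, then (via $(P3)$) that $\varphi_*=0$, and finally derives a contradiction by comparing $\partial_x\varphi_n$ with $\nabla_{\gamma_x}\varphi_n$. Your argument is direct and constructive: you split $\varphi=f\,\gamma_x/L_0+\varphi_\perp$, use the negative curvature to bound $\|\varphi_\perp\|_{L^2}$ from the curvature term, use $(P3)$ together with Poincar\'e--Wirtinger to bound $\|f\|_{L^2}$ from $\|f_x\|_{L^2}$, and finish with the Gauss formula $\varphi_x=\nabla_{\gamma_x}\varphi+\Pi(\gamma_x,\varphi)$ to pass from the intrinsic to the extrinsic $H^1$-norm. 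Your approach is more elementary, avoids compactness entirely, and in principle yields an explicit constant $c_{co}$ in terms of $\kappa$, $L_0$, the Poincar\'e constant, and $\|\Pi\|_{L^\infty(\gamma)}$; the paper's soft argument gives no quantitative information on $c_{co}$ but perhaps generalizes more readily if one weakens the curvature hypothesis as alluded to in Remark~\ref{rmk:iso:negcur}.
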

Before presenting its proof, we offer some comments. 
\begin{remark}\label{rem:coer:nece}
Both the rigidity condition ($P3$) and the negative curvature assumptions are essential for this proposition. Indeed, if the first condition is dropped,  we can take $\varphi(\cdot)= \gamma_x(\cdot)$ and get 
\begin{equation*}
    -\langle \mathcal{L}_{\gamma}\varphi, \varphi\rangle_{L^2(\mathbb{T}^1)}= \int_{\mathbb{T}^1} \big(\big\|\nabla_{\gamma_x}\varphi\big\|^2 - \langle R(\gamma_x, \varphi)\gamma_x, \varphi\rangle\big)\,dx= 0.
\end{equation*}
If the curvature is positive, then assumption may also fail. For example, let the target manifold be $\mathbb{S}^k\subset \mathbb{R}^{k+1}$ and suppose that $\gamma$ lies entirely in the plane $\mathbb{R}^k\times \{0\}$, then we can select $\varphi$ as $(0,0,..., 1)$ and yields the same degeneracy.
\end{remark}

\begin{remark}\label{rem:equi:norms}
   When the sectional curvature is strictly negative on $\gamma$, then there exists $C_{co}>0$ such that 
\begin{equation*}
  C_{co}^{-1}  \|(f, g)\|_{\mathcal{H}}\leq   \left(\mathcal{E}_{\gamma_p}(f, g)\right)^{1/2}\leq C_{co} \|(f, g)\|_{\mathcal{H}},
\end{equation*}
for every $p\in [0, 2\pi)$ and for every $ (f, g)\in \mathcal{H}_{\gamma_p, 0}$.
\end{remark} 

\begin{remark}
Thanks to Lemma \ref{lem:key:ob:Psi} and the above proposition,  the quantity $\mathcal{E}_{\gamma}(\Psi[t])$ can characterize the distance between $(\phi, \phi_t)$ and the rotation family of closed geodesics $\{(\gamma_p, 0)\}_{p\in [0, 2\pi)}$.
\end{remark}

The rest part of this section is devoted to
\begin{proof}[Proof of Proposition \ref{prop:coercivityofL}]

Without loss of generality, we assume that $|\gamma_x(x)|= 1$ for every $x\in \mathbb{T}^1$.  Define the inner product 
\[
\mathcal{S}(\varphi): = -\langle \mathcal{L}_{\gamma}\varphi, \varphi\rangle_{L^2(\mathbb{T}^1)}.
\]
It equals up to a constant the second variation of the energy with respect to a variation $h(\cdot, \varphi): (-\delta,\delta)\times \mathbb{T}^1\longrightarrow \mathcal{N}$ with $h_s(0,x) = \varphi(x)$ and $h(0,x) = \gamma(x) \; \forall x\in \mathbb{T}^1$. Due to Lemma \ref{lem:selfadjoint}, this can also be expressed in the intrinsic coordinates 
\[
\mathcal{S}(\varphi) = \int_{\mathbb{T}^1} \big(|\nabla_{\gamma_x(x)}\varphi(x)|^2 - \langle R(\gamma_x, \varphi)\gamma_x, \varphi\rangle(x)\big)\,dx. 
\]

The negative curvature assumption implies that function $\mathcal{S}(\varphi)$ is non-negative for any $\varphi\in H^1(\mathbb{T}^1; \gamma^*(T\mathcal{N}))$.
To prove the lower bound, we proceed by contradiction. Assume there exists a sequence $\{\varphi\}_{n\geq 1}\subset H^1(\mathbb{T}^1; \gamma^*(TM))$ with 
\[
\big\|\varphi_n\big\|_{H^1(\mathbb{T}^1)} = 1,\,\langle \varphi_n, \gamma_x\rangle_{L^2(\mathbb{T}^1)} = 0\, 
\]
and such that 
\[
\lim_{n\rightarrow \infty} \mathcal{S}(\varphi_n) = 0 
\]
Using the Banach-Alaoglu lemma as well as the Rellich compactness lemma we can select a subsequence and $\varphi_*\in H^1(\mathbb{T}^1; \gamma^*(T \mathcal{N}))$, which we label in the same way, such that 
\begin{align}
    \varphi_{n, x}&\rightharpoonup \varphi_{*, x} \textrm{ weakly in } L^2(\mathbb{T}^1; \mathbb{R}^N), \label{eq:weak:conv:inRN} \\
     \varphi_{n}&\rightarrow \varphi_{*} \textrm{  in } C^0(\mathbb{T}^1; \mathbb{R}^N).
\end{align}
We also immediately know from the limit of $\mathcal{S}(\varphi_n)$ that 
\begin{gather}
\int_{\mathbb{T}^1} |\nabla_{\gamma_x(x)}\varphi_n(x)|^2 \,dx \longrightarrow 0, \label{ine:con:nab:varphin} \\
\langle R(\gamma_x, \varphi_*)\gamma_x, \varphi_*\rangle(x)= 0,  \; \forall x\in \mathbb{T}^1.
\end{gather}
The preceding equality shows that for any $x\in \mathbb{T}^1$, the vector $\varphi_*$ is parallel to $\gamma_x$.
\vspace{2mm}

Next, we first show that the weak convergence of $\{\varphi_{n, x}\}$ yields the weak convergence of $ \nabla_{\gamma_x(x)}\varphi_{n}$,
\begin{align} \label{eq:weak:nabla}
    \nabla_{\gamma_x(x)}\varphi_{n}&\rightharpoonup  \nabla_{\dot \gamma(x)} \varphi_{*} \textrm{ weakly in } L^2(\mathbb{T}^1; \mathbb{R}^N).
\end{align}
Indeed, by the definition of $\nabla_{ \gamma_x(x)}\varphi_{n}(x)$ we have 
\begin{gather*}
    \varphi_{n, x}(x)= \nabla_{\gamma_x(x)}\varphi_{n}(x)+ g_{n}(x) \textrm{ with }  g_n(x)\in T^{\perp}_{\gamma(x)} \mathcal{N}\; \forall x\in \mathbb{T}^1, \\
    \varphi_{*, x}(x)= \nabla_{\gamma_x(x)}\varphi_{*}(x)+ g_{*}(x) \textrm{ with }  g_*(x)\in T^{\perp}_{\gamma(x)} \mathcal{N}\; \forall x\in \mathbb{T}^1.
\end{gather*}
Thanks to the weak convergence \eqref{eq:weak:conv:inRN} of $\varphi_{n, x}$ in $L^2(\mathbb{T}^1; \mathbb{R}^N)$, by choosing the test function $f\in L^2(\mathbb{T}^1; \gamma^*(T\mathcal{N}))\subset L^2(\mathbb{T}^1; \mathbb{R}^N)$, one has
\begin{equation*}
    \langle \varphi_{n, x}, f\rangle_{L^2(\mathbb{T}^1; \mathbb{R}^N)} \longrightarrow \langle \varphi_{*, x}, f\rangle_{L^2(\mathbb{T}^1; \mathbb{R}^N)}. 
\end{equation*}
Thus,  
\begin{equation*}
    \langle  \nabla_{\gamma_x(x)}\varphi_{n}, f\rangle_{L^2(\mathbb{T}^1; \mathbb{R}^N)} \longrightarrow \langle  \nabla_{\gamma_x(x)}\varphi_{*}, f\rangle_{L^2(\mathbb{T}^1; \mathbb{R}^N)}\;\; \forall f\in L^2(\mathbb{T}^1; \gamma^*(T\mathcal{N})).
\end{equation*}
For any $f\in L^2(\mathbb{T}^1; \mathbb{R}^N)$, we define $f_1(x)$ as the projection of $f(x)$ on the tangent space $T_{\gamma(x)}\mathcal{N}$, then 
\begin{gather*}
  \langle  \nabla_{\gamma_x(x)}\varphi_{n}, f\rangle_{L^2(\mathbb{T}^1; \mathbb{R}^N)}=   \langle  \nabla_{\gamma_x(x)}\varphi_{n}, f_1\rangle_{L^2(\mathbb{T}^1; \mathbb{R}^N)} \;\; \;\;\;\;\;\;\; \;\; \;\;\;\;\;\;\;\;\;\;\;\;   \;\; \;\;\;\;\;\;\; \;\; \;\;\\  
 \;\;\;\;\; \;\; \;\; \;\; \;\;\;\;\;\;\; \;\; \;\;\;\;\;\;\; \;\; \;\;\;\;\;\;\; \longrightarrow \langle  \nabla_{\gamma_x(x)}\varphi_{*}, f_1\rangle_{L^2(\mathbb{T}^1; \mathbb{R}^N)}= \langle  \nabla_{\gamma_x(x)}\varphi_{*}, f\rangle_{L^2(\mathbb{T}^1; \mathbb{R}^N)}.
\end{gather*}
This finishes the proof of \eqref{eq:weak:nabla}.  From the lower semi-continuity, the weak convergence \eqref{eq:weak:nabla} yields 
\begin{equation}
     \|\nabla_{\gamma_x} \varphi_{*}\|_{L^2(\mathbb{T}^1)}
 \leq \liminf_{n}  \|\nabla_{\gamma_x}\varphi_{n}\|_{L^2(\mathbb{T}^1)}.
\end{equation}
This combined with \eqref{ine:con:nab:varphin} yield 
\begin{equation}
    \nabla_{\gamma_x} \varphi_{*}(x)= 0  \; \; \forall x\in \mathbb{T}^1.
\end{equation}

Using the fact that $\gamma_x(x)$ and $\varphi_*(x)$ belong to $T_{\gamma(x)}\mathcal{N}$, we conclude that 
 \begin{align*}
\partial_x\big( \varphi_*\cdot\gamma_x\big)= \partial_x \varphi_*\cdot\gamma_x + \varphi_*\cdot \partial_x \gamma_x 
= \nabla_{\gamma_x}\varphi_*\cdot\gamma_x + \varphi_*\cdot \nabla_{\gamma_x} \gamma_x = 0,
\end{align*}
and so we obtain that 
\[
 \varphi_*\cdot\gamma_x \equiv  \text{const} \; \; \forall x\in \mathbb{T}^1.
 \]
 Observing from the ($P3$) condition that
 \[
 \langle \varphi_*, \gamma_x\rangle_{L^2(\mathbb{T}^1)} = \lim_{n\rightarrow\infty} \langle \varphi_n, \gamma_x\rangle_{L^2(\mathbb{T}^1)} = 0,
 \]
we then conclude that $ \varphi_*\cdot\gamma_x= 0$ for every $x$. 
\vspace{2mm}
 
The negative sectional curvature assumption implies that 
  \begin{align*}
      0 &= \lim_{n}\int_{\mathbb{T}^1} - \langle R(\gamma_x, \varphi_n)\gamma_x, \varphi_n\rangle(x)\,dx. \\
&=  -\int_{\mathbb{T}^1} \big(\langle R(\gamma_x, \varphi_*)\gamma_x, \varphi_*\rangle(x)\big)\,dx\\
&\geq c  \int_{\mathbb{T}^1} \big ( \langle  \gamma_x, \gamma_x \rangle\langle  \varphi_*, \varphi_* \rangle - (\langle \gamma_x, \varphi_* \rangle)^2\big)(x)  \,dx \\
&= c  \int_{\mathbb{T}^1} \langle  \varphi_*, \varphi_* \rangle(x)\,dx.
\end{align*}
This implies that $\varphi_* = 0$.  Therefore, 
\begin{equation*}
    \|\varphi_{n, x}\|_{L^2(\mathbb{T}^1)}\longrightarrow 1.
\end{equation*}

To arrive at a contradiction, recall the orthogonal tangent frame $\{{\bf f_j(x)}\}_{j= 1}^L$ of $T_{\gamma(x)}\mathcal{N}$ defined in Section \ref{Sec:cont:1},  we  write 
\[
\varphi_n(x) = \sum_{j=1}^L a_{j,n}(x)\cdot {\bf{f}}_j(x).
\] 
Then the fact that $\varphi_*=0$ yields
\begin{equation*}
    \|a_{j, n}\|_{L^2(\mathbb{T}^1)}\longrightarrow 0\;\; \forall j= 1,2,..., L.
\end{equation*}
Next, writing 
\begin{align*}
\partial_x\varphi_n (x) &= \sum_{j=1}^L \partial_x\big(a_{j,n}\big)\cdot {\bf{f}}_j(x) + \sum_{j=1}^L a_{j,n}(x)\cdot \partial_x{\bf{f}}_j(x),\\
\nabla_{\dot{\gamma}}\varphi_n (x) &= \sum_{j=1}^L \partial_x\big(a_{j,n}\big)\cdot {\bf{f}}_j(x) + \sum_{j=1}^L a_{j,n}(x)\cdot \nabla_{\gamma_x}{\bf{f}}_j(x),
\end{align*}
the convergence of $\{a_{j, n}\}$ leads to 
\begin{align*}
\|\partial_x \varphi_n- \nabla_{\gamma_x} \varphi_n\|_{L^2(\mathbb{T}^1)}\longrightarrow 0.
\end{align*}
Thus 
\begin{align*}
\|\nabla_{\gamma_x} \varphi_n\|_{L^2(\mathbb{T}^1)}\longrightarrow 1.
\end{align*}
This is in contradiction with \eqref{ine:con:nab:varphin}, and ends the proof of Proposition \ref{prop:coercivityofL}.
\end{proof}

\vspace{3mm}

\subsection{Exponential stability of the linearized equation on $\Psi$ based on propagation of smallness and coercive estimates}\label{subsec:linearstabilityvarphi}
We prove Proposition \ref{prop:propagationofsmallness} concerning the  equation: 
\begin{equation}\label{eq:linearkey}
-\Psi_{tt}+ \mathcal{L}_{\gamma} \Psi- a(x)\Psi_t = g \textrm{ with } \Psi[t]\in \mathcal{H}_{\gamma}.
\end{equation}

The key point shall be to understand how the friction term $ a(x)\Psi_t$ causes the exponential decay of a suitable energy functional, namely $\mathcal{E}_{\gamma}(\Psi[t])$ which we defined in Definition \ref{def:ene:varphi}. 
Recall that 
\[ \Psi(t, x)\in T_{\gamma(x)}\mathcal{N}\;\; \forall x\in \mathbb{T}^1, \] 
and the orthogonal tangent frame $\{{\bf f_j(x)}\}_{j= 1}^L$ of $T_{\gamma(x)}\mathcal{N}$ defined in Section \ref{Sec:cont:1}. Set 
\[
\Psi(t, x) = \sum_{j=1}^L a_j(t, x)\cdot {\bf{f}}_j(x), \; \forall x\in \mathbb{T}^1.
\]
Then $\Psi_t(t, x)$ stays in $T_{\gamma(x)}\mathcal{N}$: 
\[
\Psi_t(t, x) = \sum_{j=1}^n \partial_ta_j(t, x)\cdot {\bf{f}}_j(x)\in T_{\gamma(x)}\mathcal{N} \;\; \forall x\in \mathbb{T}^1.
\]
Thus   
\[
\partial_t\big(\nabla_{\dot{\gamma}}\Psi\big) = \nabla_{\dot{\gamma}}\big(\partial_t\Psi\big) \textrm{ and } \partial_t(\mathcal{L}_{\gamma}\Psi)= \mathcal{L}_{\gamma}\Psi_t.
\]
Using the fact that $\mathcal{L}_{\gamma}$ is self-adjoint for functions in $H^1(\mathbb{T}^1; \gamma^*(T\mathcal{N}))$, namely Lemma \ref{lem:selfadjoint}, 
\begin{align*}
-\frac{d}{dt}\langle \mathcal{L}_{\gamma}\Psi, \Psi\rangle_{L^2(\mathbb{T}^1)} &= -\langle  \partial_t(\mathcal{L}_{\gamma}\Psi), \Psi\rangle_{L^2(\mathbb{T}^1)}-\langle  \mathcal{L}_{\gamma}\Psi, \partial_t\Psi\rangle_{L^2(\mathbb{T}^1)} \\
&= -\langle  \mathcal{L}_{\gamma}\Psi_t, \Psi\rangle_{L^2(\mathbb{T}^1)}-\langle  \mathcal{L}_{\gamma}\Psi, \Psi_t\rangle_{L^2(\mathbb{T}^1)} \\
&= -2 \langle  \mathcal{L}_{\gamma}\Psi, \Psi_t\rangle_{L^2(\mathbb{T}^1)}.
\end{align*}

Hence the variation of the energy functional $\mathcal{E}_{\gamma}(\Psi[t])$ reads
\begin{align}
\frac{d}{dt} \mathcal{E}_{\gamma}(\Psi[t]) &= \langle \Psi_{tt}, \Psi_{t} \rangle_{L^2(\mathbb{T}^1)}  -\langle \mathcal{L}_{\gamma}\Psi, \Psi_t\rangle_{L^2(\mathbb{T}^1)}  \notag\\
&= \langle  \mathcal{L}_{\gamma} \Psi- a(x)\Psi_t- g, \Psi_{t} \rangle_{L^2(\mathbb{T}^1)}  -\langle \mathcal{L}_{\gamma}\Psi, \Psi_t\rangle_{L^2(\mathbb{T}^1)} \notag\\
&=  -\langle a(x)\Psi_t, \Psi_{t} \rangle_{L^2(\mathbb{T}^1)}- \langle g, \Psi_{t} \rangle_{L^2(\mathbb{T}^1)}. \label{eq:varivarEvarphi}
\end{align}
Heuristically, if during some period $t\in (0, T)$ the following two hold simultanesly 
\begin{gather*}
    \|g\|_{L^2(0, T; L^2(\mathbb{T}^1))}\ll \mathcal{E}_{\gamma}^{\frac12}(\Psi[0]),  \\
   \int_0^{T} \langle a(x)\Psi_t, \Psi_{t} \rangle_{L^2(\mathbb{T}^1)} \, dt\geq c \mathcal{E}_{\gamma}(\Psi[0]),
\end{gather*}
then $\mathcal{E}_{\gamma}(\Psi[t])$ decays.
\vspace{3mm}

In light of the preceding discussion, by combining the propagation of the smallness property (Corollary \ref{cor:propaga:linear}) and the coercive estimate Proposition \ref{prop:coercivityofL}, we deduce the following result concerning the asymptotic stability of the component $\Psi[t]$:

\begin{prop}\label{prop:propagationofsmallness}
There exist positive constants $\delta_{li}$ and $c_{li}$ such that for any pair $\Psi$ and $g$ satisfying the following properties 
\begin{gather}
    -\Psi_{tt}+ \mathcal{L}_{\gamma} \Psi- a(x)\Psi_t = g,  \label{eq:varphi:linear} \\
  \Psi[t]\in \mathcal{H}_{\gamma},\; \forall t\in [0, 32\pi], \label{eq:varphi:linear:2rd}  \\
      \|g\|_{L^2(0, 32\pi; L^2(\mathbb{T}^1))}^2\leq \delta_{li} \mathcal{E}_{\gamma}(\Psi[0]),  \label{eq:gcond} 
\end{gather}
one has 
\begin{equation}\label{ine:avarli1st:ori}
      \int_0^{32\pi} \langle a(x)\Psi_t, \Psi_{t} \rangle_{L^2(\mathbb{T}^1)} \, dt\geq c_{li} \mathcal{E}_{\gamma}(\Psi[0]),
\end{equation}
and 
\begin{gather}\label{ine:avarli2nd}
      \mathcal{E}_{\gamma}(\Psi[32\pi])\leq (1- c_{li})  \mathcal{E}_{\gamma}(\Psi[0]), \\
    \mathcal{E}_{\gamma}(\Psi[t])\leq 2  \mathcal{E}_{\gamma}(\Psi[0]), \; \forall t\in [0, 32\pi]. \label{ine:avarli3rd}
\end{gather}
\end{prop}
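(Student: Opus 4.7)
\medskip

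\textbf{Plan.} The starting point is the energy identity \eqref{eq:varivarEvarphi} already derived in Section \ref{subsec:linearstabilityvarphi}, namely
\begin{equation*}
\frac{d}{dt}\mathcal{E}_{\gamma}(\Psi[t]) \;=\; -\langle a(x)\Psi_t,\Psi_t\rangle_{L^2(\mathbb{T}^1)} \;-\; \langle g,\Psi_t\rangle_{L^2(\mathbb{T}^1)}.
\end{equation*}
For the uniform bound \eqref{ine:avarli3rd}, I will integrate this identity and control the source term by Cauchy--Schwarz using $\|\Psi_t\|_{L^2}^2\le 2\mathcal{E}_{\gamma}(\Psi[t])$; a standard bootstrap then closes provided $\delta_{li}$ is chosen so small that $2\sqrt{32\pi\,\delta_{li}}\le 1$. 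The inequality \eqref{ine:avarli2nd} is an automatic corollary of \eqref{ine:avarli1st:ori} together with the same energy identity applied between $t=0$ and $t=32\pi$. Hence the whole proof reduces to establishing the observability-type lower bound \eqref{ine:avarli1st:ori}.

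\medskip

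To prove \eqref{ine:avarli1st:ori}, I will argue by contradiction. Suppose one can find sequences $\Psi_n$ and $g_n$ satisfying \eqref{eq:varphi:linear}--\eqref{eq:gcond} and normalised so that $\mathcal{E}_{\gamma}(\Psi_n[0])=1$, while $\int_0^{32\pi}\langle a\Psi_{n,t},\Psi_{n,t}\rangle\,dt\to 0$ and $\|g_n\|_{L^2_{t,x}}\to 0$. Since $\mathcal{L}_\gamma$ annihilates $\gamma_x$, the quantity $\mathcal{E}_{\gamma}$ is degenerate along the line $\{b\gamma_x:b\in\mathbb{R}\}$, so I decompose $\Psi_n(t)=\tilde\Psi_n(t)+b_n(t)\gamma_x$ with $\tilde\Psi_n(t)\in\mathcal{H}_{\gamma,0}$, as in Lemma~\ref{lem:key:ob:Psi}. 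Replacing $\Psi_n$ by $\Psi_n-b_n(0)\gamma_x$ we can assume $b_n(0)=0$, so that Proposition~\ref{prop:coercivityofL} and Lemma~\ref{lem:key:ob:Psi} give $\|\Psi_n[0]\|_{\mathcal H}\sim \mathcal{E}_{\gamma}(\Psi_n[0])^{1/2}\sim 1$. Projecting the equation against $\gamma_x$ yields the second-order ODE
\begin{equation*}
L b_n''+ l\,b_n' \;=\; -\langle g_n,\gamma_x\rangle - \langle a\tilde\Psi_{n,t},\gamma_x\rangle,
\end{equation*}
while $\tilde\Psi_n$ satisfies the same abstract equation as $\Psi_n$ with modified right-hand side $\tilde g_n=g_n+(b_n''+ab_n')\gamma_x$.

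\medskip

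Rewriting $\mathcal{L}_\gamma\Psi_n-\Psi_{n,xx}$ as lower-order terms $A(x)\Psi_{n,x}+B(x)\Psi_n$, the equation for $\Psi_n$ fits the hypotheses of Corollary~\ref{cor:propaga:linear}. After the time shift $t\mapsto t+16\pi$, this corollary yields
\begin{equation*}
\|\Psi_{n,t}\|^2_{L^\infty_x L^2_t((13\pi,19\pi))} \;\longrightarrow\; 0,
\end{equation*}
hence $\int_{13\pi}^{19\pi}\|\Psi_{n,t}(s)\|^2_{L^2_x}\,ds\to 0$. Using $b_n'(t)=L^{-1}\langle\Psi_{n,t}(t),\gamma_x\rangle$ and the ODE for $b_n$ together with the $L^2_{t,x}$-smallness of $g_n$ and of $\sqrt{a}\tilde\Psi_{n,t}$, I conclude that $b_n'$ and $b_n''$ are both small in $L^2(13\pi,19\pi)$, whence $\tilde g_n\to 0$ and $\tilde\Psi_{n,t}\to 0$ in $L^2((13\pi,19\pi)\times\mathbb{T}^1)$.

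\medskip

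The decisive step is then to test the equation for $\tilde\Psi_n$ against $\tilde\Psi_n$ itself over a subinterval $(t_1',t_2')\subset(13\pi,19\pi)$ chosen so that $\|\tilde\Psi_{n,t}(t_i')\|_{L^2}\to 0$ (possible by Fubini). Integration by parts in time gives
\begin{equation*}
-\int_{t_1'}^{t_2'}\!\!\int_{\mathbb{T}^1}\langle\mathcal{L}_\gamma\tilde\Psi_n,\tilde\Psi_n\rangle\,dx\,dt \;=\; \bigl[\text{boundary terms}\bigr] + \int\!\!\int|\tilde\Psi_{n,t}|^2 - \int\!\!\int a\,\tilde\Psi_{n,t}\cdot\tilde\Psi_n + \int\!\!\int \tilde g_n\cdot\tilde\Psi_n,
\end{equation*}
every term on the right vanishing in the limit. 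Since $\tilde\Psi_n(t)\in\mathcal{H}_{\gamma,0}$, the coercive estimate of Proposition~\ref{prop:coercivityofL} forces $\int_{t_1'}^{t_2'}\|\tilde\Psi_n(t)\|^2_{H^1}\,dt\to 0$. Combined with the smallness of $\tilde\Psi_{n,t}$ and $b_n'$, this yields $\int_{t_1'}^{t_2'}\mathcal{E}_\gamma(\Psi_n[t])\,dt\to 0$, so some $\bar t_n\in(t_1',t_2')$ satisfies $\mathcal{E}_\gamma(\Psi_n[\bar t_n])\to 0$. Applying the energy identity backwards from $\bar t_n$ to $0$ gives $\mathcal{E}_\gamma(\Psi_n[0])\to 0$, contradicting the normalisation. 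The main obstacle is precisely the handling of the degenerate $\gamma_x$-direction of $\mathcal{L}_\gamma$: the propagation of smallness alone controls only $\Psi_t$, and recovering control of $\|\tilde\Psi_n\|_{H^1}$ is what forces the pairing argument that converts a $\Psi_t$-observation into a genuine energy observation via Proposition~\ref{prop:coercivityofL}.
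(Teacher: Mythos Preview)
Your proposal is correct and follows essentially the same strategy as the paper: the energy identity reduces everything to the observability bound \eqref{ine:avarli1st:ori}; propagation of smallness (Corollary~\ref{cor:propaga:linear}) controls $\Psi_t$ on $(13\pi,19\pi)$; testing the equation against $\Psi$ converts this into smallness of $-\langle\mathcal{L}_\gamma\Psi,\Psi\rangle$; and the coercive estimate (Proposition~\ref{prop:coercivityofL}), after projecting out the $\gamma_x$-component, yields the contradiction.

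The differences are purely technical. The paper gives a direct quantitative argument (tracking explicit powers of $\tilde\delta_{li},\tilde c_{li}$) rather than a sequential contradiction; it subtracts a \emph{constant} multiple of $\gamma_x$ chosen so that the shifted function lies in $\mathcal{H}_{\gamma,0}$ at the central time $t=16\pi$ (rather than at $t=0$), and pairs with a smooth time cutoff $\eta$ instead of selecting favourable endpoints $t_1',t_2'$. In particular the paper never writes the ODE for $b_n$: since $b(t)-b(16\pi)=L^{-1}\int_{16\pi}^t\langle\Psi_t,\gamma_x\rangle\,ds$, the smallness of $b(t)$ on $(13\pi,19\pi)$ follows directly from the $L^2$-smallness of $\Psi_t$ there, making your $b_n''$ step unnecessary. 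Both routes are valid; the paper's is slightly leaner and yields effective constants.
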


In the sequel, we reduce the proposition to the Lemma \ref{lem:propagationofsmallnessvarphi} and then establish this auxiliary result. 
\subsubsection{A reduction step: Lemma \ref{lem:propagationofsmallnessvarphi} implies Proposition \ref{prop:propagationofsmallness}}
\begin{lemma}\label{lem:propagationofsmallnessvarphi}
There exist constants $\tilde \delta_{li}>0$ and $\tilde c_{li}>0$ such that for any pair $\Psi$ and $g$ satisfying the following properties 
\begin{gather}
    -\Psi_{tt}+ \mathcal{L}_{\gamma} \Psi- a(x)\Psi_t = g,  \label{eq:varphi:linear:new} \\
    \Psi[t]\in \mathcal{H}_{\gamma},\; \forall t\in [0, 32\pi], \\
    \Psi[16 \pi]\in \mathcal{H}_{\gamma, 0}, \\
      \|g\|_{L^2(0, 32\pi; L^2(\mathbb{T}^1))}^2\leq \tilde \delta_{li} \|\Psi[16\pi]\|_{\mathcal{H}}^2,  \label{eq:g:linear:new} 
\end{gather}
one has 
\begin{equation}\label{ine:avarli1st}
      \int_0^{32\pi} \langle a(x)\Psi_t, \Psi_{t} \rangle_{L^2(\mathbb{T}^1)} \, dt\geq \tilde c_{li} \|\Psi[16\pi]\|_{\mathcal{H}}^2.
\end{equation}
\end{lemma}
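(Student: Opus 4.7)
\textbf{Proof plan for Lemma \ref{lem:propagationofsmallnessvarphi}.}
I would argue by contradiction. Suppose there are sequences $(\Psi_n, g_n)$ satisfying \eqref{eq:varphi:linear:new}--\eqref{eq:g:linear:new} with $\|\Psi_n[16\pi]\|_\mathcal{H}=1$, $\Psi_n[16\pi]\in\mathcal{H}_{\gamma,0}$, $\|g_n\|_{L^2_{t,x}}\to 0$, and $\int_0^{32\pi}\langle a\Psi_{n,t},\Psi_{n,t}\rangle dt\to 0$. Standard energy estimates for the damped linear wave equation with source bound $\Psi_n$ uniformly in $C([0,32\pi];\mathcal{H})$. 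Integrating the energy identity
\[
\tfrac{d}{dt}\mathcal{E}_\gamma(\Psi_n[t])=-\langle a\Psi_{n,t},\Psi_{n,t}\rangle-\langle g_n,\Psi_{n,t}\rangle,
\]
and using Cauchy-Schwarz for the second term, one obtains $\sup_t|\mathcal{E}_\gamma(\Psi_n[t])-\mathcal{E}_\gamma(\Psi_n[16\pi])|\to 0$. Since $\Psi_n[16\pi]\in\mathcal{H}_{\gamma,0}$, the coercive estimate of Proposition \ref{prop:coercivityofL} (cf.\ Remark \ref{rem:equi:norms}) gives $\mathcal{E}_\gamma(\Psi_n[16\pi])\sim\|\Psi_n[16\pi]\|_\mathcal{H}^2=1$; in particular $\mathcal{E}_\gamma(\Psi_n[t])\geq c>0$ for all $t$ and $n$ large.

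Next, the equation $-\Psi_{n,tt}+\mathcal{L}_\gamma\Psi_n-a\Psi_{n,t}=g_n$ may be rewritten as $\Box\Psi_n+{\bf A}(x)\partial_x\Psi_n+{\bf B}(x)\Psi_n-a(x)\Psi_{n,t}=g_n$, so Corollary \ref{cor:propaga:linear} applies (after translating time so that $16\pi\mapsto 0$). Choosing $J\subset\omega$ where $a\geq a_0>0$ yields $\delta_{1,n},\delta_{2,n}\to 0$, hence
\[
\|\Psi_{n,t}\|_{L^\infty_x L^2_t(13\pi,19\pi)}^2\to 0.
\]
Integrating in time from $16\pi$ and writing $U_n:=\Psi_n(16\pi,\cdot)$ gives $\|\Psi_n(t,\cdot)-U_n\|_{L^\infty((13\pi,19\pi)\times\mathbb{T}^1)}\to 0$. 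Extracting a subsequence, $U_n\rightharpoonup U_\ast$ weakly in $H^1(\mathbb{T}^1;\gamma^\ast(T\mathcal{N}))$ and strongly in $C^0$ (by Rellich and $H^1\hookrightarrow C^{1/2}$); $U_\ast\in H^1$ lies in $\gamma^\ast(T\mathcal{N})$ and satisfies the rigidity condition $\langle U_\ast,\gamma_x\rangle_{L^2}=0$. Testing the equation against $\eta(t)\phi(x)$ with $\eta\in C_c^\infty(13\pi,19\pi)$ and $\phi\in C^\infty(\mathbb{T}^1;\gamma^\ast(T\mathcal{N}))$, and using Lemma \ref{lem:selfadjoint} plus the smallness of $\Psi_{n,t}$, $g_n$, gives $\langle U_\ast,\mathcal{L}_\gamma\phi\rangle_{L^2}=0$ for all $\phi$, so $\mathcal{L}_\gamma U_\ast=0$. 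Proposition \ref{prop:coercivityofL} then forces $U_\ast=0$, and consequently $\Psi_n\to 0$ in $L^\infty((13\pi,19\pi)\times\mathbb{T}^1)$.

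To reach the contradiction I would now show $\mathcal{E}_\gamma(\Psi_n[16\pi])\to 0$. Fix $\eta\in C_c^\infty(13\pi,19\pi)$ with $\int\eta\,dt=1$ and consider
\[
I_n:=\int\eta(t)\,\langle\mathcal{L}_\gamma\Psi_n(t),\Psi_n(t)\rangle\,dt=\int\eta\int\Psi_n\cdot\big(\Psi_{n,tt}+a\Psi_{n,t}+g_n\big)\,dx\,dt.
\]
Integrating by parts in $t$ in the first term and using the $L^2_{t,x}$-smallness of $\Psi_{n,t}$ and $g_n$ together with the uniform $\mathcal{H}$-bound on $\Psi_n$, every piece is $o(1)$, so $I_n\to 0$. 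By Lemma \ref{lem:selfadjoint},
\[
\langle\mathcal{L}_\gamma\Psi_n,\Psi_n\rangle_{L^2}=-\int_{\mathbb{T}^1}|\nabla_{\gamma_x}\Psi_n|^2\,dx+\int_{\mathbb{T}^1}\langle R(\gamma_x,\Psi_n)\gamma_x,\Psi_n\rangle\,dx,
\]
and the curvature integral is controlled by $\|\Psi_n\|_{L^2}^2=o(1)$ (uniform in $t$ on $(13\pi,19\pi)$), yielding $\int\eta\int|\nabla_{\gamma_x}\Psi_n|^2\,dx\,dt\to 0$. Combining this with $\int_{13\pi}^{19\pi}\|\Psi_{n,t}\|_{L^2}^2\,dt\to 0$ and the mean value theorem produces $t_n^{\ast\ast}\in(13\pi,19\pi)$ at which both $\|\Psi_{n,t}(t_n^{\ast\ast})\|_{L^2}^2\to 0$ and $\int|\nabla_{\gamma_x}\Psi_n(t_n^{\ast\ast})|^2\,dx\to 0$. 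Since $\|\Psi_n(t_n^{\ast\ast})\|_{L^\infty}\to 0$ as well, the curvature term is negligible at this time and $\mathcal{E}_\gamma(\Psi_n[t_n^{\ast\ast}])\to 0$. The energy identity from the first paragraph then forces $\mathcal{E}_\gamma(\Psi_n[16\pi])\to 0$, contradicting the lower bound $\mathcal{E}_\gamma(\Psi_n[16\pi])\geq c>0$.

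The main obstacle in this scheme is the step from a weak limit $U_\ast=0$ to a genuine quantitative decay of $\mathcal{E}_\gamma(\Psi_n[16\pi])$: weak convergence in $H^1$ only gives $U_n\to 0$ in $L^\infty$, but the energy is sensitive to the full $H^1$ norm through $-\langle\mathcal{L}_\gamma U_n,U_n\rangle\sim\|U_n\|_{H^1}^2$. The device that overcomes this is the time-averaged identity $I_n\to 0$, which, together with the mean value argument, lets us evaluate $\mathcal{E}_\gamma$ at a well-chosen time $t_n^{\ast\ast}$ where \emph{both} the kinetic and the gradient contributions collapse; this is where the combination of propagation of smallness and the coercivity of $\mathcal{L}_\gamma$ (under rigidity) is essential.
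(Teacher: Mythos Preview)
Your proof is correct and shares the paper's two-step skeleton: apply propagation of smallness (Corollary~\ref{cor:propaga:linear}) to make $\Psi_t$ small on $(13\pi,19\pi)$, then test the equation against $\eta(t)\Psi$ to show that the time-averaged $\langle\mathcal{L}_\gamma\Psi,\Psi\rangle$ is small, and contradict coercivity (Proposition~\ref{prop:coercivityofL}).

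The difference lies in how you enforce the rigidity condition at times $t\neq 16\pi$. You take a compactness route: normalize, extract a weak $H^1$ limit $U_\ast$ of $U_n=\Psi_n(16\pi,\cdot)$, prove $\mathcal{L}_\gamma U_\ast=0$ hence $U_\ast=0$, and use the resulting $L^\infty$-smallness of $\Psi_n$ on $(13\pi,19\pi)\times\mathbb{T}^1$ to kill the curvature piece of the Synge formula, finally selecting a single time $t_n^{**}$ via a pigeonhole argument. The paper instead writes $\Psi(t,\cdot)=\tilde\Psi(t,\cdot)+b(t)\gamma_x$ with $\tilde\Psi(t,\cdot)$ satisfying the rigidity condition and invokes the rotational invariance \eqref{eq:mathcalLselfinva} to get $\langle\mathcal{L}_\gamma\Psi,\Psi\rangle=\langle\mathcal{L}_\gamma\tilde\Psi,\tilde\Psi\rangle$; since $b(16\pi)=0$ and $\Psi_t$ is quantitatively small, so is $|b(t)|$, and coercivity applied to $\tilde\Psi$ yields a direct lower bound $\int\eta(-\langle\mathcal{L}_\gamma\Psi,\Psi\rangle)\,dt\gtrsim\|\Psi[16\pi]\|_\mathcal{H}^2$ without ever needing $\|\Psi\|$ small. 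This bypasses your weak-limit detour entirely and delivers effectively computable constants $\tilde\delta_{li},\tilde c_{li}$ rather than abstract ones coming from contradiction-by-sequences. Your argument is sound, but the $b(t)\gamma_x$ decomposition is the cleaner device for restoring rigidity off the reference time and is worth noting.
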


We  first present the following basic energy estimate:
\begin{lemma}\label{lem:equinormEgamma}
    There exist $\delta>0$ and $C>0$ such that for every $s\in [0, 32\pi]$, and for every pair $\Psi$ and $g$ satisfying the following properties 
\begin{gather*}
    -\Psi_{tt}+ \mathcal{L}_{\gamma} \Psi- a(x)\Psi_t = g,   \\
    \Psi[t]\in \mathcal{H}_{\gamma},\; \forall t\in [0, 32\pi], \\
      \|g\|_{L^2(0, 32\pi; L^2(\mathbb{T}^1))}^2\leq \delta \mathcal{E}_{\gamma}(\Psi[s]),  
\end{gather*}
one has 
\begin{equation*}
   C^{-1}  \mathcal{E}_{\gamma}(\Psi[t_2]) \leq   \mathcal{E}_{\gamma}(\Psi[t_1])\leq C  \mathcal{E}_{\gamma}(\Psi[t_2]) \; \; \forall t_1, t_2\in [0, 32\pi]. 
\end{equation*}
and
\begin{equation*}
     \mathcal{E}_{\gamma}(\Psi[t_2]) \leq   2\mathcal{E}_{\gamma}(\Psi[t_1]) \;\; \forall 0\leq t_1\leq t_2\leq 32\pi. 
\end{equation*}
\end{lemma}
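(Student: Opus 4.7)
The plan is to combine the energy identity \eqref{eq:varivarEvarphi} with the pointwise bound $\|\Psi_t(t,\cdot)\|_{L^2(\mathbb{T}^1)}^2 \leq 2\mathcal{E}_\gamma(\Psi[t])$, valid because the negative sectional curvature assumption forces $-\langle \mathcal{L}_\gamma \Psi, \Psi\rangle_{L^2(\mathbb{T}^1)} \geq 0$ via Lemma~\ref{lem:selfadjoint}, and then to apply a two-sided Gronwall argument to the scalar quantity $E(t) := \mathcal{E}_\gamma(\Psi[t])$. The point is to compare $E$ at arbitrary points of $[0,32\pi]$ with the single reference value $E(s)$ that controls $\|g\|$ in the hypothesis.

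First I would note that \eqref{eq:varivarEvarphi} combined with $\|\Psi_t\|_{L^2}^2 \leq 2E(t)$ yields the a.e. inequality
\[
\left|\frac{d}{dt}\sqrt{E(t)}\right| \leq \|a\|_{L^\infty(\mathbb{T}^1)}\sqrt{E(t)} + \frac{1}{\sqrt{2}}\|g(t,\cdot)\|_{L^2(\mathbb{T}^1)},
\]
which is a scalar ODE amenable to Gronwall in both time directions. Integrating from $s$ to an arbitrary $t \in [0, 32\pi]$ and bounding $\int_0^{32\pi}\|g(\tau,\cdot)\|_{L^2}d\tau \leq \sqrt{32\pi}\,\|g\|_{L^2(0,32\pi;L^2(\mathbb{T}^1))} \leq \sqrt{32\pi\delta}\,\sqrt{E(s)}$ via Cauchy--Schwarz, one obtains constants $C_1, C_2$ depending only on $\|a\|_{L^\infty}$ such that
\[
(1 - C_2\sqrt{\delta})\sqrt{E(s)} \leq C_1\sqrt{E(t)}, \qquad \sqrt{E(t)} \leq (C_1 + C_2\sqrt{\delta})\sqrt{E(s)},
\]
for every $t \in [0, 32\pi]$. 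Choosing $\delta$ small enough so that $C_2\sqrt{\delta} \leq 1/2$ then gives $E(t) \sim E(s)$ uniformly, hence $C^{-1} E(t_2) \leq E(t_1) \leq C E(t_2)$ for every $t_1, t_2$, which is the first conclusion.

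For the sharper monotonicity-type bound $E(t_2) \leq 2E(t_1)$ when $t_1 \leq t_2$, I would exploit that the damping contribution $-\langle a\Psi_t, \Psi_t\rangle_{L^2}$ in \eqref{eq:varivarEvarphi} is non-positive. Integrating over $[t_1, t_2]$ and dropping this term gives
\[
E(t_2) - E(t_1) \leq \int_{t_1}^{t_2}|\langle g(\tau,\cdot), \Psi_t(\tau,\cdot)\rangle_{L^2}|\,d\tau \leq \|g\|_{L^2((0,32\pi)\times\mathbb{T}^1)} \, \|\Psi_t\|_{L^2((t_1,t_2)\times\mathbb{T}^1)}.
\]
The pointwise bound $\|\Psi_t(\tau,\cdot)\|_{L^2}^2 \leq 2E(\tau) \leq 2C E(t_1)$ (from the first step) gives $\|\Psi_t\|_{L^2_{t,x}}^2 \leq 64\pi C E(t_1)$, while $\|g\|_{L^2_{t,x}}^2 \leq \delta E(s) \leq \delta C E(t_1)$. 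Multiplying, $E(t_2) - E(t_1) \leq C'\sqrt{\delta}\,E(t_1)$, and choosing $\delta$ smaller still ensures $E(t_2) \leq 2 E(t_1)$.

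The only (mild) obstacle is a bit of circularity: the hypothesis controls $\|g\|$ via $E(s)$ at a fixed reference time, but the conclusions must hold for arbitrary $t_1, t_2$, so one first has to bootstrap a uniform two-sided comparison between $E(t)$ and $E(s)$ before iterating into the sharper short-time estimate. Once $\delta$ is fixed small enough (depending only on $\|a\|_{L^\infty}$), both conclusions follow without further complications.
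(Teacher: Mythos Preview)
Your proposal is correct and follows essentially the same route as the paper: both use the energy identity \eqref{eq:varivarEvarphi} together with the nonnegativity $-\langle \mathcal{L}_\gamma\Psi,\Psi\rangle_{L^2}\geq 0$ (which, as you note, comes from Lemma~\ref{lem:selfadjoint} under the standing negative-curvature assumption) to run a Gronwall-type argument comparing $\mathcal{E}_\gamma(\Psi[t])$ with $\mathcal{E}_\gamma(\Psi[s])$, and then drop the nonpositive damping term for the one-sided bound. Your version is actually more explicit than the paper's---you work with $\sqrt{E(t)}$ and spell out the differential inequality, whereas the paper simply asserts the integrated estimate $\mathcal{E}_\gamma(\Psi[s_1])\leq C(\mathcal{E}_\gamma(\Psi[s_2])+\|g\|_{L^2_{t,x}}^2)$ as an immediate consequence of \eqref{eq:varivarEvarphi}.
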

\begin{proof}
To prove the first inequality, it suffices to show that 
\begin{equation*}
     C^{-1}  \mathcal{E}_{\gamma}(\Psi[s]) \leq  \mathcal{E}_{\gamma}(\Psi[t])\leq C  \mathcal{E}_{\gamma}(\Psi[s]) \; \; \forall t\in [0, 32\pi]. 
\end{equation*}

Thanks to \eqref{eq:varivarEvarphi}, there exists $C$ such that 
\begin{equation*}
      \mathcal{E}_{\gamma}(\Psi[s_1])\leq C\left(  \mathcal{E}_{\gamma}(\Psi[s_2])+ \|g\|_{L^2_{t, x}}^2 \right) \; \; \forall s_1, s_2\in [0, 32\pi]. 
\end{equation*}

On the one hand, we have
\begin{equation*}
      \mathcal{E}_{\gamma}(\Psi[t])\leq C\left(  \mathcal{E}_{\gamma}(\Psi[s])+ \|g\|_{L^2_{t, x}}^2 \right)\leq (C+ \delta)  \mathcal{E}_{\gamma}(\Psi[s]),
\end{equation*}
On the other hand, by selecting $\delta$ small enough, we obtain
\begin{equation*}
      \mathcal{E}_{\gamma}(\Psi[s])\leq C\left(  \mathcal{E}_{\gamma}(\Psi[t])+ \|g\|_{L^2_{t, x}}^2 \right)\leq C\left(  \mathcal{E}_{\gamma}(\Psi[t])+ \delta  \mathcal{E}_{\gamma}(\Psi[s]) \right)\leq 2C \mathcal{E}_{\gamma}(\Psi[t]).
\end{equation*}

Next, we show the second inequality. Thanks to  \eqref{eq:varivarEvarphi}, 
\begin{align*}
     \mathcal{E}_{\gamma}(\Psi[t_2]) &\leq   \frac{3}{2}\mathcal{E}_{\gamma}(\Psi[t_1])+ C \|g\|_{L^2_{t, x}}^2 \\
     &\leq \frac{3}{2}\mathcal{E}_{\gamma}(\Psi[t_1])+ C \delta \mathcal{E}_{\gamma}(\Psi[s])\leq 2\mathcal{E}_{\gamma}(\Psi[t_1]),
\end{align*}
provided that $\delta$ is sufficiently small.
\end{proof}
Similarly, based on the well-posedness property Lemma \ref{lem:linea:well:gen}, we also obtain the following result without giving its proof:
\begin{lemma}\label{lem:equinormH1}
    There exist $\delta>0$ and $C>0$ such that for any $s\in [0, 32\pi]$, and for any pair $\Psi$ and $g$ satisfying equation \eqref{eq:varphi:linear:new} and  
\begin{gather*}
      \|g\|_{L^2(0, 32\pi; L^2(\mathbb{T}^1))}^2\leq \delta \|\Psi[s]\|_{\mathcal{H}}^2, 
\end{gather*}
one has 
\begin{equation*}
    C^{-1}  \|\Psi[t_2]\|_{\mathcal{H}}^2 \leq  \|\Psi[t_1]\|_{\mathcal{H}}^2 \leq C  \|\Psi[t_2]\|_{\mathcal{H}}^2 \; \; \forall t_1, t_2\in [0, 32\pi]. 
\end{equation*}
\end{lemma}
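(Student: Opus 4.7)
\medskip

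\noindent\textbf{Proof proposal for Lemma \ref{lem:equinormH1}.}
The statement is the exact $\mathcal{H}$-norm analogue of the preceding Lemma~\ref{lem:equinormEgamma}, and my plan is to mimic that proof almost verbatim, replacing the energy functional $\mathcal{E}_\gamma$ by the full $\mathcal{H}$-norm and using the linear well-posedness statement Lemma~\ref{lem:linea:well:gen} instead of the identity \eqref{eq:varivarEvarphi}. The point is that equation \eqref{eq:varphi:linear:new}, namely
\[
-\Psi_{tt}+\mathcal{L}_\gamma\Psi - a(x)\Psi_t = g,
\]
fits into the framework of Lemma~\ref{lem:linea:well:gen} with $K=N$, matrices ${\bf A},{\bf B},{\bf C}$ read off from $\mathcal{L}_\gamma$ and the damping $a(x)$, and source term $-g$ (the fact that $\Psi$ takes values in $\gamma^*(T\mathcal{N})$ is preserved by the flow since $\mathcal{L}_\gamma$ does). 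In particular, running this well-posedness statement both forward and backward from an arbitrary reference time in $[0,32\pi]$ (the equation is reversible modulo a sign change of $a$, which does not affect the bound), I will obtain a universal constant $\mathcal{C}_0$ such that
\[
\|\Psi[t]\|_{\mathcal{H}} \;\leq\; \mathcal{C}_0\bigl(\|\Psi[\tau]\|_{\mathcal{H}} + \|g\|_{L^2_{t,x}((0,32\pi)\times\mathbb{T}^1)}\bigr) \qquad \forall\,t,\tau\in[0,32\pi].
\]

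From here the argument is the same soft absorption used in Lemma~\ref{lem:equinormEgamma}. First I apply the above inequality with $\tau=s$ and an arbitrary $t\in[0,32\pi]$, together with the hypothesis $\|g\|^2_{L^2_{t,x}}\le\delta\|\Psi[s]\|_{\mathcal{H}}^2$, to get
\[
\|\Psi[t]\|_{\mathcal{H}}^{2} \;\le\; 2\mathcal{C}_0^{2}\bigl(\|\Psi[s]\|_{\mathcal{H}}^{2} + \delta\|\Psi[s]\|_{\mathcal{H}}^{2}\bigr) \;\le\; 3\mathcal{C}_0^{2}\,\|\Psi[s]\|_{\mathcal{H}}^{2}
\]
as soon as $\delta\le 1$. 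Next, to control $\|\Psi[s]\|_{\mathcal{H}}$ by $\|\Psi[t_1]\|_{\mathcal{H}}$ for an arbitrary $t_1\in[0,32\pi]$, I use the same inequality in the opposite direction, obtaining
\[
\|\Psi[s]\|_{\mathcal{H}}^{2}\;\le\; 2\mathcal{C}_0^{2}\bigl(\|\Psi[t_1]\|_{\mathcal{H}}^{2} + \delta\|\Psi[s]\|_{\mathcal{H}}^{2}\bigr),
\]
and choosing $\delta$ so small that $2\mathcal{C}_0^{2}\delta\le 1/2$ lets me absorb the last term on the right, yielding $\|\Psi[s]\|_{\mathcal{H}}^{2}\le 4\mathcal{C}_0^{2}\|\Psi[t_1]\|_{\mathcal{H}}^{2}$.

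Combining the two displays gives
\[
\|\Psi[t_2]\|_{\mathcal{H}}^{2}\;\le\; 3\mathcal{C}_0^{2}\,\|\Psi[s]\|_{\mathcal{H}}^{2} \;\le\; 12\mathcal{C}_0^{4}\,\|\Psi[t_1]\|_{\mathcal{H}}^{2}
\]
for every $t_1,t_2\in[0,32\pi]$, which is the desired two-sided bound with $C=12\mathcal{C}_0^{4}$ (the reverse direction follows by exchanging $t_1$ and $t_2$). I do not foresee any genuine obstacle in this argument: everything reduces to linear well-posedness plus a smallness absorption on $\delta$. The only point that needs minor care is checking that Lemma~\ref{lem:linea:well:gen}, stated for initial time $0$, remains valid with the same constant $\mathcal{C}$ when the reference time is shifted to an arbitrary $\tau\in[0,32\pi]$; this is immediate from time-translation invariance of the coefficients $\mathcal{L}_\gamma$ and $a(x)$ and from reversibility of the wave equation, and the final constant $C$ depends only on $\gamma$, $a(x)$ and the length $32\pi$ of the time interval, as required.
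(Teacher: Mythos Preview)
Your proposal is correct and follows precisely the approach the paper itself indicates: the paper states this lemma immediately after Lemma~\ref{lem:equinormEgamma} with the remark ``Similarly, based on the well-posedness property Lemma~\ref{lem:linea:well:gen}, we also obtain the following result without giving its proof,'' and your argument---mimicking the absorption scheme of Lemma~\ref{lem:equinormEgamma} with Lemma~\ref{lem:linea:well:gen} in place of \eqref{eq:varivarEvarphi}---is exactly this.
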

\vspace{2mm}
 Now, assume that Lemma \ref{lem:propagationofsmallnessvarphi} holds with some $\tilde \delta_{li}$ and $\tilde c_{li}$, we will find the constants $\delta_{li}$ and $c_{li}$ such that Proposition \ref{prop:propagationofsmallness} is fulfilled.  Suppose that $(\Psi, g)$ satisfy \eqref{eq:varphi:linear}--\eqref{eq:gcond}, and define 
 \begin{equation*}
     \tilde \Psi(t, x):= \Psi(t, x)- b \dot \gamma(x)\;\; \forall t\in [0, 32\pi]\;\; \forall x\in \mathbb{T}^1,
 \end{equation*}
where $b$ is a constant chosen such that $\tilde \Psi[16\pi]\in \mathcal{H}_{\gamma, 0}$.  Due to the observation \eqref{eq:mathcalLselfinva}, 
\begin{equation}
    \mathcal{E}_{\gamma}(\tilde \Psi[t])= \mathcal{E}_{\gamma}(\Psi[t])\;\; \forall t\in [0, 32\pi].
\end{equation}

Since $\mathcal{L}_{\gamma} \dot \gamma= 0$ and $\Psi_t= \tilde \Psi_t$, the new function $\tilde \Psi$ also satisfies 
\begin{equation*}
     -\tilde \Psi_{tt}+ \mathcal{L}_{\gamma} \tilde  \Psi- a(x) \tilde \Psi_t = g.
\end{equation*}
Moreover, by Lemma \ref{lem:equinormEgamma},  assuming $\delta_{li}$ small enough we have 
\begin{equation*}
    \|g\|_{L^2(0, 32\pi; L^2(\mathbb{T}^1))}^2\leq \delta_{li} \mathcal{E}_{\gamma}(\Psi[0])\leq C\delta_{li} \mathcal{E}_{\gamma}(\Psi[16\pi])= C\delta_{li} \mathcal{E}_{\gamma}(\tilde \Psi[16\pi])
\end{equation*}
Recall from Remark \ref{rem:equi:norms} that 
\begin{equation*}
  C^{-1}  \|(f, g)\|_{\mathcal{H}}^2\leq   \mathcal{E}_{\gamma}(f, g)\leq C \|(f, g)\|_{\mathcal{H}}^2 \; \; \forall (f, g)\in \mathcal{H}_{\gamma, 0}.
\end{equation*}
Because $\tilde \Psi[16\pi]$ belongs to $\mathcal{H}_{\gamma, 0}$, we get
\begin{equation*}
    \|g\|_{L^2(0, 32\pi; L^2(\mathbb{T}^1))}^2\leq  C_0\delta_{li} \|\tilde \Psi[16\pi]\|_{\mathcal{H}}^2.
\end{equation*}

Assuming that $ C_0\delta_{li}\leq \tilde \delta_{li}$, then $(\tilde \Psi, g)$ satsify the conditions given in Lemma \ref{lem:propagationofsmallnessvarphi}. Thus 
\begin{equation*}
      \int_0^{32\pi} \langle a(x)\tilde \Psi_t, \tilde \Psi_{t} \rangle_{L^2(\mathbb{T}^1)} \, dt\geq \tilde c_{li} \|\tilde \Psi[16\pi]\|_{\mathcal{H}}^2.
\end{equation*}
Therefore, 
\begin{equation*}
      \int_0^{32\pi} \langle a(x) \Psi_t,  \Psi_{t} \rangle_{L^2(\mathbb{T}^1)} \, dt\geq \tilde c_{li}  C^{-1} \mathcal{E}_{\gamma}(\tilde \Psi[16\pi])= \tilde c_{li}  C^{-1} \mathcal{E}_{\gamma}( \Psi[16\pi])\geq  \tilde c_{li}  C^{-1} \mathcal{E}_{\gamma}( \Psi[0]).
\end{equation*}
This is exactly inequality \eqref{ine:avarli1st:ori} by selecting $c_{li}$ as $ \tilde c_{li}  C^{-1}$.
\vspace{2mm}

We also notice that \eqref{ine:avarli2nd} is a direct consequence of \eqref{ine:avarli1st:ori} by decreasing the values of $c_{li}$ and $\delta_{li}$ if necessary.
Indeed,  
\begin{align*}
    \mathcal{E}_{\gamma}(\Psi[0])- \mathcal{E}_{\gamma}(\Psi[32\pi])&= \int_0^{32\pi}  \langle a(x)\Psi_t, \Psi_{t} \rangle_{L^2(\mathbb{T}^1)}+ \langle g, \Psi_{t} \rangle_{L^2(\mathbb{T}^1)} \, dt \\
    &\geq  c_{li} \mathcal{E}_{\gamma}( \Psi[0])- C \delta_{li}^{1/2} \mathcal{E}_{\gamma}(\Psi[0]).
\end{align*}
Finally, inequality \eqref{ine:avarli3rd} is already shown in Lemma \ref{lem:equinormEgamma}.
This finishes the proof of this reduction procedure.

\subsubsection{The proof of Lemma \ref{lem:propagationofsmallnessvarphi}}

 This proof is based on two aspects.
 \begin{itemize}
     \item[1)]  The propagation of smallness of this linear system with source terms. In this step we do not use the specific structure of the solution, namely that the solution belongs to $\mathcal{H}_{\gamma}$ and that the geodesic $\gamma$ has negative curvature.
     \item[2)] If some solution satisfies $\Psi_t$ small in a time period $J$, then $\langle \mathcal{L}_{\gamma}\Psi, \Psi\rangle$ must be small in a smaller interval inside $J$. This leads to a contradiction with the coercive estimate; Proposition \ref{prop:coercivityofL}.
 \end{itemize} 
\noindent{\it Step 1}. Assume that for some $\tilde \delta_{li}$ and $\tilde c_{li}$ small, there is some non-trivial pair $\Psi, g$ satisfying 
\begin{gather}
    -\Psi_{tt}+ \mathcal{L}_{\gamma} \Psi- a(x)\Psi_t = g, \label{111111111}\\
    \Psi[t]\in \mathcal{H}_{\gamma} \; \; \forall t\in [0, 32\pi], \\
      \Psi[16 \pi]\in \mathcal{H}_{\gamma, 0}, \\
      \|g\|_{L^2(0, 32\pi; L^2(\mathbb{T}^1))}^2\leq \tilde\delta_{li} \|\Psi[16\pi]\|_{\mathcal{H}}^2, 
\end{gather}
and 
\begin{equation}
      \int_0^{32\pi} \langle a(x)\Psi_t, \Psi_{t} \rangle_{L^2(\mathbb{T}^1)} \, dt\leq \tilde  c_{li} \|\Psi[16\pi]\|_{\mathcal{H}}^2. \label{2222222222}
\end{equation}

Then, the propagation of smallness property, Corollary \ref{cor:propaga:linear}, implies that 
\begin{equation}\label{eq:varphitsmall}
    \|\Psi_t\|_{L^{\infty}_x(\mathbb{T}^1; L^2_t(13\pi, 19\pi))}^2\leq C_q   \left(\tilde\delta_{li}^{1/q}+  \tilde c_{li}^{1/q}\right) \|\Psi[16\pi]\|_{\mathcal{H}}^2.
\end{equation}

Note from Lemma \ref{lem:equinormH1} that 
\begin{equation*}
    C^{-1}  \|\Psi[t_2]\|_{\mathcal{H}}^2 \leq  \|\Psi[t_1]\|_{\mathcal{H}}^2 \leq C  \|\Psi[t_2]\|_{\mathcal{H}}^2 \; \; \forall t_1, t_2\in [0, 32\pi]. 
\end{equation*}

Set for every $t\in [0, 32\pi]$,
\begin{gather*}
    \Psi(t, x)= \tilde\Psi(t, x)+ b(t) \dot\gamma(x) \textrm{ with } \\
    \tilde \Psi[t] \in \mathcal{H}_{\gamma, 0} \textrm{ and } b(t)= \langle \Psi(t, \cdot), \gamma_x(\cdot) \rangle_{L^2(\mathbb{T}^1)}.
\end{gather*}
Notice that in particular $b(16\pi)= 0$.  Furthermore, for any $t\in [13\pi, 19\pi]$ one has 
\begin{equation*}
    \Psi(t, x)= \Psi(16\pi, x)+ \int_{16\pi}^{t} \Psi_t (s, x) ds,
 \end{equation*}
hence
\begin{align*}
 b(t)&=    \langle \Psi(t, \cdot), \gamma_x(\cdot) \rangle_{L^2(\mathbb{T}^1)} \\
 &=  \left\langle \Psi(16\pi, \cdot)+ \int_{16\pi}^{t} \Psi_t (s, \cdot)ds, \gamma_x(\cdot) \right\rangle_{L^2(\mathbb{T}^1)} \\
 &=  \left\langle  \int_{16\pi}^{t} \Psi_t (s, \cdot)ds, \gamma_x(\cdot) \right\rangle_{L^2(\mathbb{T}^1)}.
\end{align*}
This, combined with inequality \eqref{eq:varphitsmall}, implies that for every $t\in [13\pi, 19\pi]$ 
\begin{equation}\label{eq:btsmall}
  |b(t)|^2\leq \|\Psi_t\|_{L^1_{t, x}((13\pi, 19\pi)^2\times \mathbb{T}^1)}^2\lesssim  \left(\tilde\delta_{li}^{1/q}+  \tilde c_{li}^{1/q}\right) \|\Psi[16\pi]\|_{\mathcal{H}}^2. 
\end{equation}

\noindent{\it Step 2}. Keep in mind the preceding estimate on $\|\Psi_t\|_{L^{\infty}_x(\mathbb{T}^1; L^2_t(13\pi, 19\pi))}$, we then come back to \eqref{eq:varphi:linear:new}, which we integrate against $\Psi$. The idea is to use the smallness of $\Psi_t$ on the time interval $(13\pi, 19\pi)$.   We define a non-negative smooth truncated function  $\eta$ such that
\[
\eta \;  \supp \; (13\pi, 19\pi), \; \eta= 1 \textrm{ on } [14\pi, 18\pi].
\]
Integrating equation \eqref{eq:varphi:linear}  against $\eta \Psi$, we obtain 
\begin{equation*}
    \int_{\mathbb{R}} \langle \mathcal{L}_{\gamma} \Psi, \eta \Psi \rangle_{L^2(\mathbb{T}^1)} \, dt= \int_{\mathbb{R}} \int_{\mathbb{T}^1} (\Psi_{tt}+ a(x)\Psi_t+ g)\cdot \eta \Psi \, dx  dt. 
\end{equation*}
Concerning the right hand side, we have 
\begin{align*}
 \left|  \int_{\mathbb{T}^1} \int_{\mathbb{R}} \Psi_{tt} \cdot \eta \Psi  \, dt dx \right|&=    \left|  \int_{\mathbb{T}^1} \int_{\mathbb{R}} \Psi_{t} \cdot (\eta_t \Psi+  \eta \Psi_t) \, dt dx  \right|\\
   &=  \left| \int_{13\pi}^{19\pi}  \langle  \Psi_t, \eta_t \Psi+  \eta \Psi_t \rangle_{L^2(\mathbb{T}^1)} \, dt \right|\\
   &\lesssim    \|\Psi_t\|_{L^2(13\pi, 19\pi; L^2(\mathbb{T}^1))} \|\Psi[t]\|_{L^2(13\pi, 19\pi; \mathcal{H})}    \\
    &\lesssim \left( \tilde\delta_{li}^{1/2q}+  \tilde c_{li}^{1/2q}\right) \|\Psi[16\pi]\|_{\mathcal{H}}^2.
\end{align*}
Similarly,
\begin{align*}
 \left|  \int_{\mathbb{T}^1} \int_{\mathbb{R}} (a(x)\Psi_t+ g)\cdot \eta \Psi \,dt dx \right| \lesssim \left( \tilde \delta_{li}^{1/2q}+  \tilde c_{li}^{1/2q
 }\right) \|\Psi[16\pi]\|_{\mathcal{H}}^2.
\end{align*}
Thus 
\begin{equation}
    \int_{\mathbb{R}} \langle \mathcal{L}_{\gamma} \Psi, \eta \Psi \rangle_{L^2(\mathbb{T}^1)} \, dt\leq \tilde{C} \left(\tilde  \delta_{li}^{1/2q}+  \tilde c_{li}^{1/2q}\right) \|\Psi[16\pi]\|_{\mathcal{H}}^2,
\end{equation}
where the constant $\tilde{C}$ is independent of the choice of $\tilde \delta_{li}$ and $\tilde c_{li}$.
\vspace{2mm}

On the other hand, thanks to the coercive estimate provided in Proposition \ref{prop:coercivityofL} and the observation \eqref{eq:mathcalLselfinva}, we have 
\begin{align*}
    \int_{\mathbb{R}} \langle \mathcal{L}_{\gamma} \Psi, \eta \Psi \rangle_{L^2(\mathbb{T}^1)} \, dt&=   \int_{13\pi}^{19\pi} \eta \langle \mathcal{L}_{\gamma} \Psi,  \Psi \rangle_{L^2(\mathbb{T}^1)} \, dt  \\ 
    &= \int_{13\pi}^{19\pi} \eta \langle \mathcal{L}_{\gamma} \tilde  \Psi,  \tilde \Psi \rangle_{L^2(\mathbb{T}^1)} \, dt  \\ 
    & \geq c \int_{14\pi}^{18\pi} \|\tilde \Psi(t)\|_{H^1(\mathbb{T}^1)}^2 \, dt \\
     & \geq \frac{c}{2} \int_{14\pi}^{18\pi} \| \Psi(t)\|_{H^1(\mathbb{T}^1)}^2 \, dt- C \int_{14\pi}^{18\pi}|b(t)|^2 \, dt \\
    &= \frac{c}{2} \int_{14\pi}^{18\pi} \left(\|\Psi[t]\|_{\mathcal{H}}^2- \|\Psi_t(t)\|_{L^2(\mathbb{T}^1)}^2 \right)\, dt- C \int_{14\pi}^{18\pi}|b(t)|^2 \, dt.
\end{align*}  
Due to the smallness of $\Psi_t$ and $b(t)$, namely inequalities \eqref{eq:varphitsmall} and \eqref{eq:btsmall}, by assuming $\tilde \delta_{li}$ and $\tilde c_{li}$ sufficiently small we have 
\begin{align*}
    \int_{\mathbb{R}} \langle \mathcal{L}_{\gamma} \Psi, \eta \Psi \rangle_{L^2(\mathbb{T}^1)} \, dt\geq  \tilde{c} \|\Psi[16\pi]\|_{\mathcal{H}}^2,
\end{align*}
where the constant $\tilde{c}$ is independent of the choice of $\tilde \delta_{li}$ and $\tilde c_{li}$. 

Hence any non-trivial solution satisfying \eqref{111111111}--\eqref{2222222222} must implies
\begin{equation*}
    \tilde \delta_{li}^{1/2q}+  \tilde c_{li}^{1/2q}\geq \tilde{c} \tilde{C}^{-1}.
\end{equation*}
This leads to the proof of the estimate \eqref{ine:avarli1st} by selecting $\tilde \delta_{li}$ and $\tilde c_{li}$ sufficiently small such that the preceding inequality fails. 
Thus we finish the proof of Lemma \ref{lem:propagationofsmallnessvarphi}.

\vspace{3mm}

\subsection{Exponential stability of the full system on $(\varphi, \alpha)$} \label{subsec:expostabproof}

\subsubsection{Fixed time stability}

This section is devoted to the following result. Its  proof is based on the following 
ingredients:  Proposition \ref{prop:propagationofsmallness} concerning the stability of the linearized equation on $\Psi$, the observation Lemma \ref{lem:key:ob:Psi} on $\mathcal{E}_{\gamma}(\Psi[t])$, the structure of the nonlinear term (namely the lack of the quadratic term $\alpha^2$), and the standard bootstrap argument.   
\begin{prop}\label{prop:main:stabilityT0}
    Let $\gamma$ be a closed geodesic on $\mathcal{N}$ along which the sectional curvature is negative. Let $c_{li}$ be the constant given in Proposition \ref{prop:propagationofsmallness}. There exist constants $\delta_{fi}>0$ and $M>1$ such that for every initial state $(\varphi, \varphi_t, \alpha, \dot\alpha)(0)\in \mathcal{H}_{\gamma, 0, 0}\times \mathbb{R}^2$ satisfying 
    \begin{equation}\label{eq:in:varEalpha}
        \mathcal{E}_{\gamma}(\varphi[0])+ \dot \alpha^2(0)+ \alpha^2(0) \leq \delta_{fi}^2, 
    \end{equation}
    the system \eqref{eq:varphialphafull} admits a unique solution in period $[0, 32\pi]$.  The function $ \varphi[t]$ belongs to $\mathcal{H}_{\gamma, 0, 0}$ for every $t\in [0, 32\pi]$, the function  $\Psi$ given as \eqref{eq:def:Psi} satisfies \eqref{eq:Psi:final:def}. Moreover, 
     \begin{gather}
     \mathcal{E}_{\gamma}(\Psi[32\pi])\leq (1- c_{li})  \mathcal{E}_{\gamma}(\Psi[0]), \label{estim:mu;decay} \\
    \mathcal{E}_{\gamma}(\Psi[t])\leq 2  \mathcal{E}_{\gamma}(\Psi[0]), \; \forall t\in [0, 32\pi], \label{estim:mu;unifboun} \\
    |\alpha(t)|\leq |\alpha(0)|+ M \mathcal{E}_{\gamma}^{1/2}(\Psi[0]), \; \forall t\in [0, 32\pi].  \label{estim:alpha;unifboun}
\end{gather}
\end{prop}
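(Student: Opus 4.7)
The plan is to run a bootstrap argument on the quantity $\mathcal{E}_{\gamma}(\Psi[t])$, using Proposition \ref{prop:propagationofsmallness} as the engine of decay and Proposition \ref{lem:full:system:varalpha} to control the nonlinear forcing $\mathcal{M}$.  Throughout, write $E_0 := \mathcal{E}_{\gamma}(\Psi[0])$, which by Lemma \ref{lem:key:ob:Psi} and the coercive estimate (Proposition \ref{prop:coercivityofL}) is equivalent to $\|\varphi[0]\|_{\mathcal{H}}^2+\dot\alpha(0)^2$, and is $\lesssim \delta_{fi}^2$ by \eqref{eq:in:varEalpha}.  Note that $\alpha(0)^2$ is \emph{separately} bounded by $\delta_{fi}^2$; this is crucial, since $\mathcal{E}_{\gamma}(\Psi)$ does not see $\alpha$.

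\textbf{Step 1 (Local well-posedness and preservation of rigidity).}  For $\delta_{fi}$ small, standard theory for the quasilinear wave system \eqref{eq:varphialphafull} coupled to the ODE for $\alpha$ (of the same type as Lemma \ref{well:wm1} and Lemma \ref{lem:linea:well:gen} combined with the nonlinear bounds \eqref{ine:non:varphi}--\eqref{ine:non:alpha}) yields a unique strong solution on some maximal interval $[0,T_{\max})$.  The condition $\varphi[t]\in\mathcal{H}_{\gamma,0,0}$ is automatic from the construction of $\alpha(t)$ in Section \ref{subsec:decompositiongeode}: the choice of $\alpha$ enforces $\langle\varphi(t,\cdot),\gamma_x\rangle=0$ for all times, and differentiating gives the rigidity for $\varphi_t$.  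Hence $\Psi$ satisfies \eqref{eq:Psi:final:def} by Proposition \ref{lem:full:system:varalpha}.

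\textbf{Step 2 (Bootstrap set-up).}  Define
\[
T^*:=\sup\Bigl\{t\in[0,\min(T_{\max},32\pi)] : \mathcal{E}_{\gamma}(\Psi[s])\leq 2E_0 \text{ for all } s\in[0,t]\Bigr\}.
\]
The strategy is to show $T^*=32\pi$ and that on $[0,32\pi]$ one in fact has the sharper inequality \eqref{estim:mu;decay} at the endpoint.  Suppose $t\leq T^*$.  By Lemma \ref{lem:key:ob:Psi} and Proposition \ref{prop:coercivityofL},
\[
\|\varphi[s]\|_{\mathcal{H}}^2+\dot\alpha(s)^2\;\lesssim\; \mathcal{E}_{\gamma}(\Psi[s])\;\leq\;2E_0\qquad\forall s\in[0,T^*].
\]
Integrating the $\alpha$-equation \eqref{eq:final:alpha} and using the pointwise bound \eqref{ine:non:alpha} gives
\[
|\alpha(t)|\;\leq\;|\alpha(0)|+\int_0^{32\pi}|\dot\alpha(s)|\,ds\;\leq\; |\alpha(0)|+M\,E_0^{1/2},
\]
for some $M$ independent of $\delta_{fi}$, establishing \eqref{estim:alpha;unifboun}.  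Crucially, this means $|\alpha(t)|\lesssim \delta_{fi}$ on $[0,T^*]$.

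\textbf{Step 3 (Controlling the nonlinear forcing $\mathcal{M}$).}  This is the main technical step.  Using the pointwise bound \eqref{ine:non:varphi} and the null structure of $\sum|\partial_\nu\varphi^j\partial^\nu\varphi^k|$ (which is handled exactly as in Lemma \ref{well:wm1} via the $L^2_u L^\infty_v\cap L^\infty_u L^2_v$ norms of the $\mathcal{W}_T$-space applied to $\varphi$), one obtains
\[
\|\mathcal{M}\|_{L^2(0,32\pi;L^2(\mathbb{T}^1))}^2\;\leq\; C\bigl(\|\varphi\|_{\mathcal{W}_{32\pi}}^2+\sup_{[0,32\pi]}(|\alpha|+|\dot\alpha|)\bigr)^2\cdot E_0\;\leq\; C\,\delta_{fi}^2\, E_0.
\]
Each piece of $\mathcal{M}$ is verified separately: the null-form term contributes $\|\varphi\|_{\mathcal{W}_T}^4\lesssim E_0^2$; quadratic terms in $\dot\alpha$ yield at most $E_0^2$; cross terms of the form $(|\varphi|+|\alpha|)(|\varphi_x|+|\varphi_t|+|\dot\alpha|)$ integrate to at most $\delta_{fi}^2\,E_0$ because $|\alpha|\lesssim\delta_{fi}$ and the $H^1$-energy times $T$ is $\lesssim E_0$.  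The pivotal observation, already emphasized in Section \ref{subsec:full system varphialpha}, is the absence of a pure $\alpha^2$ term in $\mathcal{M}$: every occurrence of $\alpha$ is multiplied by a small factor involving $\varphi,\varphi_x,\varphi_t,\dot\alpha$, and therefore integrable against $E_0$.

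\textbf{Step 4 (Closing the bootstrap).}  Choosing $\delta_{fi}$ so small that $C\delta_{fi}^2\leq\delta_{li}$ (the threshold of Proposition \ref{prop:propagationofsmallness}), the bound of Step~3 verifies hypothesis \eqref{eq:gcond}.  Proposition \ref{prop:propagationofsmallness} therefore applies, giving simultaneously $\mathcal{E}_{\gamma}(\Psi[t])\leq 2E_0$ for all $t\in[0,T^*]$ and $\mathcal{E}_{\gamma}(\Psi[32\pi])\leq(1-c_{li})E_0$ if $T^*=32\pi$.  The former strictly improves the bootstrap hypothesis, hence by continuity $T^*=\min(T_{\max},32\pi)$; combined with Lemma \ref{lem-conti-dep-inh}-type continuation we also conclude $T_{\max}>32\pi$.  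Then \eqref{estim:mu;decay}--\eqref{estim:mu;unifboun} follow, and \eqref{estim:alpha;unifboun} has already been proved in Step~2.

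\textbf{Main obstacle.}  The delicate point is Step~3: one must verify that every term in $\mathcal{M}$ contributes at worst $\delta_{fi}^2\,E_0$ to $\|\mathcal{M}\|_{L^2_{t,x}}^2$, rather than $E_0\cdot E_0^{1/2}$ which would only close a cubic-type bootstrap and not a \emph{linear} one in $E_0$.  The linear-in-$E_0$ gain is possible precisely because (i) the null form absorbs into the $\mathcal{W}_T$-norm, (ii) $\alpha(t)$ remains of size $\delta_{fi}$ throughout (Step~2), and (iii) $\mathcal{M}$ contains no standalone $\alpha^2$ or $\alpha$-only terms—features that were recorded by Proposition \ref{lem:full:system:varalpha} and the construction of $\Psi=\varphi+\alpha\gamma_x$ in Section \ref{subsec:full system varphialpha}.
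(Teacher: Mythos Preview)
Your strategy is correct and matches the paper's: bound $\|\mathcal{M}\|_{L^2_{t,x}}^2$ by $\delta_{fi}^2\,E_0$ using the null structure and the absence of a pure $\alpha^2$ term, then feed this into Proposition~\ref{prop:propagationofsmallness}. The packaging differs. The paper does \emph{not} run an explicit bootstrap on $\mathcal{E}_\gamma(\Psi[t])$; instead it first proves an a~priori lemma (Lemma~\ref{lem:liwave:1:vari2}) giving, for small data, existence on all of $[0,32\pi]$ together with
\[
\|\varphi\|_{\mathcal{W}_{32\pi}}+\|\alpha\|_{C^1}\lesssim \|\varphi[0]\|_{\mathcal{H}}+|\alpha(0)|+|\dot\alpha(0)|,\qquad
\|\varphi\|_{\mathcal{W}_{32\pi}}+\|\dot\alpha\|_{C}\lesssim \|\varphi[0]\|_{\mathcal{H}}+|\dot\alpha(0)|.
\]
Well-posedness comes by returning to the original $\phi$ equation (Lemmas~\ref{well:wm1}, \ref{lem-conti-dep-inh}) and then reading off $(\varphi,\alpha)$ via the decomposition. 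The second displayed estimate---crucially \emph{without} $|\alpha(0)|$---is obtained by a short-time iteration on the $(\varphi,\alpha)$ system (Lemma~\ref{lem:linea:well:gen} on intervals of length $T\ll1$, absorbing the linear coupling $T\|\dot\alpha\|_C+T\|\varphi\|_{\mathcal{W}_T}$, then iterating). Multiplying the two estimates gives $\|\mathcal{M}\|_{L^2}\lesssim \delta_{fi}\,E_0^{1/2}$, and Proposition~\ref{prop:propagationofsmallness} then yields \eqref{estim:mu;decay}--\eqref{estim:mu;unifboun} directly; \eqref{estim:alpha;unifboun} follows by integrating $\dot\alpha$.

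Your bootstrap is a viable alternative route, but two points are glossed over. First, the bootstrap assumption $\mathcal{E}_\gamma(\Psi[s])\le 2E_0$ only controls $\|\varphi[s]\|_{\mathcal{H}}$ pointwise in $s$, not $\|\varphi\|_{\mathcal{W}_{32\pi}}$; to bound the null-form you still need exactly the refined $\mathcal{W}_T$ estimate above (or a nested bootstrap on $\|\varphi\|_{\mathcal{W}_t}$), and your reference to ``as in Lemma~\ref{well:wm1}'' hides this work. Second, in Step~4 you invoke Proposition~\ref{prop:propagationofsmallness} to improve the bootstrap on $[0,T^*]$, but that proposition is stated on the full interval $[0,32\pi]$; to close the bootstrap before reaching $32\pi$ one should instead use the energy identity \eqref{eq:varivarEvarphi} directly, $\mathcal{E}_\gamma(\Psi[t])\le E_0+\|g\|_{L^2}\|\Psi_t\|_{L^2}\le E_0+C\delta_{fi}E_0<2E_0$, and only afterwards apply Proposition~\ref{prop:propagationofsmallness} on $[0,32\pi]$. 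These are fixable details; the substance of your argument is the same as the paper's.
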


\begin{proof}[Proof of Proposition \ref{prop:main:stabilityT0}]

Define $D:= [0, 32\pi]\times \mathbb{T}^1$.  Recall the definition of space $\mathcal{W}= \mathcal{W}_{32\pi}$ given in Definition \ref{def:WTnorm}, the constant $L$ in \eqref{def:L:l},  the definition of constants $c_{li}$ and $\delta_{li}$ in   Proposition \ref{prop:propagationofsmallness}, and the constant $C_{co}>1$ from Remark \ref{rem:equi:norms} such that 
\begin{equation*}
  C^{-1}_{co}  \|(f, g)\|_{\mathcal{H}}\leq   \left(\mathcal{E}_{\gamma}(f, g)\right)^{1/2}\leq C_{co} \|(f, g)\|_{\mathcal{H}} \; \; \forall (f, g)\in \mathcal{H}_{\gamma, 0},
\end{equation*}
and the identity from Lemma \ref{lem:key:ob:Psi},
    \begin{equation}\label{eq:use:ide:EPsi}
   \mathcal{E}_{\gamma}(\Psi[t])=   \mathcal{E}_{\gamma}(\varphi[t])+  \frac{L}{2}\dot \alpha^2(t).
\end{equation}

We first present the following lemma concerning some useful estimates for the coupled system.
\begin{lemma}\label{lem:liwave:1:vari2}
There exist constants $\delta_2>0$ and $C_2>0$ such that,  for every initial state $(\varphi, \varphi_t, \alpha, \dot\alpha)(0)\in \mathcal{H}_{\gamma, 0, 0}\times \mathbb{R}^2$ satisfying 
    \begin{equation} \label{eq:cond:varphialphadosl:1}
       \mathcal{E}_{\gamma}(\varphi[0])+ \dot \alpha^2(0)+ \alpha^2(0) \leq \delta_2^2,  
    \end{equation}
    the system \eqref{eq:varphialphafull} admits a unique solution in period $[0, 32\pi]$.  The function $ \varphi[t]$ belongs to $\mathcal{H}_{\gamma, 0, 0}$ for every $t\in [0, 32\pi]$, the function  $\Psi$ given as \eqref{eq:def:Psi} satisfies \eqref{eq:Psi:final:def}. Moreover, 
      \begin{gather} 
  \|\varphi\|_{\mathcal{W}_{32\pi}}+ \|\alpha\|_{C^1(0, 32\pi)} \leq C_2 \big( \|\varphi[0]\|_{\mathcal{H}}+ |\alpha(0)|+ |\dot\alpha(0)|\big), \label{eq:vralpha1stbound}  \\
   \|\varphi\|_{\mathcal{W}_{32\pi}}+ \|\dot\alpha\|_{C(0, 32\pi)} \leq C_2 \big( \|\varphi[0]\|_{\mathcal{H}}+ |\dot\alpha(0)|\big), \label{eq:vralpha2ndbound}
\end{gather}
and the nonlinear terms on the right-hand side of \eqref{eq:varphialphafull} and \eqref{eq:Psi:final:def} satisfy
\begin{align}
&\;\;\;\;  \left\|\big(\mathcal{M}, \mathcal{M}_1\big)(x; \alpha, \dot\alpha, \varphi(\cdot), \varphi_t(\cdot), \varphi_x(\cdot))\right\|_{L^2_{t, x}(D)} + \|\mathcal{O}(\alpha, \dot\alpha, \varphi(\cdot), \varphi_t(\cdot), \varphi_x(\cdot)) \|_{L^2(0, 32\pi)} \notag
   \\
 &\leq C_2 \big( \|\varphi[0]\|_{\mathcal{H}}+ |\alpha(0)|+ |\dot\alpha(0)|\big)\big( \|\varphi[0]\|_{\mathcal{H}}+  |\dot\alpha(0)|\big).  \label{eq:vralpha3rdbound}
\end{align}
\end{lemma}  
\begin{proof}[Proof of Lemma \ref{lem:liwave:1:vari2}]

The small data well-posedness of this coupled system on $(\varphi, \alpha)$ is a consequence of the well-posedness of the system on $\phi$ and the equivalence between the flows of $\phi$ and $(\varphi, \alpha)$. Indeed,  thanks to the decomposition results Lemma \ref{lem:def:full:decom} and Remark \ref{rem:def:full:decom},  any small initial state $(\varphi, \varphi_t, \alpha, \dot\alpha)(0)\in \mathcal{H}_{\gamma, 0, 0}\times \mathbb{R}^2$ corresponds to an initial state $(\phi, \phi_t)(0)$ close to $(\gamma, 0)$. According to Lemma \ref{well:wm1}, the wave maps equation admits a unique solution $\phi[t]$ on the time interval $(0, 32\pi)$.  Thus due to the rule of decomposition we have that $ \varphi[t]$ belongs to $\mathcal{H}_{\gamma, 0, 0}$ for every $t\in [0, 32\pi]$, and according to Proposition \ref{lem:full:system:varalpha}  the function  $\Psi$ given as \eqref{eq:def:Psi} satisfies \eqref{eq:Psi:final:def}.

Notice that $(\gamma, 0)$ is a steady state of the damped wave maps equation, thus using the continuous dependence result Lemma \ref{lem-conti-dep-inh},
    \begin{gather*}
   \|(w_x, w_t, w)\|_{L^{\infty}_t L^2_x(D)}+  \|w_u\|_{L^{2}_u L^{\infty}_v\cap L^{\infty}_v L^{2}_u(D)}+  \|w_v\|_{L^{2}_v L^{\infty}_u\cap L^{\infty}_u L^{2}_v(D)} 
   \leq C \lVert w[0]\lVert_{\mathcal{H}},
\end{gather*}
 where $w(t, x):= \phi(t, x)- \gamma(x)$. This implies the unique solution $\phi[t]$ is close enough to the geodesic $(\gamma, 0)$ during this period. Using again Lemma \ref{lem:def:full:decom} and  Remark \ref{rem:def:full:decom}, the projection $(\varphi, \varphi_t, \alpha, \dot\alpha)(t)\in \mathcal{H}_{\gamma, 0, 0}\times \mathbb{R}^2$ is the unique solution of \eqref{eq:varphialphafull} with given data at time $t = 0$ and is small: for every $t\in [0, 32\pi]$,
 \begin{align}
     |\alpha(t)|+ |\dot \alpha(t)|+ \|\varphi[t]\|_{\mathcal{H}} &\leq C_{tr} \|\phi[t]- (\gamma, 0)\|_{\mathcal{H}} \notag\\
     &\leq C C_{tr}\lVert w[0]\lVert_{\mathcal{H}}\leq C C_{tr}^2 (|\alpha(0)|+ |\dot \alpha(0)|+ \|\varphi[0]\|_{\mathcal{H}}). \label{eq:bound:alphavarphi:t}
 \end{align}

To demonstrate estimates \eqref{eq:vralpha1stbound}--\eqref{eq:vralpha3rdbound}, we first  
estimate from the upper bound of $\mathcal{M}, \mathcal{M}_1, \mathcal{O}$  in  \eqref{ine:non:varphi}--\eqref{ine:non:alpha}  that 
\begin{align}
  &\;\;\;\; \left\|\Big(\mathcal{M}, \mathcal{M}_1\Big)
  (x; \alpha, \dot\alpha, \varphi(\cdot), \varphi_t(\cdot), \varphi_x(\cdot))\right\|_{L^2_{t, x}(D)}+ \left\| \mathcal{O}
  (\alpha, \dot\alpha, \varphi(\cdot), \varphi_t(\cdot), \varphi_x(\cdot))\right\|_{L^2_{t}(0, 32\pi)}  \notag\\
  &\lesssim  
  \left\|\sum_{j, k} |\partial_{\nu} \varphi^j \partial^{\nu} \varphi^k|+  |\dot \alpha|^2+  |\varphi_t||\dot \alpha|+  (|\varphi|+ |\alpha|) (|\varphi|+ |\varphi_x|+ |\varphi_t|+ |\dot\alpha| )\right\|_{L^2_{t, x}(D)}  \notag\\
  &\;\;\;\;\;\;\;\;\;\;\;\;\;\;\;\;\;\;\;\;\;\;\;\;  + \left\| \|\varphi_t\|_{L^1(\mathbb{T}^1)} \left(|\alpha|+ |\varphi|(x)+  \|\varphi\|_{L^1(\mathbb{T}^1)}\right)\right\|_{L^2_{t, x}(D)}  \notag\\
  &\lesssim \|\varphi\|_{\mathcal{W}_{32\pi}}^2+ \|\dot\alpha\|_{C(0, 32\pi)}^2+ \|\dot\alpha\|_{C(0, 32\pi)} \|\varphi\|_{\mathcal{W}_{32\pi}}+ (\|\varphi\|_{\mathcal{W}_{32\pi}}+ \|\alpha\|_{C(0, 32\pi)}) (\|\varphi\|_{\mathcal{W}_{32\pi}}+ \|\dot\alpha\|_{C(0, 32\pi)})  \notag\\
  &\lesssim   (\|\varphi\|_{\mathcal{W}_{32\pi}}+ \|\alpha\|_{C^1(0, 32\pi)}) (\|\varphi\|_{\mathcal{W}_{32\pi}}+ \|\dot\alpha\|_{C(0, 32\pi)}).  \label{es:N:123}
\end{align}

We first prove \eqref{eq:vralpha1stbound}.  The function $\varphi$ is governed by \eqref{eq:final:varphi}.
We regard this equation as a linear wave equation on $\varphi$ with the right-hand side as a given source term. Thus by the well-posedness result of the linear wave equation Lemma \ref{lem:linea:well:gen},  as well as  estimates \eqref{eq:bound:alphavarphi:t}-- \eqref{es:N:123},   we obtain 
\begin{align*} \|\varphi\|_{\mathcal{W}_{32\pi}}&\lesssim \|\varphi[0]\|_{\mathcal{H}}+ \|\mathcal{M}_3(x; \alpha, \dot\alpha, \varphi, \varphi_t, \varphi_x)\|_{L^2_{t, x}(D)}\\
&\;\;\;\;\;\; + \left\|\left(a(x)- \frac{l}{L}\right)\dot \alpha \gamma_x(x)- \frac{1}{L} \langle a(\cdot)\varphi_t(t, \cdot), \gamma_x(\cdot)\rangle_{L^2(\mathbb{T}^1)} \gamma_x(x) \right\|_{L^2_{t, x}(D)} \\
&\lesssim  \left(|\alpha(0)|+ |\dot \alpha(0)|+  \|\varphi[0]\|_{\mathcal{H}}\right)+ \left(\|\varphi\|_{\mathcal{W}_{32\pi}}+ \|\alpha\|_{C^1(0, 32\pi)} \right)\left(\|\varphi\|_{\mathcal{W}_{32\pi}}+ \|\dot\alpha\|_{C(0, 32\pi)} \right).
\end{align*}
Recall that $\alpha$ is bounded by 
\begin{equation*}
 \|\alpha\|_{C^1(0, 32\pi)}\lesssim   |\alpha(0)|+ |\dot \alpha(0)|+ \|\varphi[0]\|_{\mathcal{H}}. 
\end{equation*}
Thus, estimate \eqref{eq:vralpha1stbound} follows by combining the preceding two estimates and by assuming $\delta_2= \delta_{2, 0}$ sufficiently small.

\vspace{2mm}

Next, we turn to the proof of inequality \eqref{eq:vralpha2ndbound}. 
This time we shall first prove the desired estimate on a small time interval  $(0, T)$, and then iterate the procedure to $(0, 32\pi)$. Let the value of $T$ to be chosen later on. We consider the equation on $D_T= [0, T]\times \mathbb{T}^1$ and work with the  $\mathcal{W}_T$ norm. 

 In analogy to the estimate \eqref{es:N:123}, and using estimate \eqref{eq:vralpha1stbound}, 
\begin{align}
  & \left\|\Big(\mathcal{M}, \mathcal{M}_1\Big)
  (x; \alpha, \dot\alpha, \varphi(\cdot), \varphi_t(\cdot), \varphi_x(\cdot))\right\|_{L^2_{t, x}(D_T)}+ \left\| \mathcal{O}
  (\alpha, \dot\alpha, \varphi(\cdot), \varphi_t(\cdot), \varphi_x(\cdot))\right\|_{L^2_{t}(0, T)}  \notag\\
  & \;\;\;\;\;\; \lesssim   (\|\varphi\|_{\mathcal{W}_{T}}+ \|\alpha\|_{C^1(0, T)}) (\|\varphi\|_{\mathcal{W}_{T}}+ \|\dot\alpha\|_{C(0, T)}) \notag \\
  &\;\;\;\;\;\;\;\;\;\;\;\;\lesssim   \left(|\alpha(0)|+ |\dot \alpha(0)|+  \|\varphi[0]\|_{\mathcal{H}}\right)(\|\varphi\|_{\mathcal{W}_{T}}+ \|\dot\alpha\|_{C(0, T)})  \label{es:N:123:new}
\end{align}

Again, thanks to the well-posedness result of the linear wave equation Lemma \ref{lem:linea:well:gen},  as well as  estimates \eqref{eq:vralpha1stbound} and \eqref{es:N:123:new} we obtain 
\begin{align*} \|\varphi\|_{\mathcal{W}_{T}}&\lesssim \|\varphi[0]\|_{\mathcal{H}}+ T^{1/2} \|\mathcal{M}_1(x; \alpha, \dot\alpha, \varphi, \varphi_t, \varphi_x)\|_{L^2_{t, x}(D_T)}\\
&\;\;\;\;\;\; + T^{1/2} \left\|\left(a(x)- \frac{l}{L}\right)\dot \alpha \gamma_x(x)- \frac{1}{L} \langle a(\cdot)\varphi_t(t, \cdot), \gamma_x(\cdot)\rangle_{L^2(\mathbb{T}^1)} \gamma_x(x) \right\|_{L^2_{t, x}(D_T)} \\
&\lesssim \|\varphi[0]\|_{\mathcal{H}}+   T^{1/2} \left(|\alpha(0)|+ |\dot \alpha(0)|+  \|\varphi[0]\|_{\mathcal{H}}\right)(\|\varphi\|_{\mathcal{W}_{T}}+ \|\dot\alpha\|_{C(0, T)})     \\
&\;\;\;\; \;\;\;\;\;\;\;\; +  T \|\dot\alpha\|_{C(0, T)}+ T \|\varphi\|_{\mathcal{W}_T}    
\end{align*}
Next we turn to the equation governing $\alpha$ in \eqref{eq:varphialphafull}. Notice that it can be regarded as a first order equation on $\dot\alpha$, and
\begin{align*} 
\|\dot \alpha\|_{C(0, T)}&\lesssim |\dot\alpha(0)|+ \left\|\frac{1}{L}\left\langle  a(\cdot) \varphi_t(t, \cdot), \gamma_x(\cdot) \right\rangle_{L^2(\mathbb{T}^1)}+ \mathcal{O}(\alpha, \dot\alpha, \varphi(\cdot), \varphi_t(\cdot), \varphi_x(\cdot)) \right\|_{L^1(0, T)}\\
&\lesssim |\dot\alpha(0)|+ \|\varphi_t\|_{L^1_{t, x}(D_T)} + T^{1/2}\left\| \mathcal{O}(\alpha, \dot\alpha, \varphi(\cdot), \varphi_t(\cdot), \varphi_x(\cdot)) \right\|_{L^2(0, T)}\\
&\lesssim |\dot\alpha(0)|+  T \|\varphi\|_{\mathcal{W}_T} + T^{1/2} \left(|\alpha(0)|+ |\dot \alpha(0)|+  \|\varphi[0]\|_{\mathcal{H}}\right)(\|\varphi\|_{\mathcal{W}_{T}}+ \|\dot\alpha\|_{C(0, T)}).   
\end{align*}
By combining the above two estimates, we find effective computable $T>0$ and $C>0$ such that for any initial state satisfying \eqref{eq:cond:varphialphadosl:1} with $\delta_{2, 0}$, we infer 
\begin{equation*}
\|\varphi\|_{\mathcal{W}_{T}}+ \|\dot\alpha\|_{C(T)} \leq C_2 \big( \|\varphi[0]\|_{\mathcal{H}}+ |\dot\alpha(0)|\big),
\end{equation*}
and, in particular 
\begin{equation*}
\|\varphi\|_{C(0, T;\mathcal{H})}+ \|\dot\alpha\|_{C(T)}\leq C_2 \big( \|\varphi[0]\|_{\mathcal{H}}+ |\dot\alpha(0)|\big).  
\end{equation*}

Thanks to the bound  \eqref{eq:vralpha1stbound}, by reducing the value of $\delta_2$ we deduce that 
\begin{equation*}
\mathcal{E}_{\gamma}(\varphi[t])+ \dot \alpha^2(t)+ \alpha^2(t) \leq \delta_{2, 0}^2
\end{equation*}
holds for every $t\in [0, 32\pi]$ and for every initial state satisfying \eqref{eq:cond:varphialphadosl:1}. Therefore, by repeating the procedure on $\{[kT, (k+1T)]: k= 0, 1, ..., [32\pi/T]\}$, we conclude the estimate \eqref{eq:vralpha2ndbound}.

Finally, the inequality \eqref{eq:vralpha3rdbound} is a direct consequence of \eqref{es:N:123} together with the bounds \eqref{eq:vralpha1stbound}--\eqref{eq:vralpha2ndbound}. 
This finishes the proof of Lemma \ref{lem:liwave:1:vari2}.
\end{proof}

Now we come back to the proof of Proposition \ref{prop:main:stabilityT0}. Fix the value of $\delta_{fi}$ and $M$ as 
\begin{equation}
    \delta_{fi}:= \min \left\{\frac{\delta_{li}^{1/2}}{8 C_{co} C_2 (C_{co}+ 2/\sqrt{L})}, \delta_2\right\}, \;\; M:= 32\pi  C_2 \left( C_{co}+ 
2/\sqrt{L} \right).
\end{equation} 

The main issue is to obtain the estimate \eqref{estim:mu;decay}. Since $\delta_{fi}$ is smaller than $\delta_2$, thanks to Lemma \ref{lem:liwave:1:vari2}, the unique solution $(\varphi, \varphi_t, \alpha, \dot \alpha)$  and the  nonlinear terms 
$\mathcal{M}(x; \alpha, \dot\alpha, \varphi, \varphi_t, \varphi_x)$, $\mathcal{M}_1(x; \alpha, \dot\alpha, \varphi, \varphi_t, \varphi_x)$, and $\mathcal{O}(\alpha, \dot\alpha, \varphi(\cdot), \varphi_t(\cdot), \varphi_x(\cdot))$ satisfy
  \begin{gather} 
  \|\varphi\|_{\mathcal{W}}+ \|\alpha\|_{C^1(0, 32\pi)} \leq C_2 \big( \|\varphi[0]\|_{\mathcal{H}}+ |\alpha(0)|+ |\dot\alpha(0)|\big)\leq 2C_{co} C_2 \delta_{fi}, \notag  \\
   \|\varphi\|_{\mathcal{W}}+ \|\dot\alpha\|_{C(0, 32\pi)} \leq C_2 \big( \|\varphi[0]\|_{\mathcal{H}}+ |\dot\alpha(0)|\big), \notag \\
\|\mathcal{M}\|_{L^2_{t, x}(D)} + \|\mathcal{M}_1\|_{L^2_{t, x}(D)}+ \|\mathcal{O} \|_{L^2(0, 32\pi)} \leq 2C_{co} C_2 \delta \big( \|\varphi[0]\|_{\mathcal{H}}+ |\dot\alpha(0)|\big).  \notag
\end{gather}

The idea is to use Proposition \ref{prop:propagationofsmallness}. Clearly, by the definition of $\Psi$ in \eqref{eq:def:Psi} and equation \eqref{eq:Psi:final:def}, the function $\Psi$ satisfies conditions \eqref{eq:varphi:linear}--\eqref{eq:varphi:linear:2rd} with 
\begin{equation*}
    g= \mathcal{M}(x; \alpha, \dot\alpha, \varphi(\cdot), \varphi_t(\cdot), \varphi_x(\cdot)).
\end{equation*}
Furthermore, one has from the definition of $\delta_{fi}$ that 
\begin{align*}
  \|g\|_{L^2_{t, x}(D)}^2&=    \|\mathcal{M}\|_{L^2_{t, x}(D)}^2\leq 4C_{co}^2 C_2^2 \delta_{fi}^2 \big( \|\varphi[0]\|_{\mathcal{H}}+ |\dot\alpha(0)|\big)^2 \\
  &\leq  8C_{co}^2 C_2^2 \delta_{fi}^2  \left(C_{co}^2+ \frac{2}{L}\right) \mathcal{E}_{\gamma}(\Psi[0])\leq \frac{\delta_{li}}{2}\mathcal{E}_{\gamma}(\Psi[0]),
\end{align*}
namely, the condition \eqref{eq:gcond} also holds.  Thus, by applying Proposition \ref{prop:propagationofsmallness}, we obtain estimates \eqref{estim:mu;decay}--\eqref{estim:mu;unifboun}. 
Finally, the inequality \eqref{estim:alpha;unifboun} directly follows from 
\begin{align*}
    |\alpha(t)|&\leq |\alpha(0)|+ \int_0^t |\dot\alpha (s)| ds \\
    &\leq |\alpha(0)|+ 32\pi C_2 \big( \|\varphi[0]\|_{\mathcal{H}}+ |\dot\alpha(0)|\big) \\
    &\leq |\alpha(0)|+ 32\pi  C_2 \left( C_{co}+ 
2/\sqrt{L} \right)\mathcal{E}^{1/2}_{\gamma}(\Psi[0])
\end{align*}

This finishes the proof of Proposition \ref{prop:main:stabilityT0}.
\end{proof}

\subsubsection{The proof of Theorem \ref{thm:stability} Part $(ii)$}
Armed with the fixed time stability result Proposition \ref{prop:main:stabilityT0}, we are now in a position to present the proof of Theorem \ref{thm:stability}.

Recall the following constants: $\delta_{tr}$ and $C_{tr}$ in Remark \ref{rem:def:full:decom},  $C_{co}$ in Remark \ref{rem:equi:norms}, $ c_{li}$  in Proposition \ref{prop:propagationofsmallness},  $L$ in \eqref{def:L:l}, $\delta_{fi}$ and $M$ in Proposition \ref{prop:main:stabilityT0}.
Let us fix the value of $\varepsilon$ as
\begin{equation}
    \varepsilon:= \frac{\delta_{fi}}{2 C_{tr} L (C_{co}+ \sqrt{L}/2) (1+ 2/\sqrt{L}+ 2M/c_{li}) }.
\end{equation}

Thanks to Proposition \ref{prop:main:stabilityT0},  Remark \ref{rem:equi:norms}, the identity \eqref{eq:use:ide:EPsi}, using a simple induction argument we obtain the following: for any initial state $\phi[0]$ satisfying the condition,
 \begin{equation}\label{eq:pro:Thm2part2:ini}
        \|\phi[0]- (\gamma, 0)\|_{\mathcal{H}}\leq \varepsilon,
    \end{equation}
the system admits a unique solution in period $(0, +\infty)$. Moreover, under the decomposition $(\varphi, \varphi_t, \alpha, \dot \alpha)\in \mathcal{H}_{\gamma, 0, 0}\times \mathbb{R}^2$, we have that  for every $n\in \mathbb{N}$, 
 \begin{gather}
      \mathcal{E}_{\gamma}(\Psi[32\pi (n+1)])\leq (1- c_{li}) \mathcal{E}_{\gamma}(\Psi[32\pi n]), \label{eq:induc:1} \\
   \mathcal{E}_{\gamma}(\Psi[t])\leq 2 \mathcal{E}_{\gamma}(\Psi[32\pi n]) \; \; \forall t\in [32\pi n, 32\pi (n+1)], \label{eq:induc:2} \\
    |\alpha(t)|\leq |\alpha(32\pi n)|+ M \mathcal{E}_{\gamma}^{1/2}(\Psi[32\pi n]) \; \; \forall t\in [32\pi n, 32\pi (n+1)], \label{eq:induc:3}\\
    \mathcal{E}_{\gamma}(\varphi[32\pi (n+1)])+ \dot \alpha^2(32\pi (n+1))+ \alpha^2(32\pi (n+1)) < \delta_{fi}^2. \label{eq:induc:4}
\end{gather}

The case $n$ equals 0 is a direct consequence of Proposition \ref{prop:main:stabilityT0}. Suppose that the argument is correct for every $n\in \{0,..., k\}$. In particular, inequality \eqref{eq:induc:4} for the case $n= k$ is exactly the smallness condition  of the initial state at time $32\pi (k+1)$ for Proposition \ref{prop:main:stabilityT0}. Then, for the case $n$ equals to $k+1$,  the existence of a unique solution as well as the estimates \eqref{eq:induc:1}--\eqref{eq:induc:3}  are guaranteed by Proposition \ref{prop:main:stabilityT0}. It suffices to show  inequality \eqref{eq:induc:4}  to conclude the induction argument.

Indeed, thanks to the inequality \eqref{eq:induc:1} for the cases $n\in \{0,..., k+1\}$, 
\begin{align*}
\mathcal{E}_{\gamma}(\Psi[ 32\pi (k+2)])&\leq (1- c_{li})  \mathcal{E}_{\gamma}(\Psi[ 32\pi (k+1)]) \\
&\leq (1- c_{li})^2  \mathcal{E}_{\gamma}(\Psi[ 32\pi k]) \leq (1- c_{li})^{k+2}  \mathcal{E}_{\gamma}(\Psi[0])
\end{align*}
These bounds, to be combined with the inequality \eqref{eq:induc:3} for the cases $n\in \{0,..., k+1\}$, yield
\begin{align*}
|\alpha(32\pi (k+2))|&\leq |\alpha(32\pi (k+1))|+ M  \mathcal{E}_{\gamma}^{1/2}(\Psi[32\pi (k+1)])  \\
&\leq |\alpha(32\pi k)|+ M  \left(\mathcal{E}_{\gamma}^{1/2}(\Psi[32\pi (k+1)])+  \mathcal{E}_{\gamma}^{1/2}(\Psi[32\pi k])\right) \\
&\leq |\alpha(0)|+ M \sum_{n= 0}^{k+1} \mathcal{E}_{\gamma}^{1/2}(\Psi[32\pi n])\\ 
&\leq |\alpha(0)|+ M \sum_{n= 0}^{k+1}  (1- c_{li})^{n/2}   \mathcal{E}_{\gamma}^{1/2}(\Psi[0])  \\
&\leq |\alpha(0)|+ \frac{2M}{c_{li}} \mathcal{E}_{\gamma}^{1/2}(\Psi[0]).
\end{align*}
Hence,
\begin{align*}
    &\;\;\;\; \mathcal{E}_{\gamma}(\varphi[32\pi (k+2)])+ \dot \alpha^2(32\pi (k+2))+ \alpha^2(32\pi (k+2)) \\
    &\leq  \left(1+ \frac{2}{L}+  \frac{4 M^2}{c_{li}^2}\right)  \mathcal{E}_{\gamma}(\Psi[0]) +  2|\alpha(0)|^2  \\
    &= \left(1+ \frac{2}{L}+  \frac{4 M^2}{c_{li}^2}\right)  \left( C_{co}^2\|\varphi[0]\|_{\mathcal{H}}^2+ \frac{L}{2} \dot\alpha^2(0)\right) +  2|\alpha(0)|^2 \\
    & \leq \left(1+ \frac{2}{L}+  \frac{4 M^2}{c_{li}^2}\right)  \left( C_{co}^2+ \frac{L}{2} \right)\left(|\alpha(0)|+ |\dot \alpha(0)|+ \|\varphi[0]\|_{\mathcal{H}}\right)^2   \\
    & \leq \left(1+ \frac{2}{L}+  \frac{4 M^2}{c_{li}^2}\right)  \left( C_{co}^2+ \frac{L}{2} \right) C_{tr}^2 \varepsilon^2< \delta_{fi}^2. 
\end{align*}
This concludes the induction procedure. 

\vspace{3mm}

Now, we are in a position to prove the exponential stability property. 
First, we observe that $|\dot \alpha(t)|+ \|\varphi[t]\|_{\mathcal{H}}$ decays exponentially: for every $t\in (0, +\infty)$,
\begin{align*}
 |\dot \alpha(t)|+ \|\varphi[t]\|_{\mathcal{H}}&\lesssim  \mathcal{E}_{\gamma}^{1/2}(\Psi[ t]) 
\lesssim  e^{- r t}   \mathcal{E}_{\gamma}^{1/2}(\Psi[0]) \lesssim  e^{- r t} \left(|\dot \alpha(0)|+ \|\varphi[0]\|_{\mathcal{H}}\right).
\end{align*}
Next, we fix the value of rotation $p$, which is indeed the limit of $\alpha(t)$.  Due to the estimates \eqref{eq:induc:1} and \eqref{eq:induc:4}, and the Cauchy convergence principle, the sequence $\{\alpha(t)\}_{t\in (0, +\infty)}$ admits a limit:
\begin{gather*}
  |p|\leq C \left(|\alpha(0)|+ |\dot \alpha(0)|+ \|\varphi[0]\|_{\mathcal{H}}\right), \\
    |\alpha(t)- p|\leq C e^{- r t} \left(|\alpha(0)|+ |\dot \alpha(0)|+ \|\varphi[0]\|_{\mathcal{H}}\right)\;\; \forall t\in (0, +\infty).
\end{gather*}

Recall from Lemma \ref{lem:def:full:decom} concerning the decomposition of $\phi$: 
\begin{equation}
        \phi(t, x)= \gamma(x+ \alpha(t))+ \varphi(t, x)+ \mathcal{F}(x; \varphi(t, x), \alpha(t)) \;\; \forall t\in (0, +\infty)\;\; \forall  x\in \mathbb{T}^1, \notag
    \end{equation}
    where the nonlinear mapping $\mathcal{F}$ satisfies \eqref{esonF2}.
Thus,  
\begin{align*}
  \|\phi(t)- \gamma_p\|_{L^2(\mathbb{T}^1)}
   &\lesssim  \|\gamma_{\alpha(t)}- \gamma_p\|_{L^2(\mathbb{T}^1)}+ \|\varphi(t)\|_{L^2(\mathbb{T}^1)}+ \|\mathcal{F}(x; \varphi(t, x), \alpha(t))\|_{L^2(\mathbb{T}^1)} \\
   &\lesssim |\alpha(t)- p|+ \|\varphi(t)\|_{L^2(\mathbb{T}^1)} \\
   &\lesssim e^{- rt} \left(|\alpha(0)|+ |\dot \alpha(0)|+ \|\varphi[0]\|_{\mathcal{H}}\right)\;\; \forall t\in (0, +\infty).
\end{align*}
And, similarly, 
\begin{align*}
   &\;\;\;\; \|(\phi(t)- \gamma_p)_x\|_{L^2(\mathbb{T}^1)}\\
   &\lesssim  \|(\gamma_{\alpha(t)}- \gamma_p)_x\|_{L^2(\mathbb{T}^1)}+ \|\varphi_x(t)\|_{L^2(\mathbb{T}^1)}+ \|\mathcal{F}_x(x; \varphi(t, x), \alpha(t))\|_{L^2(\mathbb{T}^1)}+ \|\mathcal{F}_{\varphi}(x; \varphi(t, x), \alpha(t)) \varphi_x(t, x)\|_{L^2(\mathbb{T}^1)} \\
   &\lesssim  \|(\gamma_{\alpha(t)}- \gamma_p)_x\|_{L^2(\mathbb{T}^1)}+ \|\varphi_x(t)\|_{L^2(\mathbb{T}^1)}+ \||\varphi|^2+ |\varphi||\alpha| \|_{L^2(\mathbb{T}^1)}+ \|(|\varphi|+ |\alpha|)\varphi_x\|_{L^2(\mathbb{T}^1)} \\
   &\lesssim |\alpha(t)- p|+ \|\varphi(t)\|_{H^1(\mathbb{T}^1)} \\
   &\lesssim e^{- rt} \left(|\alpha(0)|+ |\dot \alpha(0)|+ \|\varphi[0]\|_{\mathcal{H}}\right)\;\; \forall t\in (0, +\infty).
\end{align*}
as well as 
\begin{align*}
 &\;\;\;\; \|\phi_t(t)\|_{L^2(\mathbb{T}^1)}\\
  &\lesssim \|(\gamma_{\alpha(t)})_x \dot\alpha\|_{L^2(\mathbb{T}^1)}+ \|\varphi_t(t)\|_{L^2(\mathbb{T}^1)}+  \|\mathcal{F}_{\varphi}(x; \varphi(t, x), \alpha(t)) \varphi_t(t, x)\|_{L^2(\mathbb{T}^1)}+ \|\mathcal{F}_{\alpha}(x; \varphi(t, x), \alpha(t)) \dot\alpha\|_{L^2(\mathbb{T}^1)}\\
   &\lesssim |\dot\alpha(t)|+ \|\varphi_t(t)\|_{L^2(\mathbb{T}^1)} \\
  &\lesssim  e^{- r t} \left(|\dot \alpha(0)|+ \|\varphi[0]\|_{\mathcal{H}}\right)\;\; \forall t\in (0, +\infty).
\end{align*}

By combining the above three inequalities, we get the exponential stability result:
\begin{align*}
    \|\phi[t]- (\gamma_p, 0)\|_{\mathcal{H}} \lesssim e^{- rt} \left(|\alpha(0)|+ |\dot \alpha(0)|+ \|\varphi[0]\|_{\mathcal{H}}\right)\;\; \forall t\in (0, +\infty),
\end{align*}
provided that the initial condition \eqref{eq:pro:Thm2part2:ini} is verified.

Thus, this finishes the proof of Theorem \ref{thm:stability}.
\vspace{5mm}

\noindent\textbf{Acknowledgments} \;
Part of this work was accomplished during visits between the three authors, and they would like to thank Sorbonne Université, EPFL, and Peking University for the warm hospitality. Jean-Michel Coron would like to thank Xu Zhang for the kind invitation to visit Sichuan University.  This visit, during which part of this article was carried out,  was supported by his contract with the New Cornerstone Science Foundation.  Joachim Krieger would like to thank the hospitality of IHP in June 2025.  Shengquan Xiang was supported by the NSF of China under Grant No. 12301562.

\appendix

\section{Proofs of some geometric lemmas}\label{sec:app:A}

This section is devoted to the proofs of some geometric  Lemmas \ref{lem:secfunext}, \ref{lem:selfadjoint}, and \ref{lem:movingframe}.

\noindent {\bf A.1.}\label{setting(Aex1)}  {\bf On the extension of the second fundamental form.}

\begin{proof}[Proof of Lemma \ref{lem:secfunext}]
Since $\mathcal{N}\subset \mathbb{R}^N$ is a compact submanifold of dimension $R$, there exists an open tubular neighborhood $\mathcal{U}$ of $\mathcal{N}$ in $\mathbb{R}^N$ and a smooth projection $\mathcal{P}: \mathcal{N}\rightarrow \mathcal{N}$ such that every point $x\in \mathcal{U}$ can be uniquely written as 
\begin{equation*}
    x= y+ v \textrm{ with } y= \mathcal{P}(x)\in \mathcal{N}, v\in N_{y} \mathcal{N}.
\end{equation*}
We have the smooth orthogonal projections, for every   $\phi\in \mathcal{N}$, 
\begin{align*}
    \Pi_T(\phi)&: \mathbb{R}^N\longrightarrow T_{\phi} \mathcal{N}, \\
     \Pi_N(\phi) &: \mathbb{R}^N\longrightarrow N_{\phi} \mathcal{N}.
\end{align*}

Recall that the second fundamental form $\Pi$ is  a smooth tensor field: $
    \Pi: T\mathcal{N}\times T\mathcal{N}\rightarrow N \mathcal{N}$. Now we extend it to $\tilde{\tilde \Pi}$ as follows. For $(x, u, v)\in \mathcal{U}\times \mathbb{R}^N\times \mathbb{R}^N$, define 
    \begin{equation*}
      \tilde{\tilde \Pi}(x) (u, v):= \Pi\left(\mathcal{P}(x)\right)  \Big(\Pi_T(\mathcal{P}(x)) u, \Pi_T(\mathcal{P}(x)) v \Big)\in N_{\mathcal{P}(x)} \mathcal{N} \subset \mathbb{R}^N.
    \end{equation*}
This new map $\tilde \Pi$ is smooth on $\mathcal{U}\times \mathbb{R}^N\times \mathbb{R}^N$. For every $(\phi, u, v)$ with $\phi\in \mathcal{N}$ and $u, v\in T_{\phi}\mathcal{N}$ we recover the original map:
\begin{equation*}
 \tilde{\tilde \Pi}(\phi) (u, v)= \Pi(\phi) (u, v).   
\end{equation*}

Next, we can define a map $\tilde \Pi$ from $\mathbb{R}^N\times \mathbb{R}^N\times \mathbb{R}^N$ to $\mathbb{R}^N$. Choose a smooth cutoff function $\chi: \mathbb{R}^N \rightarrow [0, 1]$ such that $\chi= 1$ on a smaller tube $\mathcal{U}_0\subset \mathcal{U}$ and that $\supp \chi\subset \mathcal{U}$. Define, for every $(x, u, v)\in \mathbb{R}^N\times \mathbb{R}^N\times \mathbb{R}^N$,
\begin{equation*}
  \tilde{\Pi} (x) (u, v):= 
  \begin{cases}
      \chi(x) \tilde{\tilde \Pi}(x) (u, v) \; \textrm{ if } x\in \mathcal{U}, \\
      0 \; \textrm{ otherwise.}
  \end{cases}
\end{equation*}
This new map is smooth and agrees with $\Pi$ for every $(\phi, u, v)$ with $\phi\in \mathcal{N}$ and $u, v\in T_{\phi}\mathcal{N}$. By the construction, this new map is clearly bilinear and symmetric.  

Now we define the smooth functions $\{S_{jk}: \mathbb{R}^N \rightarrow \mathbb{R}^N\}$ as follows. Let the vectors $\{\iota_1, ..., \iota_N\}$ be the standard orthonormal basis of $\mathbb{R}^N$. Set 
\begin{equation*}
    S_{jk}(x):= -\tilde \Pi(x) (\iota_j, \iota_k)\in \mathbb{R}^N.
\end{equation*}
It is automatically symmetric due to the symmetry of $\tilde \Pi$. Moreover, by the definition of $\{S_{jk}\}$ and the bilinearity of $\tilde \Pi$, one has for every $ x\in\mathbb{R}^N,\; \forall\, v,w\in \mathbb{R}^N, $
\begin{equation*}
   \tilde\Pi(x)(v,w)= \sum_{j,k=1}^N  \tilde\Pi(x)(\iota_j, \iota_k) v^j w^k = -  \sum_{j,k=1}^N S_{jk}(x)v^j w^k.
\end{equation*}
This finishes the proof of Lemma \ref{lem:secfunext}.
\end{proof} 

\vspace{2mm}

\noindent {\bf A.2.}\label{setting(A)}  {\bf  Proof of Lemma \ref{lem:selfadjoint}. } 
Let us now check the operator $\mathcal{L}_{\gamma}$ given in Definition \ref{def:jacobian} satisfies the identities in Lemma \ref{lem:selfadjoint}. Without loss of generality, we assume that $|\gamma_x(x)|= 1$. Let $\varphi$
 be a vector field along the geodesic $\gamma$. Our first goal is to show that 
 \begin{align}\label{eq:Lvarvarshow}
        -\langle \mathcal{L}_{\gamma}\varphi, \varphi\rangle_{L^2(\mathbb{T}^1)} = \int_{\mathbb{T}^1}\big(\big\|\nabla_{\gamma_x}\varphi\big\|^2 - \langle R(\gamma_x, \varphi)\gamma_x, \varphi\rangle\big)\,dx.
    \end{align}
 
 Construct a map 
\begin{align*}
    c(\cdot, \cdot): (- \varepsilon, \varepsilon)\times \mathbb{T}^1&\rightarrow \mathcal{N}, \\
    (s, x)&\mapsto \exp_{\gamma(x)} \left(s \varphi(x) \right).
\end{align*}
For any $s$, $c(s, \cdot)$ is a closed curve on $\mathcal{N}$ that is close to $\gamma$. By the construction of $c$ and the extension of exponential maps,  we have
\begin{equation*}
    c(s, x)= \gamma(x)+ s \varphi(x)- \frac{s^2}{2} \Pi\left(\varphi(x), \varphi(x)\right)+ O(s^3).
\end{equation*}
Define the energy for the curve $c(s, \cdot)$ as
\begin{equation*}
    E(c(s, \cdot)):= \frac{1}{2}\int_{\mathbb{T}^1} \left|\frac{\partial}{\partial x} c(s, x)\right|^2 \, dx 
\end{equation*}

On the one hand, thanks to Synge’s second variation 
formula for the energy (see \cite[Theorem 6.1.4]{2016-Petersen-book}),
\begin{equation*}
    \frac{d^2}{ d s^2} E(c(s, \cdot))|_{s= 0}= \int_{\mathbb{T}^1}\big(\big\|\nabla_{\gamma_x}\varphi\big\|^2 - \langle R(\gamma_x, \varphi)\gamma_x, \varphi\rangle\big)\,dx.
\end{equation*}

On the other hand, direct calculation yields
\begin{align*}
    2 E(c(s, \cdot))= \int_{\mathbb{T}^1}   \Big|\gamma_x+ s \varphi_x- \frac{s^2}{2} \partial_x \left( \Pi\left(\varphi(x), \varphi(x)\right)\right)+ O(s^3) \Big|^2    \, dx.
\end{align*}
Recall that 
\begin{equation*}
    \gamma_{xx}= \nabla_{\gamma_x} \gamma_x+ \Pi(\gamma_x, \gamma_x).
\end{equation*}
Thus 
\begin{align*}
    \frac{d^2}{ d s^2} E(c(s, \cdot))|_{s= 0}&=   \int_{\mathbb{T}^1}  |\varphi_x|^2 \, dx- \int_{\mathbb{T}^1}  \left \langle  \partial_x \left( \Pi\left(\varphi(x), \varphi(x)\right)\right), \gamma_x(x) \right\rangle  \, dx, \\
    &=   \int_{\mathbb{T}^1}  |\varphi_x|^2 \, dx+ \int_{\mathbb{T}^1}  \left \langle   \Pi\left(\varphi(x), \varphi(x)\right),\partial_x (\gamma_x)(x) \right\rangle  \, dx, \\
    &=   \int_{\mathbb{T}^1}  |\varphi_x|^2 \, dx+ \int_{\mathbb{T}^1}  \left \langle  \Pi\left(\varphi, \varphi\right),\Pi\left(\gamma_x, \gamma_x\right) \right\rangle  \, dx, 
\end{align*}
Thus, to show \eqref{eq:Lvarvarshow} it suffices to prove
\begin{align}
\begin{split}\label{eq:SPivarphi}
      & \;\;\; \int_{\mathbb{T}^1} \langle \varphi^r\partial_rS_{jk}(\gamma)\partial_x\gamma^j\partial_x\gamma^k + 2S_{jk}(\gamma)\partial_x\gamma^j\partial_x\varphi^k, \varphi\rangle \, dx \\
  & =  \int_{\mathbb{T}^1}  \left \langle  \partial_x \left( \Pi\left(\varphi(x), \varphi(x)\right)\right), \gamma_x(x) \right\rangle  \, dx     \\
 \;\; & =  -\int_{\mathbb{T}^1}  \left \langle  \Pi\left(\varphi, \varphi\right),\Pi\left(\gamma_x, \gamma_x\right) \right\rangle  \, dx. 
\end{split}
\end{align}

The equality between the first and second terms is not straightforward to establish. Thus we turn to show the equivalence between the first and third. 
Consider an auxiliary function
\begin{align*}
 I(s):= \int_{\mathbb{T}^1} \left\langle - \Pi\left(c(s, x)\right)\left(\frac{\partial c}{\partial x} (s, x), \frac{\partial c}{\partial x} (s, x)\right), \frac{\partial c}{\partial s} (s, x) \right\rangle  \, dx. 
\end{align*}
Note that the above equation is well-defined, since for every given $(s, x)$, the vectors $\frac{\partial c}{\partial x} (s, x)$ and $\frac{\partial c}{\partial s} (s, x)$ belong to $T_{c(s, x)}\mathcal{N}$. Recall the extrinsic formula of the second fundamental form, equation \eqref{eq:2ndfundcoord}. Thus 
\begin{equation*}
I(s):= \int_{\mathbb{T}^1} \left\langle S_{jk}\left(c(s, x)\right)\left(\frac{\partial c}{\partial x}\right)^j (s, x) \left(\frac{\partial c}{\partial x}\right)^k(s, x), \frac{\partial c}{\partial s} (s, x) \right\rangle  \, dx.
\end{equation*}
Note that the second derivatives such as $\frac{\partial^2 c}{ \partial s \partial x}$ shall be understood in $\mathbb{R}^N$.
Direct calculation yields 
\begin{align*}
    \frac{d}{ds} I(0)&=  \int_{\mathbb{T}^1} \langle \varphi^r\partial_rS_{jk}(\gamma)\partial_x\gamma^j\partial_x\gamma^k + 2S_{jk}(\gamma)\partial_x\gamma^j\partial_x\varphi^k, \varphi\rangle \, dx \\
    &\;\;\;\;  + \int_{\mathbb{T}^1} \left\langle - \Pi\left(\gamma\right)\left(\gamma_x, \gamma_x\right), - \Pi\left(\gamma(x))\right)\left(\varphi(x), \varphi(x)\right) \right\rangle  \, dx, \\
   & =  \int_{\mathbb{T}^1} \langle \varphi^r\partial_rS_{jk}(\gamma)\partial_x\gamma^j\partial_x\gamma^k + 2S_{jk}(\gamma)\partial_x\gamma^j\partial_x\varphi^k, \varphi\rangle \, dx \\
     &\;\;\;\;  + \int_{\mathbb{T}^1} \left\langle  \Pi\left(\gamma\right)\left(\gamma_x, \gamma_x\right),  \Pi\left(\gamma\right)\left(\varphi, \varphi\right) \right\rangle  \, dx.
\end{align*}
Meanwhile, we also notice from the definition of $I(s)$ that for every $s$, 
\begin{equation*}
    I(s)= \int_{\mathbb{T}^1} 0\, dx= 0.
\end{equation*}
Hence, the equality \eqref{eq:SPivarphi} holds. This finishes the proof of the identity \eqref{eq:Lvarvarshow}.
\vspace{2mm}

Next, we show that $\mathcal{L}_{\gamma}$ is self-adjoint. Given two tangent fields $\varphi, \psi$ along the geodesic $\gamma$.  Similarly, we construct a map 
\begin{align*}
    c(\cdot, \cdot, \cdot): (- \varepsilon, \varepsilon)\times (- \varepsilon, \varepsilon)\times \mathbb{T}^1&\rightarrow \mathcal{N}, \\
    (s, t,  x)&\mapsto \exp_{\gamma(x)} \left(s \varphi(x)+ t\psi(x) \right).
\end{align*}
For any $s, t\in (-\varepsilon, \varepsilon)$, $c(s, t, \cdot)$ is a closed curve on $\mathcal{N}$ that is close to $\gamma$. We have
\begin{equation*}
    c(s, x)= \gamma(x)+ s \varphi(x)+ t\psi(x) - \frac{1}{2} \Pi\left(s\varphi(x)+ t\psi(x), s\varphi(x)+ t\psi(x)\right)+ O(|(s, t)|^3).
\end{equation*}
Consider an auxiliary function
\begin{align*}
 II(s, t):= \int_{\mathbb{T}^1} \left\langle - \Pi\left(c(s, t, x)\right)\left(\frac{\partial c}{\partial x} (s, t, x), \frac{\partial c}{\partial x} (s, t,  x)\right), \frac{\partial c}{\partial t} (s, t, x) \right\rangle  \, dx= 0. 
\end{align*}
Then, direct calculation yields 
\begin{align*}
    \frac{\partial}{\partial s} II(0, 0)&= \int_{\mathbb{T}^1} \langle \varphi^r\partial_rS_{jk}(\gamma)\partial_x\gamma^j\partial_x\gamma^k + 2S_{jk}(\gamma)\partial_x\gamma^j\partial_x\varphi^k, \psi\rangle \, dx \\
    &\;\;\;\;\;\;\;\;  +  \int_{\mathbb{T}^1} \left\langle  \Pi\left(\gamma\right)\left(\gamma_x, \gamma_x\right), \Pi\left(\gamma\right)\left(\varphi, \psi\right) \right\rangle  \, dx.
\end{align*}
By the symmetry of the second fundamental form, we obtain 
\begin{align*}
  & \;\;\;\;  \int_{\mathbb{T}^1} \langle \varphi^r\partial_rS_{jk}(\gamma)\partial_x\gamma^j\partial_x\gamma^k + 2S_{jk}(\gamma)\partial_x\gamma^j\partial_x\varphi^k, \psi\rangle \, dx  \\
 &   =  -\int_{\mathbb{T}^1} \left\langle  \Pi\left(\gamma\right)\left(\gamma_x, \gamma_x\right), \Pi\left(\gamma\right)\left(\varphi, \psi\right) \right\rangle  \, dx\\
 & = \int_{\mathbb{T}^1} \langle \psi^r\partial_rS_{jk}(\gamma)\partial_x\gamma^j\partial_x\gamma^k + 2S_{jk}(\gamma)\partial_x\gamma^j\partial_x\psi^k, \varphi\rangle \, dx,
\end{align*}
thus $\mathcal{L}_{\gamma}$ is self-adjoint. This, together with the identity  \eqref{eq:Lvarvarshow} as well as the symmetry 
\begin{equation*}
   \langle R(\gamma_x, 
\varphi)\gamma_x, \psi\rangle= \langle R(\gamma_x, 
\psi)\gamma_x, \varphi\rangle 
\end{equation*}
yields 
  \begin{equation*}
 \langle \mathcal{L}_{\gamma}\varphi, \psi\rangle_{L^2(\mathbb{T}^1)} = - 
\langle \nabla_{\gamma_x}\varphi, 
\nabla_{\gamma_x}\psi\rangle_{L^2(\mathbb{T}^1)}   + \int_{\mathbb{T}^1}\langle R(\gamma_x, 
\varphi)\gamma_x, \psi\rangle\,dx= \langle \varphi, \mathcal{L}_{\gamma}\psi\rangle_{L^2(\mathbb{T}^1)}.
 \end{equation*}

 Finally, we show that $\mathcal{L}_{\gamma} \gamma_x= 0$. Using the above identity for $\varphi= \gamma_x$ we obtain 
 \begin{equation*}
    \langle \mathcal{L}_{\gamma}\gamma_x, \psi\rangle_{L^2(\mathbb{T}^1)}= 0. 
 \end{equation*}
 Recall that $\gamma$ satisfies the geodesic equation \eqref{eq:geodesic:equation}. Then, we have 
\begin{equation*}
\Delta \gamma(x+ s) + S_{jk}(\gamma(x+ s))\partial_x\gamma^j(x+s) \partial_x\gamma^k(x+s) = 0\;\; \forall x\in \mathbb{T}^1 \; \forall s\in (- \varepsilon, \varepsilon).
\end{equation*}
Differentiating the preceding equation with respect to the variable $s$ at $s= 0$, we obtain 
\begin{equation*}
0=   \Delta \gamma_x + \gamma_x^r\partial_rS_{jk}(\gamma)\partial_x\gamma^j\partial_x\gamma^k + 2S_{jk}(\gamma)\partial_x\gamma^j\partial_x\gamma_x^k= \mathcal{L}_{\gamma} \gamma_x\;\; \forall x\in \mathbb{T}^1.
\end{equation*}
This finishes the proof of Lemma \ref{lem:selfadjoint}.

 \vspace{2mm}

\noindent {\bf A.3.}\label{setting(Aex3)}  {\bf  Proof of Lemma \ref{lem:movingframe}. }
To simplify notations, we identify $\mathbb{T}^1$ with the interval $[0,1]$ with endpoints identified and denote $\gamma_x$ by $\dot \gamma$. The proof is composed of two steps.

\textit{Step 1. Constructing a frame via parallel transport.}  
Choose a base point $x_0= 0 \in [0,1]$. Select an orthonormal basis $
\{{\bf h}_1, {\bf h}_2, \dots, {\bf h}_R\}$
of $T_{\gamma(0)}\mathcal{N}$ with the property that $
e_1 = \dot{\gamma}(0).$
For each $x\in [0,1]$, we denote the  parallel transport along $\gamma$ (with respect to the Levi--Civita connection) as 
\[
P_{0\to x}: T_{\gamma(0)}\mathcal{N} \to T_{\gamma(x)}\mathcal{N}
\]
Define, for $p=1,... ,R$,
\[
{\bf f}_p(x) = P_{0\to x}({\bf h}_p).
\]
Because parallel transport is an isometry, for every $x$ the set $\{{\bf f}_1(x),... ,{\bf f}_R(x)\}$ forms an orthonormal basis of $T_{\gamma(x)}\mathcal{N}$. Moreover, since $\gamma$ is a geodesic, its velocity is parallel along $\gamma$,
$\nabla_{\dot{\gamma}} \dot{\gamma} = 0.$
It follows that
\[
{\bf f}_1(x) = P_{0\to x}({\bf h}_1) = P_{0\to x}(\dot{\gamma}(0)) = \dot{\gamma}(x)\;\; \forall x \in [0,1].
\]

\medskip

\textit{Step 2. Adjusting the frame via holonomy correction to achieve periodicity.}  
The construction above yields a smooth frame $\{{\bf f}_i(x)\}$ on $[0,1]$. However, in general the holonomy
\[
P_{0\to 1}: T_{\gamma(0)}\mathcal{N} \to T_{\gamma(1)}\mathcal{N} = T_{\gamma(0)}\mathcal{N}
\]
need not equal the identity. 
Since $P_{0\to 1}$ is an orthogonal transformation (an element of $O(R)$), there exists a smooth path
\[
R: [0,1] \to O(R)
\]
such that
\[
R(0) = I \quad \text{and} \quad R(1) = P_{0\to 1}^{-1}.
\]
Moreover, because we want to preserve the fact that $f_1(x) = \dot{\gamma}(x)$, we may choose $R(x)$ such that
\[
R(x){\bf h}_1 = {\bf h}_1 \quad \forall x\in [0,1].
\]
Define the modified frame
\[
\tilde {\bf f}_p(x) = R(x) {\bf f}_p(x) \quad \forall x\in [0,1] \text{ and } p=1,\dots,R.
\]
Then $\tilde {\bf f}_i(x)$ is smooth on $[0,1]$. At $x=0$,
\[
\tilde {\bf f}_p(0) = R(0){\bf f}_p(0) = I\cdot {\bf h}_p = {\bf h}_p,
\]
and at $x=1$,
\[
\tilde {\bf f}_p(1) = R(1) f_p(1) = P_{0\to 1}^{-1}\bigl(P_{0\to 1}({\bf h}_p)\bigr) = {\bf h}_p.
\]
Thus, $\tilde {\bf f}_p(0) = \tilde {\bf f}_p(1)$ for each $p$, and the modified frame is a smooth, periodic frame on $\mathbb{T}^1$.

Moreover, since $R(x){\bf h}_1 = {\bf h}_1$ for all $x$, we have
\[
\tilde {\bf f}_1(x) = R(x){\bf f}_1(x) = R(x)P_{0\to x}({\bf h}_1) = P_{0\to x}({\bf h}_1) = \dot{\gamma}(x)
\]
for all $x\in \mathbb{T}^1$. To simplify the notations we rename $\tilde {\bf f}_p$ as ${\bf f}_p$.
This finishes the proof of this construction.

\section{More details on the propagation of smallness}\label{sec:app:B}

In this section we provide more details on the proofs of Propositions \ref{lem:2}, \ref{prop:psl2}.

\noindent {\bf B.1.}\label{setting(B1)}  {\bf   More details on the proof of Proposition \ref{lem:2}.}
This part is devoted to the proof of Lemma \ref{lem:pro:wm}.
First, we  define the $i^{th}$-component of 
$\Pi_T(\phi)f$ as $g^i \; \forall i=1,2,...,N$.
Recall that under the null coordinate equation \eqref{eq:wmdamped} can be expressed as
\begin{equation}
    -\phi_{u v}^i= S^i_{jk}(\phi) \phi^j_u \phi^k_v+ \frac{1}{4}a \phi^i_t+ g^i:= F^i\;\; \forall i= 1,..., N. \notag
\end{equation}
Therefore $\phi^i$ can be characterized as follows using direct integration
\begin{align*}
  \phi^i(u, v)
  &=   2\int_0^t \int_{x-t+s}^{x+t-s} F^i(s, y) dy ds+ \int_{-v}^u \phi^i_u(u_0, -u_0) du_0+ \phi^i(-v, v),
\end{align*}
 where, $(u_0, v_0)$ is related  to the $(u, v)$ coordinate, while $(s, y)$ is to be understand in the original $(t,  x)$ coordinate.  

\noindent {\it On the characterization of $\phi_t$}.

By taking the time derivative the function $\phi_t$ is governed by 
\begin{equation}
    -\phi_{t, uv}^i=  \left(\nabla S^i_{jk}(\phi) \cdot \phi_t \right)  \phi^j_u \phi^k_v+ S^i_{jk}(\phi) \left(\phi^j_u \phi^k_{t, v}+ \phi^j_v \phi^k_{t, u} \right)+ \frac{1}{4}a \phi^i_{tt}+ g^i_t := G^i, \notag
\end{equation}
for every $ i= 1,..., N$. Taking advantage of the preceding formula we can express the value of $\phi_t$, basically due to the finite speed of propagation. This is linked to the proof of Proposition 2.2 in \cite{KX}, more precisely, to the ``propagation of the smallness'' part as Section 2.4.2 therein. To characterize of propagation of the wave, recall the setting in \cite[equations (2.30)--(2.31)]{KX}, we work with the vertical trapezoidal regions  
for $\alpha+ 2\pi<\beta$ and $l\leq \pi$
\begin{gather}
    P_{\alpha, \beta}^l(y):= \{(t, x): x\in [y, y+l], t\in [\alpha+ x-y, \beta-x+y]\},  \notag
\end{gather}
and  the $L^{\infty}_x L^2_t$-norm on it as
\begin{gather}
    \|\psi\|_{L^{\infty}_x L^2_t(P_{\alpha, \beta}^l(y))}:= \sup_{x\in [y, y+l]}   \|\psi(t, x)\|_{L^2_t(\alpha+ x-y, \beta-x+y)}.\notag
\end{gather}
Thus, recalling the calculation in \cite[Section 2.4.2]{KX}, for every point $(u, v)\in  P_{\alpha, \beta}^l(x_0)$ 
\begin{align*}
    -\phi^i_{t}(u, v)
    &= -\phi^i_{t}(2x_0+ v, v)- \int_{2x_0+v}^u \phi^i_{t, u}(u_0, u_0- 2x_0) d u_0+ \int_{2x_0+v}^u \int_{u_0- 2x_0}^v G^i(u_0, v_0) dv_0 d u_0, \\
    &=  -\frac{1}{2}\Big(\phi^i_t(u- x_0, x_0)+ \phi^i_t(v+x_0, x_0)+ \phi^i_x(u- x_0, x_0)- \phi^i_x(v+ x_0, x_0)\Big)\\
     &\;\;\;\;\;\;\;\;\;\;\;\;+ \int_{2x_0+v}^u \int_{u_0- 2x_0}^v \left( \left(\nabla S^i_{jk}(\phi) \cdot \phi_t \right)  \phi^j_u \phi^k_v+ \frac{1}{4}a \phi^i_{tt}+ g^i_t\right) (u_0, v_0) \,dv_0 d u_0\\
      &\;\;\;\;\;\;\;\;\;\;\;\;\;\;\;\;\;\;\;\;\;\;\;\;+ \int_{2x_0+v}^u \int_{u_0- 2x_0}^v  S^i_{jk}(\phi) \left(\phi^j_u \phi^k_{t, v}+ \phi^j_v \phi^k_{t, u} \right)(u_0, v_0) \,dv_0 d u_0.
\end{align*}

Thanks to the integration by parts
\begin{align*}
   & \int_{2x_0+v}^u \int_{u_0- 2x_0}^v \left( S^i_{jk}(\phi) \phi^j_u \phi^k_{t, v}\right)(u_0, v_0) \,dv_0 d u_0 \\
   & = -\int_{2x_0+v}^u \int_{u_0- 2x_0}^v \left(\nabla S^i_{jk}(\phi)\cdot \phi_v \right) \phi^j_u \phi^k_t(u_0, v_0) \,dv_0 d u_0 \\
    &\;\;\;\;\;\;\;\;\;\;\;\;\;\;\;\; -\int_{2x_0+v}^u \int_{u_0- 2x_0}^v  S^i_{jk}(\phi) \phi^j_{uv} \phi^k_t  (u_0, v_0) \,dv_0 d u_0 \\
   &\;\;\;\;\;\;\;\;\;\;\;\;\;\;\;\;\;\;\;\;\;\;\;+ \int_{2x_0+v}^u S^i_{jk}(\phi) \phi^j_u \phi^k_{t}(u_0, v)- S^i_{jk}(\phi) \phi^j_u \phi^k_{t}(u_0, u_0- 2x_0)  \,  du_0 \\
    &= -\int_{2x_0+v}^u \int_{u_0- 2x_0}^v \left(\nabla S^i_{jk}(\phi)\cdot \phi_v \right) \phi^j_u \phi^k_t(u_0, v_0) \,dv_0 d u_0 \\
    &\;\;\;\;\;\;\;\;\;\;\;\;\;\;\;\;\;\; +\int_{2x_0+v}^u \int_{u_0- 2x_0}^v  S^i_{jk}(\phi) \phi^k_t \left(S^j_{lm}(\phi)\phi^l_u \phi^m_v+ \frac{1}{4}a \phi^j_t+ g^j \right)  (u_0, v_0) \, dv_0 d u_0 \\
   &\;\;\;\;\;\;\;\;\;\;\;\;\;\;\;\;\;\;\;\;\;\;\;\;\;\;\;\;\;\;+ \int_{2x_0+v}^u S^i_{jk}(\phi) \phi^j_u \phi^k_{t}(u_0, v)- S^i_{jk}(\phi) \phi^j_u \phi^k_{t}(u_0, u_0- 2x_0)  \,  du_0 \\
\end{align*}
and 
\begin{align*}
   &\;\;\;\; \int_{2x_0+v}^u \int_{u_0- 2x_0}^v \left( S^i_{jk}(\phi) \phi^j_v \phi^k_{t, u}\right)(u_0, v_0) \, dv_0 d u_0 \\
   & =  -\int_{v}^{u- 2x_0} \int_{v_0+ 2x_0}^u \left( S^i_{jk}(\phi) \phi^j_v \phi^k_{t, u}\right)(u_0, v_0) \, du_0 d v_0 \\
   &= \int_{v}^{u- 2x_0} \int_{v_0+ 2x_0}^u  \left(\nabla S^i_{jk}(\phi)\cdot \phi_u \right) \phi^j_v \phi^k_t(u_0, v_0) \, du_0 d v_0 \\
    &\;\;\;\;\;\;\;\;\;\;\;\;\;\;\;\;\;\; +\int_{v}^{u- 2x_0} \int_{v_0+ 2x_0}^u  S^i_{jk}(\phi) \phi^j_{uv} \phi^k_t  (u_0, v_0) \, du_0 d v_0 \\
   &\;\;\;\;\;\;\;\;\;\;\;\;\;\;\;\;\;\;\;\;\;\;\;\;\;\;\;\;\;\;+ \int_v^{u- 2 x_0}  S^i_{jk}(\phi) \phi^j_v \phi^k_{t}(v_0+ 2x_0, v_0)- S^i_{jk}(\phi) \phi^j_u \phi^k_{t}(u, v_0)   \, dv_0 \\
   &= \int_{v}^{u- 2x_0} \int_{v_0+ 2x_0}^u  \left(\nabla 
  S^i_{jk}(\phi)\cdot \phi_u \right) \phi^j_v \phi^k_t(u_0, v_0) \, du_0 d v_0 \\
    &\;\;\;\;\;\;\;\;\;\;\;\;\;\;\;\;\;\; -\int_{v}^{u- 2x_0} \int_{v_0+ 2x_0}^u  S^i_{jk}(\phi) \phi^k_t \left(S^j_{lm}(\phi)\phi^l_u \phi^m_v+ \frac{1}{4}a \phi^j_t+ g^j \right)  (u_0, v_0)  \, du_0 d v_0 \\
   &\;\;\;\;\;\;\;\;\;\;\;\;\;\;\;\;\;\;\;\;\;\;\;\;\;\;\;\;\;\;+ \int_v^{u- 2 x_0}  S^i_{jk}(\phi) \phi^j_v \phi^k_{t}(v_0+ 2x_0, v_0)- S^i_{jk}(\phi) \phi^j_u \phi^k_{t}(u, v_0)  \,  dv_0 \\
     &=  -\int_{2x_0+v}^u \int_{u_0- 2x_0}^v  \left(\nabla 
  S^i_{jk}(\phi)\cdot \phi_u \right) \phi^j_v \phi^k_t(u_0, v_0) \, dv_0 d u_0 \\
    &\;\;\;\;\;\;\;\;\;\;\;\;\;\;\;\;\;\;  +\int_{2x_0+v}^u \int_{u_0- 2x_0}^v S^i_{jk}(\phi) \phi^k_t \left(S^j_{lm}(\phi)\phi^l_u \phi^m_v+ \frac{1}{4}a \phi^j_t+ g^j \right)  (u_0, v_0)\,  dv_0 d u_0 \\
   &\;\;\;\;\;\;\;\;\;\;\;\;\;\;\;\;\;\;\;\;\;\;\;\;\;\;\;\;\;\;+ \int_v^{u- 2 x_0}  S^i_{jk}(\phi) \phi^j_v \phi^k_{t}(v_0+ 2x_0, v_0)- S^i_{jk}(\phi) \phi^j_u \phi^k_{t}(u, v_0)  \,  dv_0 
\end{align*}
These equations further deduce that for any $(t, x)\in P_{\alpha, \beta}^l(x_0)$ with $x= x_0+d$,
\begin{align*}
    -\phi_t^i (t, x)&=
    \underbrace{ -2\int_{x_0}^x \int_{t-x+y}^{t+x-y} W^i(s, y)  \, ds dy }_{=: H^i_1(t, x)}\\
     &\;\; \;\;\;  \underbrace{ -\frac{1}{2}\Big(\phi^i_t(u- x_0, x_0)+ \phi^i_t(v+x_0, x_0)\Big) }_{=: H^i_2(t, x)} \;\; 
     \underbrace{ -\frac{1}{2} \int_{t- d}^{t+ d} \phi^i_{tx}(s, x_0) \, ds }_{=: H^i_3(t, x)}\\
    &\;\; \;\;\;\underbrace{ -\frac{1}{2} \int_{x_0}^x a(y)\phi^i_t(t+x-y, y)+ g^i(y) \, dy +\frac{1}{2} \int_{x_0}^x a(y)\phi^i_t(t-x+y, y)+ g^i(y) \, dy }_{=: H^i_4(t, x)}\\
    &\;\; \;\;\; \underbrace{  + \int_{2x_0+v}^u S^i_{jk}(\phi) \phi^j_u \phi^k_{t}(u_0, v) \, d u_0 -  \int_{2x_0+v}^u  S^i_{jk}(\phi) \phi^j_u \phi^k_{t}(u_0, u_0- 2x_0)    \, du_0 }_{=: H^i_5(t, x)}\\
    & \;\; \;\;\; \underbrace{  +  \int_v^{u- 2 x_0}  S^i_{jk}(\phi) \phi^j_v \phi^k_{t}(v_0+ 2x_0, v_0)  \, d v_0- \int_v^{u- 2 x_0} S^i_{jk}(\phi) \phi^j_u \phi^k_{t}(u, v_0)  \,  dv_0 }_{=: H^i_6(t, x)}\\
    &=: \sum_{j=1}^6 H_j^i(t, x),
\end{align*}
where $W^i$ is given by 
\begin{align*}
W^i&:= \left(\nabla S^i_{jk}(\phi) \cdot \phi_t \right)  \phi^j_u \phi^k_v- \left(\nabla S^i_{jk}(\phi)\cdot \phi_v \right) \phi^j_u \phi^k_t-  \left(\nabla 
  S^i_{jk}(\phi)\cdot \phi_u \right) \phi^j_v \phi^k_t   \\
  & \;\;\;\;\;\;\;\;\;\;\;\;\;\;\;\;\;\;\;\;\;\;\;\;
  + S^i_{jk}(\phi) \phi^k_t \left( 2S^j_{lm}(\phi)\phi^l_u \phi^m_v+ \frac{1}{2}a \phi^j_t+ 2g^j \right).
\end{align*}

\noindent {\it On the estimates of $\|\phi_t\|_{L^{\infty}_x L^2_t(\alpha+d, \beta-d)}$}.

To be compared with \cite[Step 1 of the proof of Lemma 2.5]{KX}, we notice that the estimates on $H_2^i, H_3^i, H_5^i, H_6^i$ remain the same: for $d\in (0, S)$, there are
\begin{gather*}
 \|H_2^i(t, x)\|_{L^2_t(\alpha+d, \beta-d)}\lesssim \|\phi_t(t, x_0)\|_{L^2_t(\alpha, \beta)},\\
   \|H_3^i(t, x)\|_{L^2_t(\alpha+d, \beta-d)}\lesssim\|\langle \partial_t\rangle^{-1} \left(\eta_{\alpha}^{\beta}[\tau](t) \phi_{tx}(t, x_0)\right)
    \|_{L^2_{t}(\R)},\\
  \|H_5^i(t, x)\|_{L^2_t(\alpha+d, \beta-d)}+ \|H_6(t, x)\|_{L^2_t(\alpha+d, \beta-d)}
\lesssim \sqrt{d} \sqrt{E(0)} \|\phi_t\|_{L^{\infty}_x L^2_t(P)}.
\end{gather*}

As for $H_2^i$ and $H_4^i$, the contribution of the extra term $g^j$ is bounded by its $L^2_{t, x}$-norm which is further controlled by $\delta_2 E(\phi[0])$, otherwise the other estimates remain the same. More precisely, 
\begin{gather*}
   \;\;\;\;  \int_{\alpha+d}^{\beta-d}\left(\int_{x_0}^x a(y)\phi_t^i(t+x-y, y)+ g^i(y) dy\right)^2 \, dt  \;\;\;\;\;\; \;\;\;\;\;\;\;\;\;\;\;\;\;\;\;\;\;\;\\
   \;\;\;\;\;\;\;\;\;\;\;\; \lesssim  d \int_{\alpha+d}^{\beta-d}\int_{x_0}^x \left((\phi^i_t)^2(t+x-y, y) + \left(g^i(y)\right)^2 \right)\, dy dt\\
 \;\;\;\;\;\;\;\;\;\;\;\;\;\;\;\;\;\;\;\;\;\;\;\;   \lesssim d^2 \|\phi_t\|_{L^{\infty}_x L^2_t(P)}^2+ d \|g^i\|_{L^{2}_{t, x}(P)}^2
\end{gather*}
thus
\begin{equation}\label{es:f4}
     \|H_4(t, x)\|_{L^2_t(\alpha+d, \beta-d)}\lesssim d \|\phi_t\|_{L^{\infty}_x L^2_t(P)} +  \left(d \delta_2 E(\phi[0])\right)^{1/2}.  \notag
\end{equation}
We also know that the contribution of $g$ in $H^i_1$ is bounded by 
\begin{align*}
\int_{x_0}^x \int_{t-x+y}^{t+x-y} S^i_{jk}(\phi) \phi^k_t g^j (s, y) ds dy
\lesssim \|\phi_t^k\|_{L^2_{t, x}} \|g\|_{L^2_{t, x}} 
\lesssim  (\delta_2)^{1/2} E(\phi[0])
\end{align*}
thus 
\begin{align*}
   |H_1|(t, x)
    &\lesssim \left(\sqrt{d}E(\phi[0])+ d\sqrt{E(\phi[0])}\right)\|\phi_t\|_{L^{\infty}_x L^2_t(P)}+ (\delta_2)^{1/2} E(\phi[0]).
\end{align*}

In conclusion, for $x= x_0+ d$ and for any $\tau\in (0, 1)$, we have
\begin{align*}
   \|\phi_t(t, x)\|_{L^2_t(\alpha+d, \beta-d)}
    &\lesssim  \|\phi_t(t, x_0)\|_{L^2_t(\alpha, \beta)}+ \left\|\langle \partial_t\rangle^{-1} \left(\eta_{\alpha}^{\beta}[\tau](t) \phi_{tx}(t, x_0)\right)
    \right\|_{L^2_{t}(\R)} \\
    & \;\;\;\;\;\;\;\;\;\;\;\;\;\;\;\;\;\;\;\; +  \sqrt{d} \|\phi_t\|_{L^{\infty}_x L^2_t(P)}+ (\delta_2 E(\phi[0]))^{1/2} \notag
\end{align*}
Hence, by choosing $d$ small enough, we obtain
\begin{align}
   &\;\;\;\; \|\phi_t(t, x)\|_{L^{\infty}_x L^2_t(P^{d}_{\alpha, \beta}(x_0))} \notag \\
    &\lesssim  \|\phi_t(t, x_0)\|_{L^2_t(\alpha, \beta)}+ \left\|\langle \partial_t\rangle^{-1} \left(\eta_{\alpha}^{\beta}[\tau](t) \phi_{tx}(t, x_0)\right)
    \right\|_{L^2_{t}(\R)}  +  (\delta_2 E(\phi[0]))^{1/2}. \notag
\end{align}

This yields the first estimate  in Lemma \ref{lem:pro:wm}.

\vspace{2mm}

\noindent {\it On the choice of $z$ such that $\phi_{tx}(t, z)$ is small.} 

This part is the same as  \cite[Step 2 of the proof of Lemma 2.5]{KX}. We can find  some point $\bar z\in [x_0+ \frac{5 S_0}{8}, x_0+ \frac{7 S_0}{8}]$ such that 
\begin{gather*}
\;\;\;\;\;  \int_{\mathbb{R}} \left(\langle \partial_t\rangle^{-1} \left(\eta_{\alpha+ S_0+ \tau_0}^{\beta-S_0-\tau_0}[\tau_0](t) \phi_{tx}(t, \bar z)\right)\right)^2 \, dt\;\;\;\;\;\;\;\;\;\;\;\;\;\;\;\;\;\;\;\;\;\;\;\;\;\;\;\;\;\;\;\;\; \\
\;\;\;\;\;\;\;\;\;\;\;\;\;\;\;\;\;\;\;\;\;\; \;\;\;\;\;\;\;\;\;\;\;\lesssim  \sqrt{E(\phi[0])}\|\phi_t(t, x)\|_{L^{\infty}_x(x_0+S_0/2, x_0+ S_0; L^2_t(\alpha+ S_0, \beta- S_0))}.
\end{gather*}

Therefore, we finish the proof of Lemma \ref{lem:pro:wm}.

\vspace{3mm}

\noindent {\bf B.2.}\label{setting(B2)}  {\bf  More details on the proof of Proposition \ref{prop:psl2}.}
Similar to \cite[Proposition 2.2]{KX}, it is a direct consequence of the following two auxilary properties: Lemma \ref{lemma:choose} and Lemma \ref{lem:keypropa}. Since these two lemmas are analog to \cite[Lemma 2.4, Lemma 2.5]{KX} concerning $\phi_t$, we omit the detailed proofs.
Actually, the analysis here is even simpler:  in \cite{KX} the authors have studied a nonlinear equation on $\phi_t$: 
\begin{equation}
    \Box \phi_t = (|\phi_t|^2-|\phi_x|^2)\phi_t+ 2(\phi_t\cdot \phi_{tt}- \phi_x\cdot \phi_{tx})\phi+ a \phi_{tt}, \notag
\end{equation}
while, in the current setting, we deal with a  linear equation \eqref{eq:prop:psl2:main} concerning the function $\varphi$:
\begin{equation*}
     \Box \varphi+ {\bf A} \partial_x\varphi+ {\bf C} \partial_t\varphi+ {\bf B}\varphi= 0.
\end{equation*}

Recall the truncated function $\mu$, the cutoff function $\eta_{\alpha}^{\beta}[\tau]$, 
 the domain $P_{\alpha, \beta}^l(y)$, and the norm $\|\cdot\|_{L^{\infty}_x L^2_t(P_{\alpha, \beta}^l(y))}$ defined in \eqref{eq:def:truncfunc}--\eqref{eq:def:norm:Pab}.

\begin{lemma}\label{lemma:choose}
There exists some effectively computable $C_0>0$ such that, for any  $\varepsilon\in (0, 1)$, for any $\tau\in (0, 1)$,
if there is some solution  $\varphi$  of \eqref{eq:prop:psl2:main}
\begin{gather}
\int_{-16\pi}^{16\pi}\int_{\mathbb{T}^1}\chi_{\omega}|\varphi|^2(t, x) \, dx dt\leq   \varepsilon \|\varphi[0]\|_{L^2\times H^{-1}}^2, \label{es:mainasum}  \notag
\end{gather}
then,  there exists some $x_0\in [0, 2\pi)$ such that
\begin{equation}\label{es:lem_31}
     \|\varphi(t, x_0)\|_{L^2_t(-16\pi, 16\pi)}^2+   \|\langle\partial_t\rangle^{-1}\left(\eta_{-15\pi}^{15\pi}[\tau] \varphi_{x}\right)(t, x_0)\|_{L^2_t(\R)}^2 \leq C_0 \frac{\sqrt{\varepsilon}}{\tau^2} \|\varphi[0]\|_{L^2\times H^{-1}}^2.  \notag
\end{equation}
\end{lemma}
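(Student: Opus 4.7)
The plan is to choose $x_0$ by a pigeonhole argument after averaging both quantities on the left-hand side over $x\in J$ (where $J$ denotes the support appearing in the hypothesis). Abbreviate $E_0:=\|\varphi[0]\|_{L^2\times H^{-1}}^2$, $\eta:=\eta_{-15\pi}^{15\pi}[\tau]$, and
\[
A(x):=\|\varphi(\cdot,x)\|_{L^2_t(-16\pi,16\pi)}^2,\qquad B(x):=\|\langle\partial_t\rangle^{-1}(\eta\varphi_x)(\cdot,x)\|_{L^2_t(\mathbb{R})}^2.
\]
My goal will be to show
\[
\int_J (A(x)+B(x))\,dx \;\lesssim\; \frac{\sqrt{\varepsilon}}{\tau^{2}}\,E_0,
\]
after which Chebyshev selects $x_0\in J$ with $A(x_0)+B(x_0)\le C_0\tau^{-2}\sqrt{\varepsilon}\,E_0$. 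Throughout I will use that \eqref{eq:prop:psl2:main} is well-posed in $L^2\times H^{-1}$ with the uniform bound $\|\varphi[t]\|_{L^2\times H^{-1}}\lesssim E_0^{1/2}$ for $t\in[-16\pi,16\pi]$, which follows from Lemma \ref{lem:linea:well:gen} by the standard duality argument (test against the backward adjoint problem whose data lies in $\mathcal{H}$).

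The averaged bound on $A$ is immediate: by Fubini and the hypothesis,
\[
\int_J A(x)\,dx \;=\; \int_{-16\pi}^{16\pi}\!\int_J |\varphi(t,x)|^2\,dx\,dt \;\le\; \varepsilon\,E_0.
\]

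The main step is the averaged bound on $B$. By duality, $B(x)=\|\eta\varphi_x(\cdot,x)\|_{H^{-1}_t(\mathbb{R})}^2$, so I test $\eta\varphi_x(\cdot,x)$ against an arbitrary $L^2_t$-function $\psi$ via $\langle\partial_t\rangle^{-1}\psi$ and pass to the null coordinates $u=t+x$, $v=t-x$. In these coordinates, $\partial_x=\partial_u-\partial_v$, while \eqref{eq:prop:psl2:main} becomes $-4\varphi_{uv}=\mathbf{A}\varphi_x+\mathbf{C}\varphi_t+\mathbf{B}\varphi$, a bounded first-order perturbation of the free wave operator. Integrating once in a null direction, any antiderivative in $t$ of $\eta\varphi_x$ can be written, modulo explicit characteristic traces, as an $\eta$-weighted trace of $\varphi$ itself plus two kinds of remainder: terms in which $\varphi, \varphi_t$ are paired against $\eta'$ or $\eta''$, and double integrals against the lower-order operators $\mathbf{A},\mathbf{B},\mathbf{C}$. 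The trace contribution, after averaging in $x\in J$, is controlled by $\int_J A(x)^{1/2}\,dx$, and a further Cauchy–Schwarz against the hypothesis $\int_J A\le\varepsilon E_0$ produces the square-root factor $\sqrt{\varepsilon}$. The cutoff remainders involving $\eta',\eta''$ of $L^2$-size $\tau^{-1/2}$ and $\tau^{-3/2}$ are absorbed into the uniform energy $E_0$ at the price of at most two powers of $\tau^{-1}$, giving the overall $\tau^{-2}$ factor; the double-integral remainders are of lower order in the same scheme. This yields $\int_J B(x)\,dx\lesssim\tau^{-2}\sqrt{\varepsilon}\,E_0$, and the pigeonhole completes the argument.

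The hard part is the null-coordinate calculation controlling $\int_J B$: one must carefully track all boundary and characteristic contributions from the integrations by parts in $u$ and $v$, keep the support of $\eta$ inside the time window on which uniform energy bounds are available, and verify that each resulting piece is dominated either by $A(x)^{1/2}$ (small by the hypothesis after averaging and Cauchy–Schwarz) or by the uniform $E_0$ with exactly the claimed number of $\tau^{-1}$ factors. The analogous manipulation for the smoother pair $(\phi_t,\phi_{tx})$ is carried out in detail in \cite[Lemma 2.4]{KX} and in Lemma \ref{lem:choosex0} above; the adaptation needed here is essentially to replace the $H^1\times L^2$-level energy estimates by their $L^2\times H^{-1}$ counterparts, which is naturally encoded by the $\langle\partial_t\rangle^{-1}$ operator inside $B$ shifting one $\partial_t$-derivative onto the test function $\psi$.
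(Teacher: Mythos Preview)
Your proposal is correct in outline and takes essentially the same approach as the paper: both defer to the argument of \cite[Lemma~2.4]{KX} (cf.\ Lemma~\ref{lem:choosex0}), adapted one derivative lower to the $L^2\times H^{-1}$ setting, with the point $x_0$ selected by averaging over $J$ and pigeonhole. Your description of the mechanism producing $\sqrt{\varepsilon}/\tau^2$ is slightly imprecise---the two factors arise together from a single Cauchy--Schwarz pairing the small localized quantity $\|\chi_J\varphi\|_{L^2_{t,x}}\lesssim(\varepsilon E_0)^{1/2}$ against an energy-bounded quantity of size $\tau^{-2}E_0^{1/2}$ (coming from $\|\langle\partial_t\rangle^{-1}(\eta''\,\cdot\,)\|$-type terms), rather than as separate additive contributions---but this does not affect the validity of the strategy.
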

This lemma is similar to \cite[Lemma 2.4]{KX}, where the authors  obtained estimates on $\|\phi_t(t, x_0)\|_{L^2_t(-16\pi, 16\pi)}^2$ and $   \|\langle\partial_t\rangle^{-1}\left(\eta_{-15\pi}^{15\pi}[\tau] \phi_{tx}\right)(t, x_0)\|_{L^2_t(\R)}^2$.

\begin{lemma}\label{lem:keypropa}
There exist some effectively computable values $S_0$ 
and $C_{S_0}$ such that, for any $\tau\in (0, 1)$,  for any $\alpha\in [-15\pi, 0), \beta\in (2\pi, 15\pi]$, for any $z\in [0, 4\pi]$, 
one has
\begin{equation}\label{eq:bound:p181}
    \|\varphi\|_{L^{\infty}_x L^2_t(P_{\alpha, \beta}^{S_0}(z))}\leq 2  \|\varphi(t, z)\|_{L^2_t(\alpha, \beta)}+ 6\left\|\langle\partial_t \rangle^{-1}\left(\eta_{\alpha}^{\beta}[\tau](t) \varphi_{x}(t, z)\right)\right\|_{L^2_t(\R)},   \notag
\end{equation}
and, moreover, by denoting $\tau_0:= S_0/16$, there exists some $\bar x\in (z+ S_0/2, z+ S_0)$ such that 
\begin{align}\label{eq:bound:p182}
   &\;\;\;\;\; \left\| \langle\partial_t\rangle^{-1} \left(\eta_{\alpha+ S_0+\tau_0}^{\beta-S_0-\tau_0}[\tau_0](t)\varphi_{x}(t, \bar x)\right)\right\|_{L^2_t(\R)} \notag \\
   &\leq C_{S_0} \left(\|\varphi[0]\|_{L^2\times H^{-1}}\right)^{\frac{1}{2}}\left(  \|\varphi(t, z)\|_{L^2_t(\alpha, \beta)}+ \left\|\langle\partial_t \rangle^{-1}\left(\eta_{\alpha}^{\beta}[\tau_0](t) \varphi_{x}(t, z)\right)\right\|_{L^2_t(\R)} \right)^{\frac{1}{2}}.   \notag
\end{align}
\end{lemma}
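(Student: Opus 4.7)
The plan is to mimic the scheme developed for the $\phi_t$-equation in Lemma~\ref{lem:pro:wm} (which itself follows \cite[Lemma~2.5]{KX}), with the substantial simplification that we now deal with a linear equation acting on $\varphi$ rather than a quadratic nonlinearity acting on $\phi_t$. First, pass to null coordinates $u = t+x$, $v = t-x$, so that equation \eqref{eq:prop:psl2:main} becomes
\begin{equation*}
  -4\varphi_{uv} = \mathbf{A}(x)\partial_x\varphi + \mathbf{C}(x)\partial_t\varphi + \mathbf{B}(x)\varphi =: \mathcal{R}(t,x),
\end{equation*}
where $\mathcal{R}$ is controlled in $L^2_{t,x}$ by the $H^1\times L^2$-seminorm of $\varphi$. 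Starting from a point $(t,x) \in P_{\alpha,\beta}^{S_0}(z)$ with $x = z + d$, integrate $\varphi_{uv}$ along the characteristic rectangle with one corner on $\{x = z\}$. This yields the representation
\begin{equation*}
\begin{split}
  \varphi(t,x) &= \tfrac12\bigl(\varphi(u-z,z) + \varphi(v+z,z)\bigr) + \tfrac12\int_{t-d}^{t+d}\varphi_x(s,z)\,ds \\
  &\qquad - \tfrac14\int_{x_0}^{x}\int_{t-x+y}^{t+x-y}\mathcal{R}(s,y)\,ds\,dy,
\end{split}
\end{equation*}
which is the linear analogue of the decomposition $\phi_t = \sum_{j=1}^{6}H_j^i$ used in Appendix~B.1, but with $H_5,H_6$ (the terms requiring delicate integration by parts in the nonlinear case) simply absent.

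Second, take the $L^2_t$-norm on $(\alpha+d,\beta-d)$ of each summand. The first two boundary contributions are bounded by $\|\varphi(t,z)\|_{L^2_t(\alpha,\beta)}$. For the third we insert the cutoff $\eta_{\alpha}^{\beta}[\tau]$ and pass an operator $\langle\partial_t\rangle^{-1}$, so that the time-integral $\tfrac12\int_{t-d}^{t+d}\varphi_x(s,z)\,ds$ is bounded by $\bigl\|\langle\partial_t\rangle^{-1}(\eta_{\alpha}^{\beta}[\tau]\varphi_x(\cdot,z))\bigr\|_{L^2_t(\mathbb R)}$, up to a universal constant; this is exactly the same Fourier-multiplier maneuver as in \cite{KX}. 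For the double integral, Cauchy–Schwarz and the $L^2$-energy estimate for equation \eqref{eq:prop:psl2:main} yield a bound of order $d\,\|\varphi\|_{L^\infty_x L^2_t(P_{\alpha,\beta}^{S_0}(z))}$. Fixing $S_0>0$ sufficiently small (depending only on $\|\mathbf{A}\|_\infty + \|\mathbf{B}\|_\infty + \|\mathbf{C}\|_\infty$) so that this prefactor is, say, $\tfrac14$, and absorbing it in the left-hand side via a standard bootstrap on the sup in $x$, gives the first inequality of the lemma with the sharp constants $2$ and $6$.

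Third, to produce the point $\bar{x}\in(z+S_0/2,z+S_0)$, invoke an averaging argument: from the energy identity for \eqref{eq:prop:psl2:main} one gets $\|\varphi_x\|_{L^\infty_t L^2_x}\lesssim \|\varphi[0]\|_{L^2\times H^{-1}}$ (after one spatial integration by parts against $\langle\partial_x\rangle^{-1}$), so
\begin{equation*}
  \int_{z+S_0/2}^{z+S_0}\bigl\|\langle\partial_t\rangle^{-1}\bigl(\eta_{\alpha+S_0+\tau_0}^{\beta-S_0-\tau_0}[\tau_0]\varphi_x(\cdot,x)\bigr)\bigr\|_{L^2_t(\mathbb R)}^2\,dx \lesssim \|\varphi\|_{L^\infty_x L^2_t}\cdot \|\varphi[0]\|_{L^2\times H^{-1}},
\end{equation*}
where one factor of $\|\varphi\|_{L^\infty_x L^2_t(P)}$ comes from pairing with $\varphi$ itself after integration by parts in $t$ (using that $\eta_{\alpha+S_0+\tau_0}^{\beta-S_0-\tau_0}[\tau_0]$ vanishes on the lateral sides of the trapezoid). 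Pigeon-holing in $x$, one obtains at least one $\bar x$ in the prescribed interval for which the pointwise norm is controlled by the product $\|\varphi[0]\|_{L^2\times H^{-1}}^{1/2}\cdot\|\varphi\|_{L^\infty_x L^2_t(P)}^{1/2}$. Combining with the first inequality of the lemma yields the claimed bound on $\bar x$.

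The only delicate step is the third: one must verify that $\varphi_x$, which is only in $L^\infty_t L^2_x$ a priori, acquires enough time-regularity via $\langle\partial_t\rangle^{-1}\circ \eta_{\alpha+S_0+\tau_0}^{\beta-S_0-\tau_0}[\tau_0]$ to produce an $L^2_x L^2_t$-bound whose $x$-average is pointwise realizable. This is exactly where the cutoff $\eta$ plays its role (it converts $\varphi_{xt}$ into a sum of $\varphi_x$ and an $\eta_t$-term) and where the $L^2\times H^{-1}$ regularity of the data is consumed; all other steps are essentially routine once the null representation is in place.
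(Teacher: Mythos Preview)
Your strategy matches the paper's: it defers to \cite[Lemma~2.5]{KX} and notes that the linear equation \eqref{eq:prop:psl2:main} makes everything simpler. The d'Alembert representation, the bootstrap on small $S_0$, and the pigeon-holing for $\bar x$ are all the right moves.

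There is, however, a regularity slip running through your argument. You assert that ``$\mathcal{R}$ is controlled in $L^2_{t,x}$ by the $H^1\times L^2$-seminorm of $\varphi$'' and later that ``$\|\varphi_x\|_{L^\infty_t L^2_x}\lesssim\|\varphi[0]\|_{L^2\times H^{-1}}$''. Neither holds: the data for \eqref{eq:prop:psl2:main} lie in $L^2\times H^{-1}$, so $\varphi_x,\varphi_t$ are only $H^{-1}$ in space and $\mathcal{R}=\mathbf{A}\varphi_x+\mathbf{C}\varphi_t+\mathbf{B}\varphi$ is \emph{not} in $L^2_{t,x}$. Consequently the double integral cannot be bounded by ``Cauchy--Schwarz and the $L^2$-energy estimate'' as you claim. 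What actually happens is that in null coordinates $\mathbf{A}\varphi_x+\mathbf{C}\varphi_t$ is a linear combination of $\varphi_{u_0},\varphi_{v_0}$ with smooth coefficients, so the inner integrals in $\int_{2z+v}^{u}\int_{u_0-2z}^{v}\mathcal{R}\,dv_0\,du_0$ evaluate by the fundamental theorem of calculus, producing boundary values of $\mathbf{A}\varphi,\mathbf{C}\varphi$ on the characteristic edges and on $\{x=z\}$, plus a double integral of $(\partial\mathbf{A})\varphi,(\partial\mathbf{C})\varphi$. These \emph{are} the linear analogues of $H_4,H_5,H_6$ --- not ``simply absent'' --- but now each is bounded by $C\,d\,\|\varphi\|_{L^\infty_xL^2_t(P)}$ directly, without the $\sqrt{E(0)}$ factors of the nonlinear case, so your absorption step goes through. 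Similarly, in the third step the product bound is obtained (as in \cite{KX}) by pairing $\langle\partial_t\rangle^{-2}(\eta\varphi_x)$ against $\eta\varphi_x$, integrating by parts in $x$, and using the equation to trade $\varphi_{xx}$ for $\varphi_{tt}+\text{lower order}$; the $\langle\partial_t\rangle^{-2}$ then cancels $\partial_t^2$, and the remaining pairing against $\eta\varphi$ produces the factor $\|\varphi\|_{L^\infty_xL^2_t}$ you want. Once the regularity bookkeeping is fixed, your outline is correct.
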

This lemma is similar to \cite[Lemma 2.5]{KX}, where the authors  obtained estimates on $ \|\phi_t\|_{L^{\infty}_x L^2_t(P_{\alpha, \beta}^{S_0}(z))}$ and  $\left\| \langle\partial_t\rangle^{-1} \left(\eta_{\alpha+ S_0+\tau_0}^{\beta-S_0-\tau_0}[\tau_0](t)\phi_{tx}(t, \bar x)\right)\right\|_{L^2_t(\R)}$.

By combining the above two properties, we obtain Proposition \ref{prop:psl2}.

\section{More details on the well-posedness}\label{sec:app:C}

This section is devoted to the proofs of some lemmas related to well-posedness issues, including  Lemma   \ref{lem:semiwave:1}, and 
Lemma \ref{lem:wave:timegain}.

\vspace{2mm}

\noindent {\bf C.1.}\label{setting(C1)}  {\bf
 The proof of Lemma \ref{lem:semiwave:1}.} 
First we prove the basic properties of the nonlinear terms $\mathcal{K}(x; {\bf w})$ and $\mathcal{K}_1(x; {\bf w}) \alpha$, namely inequalities \eqref{K:p:1}--\eqref{K:p:2},
\begin{align*}
 \| \sum_{j,k}
A_{jk}^{p}(x; {\bf{w}})\partial_{\beta} w^j \partial^{\beta} w^k \|_{L^2_{x,t}(D_T)}&\lesssim \sum_{j,k} \|\sum_{\beta} 
A_{jk}^{i}(x; {\bf{w}})\partial_{\beta} w^j \partial^{\beta} w^k \|_{L^2_{u, v}(D_T)}  \\
&\lesssim \sum_{j,k} \|
A_{jk}^{p}(x; {\bf{w}}) (w^j_u  w^k_v+ w^j_v w^k_u) \|_{L^2_{u, v}(D_T)}  \\
&\lesssim \sum_{j,k} \|
A_{jk}^{p}(x; {\bf{w}})\|_{L^{\infty}(D_T)} \|w^j_u\|_{L^{\infty}_v L^2_u(D_T)}  \|w^k_v \|_{L^2_v L^{\infty}_u(D_T)} \\
& \;\;\;\;\;\; +  \sum_{j,k} \|
A_{jk}^{p}(x; {\bf{w}})\|_{L^{\infty}(D_T)} \|w^j_v\|_{L^{\infty}_u L^2_v(D_T)}  \|w^k_u \|_{L^2_u L^{\infty}_v(D_T)} \\
&\lesssim \|{\bf w}\|_{\mathcal{W}_T}^2,
\end{align*}
where we have used the identity \eqref{null-indentity},   and  then successively obtain
\begin{align*}
\|  \sum_j B_{r, j}^{p}(x; {\bf{w}}) \partial_{x}w^j\|_{L^2_{x,t}(D_T)}&\lesssim \sum_j \|{\bf w}\|_{L^{\infty}(D_T)}  \|  \partial_{x}w^j\|_{L^2_{x,t}(D_T)}\lesssim \|{\bf w}\|_{\mathcal{W}_T}^2,  \\
\| \sum_j C_{r, j}^{p}(x; {\bf w}) w^j \|_{L^2_{x,t}(D_T)} &\lesssim \sum_j \|{\bf w}\|_{L^{\infty}(D_T)}  \| w^j\|_{L^2_{x,t}(D_T)}\lesssim \|{\bf w}\|_{\mathcal{W}_T}^2, \\
\|\sum_j D_j^p(x; {\bf{w}})\alpha^j(t, x)\|_{L^2_{x,t}(D_T)} &\lesssim \sum_j \|{\bf w}\|_{L^{\infty}(D_T)}  \| \alpha^j\|_{L^2_{x,t}(D_T)}\lesssim \|{\bf w}\|_{L^{\infty}(D_T)}  \| \alpha\|_{L^2_{x,t}(D_T)}
\end{align*}
for every $p, j, k= 1,2,..., R$. Similarly,  
\begin{align*}
 & \;\;\;\;\; \| \sum_{\beta} A_{jk}^{p}(x; {\bf{w}}_1)\partial_{\beta} w^j_1 \partial^{\beta} w^k_1- \sum_{\beta}  A_{jk}^{p}(x; {\bf{w}}_2)\partial_{\beta} w^j_2 \partial^{\beta} w^k_2 \|_{L^2_{x,t}(D_T)}\\
 &\lesssim \| A_{jk}^{p}(x; {\bf{w}}_1) w^j_{1u}  w^k_{1v}+ A_{jk}^{p}(x; {\bf{w}}_1) w^j_{1v}  w^k_{1u} -A_{jk}^{p}(x; {\bf{w}}_2) w^j_{2u}  w^k_{2v}- A_{jk}^{p}(x; {\bf{w}}_2) w^j_{2v}  w^k_{2u} \|_{L^2_{u, v}(D_T)}\\
 &\lesssim \|A_{jk}^{p}(x; {\bf{w}}_1) w^j_{1u}  w^k_{1v}- A_{jk}^{p}(x; {\bf{w}}_2) w^j_{2u}  w^k_{2v} \|_{L^2_{u, v}(D_T)} \\
 &\;\;\;\;\;\;  \;\;\;\;\;\;  \;\;\;\;\;\; + \|  A_{jk}^{p}(x; {\bf{w}}_1) w^j_{1v}  w^k_{1u} - A_{jk}^{p}(x; {\bf{w}}_2) w^j_{2v}  w^k_{2u} \|_{L^2_{u, v}(D_T)} \\
 & \lesssim \|{\bf w}_1- {\bf w}_2\|_{L^{\infty}(D_T)} \|{\bf w}_1\|_{\mathcal{W}_T}^2+ \|{\bf w}_1- {\bf w}_2\|_{\mathcal{W}_T} (\|{\bf w}_1\|_{\mathcal{W}_T}+ \|{\bf w}_2\|_{\mathcal{W}_T})\\
 & \lesssim \|{\bf w}_1- {\bf w}_2\|_{\mathcal{W}_T} (\|{\bf w}_1\|_{\mathcal{W}_T}+ \|{\bf w}_2\|_{\mathcal{W}_T}).
\end{align*}
as well as
\begin{align*}
&\;\;\;\;\; \|  B_{r, j}^{p}(x; {\bf{w}}_1) \partial_{x}w^j_1-  B_{r, j}^{p}(x; {\bf{w}}_2) \partial_{x}w^j_2\|_{L^2_{x,t}(D_T)} \\
&\lesssim \|  B_{r, j}^{p}(x; {\bf{w}}_1) \partial_{x}w^j_1- B_{r, j}^{p}(x; {\bf{w}}_1) \partial_{x}w^j_2 \|_{L^2_{x,t}(D_T)}\\
&\;\;\;\;\;\;\;\;\;\; \;\;\;\;\;\;\;\;\;\; \;\;\;\;\;\;\;\;\;\; \;\;\;\;\;\;\;\;\;\;+ \|   B_{r, j}^{p}(x; {\bf{w}}_1) \partial_{x}w^j_2-  B_{r, j}^{p}(x; {\bf{w}}_2) \partial_{x}w^j_2\|_{L^2_{x,t}(D_T)} \\
&\lesssim \|{\bf w}_1- {\bf w}_2\|_{\mathcal{W}_T} (\|{\bf w}_1\|_{\mathcal{W}_T}+ \|{\bf w}_2\|_{\mathcal{W}_T}),\\
&\;\;\;\;\; \|  C_{r, j}^{p}(x; {\bf{w}}_1) w^j_1-  C_{r, j}^{p}(x; {\bf{w}}_2) w^j_2\|_{L^2_{x,t}(D_T)} \\
&\lesssim  \|{\bf w}_1- {\bf w}_2\|_{\mathcal{W}_T} (\|{\bf w}_1\|_{\mathcal{W}_T}+ \|{\bf w}_2\|_{\mathcal{W}_T}),
\end{align*}
and finally,
\begin{align*}
 &\;\;\;\;\; \|\mathcal{K}_1(x; {\bf w}_1)\alpha_1- \mathcal{K}_1(x; {\bf w}_2)\alpha_2\|_{L^2_{t, x}(D_T)} \\
&\leq \|\mathcal{K}_1(x; {\bf w}_1)\alpha_1- \mathcal{K}_1(x; {\bf w}_2)\alpha_1\|_{L^2_{t, x}(D_T)}+ \|\mathcal{K}_1(x; {\bf w}_2)\alpha_1- \mathcal{K}_1(x; {\bf w}_2)\alpha_2\|_{L^2_{t, x}(D_T)} \\
 &\lesssim \|{\bf w}_1- {\bf w}_2\|_{L^{\infty}(D_T)} \|\alpha_1\|_{L^2_{t, x}(D_T)}+ \|{\bf w}_2\|_{L^{\infty}(D_T)} \|\alpha_1- \alpha_2\|_{L^2_{t, x}(D_T)}.
\end{align*}

Now we come back to the proof of Lemma \ref{lem:semiwave:1}.
    Suppose the unique solution exists, then by \eqref{eq:li:w:1}--\eqref{eq:li:w:2},
\begin{equation*}
   \|{\bf w}\|_{\mathcal{W}_T}\leq \mathcal{C}_T \left( \|({\bf w}_0, {\bf w}_{0t})\|_{\mathcal{H}}+ \|\mathcal{K}(x; {\bf w})+   \alpha+ \mathcal{K}_1(x; {\bf w}) \alpha+ e\|_{L^2_{t, x}(D_T)}\right). 
\end{equation*}
Next, we a priori assume that the unique solution satisfies $\|{\bf w}\|_{\mathcal{W}_T}\leq 1$ and obtain 
\begin{align*}
 \|{\bf w}\|_{\mathcal{W}_T}
&\leq \mathcal{C}_T \left(  \|({\bf w}_0, {\bf w}_{0t})\|_{\mathcal{H}}+ \|\alpha+  e\|_{L^2_{t, x}(D_T)}\right)\\
&\;\;\;\;\;\;\;\;\;\;\;\;\;\; + \mathcal{C}_T \left(  \|\mathcal{K}(x; {\bf w})\|_{L^2_{t, x}(D_T)}+ \| \mathcal{K}_1(x; {\bf w}) \alpha\|_{L^2_{t, x}(D_T)}\right)\\
& \leq \mathcal{C}_T \left(  \|({\bf w}_0, {\bf w}_{0t})\|_{\mathcal{H}}+ \|\alpha\|_{L^2_{t, x}(D_T)}+  \|e\|_{L^2_{t, x}(D_T)}\right)\\
&\;\;\;\;\;\;\;\;\;\;\;\;\;\; + C \mathcal{C}_T \left(  \|{\bf w}\|_{\mathcal{W}_T}^2+\|{\bf w}\|_{\mathcal{W}_T} \| \alpha\|_{L^2_{t, x}(D_T)}\right).
\end{align*}

Next, we turn to the second part of this lemma. Indeed, the difference of the two solutions, ${\bf w}:= {\bf w}_1- {\bf w}_2 $, satisfies the following linear equation:
\begin{equation}
    \begin{cases}
        \mathcal{L}_{{\rm in}} {\bf w}= \mathcal{K}(x; {\bf w}_1)- \mathcal{K}(x; {\bf w}_2)+  \chi_{\omega}\big( \alpha_1- \alpha_2+ \mathcal{K}_1(x; {\bf w}_1) \alpha_1- \mathcal{K}_1(x; {\bf w}_2) \alpha_2\big) + e_1- e_2, \\
        {\bf w}[0]= {\bf u}_1[0]- {\bf u}_2[0].
    \end{cases} \notag
\end{equation}
Thanks to the first part of this lemma, we know that $\|{\bf w}_1\|_{\mathcal{W}_T}$ and $\|{\bf w}_2\|_{\mathcal{W}_T}$ are small. Thus,
due to the well-posedness of the linear equation \eqref{eq:li:w:1}--\eqref{eq:li:w:2}, one has
\begin{align*}
     \|{\bf w}\|_{\mathcal{W}_T}&\leq \mathcal{C}_T \left( \|{\bf u}_1[0]- {\bf u}_2[0]\|_{H^1\times L^2(\mathbb{T}^1)}+ \|\alpha_1- \alpha_2+ e_1- e_2\|_{L^2_{t, x}(D_T)}\right) \\
     & \;\;\;\;\; + \mathcal{C}_T \Big( \mathcal{N}(x; {\bf w}_1)- \mathcal{N}(x; {\bf w}_2)+   \mathcal{N}_1(x; {\bf w}_1) \alpha_1- \mathcal{N}_1(x; {\bf w}_2) \alpha_2\Big) \\
     & \leq \mathcal{C}_T \left( \|{\bf u}_1[0]- {\bf u}_2[0]\|_{H^1\times L^2(\mathbb{T}^1)}+ \|\alpha_1- \alpha_2+ e_1- e_2\|_{L^2_{t, x}(D_T)}\right) \\
     & \;\;\;\;\; + C \mathcal{C}_T \|{\bf w}\|_{\mathcal{W}_T} ( \|{\bf w}_1\|_{\mathcal{W}_T}+  \|{\bf w}_2\|_{\mathcal{W}_T})\\
    & \;\;\;\;\; + C \mathcal{C}_T \Big(\|{\bf w}\|_{L^{\infty}(D_T)} \|\alpha_1\|_{L^2_{t, x}(D_T)}+ \|{\bf w}_2\|_{L^{\infty}(D_T)} \|\alpha_1- \alpha_2\|_{L^2_{t, x}(D_T)}\Big).
\end{align*}
This concludes the proof of this lemma. 

\vspace{2mm}

\noindent {\bf C.2.}\label{setting(C2)}  {\bf
  The proof of Lemma \ref{lem:wave:timegain}.} 
This proof is directly inspired by \cite[Lemma 3.4]{Zhang-2000-2}, where the author considered a single variable wave equation with potential terms. 

Define a non-negative cutoff function $g\in C^{\infty}_c(-3\pi, 3\pi)$ such that $g(x)=1$ in $[-2\pi, 2\pi]$.  Using integration by parts we get
\begin{align*}
   & \;\;\;\;  \int_{-3\pi}^{3\pi} g \partial_t^2 \varphi \cdot \left((-\Delta+ 1)^{-1} \varphi\right) \, dx dt \\
    &= -\int_{-3\pi}^{3\pi}  g \partial_t \varphi \cdot \left((-\Delta+ 1)^{-1} \partial_t \varphi\right) \, dx dt- \int_{-3\pi}^{3\pi} \dot g \partial_t \varphi \cdot \left((-\Delta+ 1)^{-1} \varphi\right) \, dx dt\\
    &= -\int_{-3\pi}^{3\pi} g \|\varphi_t\|_{H^{-1}(\mathbb{T}^1)}^2 \, dt- \frac{1}{2} \int_{-3\pi}^{3\pi} \dot g \partial_t \left( \big((-\Delta+ 1)^{- \frac{1}{2}} \varphi\big)^2\right) \, dx dt\\
    &= -\int_{-3\pi}^{3\pi} g \|\varphi_t\|_{H^{-1}(\mathbb{T}^1)}^2 \, dt+ \frac{1}{2} \int_{-3\pi}^{3\pi} \ddot g  \big((-\Delta+ 1)^{- \frac{1}{2}} \varphi\big)^2 \, dx dt.
\end{align*}
By plugging equation \eqref{eq:obdua} into the above identity we obtain 
\begin{align*}
 &\;\;\;\; \int_{-2\pi}^{2\pi}  \|\varphi_t\|_{H^{-1}(\mathbb{T}^1)}^2 \, dt\\
 &\leq   \int_{-3\pi}^{3\pi} g \|\varphi_t\|_{H^{-1}(\mathbb{T}^1)}^2 \, dt \\
  &=  -\int_{-3\pi}^{3\pi} g \partial_t^2 \varphi \cdot \left((-\Delta+ 1)^{-1} \varphi\right) \, dx dt+ \frac{1}{2} \int_{-3\pi}^{3\pi} \ddot g  \big((-\Delta+ 1)^{- \frac{1}{2}} \varphi\big)^2 \, dx dt\\
  &= -\int_{-3\pi}^{3\pi} g (\Delta \varphi+ {\bf b}\partial_x \varphi+ {\bf c}\varphi) \cdot \left((-\Delta+ 1)^{-1} \varphi\right) \, dx dt+ \frac{1}{2} \int_{-3\pi}^{3\pi} \ddot g  \big((-\Delta+ 1)^{- \frac{1}{2}} \varphi\big)^2 \, dx dt\\
   &= -\int_{-3\pi}^{3\pi} g ((\Delta-1) \varphi+ {\bf b}\partial_x \varphi+ ({\bf c}+1)\varphi) \cdot \left((-\Delta+ 1)^{-1} \varphi\right) \, dx dt+ \frac{1}{2} \int_{-3\pi}^{3\pi} \ddot g  \big((-\Delta+ 1)^{- \frac{1}{2}} \varphi\big)^2 \, dx dt\\
  & \lesssim \int_{-3\pi}^{3\pi} \int_{\mathbb{T}^1} \varphi^2(t, x) \, dx dt.
\end{align*}
Therefore, we finish the proof of Lemma \ref{lem:wave:timegain}.

\bibliographystyle{abbrv}
\bibliography{biblio}

\end{document}